\documentclass[12pt]{article}

\usepackage{geometry}

\usepackage{amsmath}
\usepackage{amsfonts}
\usepackage{amsthm}
\usepackage{amssymb}
\usepackage{graphicx}
\usepackage{extarrows}
\usepackage{enumerate}
\usepackage{enumitem}
\usepackage{bbm}

\usepackage{mathtools}
\usepackage[all]{xy}
\usepackage{ifthen}
\usepackage{tikz}
\usepackage{tikz-cd}
\usepackage{tikzscale}

\usetikzlibrary{shapes.geometric}
\usetikzlibrary{decorations, decorations.markings} 
\usetikzlibrary{arrows, arrows.meta}
\usetikzlibrary{matrix, arrows, decorations.pathmorphing}
\usetikzlibrary{calc}

\makeatletter
\def\enumfix{%
\if@inlabel
\noindent \par\nobreak\vskip-\topsep\hrule\@height\z@
\fi}
\let\oldenumerate\enumerate
\def\enumerate{\enumfix\oldenumerate}
\makeatother

\newlist{parlist}{enumerate}{1}
\setlist[parlist]{label=(\arabic*),wide=0pt,topsep=0pt}

\newcounter{num}
\newenvironment{pflist}{\begin{list}{(\arabic{num})}{\usecounter{num} \leftmargin0cm \itemindent5pt}}{\end{list}}

\theoremstyle{definition}
\newtheorem{Satz}{Satz}[section]
\newtheorem{Lemma}[Satz]{Lemma}
\newtheorem{Proposition}[Satz]{Proposition}
\newtheorem{Cor}[Satz]{Corollary}
\newtheorem{Theorem}[Satz]{Theorem}
\newtheorem{Definition}[Satz]{Definition}

\newtheoremstyle{break}{}{}{}{}{\bfseries}{.}{\newline}{}

\theoremstyle{break}

\makeatletter
\renewenvironment{proof}[1][\proofname]{\par
\vspace{-5pt}
\pushQED{\qed}%
\normalfont
\topsep0pt \partopsep0pt 
\trivlist
\item[\hskip\labelsep
\bfseries
#1\@addpunct{.}]\ignorespaces
}{%
\popQED\endtrivlist\@endpefalse
\addvspace{6pt plus 6pt} 
}
\makeatother

\newcommand{\Z}{\mathbb{Z}}
\newcommand{\N}{\mathbb{N}}
\newcommand{\CC}{\mathcal{C}}

\DeclareMathOperator\Hom{Hom}

\DeclareMathOperator\Aut{Aut}

\DeclareMathOperator\im{im}

\DeclareMathOperator\Span{Span}
\DeclareMathOperator\id{id}

\DeclareMathOperator\Res{Res}
\DeclareMathOperator\diag{diag}

\newcommand{\GL}{\mathrm{GL}}
\newcommand{\SL}{\mathrm{SL}}
\newcommand{\Sp}{\mathrm{Sp}}

\newcommand{\fA}{\mathfrak{A}}
\newcommand{\fB}{\mathfrak{B}}
\newcommand{\fC}{\mathfrak{C}}
\newcommand{\fS}{\mathfrak{S}}
\newcommand{\fT}{\mathfrak{T}}

\newcommand{\fN}{\mathfrak{N}}
\newcommand{\fG}{\mathfrak{G}}
\newcommand{\fU}{\mathfrak{U}}
\newcommand{\fa}{\mathfrak{a}}
\newcommand{\fr}{\mathfrak{r}}
\newcommand{\fs}{\mathfrak{s}}
\newcommand{\ft}{\mathfrak{t}}
\newcommand{\fn}{\mathfrak{n}}
\newcommand{\fd}{\mathfrak{d}}

\newcommand{\hS}{\hat{\mathfrak{S}}}

\renewcommand{\P}{\mathbb{P}}
\newcommand{\Ext}{\mathrm{Ext}}
\newcommand{\Yext}{\mathrm{Yext}}

\renewcommand{\1}{\mathbbm{1}}
\newcommand{\TFT}{\mathcal{Z}}
\renewcommand{\P}{\mathbb{P}}

\newcommand{\ot}{\mathbin{\otimes}}

\newcommand{\uu}{u}
\newcommand{\n}{\nu}

\newcommand{\deq}{\vcentcolon =}

\newcommand\Cent{C}

\newcommand{\scdot}{\mathbin{\mspace{-3mu} \cdot \mspace{-3mu}}}

\begin{document}

\begin{flushright}
{\sf ZMP-HH/26-25}\\
{\sf Hamburger$\;$Beitr\"age$\;$zur$\;$Mathematik$\;$Nr.$\;$1012}
\end{flushright}
\vskip 11mm

\begin{center}{\bf \Large 
Hochschild Cohomology, Modular Tensor Categories, and Mapping Class Groups}

\large{\bf II.~Examples}

\vskip 8mm

{\large Simon Lentner, Svea Nora Mierach,\\ \vskip 1mm Christoph Schweigert, Yorck Sommerh\"auser}
\vskip 9mm
\end{center}

\vspace{15mm}

\begin{abstract}
\noindent In the first part of this work, we have, for a not necessarily semisimple modular category, generalized the action of the mapping class groups of surfaces on the spaces of conformal blocks to the so-called derived block spaces. In the second part presented here, we compute this action explicitly in the case of Drinfel'd doubles of finite groups over fields of positive characteristic. To do that, we connect Lyubashenko's approach to mapping class group representations with the theory of representation varieties. In this way, we are able to show that the mapping class group representations on the derived block spaces are in general different from those on the ordinary block spaces.
\end{abstract}

\setcounter{tocdepth}{2}

\newpage

\tableofcontents

\newpage 

\section*{Introduction}
\addcontentsline{toc}{section}{Introduction}

The present text constitutes a direct continuation of our previous work~\cite{LMSS2}, which comprises the first three sections of the complete text, whereas the present second part begins with the fourth section. References to the first three sections are therefore found in the first part.

In this first part, we have generalized the action of mapping class groups of surfaces on the spaces of chiral conformal blocks, or briefly the block spaces, to the so-called derived block spaces. For this purpose, we have used a framework created by V.~Lyubashenko, which we have reviewed in Paragraph~2.5 and which describes surfaces via nets and ribbon graphs. The key point of our argument was described in Paragraph~3.2: We considered a surface with an additional boundary component, which could be labeled with a projective resolution of the unit object,
so that the block spaces of this modified surface formed a cochain complex.
We then showed that the action of the mapping class group of the modified surface descended to an action of the mapping class group of the original surface in cohomology, i.e., on the derived block spaces.

It is natural to ask whether these mapping class group representations on the derived block spaces are really new in the sense that they are not isomorphic to the representations on the original block spaces. In Paragraph~\ref{Char3} below, we answer this question affirmatively: The representations that appear in higher cohomological degree are in general not isomorphic to those in degree zero. Furthermore, we show in Paragraph~\ref{YonChar3} that these new representations can also not always be obtained from those in degree zero by taking Yoneda products. In other words, the submodule that is generated from the degree zero component under the Yoneda product may not contain these new representations.

To reach these goals, we consider a special modular category, namely the category of representations of the Drinfel'd double of a finite group, a category that is nonsemisimple if the characteristic of the base field divides the order of the finite group under consideration. For this category, we are able to show that the mapping class group representations constructed by V.~Lyubashenko are isomorphic to those arising from the so-called representation variety, i.e., the set of group homomorphisms from the fundamental group of the surface to the given finite group. We give this isomorphism as Theorem~\ref{ThmModTilde} in Paragraph~\ref{RelRepVar}. Another key fact that we use is a correspondence between certain cohomology groups of the Drinfel'd double of our finite group and certain cohomology groups of that finite group itself, a relation that we establish as Proposition~\ref{GroupCohom} in Paragraph~\ref{RedGroupCohom}. Taken together, these facts show that the derived block spaces can be computed as the cohomology groups of our finite group with coefficients in a linearized version of the representation variety, a result that we state as Corollary~\ref{Group} directly afterwards.

Let us now review our contents in greater detail and in a more linear fashion. Like any quasitriangular Hopf algebra, also the Drinfel'd double of a finite group comes with two natural R-matrices, denoted by~$R$ and~$\tilde{R}$ in Paragraph~\ref{AltRMat}. Interchanging these two R-matrices corresponds graphically to interchanging overcrossings and undercrossings.  While none of the two R-matrices is preferred over the other, one of the two can be expressed more easily in terms of the standard basis, at least for the model of the Drinfel'd double that we are using.

In general, the mapping class group representations considered here depend on the R-matrix used. However, we show in Section~\ref{Sec:MapClFund}, more precisely in Theorem~\ref{RepIsom} of Paragraph~\ref{StandRMat}, that both~$R$ and~$\tilde{R}$ lead to isomorphic mapping class group representations in the cases that are relevant for us. As the formulas arising from~$\tilde{R}$ are considerably simpler, at least for the conventions that we are using, we work with~$\tilde{R}$ in the sequel. Another key result of Section~\ref{Sec:MapClFund} is the description of the relevant mapping class group actions via the representation variety in Theorem~\ref{ThmModTilde} that was already mentioned above.

In Section~\ref{Sec:DerBlCoh}, we reduce the computation of the relevant cohomology groups to group cohomology, as also already mentioned above. Moreover, the appearing cohomology groups are modules over the cohomology ring of the unit object of the category, which is an algebra with respect to the Yoneda product. This module structure plays an important role in Section~\ref{Sec:CompCoh}. To define this product, we pass to Yoneda's description of the cohomology groups, which is not based on projective resolutions, in contrast to the description that we gave in Paragraph~3.1.

\enlargethispage{-4mm}

The module structure over the Yoneda algebra is compatible with the passage to group cohomology just discussed. In group cohomology, however, it is customary not to use the Yoneda product itself, but rather the so-called cup product, which in a certain sense can be seen as a generalization of the Yoneda product. Although, strictly speaking, the cup product is not necessary for our treatment, we explain in the appendix not only the relation between the two products, but also discuss there the cup product in the setting of general tensor categories, which, compared to the case of groups or Hopf algebras, requires the modification of some arguments.

As stated above, we have seen in Section~\ref{Sec:MapClFund} that the relevant mapping class group representations can be realized on the representation variety. The given finite group acts on the representation variety by conjugation, and this action commutes with the action of the mapping class group. For the subsequent computations, it would be helpful if the representation variety could be decomposed into a Cartesian product in which the mapping class group acts on one factor and the given finite group acts on the other. While this is not possible in general, we show in Section~\ref{Sec:DecRep} that it is possible to come rather close to such a decomposition. The corresponding variant of this decomposition, which is stated as Proposition~\ref{PropDecomp} in Paragraph~\ref{GenCase}, descends to cohomology by a particularly simple case of the universal coefficient theorem, as we show in Theorem~\ref{ThmDecomp} directly afterwards.

We also show in Section~\ref{Sec:DecRep} that, if our finite group is abelian, the direct sum of the derived block spaces in all degrees is free as a right module over the cohomology ring of the base field under the Yoneda product. More precisely, we show in Paragraph~\ref{Abel} that the representation variety can be used as a basis for this free module, so that the arising isomorphism is equivariant with respect to the action of the mapping class group. This means that the key features that we want to exhibit cannot be realized by using only abelian groups: In the abelian case, the mapping class group representations that appear for the derived block spaces in higher degree are the same as the mapping class group representations on the ordinary block spaces in degree zero.

In Section~\ref{Sec:CompCoh}, we reach the explicit construction of examples that exhibit the features just mentioned. As it is necessary to work with a nonabelian group, we consider the smallest such group, the symmetric group on three letters. In order to keep the examples as simple as possible, we choose~$g=1$ for the genus of our surface, so that the mapping class group is the modular group~$\SL(2,\Z)$. To be in a nonsemisimple situation, we need to consider base fields of characteristic~$2$ or~$3$. If the characteristic is~$2$, we find in Paragraph~\ref{Char2} that all mapping class group representations that appear in higher cohomological degree are still already present in degree zero. However, the direct sum of the derived block spaces is no longer free over the cohomology ring of the base field. The situation changes if we pass to characteristic~$3$, as we show in Paragraph~\ref{Char3}: Here, new representations appear when the cohomological degree is congruent to~$1$ or~$2$ modulo~$4$. These new representations do also not appear in the submodule that is generated under the Yoneda product by the degree zero component, i.e., by the ordinary block spaces, as follows from Theorem~\ref{ModStrucChar3} in Paragraph~\ref{YonChar3}.

We conclude with an appendix that first reviews in Paragraph~\ref{PerResCycl} the periodic resolution for cyclic groups, which is one of our most important computational tools. The remaining parts of the appendix discuss the cup product in the context of general tensor categories, as already mentioned above.

We continue to use the notations and conventions introduced in the first part. In particular, we work over an algebraically closed base field denoted by~$K$. For a set~$X$, $K[X]$ denotes the free vector space with $X$ as a basis (\cite[Chap.~I, Sec.~1.7, p.~13]{Gr1}). For a group~$G$ and an element~$a \in G$, we denote its centralizer by~$\Cent(a)$. If this group acts on a set~$X$, we denote the fixed point set by~$X^{G}$. The notation~$U \le G$ indicates that the subset~$U$ of~$G$ is in fact a subgroup. The symbol~$0$ is used to denote not only the zero vector in a vector space, but also the zero vector space itself and more generally the zero object in an abelian category.

As explained in Paragraph~2.1, the left dual of an object~$X$ in a left rigid category is denoted by~$X^*$, and the left dual of a morphism~$f$ is denoted by~$f^*$. This applies in particular to the category of finite-dimensional vector spaces, where~$X^*$ and~$f^*$ then denote the dual vector space and the dual map, i.e., the transpose of~$f$. We will use the same notation also in the infinite-dimensional setting. Generalizing this special case, we use the notation~$f^*$ also for the map given by precomposing~$f$, and accordingly the notation~$f_*$ for the map given by postcomposing~$f$. In addition, we also use~$f_*$ for the map induced by postcomposition in cohomology. In contrast, the transpose of a matrix~$M$ is denoted by~$M^T$.

We also use the Kronecker symbol~$\delta_{a,b}$, which is equal to~$1$ if~$a=b$ and equal to~$0$ if~$a \neq b$. It needs to be distinguished from the dual basis elements in the dual group ring introduced in Paragraph~\ref{DrinfDoub}, for which we use the notation~$\delta_{a}$, so that~$\delta_{a}(b) = \delta_{a,b}$.

We note that our treatment of the Yoneda product in Paragraph~\ref{YonProd} leads into territory that is no longer covered by the standard axioms of set theory put forward by E.~Zermelo and A.~Fraenkel, together with the axiom of choice, as it requires the formation of so-called big groups. To a lesser extent, this also concerns other aspects of category theory discussed here. Various ways to deal with these difficulties have been proposed. One of them is to adopt an additional set-theoretical axiom, namely the axiom of the existence of a universe, and this is the approach taken here. This approach is discussed in greater detail in~\cite[Chap.~I, \S~6, p.~21ff]{ML2}. In order to form the big groups just mentioned, we need to assume in addition that the category under consideration is small with respect to the universe in the sense of~\cite[loc.\ cit., p.~22]{ML2}.

The authors would like to thank Sarah Witherspoon for interesting discussions on the present material and Marc Hoyois for pointing out reference~\cite{Bg}.

While carrying out this research, the first and the third author were partially supported by the `Deutsche Forschungsgemeinschaft' under Germany's Excellence Strategy EXC 2121 `Quantum Universe' -- 390833306 and the Collaborative Research Center SFB 1624 `Higher Structures, Moduli Spaces and Integrability' -- 506632645, while the second and the fourth author were partially supported by NSERC grant RGPIN-2017-06543.

\setcounter{section}{3}
\section{Mapping class groups and fundamental groups} \label{Sec:MapClFund}
\subsection{The alternative R-matrix} \label{AltRMat}
In Paragraph~2.2, we have considered factorizable ribbon Hopf algebras. Any such Hopf algebra~$A$ comes, by definition, with an R-matrix~$R$. There is, however, always a second possible choice for such an R-matrix, namely $\tilde{R} \deq R'^{-1}$, the inverse of the element~$R'$ introduced in Paragraph~2.2 (cf.~\cite[Par.~2.1, p.~17]{SZ}). This alternative R-matrix leads to an alternative braiding, namely
\[\tilde{c}_{X,Y} \deq c_{Y,X}^{-1} \colon X \ot Y \to Y \ot X.\]
The monodromy matrix for this alternative R-matrix is
\[\tilde{Q} = \tilde{R}' \tilde{R} = R^{-1} R'^{-1} = Q^{-1}\]
and it is not difficult to see that~$\tilde{v} \deq v^{-1}$ is a ribbon element for this alternative R-matrix (cf.~\cite[Par.~4.4, p.~40]{SZ}). Note that~$A$ is also factorizable with respect to the alternative R-matrix (cf.~\cite[Par.~3.2, p.~26]{SZ}).

In Paragraph~2.4, we have seen that the dual space~$A^*$ of~$A$, endowed with the left coadjoint action, is a coend~$L$ for the category of $A$-modules. This coend depends, as an object of the category, on the tensor product functor and the duality, but not on the braiding, and therefore not on the R-matrix. However, the associated morphisms computed in Proposition~2.3 do depend on the R-matrix. The ones corresponding to~$\tilde{R}$, which we also denote by a tilde, can be expressed in terms of the original ones:
\begin{Proposition} \label{PropAltRMat}
\[\tilde{\fT} = \fT^{-1} \qquad \qquad \quad \tilde{\fN}' = \fN'^{-1} \qquad \qquad \quad 
\tilde{\fN} = \fN^{-1}\]

\end{Proposition}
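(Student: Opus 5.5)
Proposition~2.3 of the first part gives explicit formulas on $L = A^*$ for the morphisms attached to an R-matrix: $\fT$ acts by the ribbon element, and $\fN$ and $\fN'$ are built from the monodromy matrix $Q$ together with an integral. Since that computation only used the general structure of a factorizable ribbon Hopf algebra, the same formulas apply verbatim to the data $(\tilde{R}, \tilde{Q}, \tilde{v})$. So the plan is to write down the tilde versions of these formulas and compare them with the untilded ones, using the identities $\tilde{Q} = Q^{-1}$ and $\tilde{v} = v^{-1}$ from the preceding discussion.

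\emph{The twist.} If $\fT$ is given on $L=A^*$ by $\varphi \mapsto \varphi \leftharpoonup v^{\pm1}$ (or the analogous multiplication by the ribbon element), then $\tilde{\fT}$ is the same map with $v$ replaced by $v^{-1}$. The two maps compose to the identity because $v$ is central and $v v^{-1}=1$. This settles $\tilde{\fT} = \fT^{-1}$.

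\emph{The map $\fN'$.} I expect $\fN'$ to be the Drinfel'd-type map
\[
\varphi \mapsto (\varphi \ot \id)(Q)
\]
from $A^*$ to $A$, possibly composed with the identification of $A$ with $A^*$ via the integral $\lambda$. Its tilde version is the same expression with $Q^{-1}$ in place of $Q$. To show that $\tilde{\fN}'$ is the inverse of $\fN'$, I will compose the two and reduce to the identity
\[
\sum Q^{(1)} S(\dots)\,\ldots = 1,
\]
derived from $Q Q^{-1} = 1 \ot 1$. I will also use the standard fact that the inverse of the Drinfel'd map is again given by $Q^{-1}$ twisted by the antipode and the integral, as in the usual proof of nondegeneracy; factorizability with respect to $\tilde{R}$ guarantees that both maps are bijective.

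\emph{The map $\fN$.} If $\fN$ is defined by a Fourier-transform type formula $\varphi \mapsto \varphi\bigl(\,\cdot\,\bigr)$ involving $Q$ and $\lambda$ (or as $\fN'$ composed with a Frobenius isomorphism), the same argument applies. Alternatively, if Proposition~2.3 expresses $\fN$ in terms of $\fN'$ and the antipode or an isomorphism independent of the R-matrix, then $\tilde{\fN} = \fN^{-1}$ follows formally from the result for $\fN'$.

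\emph{Main obstacle.} The hard step is the verification for $\fN'$ and $\fN$, specifically keeping the normalization constants straight. The integral normalization in Proposition~2.3 may be chosen relative to the R-matrix, for example via $\lambda(v)$ or a Gauss sum. In that case I must check that the tilde normalization is the reciprocal one, so that the composite is exactly the identity and not a nonzero scalar multiple of it. I would settle this first by evaluating the composite on the counit or on the integral, where the formulas collapse to these scalars.
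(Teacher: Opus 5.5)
Your overall strategy, applying Proposition~2.3 to the data $(\tilde{R},\tilde{Q},\tilde{v})$ and then using $\tilde{Q}=Q^{-1}$ and $\tilde{v}=v^{-1}$, is the one the paper uses. Your treatment of $\fT$ is correct: since $\fT(\varphi)(a)=\varphi(av)$, one gets $(\tilde{\fT}\circ\fT)(\varphi)(a)=\varphi(a\tilde{v}v)=\varphi(a)$, and centrality is not even needed.

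\textbf{The gap.} For $\fN'$ and $\fN$ you are working with the wrong maps. Both are endomorphisms of $L\ot L=A^*\ot A^*$, not maps $A^*\to A$. Proposition~2.3 gives
\[\fN'(\varphi\ot\psi)(a\ot a')=\varphi(aQ^1)\,\psi(S(Q^2)a'), \qquad \fN(\varphi\ot\psi)(a\ot a')=\varphi(av_{(1)})\,\psi(S(v_{(2)})a'),\]
and no integral appears anywhere. The map you describe, $\varphi\mapsto(\varphi\ot\id)(Q)$ followed by the Frobenius identification $A\cong A^*$, is in fact $\fS$, with $\fS(\varphi)(a)=\varphi(Q^1)\rho(S(Q^2)a)$. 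That map is deliberately excluded from this proposition. It is exactly where the normalization problem you anticipate arises, and it is handled separately in Lemma~\ref{LemAltRMat} using ribbon-normalized integrals. So your ``main obstacle'' and your appeal to inverting the Drinfel'd map belong to a different statement. For $\fN$, the proposal contains no usable idea, because $\fN$ is built from the coproduct of the ribbon element, not from $Q$ and an integral.

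\textbf{The fix.} With the correct formulas the verification is direct and involves no scalars. Composing and using that $S$ is an algebra antihomomorphism gives
\[(\tilde{\fN}'\circ\fN')(\varphi\ot\psi)(a\ot a')=\varphi(a\tilde{Q}^1Q^1)\,\psi(S(\tilde{Q}^2Q^2)a')=\varphi(a)\,\psi(a'),\]
because $\tilde{Q}Q=Q^{-1}Q=1 \ot 1$ in $A\ot A$. For $\fN$ the same computation yields $\varphi(a\tilde{v}_{(1)}v_{(1)})\,\psi(S(\tilde{v}_{(2)}v_{(2)})a')$. The needed identity is $\Delta(\tilde{v})\Delta(v)=\Delta(\tilde{v}v)=1 \ot 1$, which is just multiplicativity of the coproduct. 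The reverse composites are checked the same way, or follow from finite-dimensionality. No bijectivity input from factorizability is needed at this step.
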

\begin{proof}
\begin{parlist}
\item
These statements are direct consequences of Proposition~2.3: For $\varphi \in A^*$ and $a \in A$, we have
\[(\tilde{\fT} \circ \fT)(\varphi)(a) = \fT(\varphi)(a \tilde{v})
= \varphi(a \tilde{v} v) = \varphi(a)\]
and therefore $\tilde{\fT} \circ \fT = \id_L$. The equation
$\fT \circ \tilde{\fT} = \id_L$ can be derived analogously.

\item
For the second and the third statement, suppose that $\varphi, \psi \in A^*$ and $a,a' \in A$. We then have
\begin{align*}
(\tilde{\fN}' \circ \fN')&(\varphi \ot \psi)(a \ot a') =
\fN'(\varphi \ot \psi)(a \tilde{Q}^1 \ot S(\tilde{Q}^2) a') \\
&= \varphi(a \tilde{Q}^1 Q^1) \psi(S(Q^2) S(\tilde{Q}^2) a')
= \varphi(a \tilde{Q}^1 Q^1) \psi(S(\tilde{Q}^2 Q^2) a') \\
&= \varphi(a) \psi(a') = (\varphi \ot \psi)(a \ot a')
\end{align*}
and
\begin{align*}
(\tilde{\fN} \circ \fN)&(\varphi \ot \psi)(a \ot a') =
\fN(\varphi \ot \psi)(a \tilde{v}_{(1)} \ot S(\tilde{v}_{(2)}) a') \\
&= \varphi(a \tilde{v}_{(1)} v_{(1)}) \psi(S(v_{(2)}) S(\tilde{v}_{(2)}) a')
= \varphi(a \tilde{v}_{(1)} v_{(1)}) \psi(S(\tilde{v}_{(2)} v_{(2)}) a') \\
&= \varphi(a) \psi(a') = (\varphi \ot \psi)(a \ot a').
\end{align*}

This shows that $\tilde{\fN}' \circ \fN' = \id_{L \ot L}$ and
$\tilde{\fN} \circ \fN = \id_{L \ot L}$. The equations
$\fN' \circ \tilde{\fN}' = \id_{L \ot L}$ and $\fN \circ \tilde{\fN} = \id_{L \ot L}$
can be derived in a very similar way.
\qedhere
\end{parlist}
\end{proof}

The treatment of the morphism~$\fS$ requires some preparatory remarks. In Paragraph~2.4, we have used a right integral~$\rho \in A^*$ to define~$\fS$. Such a right integral is only unique up to a scalar, which did not matter, as we did only consider projective representations. In order to make the relation between~$\fS$ and~$\tilde{\fS}$ precise, we now use ribbon-normalized integrals (cf.~\cite[Def.~4.4, p.~39]{SZ}); i.e., we use a right integral that satisfies~$\rho(v) = 1$. For the definition of~$\tilde{\fS}$, we require analogously that~$\tilde{\rho}(\tilde{v}) = \tilde{\rho}(v^{-1}) = 1$. If we use these normalizations, the two morphisms are related as follows:
\begin{Lemma} \label{LemAltRMat}
$\tilde{\fS} = \fS^{-1}$
\end{Lemma}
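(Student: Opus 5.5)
The plan is to reduce the statement to two structural facts about~$\fS$ and then fix the remaining scalar with the ribbon normalizations. Recall from Proposition~2.3 that~$\fS$ is obtained from the right integral~$\rho$ together with the monodromy matrix~$Q$, roughly as $\fS(\varphi)(a) = \rho(\ldots Q^1 \ldots)\,\varphi(\ldots Q^2 \ldots)$. Accordingly, $\tilde{\fS}$ is given by the same formula with~$Q$ replaced by $\tilde{Q} = Q^{-1}$ and~$\rho$ replaced by~$\tilde{\rho}$.

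First I would relate the two integrals. Since the space of right integrals on~$A$ does not depend on the R-matrix and is one-dimensional, we have $\tilde{\rho} = c\,\rho$ for a scalar~$c$, and the normalizations $\rho(v)=1$ and $\tilde{\rho}(v^{-1})=1$ give $c = \rho(v^{-1})^{-1}$. Because~$A$ is factorizable, it is unimodular, so~$\rho$ is also a left integral and satisfies $\rho(S(x)) = \rho(x)$ up to the distinguished grouplike element, which here I expect to be trivial.

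Next I would rewrite~$\tilde{\fS}$ in terms of~$\fS$ and the transpose~$S^*$ of the antipode. Using $R^{-1} = (S \ot \id)(R)$ and $R'^{-1} = (\id \ot S)(R')$, the factors of $Q^{-1}$ can be moved through the antipode. Together with the invariance of~$\rho$ under~$S$ and the fact that~$S^2$ is conjugation by the Drinfel'd element~$u$ (so $u$ and~$v$ differ by a central grouplike element), the aim is an identity of the form $\tilde{\fS} = c\, S^* \circ \fS$, or equivalently $\tilde{\fS} = c\, \fS \circ S^*$. Independently, I would establish the known relation $\fS^2 = \lambda\, S^*$ for a scalar~$\lambda$. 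This follows from factorizability: the Drinfel'd map $A^* \to A$ given by~$Q$ is bijective, and~$\rho$ induces the Frobenius isomorphism, so composing the two maps twice yields the antipode up to the product of integral constants. Since~$S^{*2}$ is conjugation by~$u$, which acts trivially on the coadjoint-invariant part, both relations combine to $\tilde{\fS} \circ \fS = c\lambda\,(S^*)^2$ restricted appropriately, and ideally to $\tilde{\fS}\circ\fS = c\lambda\,\id_L$.

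The main obstacle is the scalar: I must show $c\lambda = 1$, that is, $\lambda = \rho(v^{-1})$ with the normalization $\rho(v)=1$. I would try the Gauss-sum identity $\rho(v)\,\rho(v^{-1}) = \lambda$ obtained by evaluating $\fS^2$ on a suitable element, namely $\fS(\fT\text{-type element})$, or equivalently by using the modular relation $(\fS\fT)^3 = \rho(v)\rho(v^{-1})\cdot\fS^2$ tested against the counit. This should be consistent with the way Proposition~\ref{PropAltRMat} forced exact inverses for~$\fT$, $\fN$ and~$\fN'$. The other composition $\fS \circ \tilde{\fS} = \id_L$ then follows by symmetry, because interchanging the roles of~$R$ and~$\tilde{R}$ exchanges~$v$ with~$v^{-1}$ and~$\rho$ with~$\tilde{\rho}$.
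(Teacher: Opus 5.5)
\textbf{There is a genuine gap.} The two identities you want to combine, $\tilde{\fS} = c\, S^* \circ \fS$ and $\fS^2 = \lambda\, S^*$, are false on $L = A^*$. At best they hold on the coadjoint-invariant part of $L$, which corresponds to the center, where results like ``$\fS^2$ is a multiple of the antipode'' live.

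You can check this with the paper's formulas for $D = D(G)$ with $G$ nonabelian. There $\tilde{\rho} = \rho_D$, so $c = 1$.
\begin{itemize}
\item From $S_D(\delta_a \ot b) = \delta_{b^{-1}a^{-1}b} \ot b^{-1}$ you get $S^*(\uu(a,b)) = \uu(b^{-1}a^{-1}b, b^{-1})$.
\item Hence Proposition~\ref{FormST} gives $S^*(\fS(\uu(a,b))) = \uu(b, a^{-1})$, and likewise $\fS(S^*(\uu(a,b))) = \uu(b^{-1}a^{-1}bab,\, b^{-1}a^{-1}b)$.
\item Corollary~\ref{InvST} gives $\fS^{-1}(\uu(a,b)) = \uu(b, b^{-1}a^{-1}b)$. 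Both expressions above differ from this whenever $ab \neq ba$.
\item Proposition~\ref{PowS} gives $\fS^2(\uu(a,b)) = \uu([a,b^{-1}]a^{-1}, [a,b^{-1}]b^{-1})$. This is not proportional to $S^*(\uu(a,b))$ unless $[a,b^{-1}] = e$.
\end{itemize}
So your route can yield at most $\tilde{\fS} \circ \fS = \id$ on the invariant subspace. The passage to ``ideally $\tilde{\fS}\circ\fS = c\lambda\,\id_L$'' is exactly what cannot be obtained this way, and the Gauss-sum determination of $c\lambda$ does not address it. The lemma is needed on all of $L$: Theorems~\ref{ThmModTilde} and~\ref{RepIsom} apply $\tilde{\fS} = \fS^{-1}$ to arbitrary $\uu(a,b)$, not only to commuting pairs.

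\textbf{What is missing} is an inversion result valid on the whole algebra. The paper imports it from \cite[Par.~4.4]{SZ}, where $\hat{\tilde{\fS}} = \hat{\fS}^{-1}$ is proved for the map $\hat{\fS}$ defined on all of $A$. It then transfers this to $L$ via Lemma~2.4: $\fS$ is conjugate to $\hat{\fS}$ via $\bar{\iota}$, and $\tilde{\fS}$ is conjugate to $\hat{\tilde{\fS}}$ via $\tilde{\bar{\iota}}$. Since $\bar{\iota}$ and $\tilde{\bar{\iota}}$ are proportional, the scalar cancels in the conjugation. A self-contained alternative would be to verify directly, from factorizability and the integral identities, that $\varphi(Q^1)\,\rho(S(Q^2)\tilde{Q}^1)\,\tilde{\rho}(S(\tilde{Q}^2)x) = \varphi(x)$ for all $\varphi \in A^*$ and $x \in A$, without passing through $\fS^2$.

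\textbf{Two smaller inaccuracies.} First, factorizability makes the integrals in $A$ two-sided, but not the integral $\rho$ on $A$. This fails, for instance, for $D(H)$ with $H$ not unimodular, since $D(H)^* \cong H^{\mathrm{op}} \ot H^*$ as algebras. Second, $u$ and $v$ differ by the grouplike element $uv^{-1}$, which is central only when $S^2 = \id$.
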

\begin{proof}
As in Paragraph~2.8, we use the notation~$\hat{\fS}$ for the map that was denoted by~$\fS$ in~\cite[Sec.~3, p.~410]{LMSS1} or~\cite[Par.~4.1, p.~35]{SZ}, and accordingly we use the notation~$\hat{\tilde{\fS}}$ for the analogous map arising from the alternative R-matrix~$\tilde{R}$. We then know from~\cite[Par.~4.4, p.~40]{SZ} that~$\hat{\tilde{\fS}} = \hat{\fS}^{-1}$.

According to Lemma~2.4, $\fS$ and~$\hat{\fS}$ are conjugate via~$\bar{\iota}$. Applying this lemma to the alternative R-matrix, we obtain that $\tilde{\fS}$ and~$\hat{\tilde{\fS}}$ are conjugate via the corresponding map~$\tilde{\bar{\iota}}$. But~$\bar{\iota}$ and~$\tilde{\bar{\iota}}$ are proportional, so $\tilde{\fS}$ and~$\hat{\tilde{\fS}}$ are also conjugate via~$\bar{\iota}$. This implies the assertion.
\end{proof}

It should be noted that the proposition above is not new; rather, it is a variant of~\cite[Thm.~6.10, p.~323]{Ly} (cf.~also~\cite[Thm.~4.4, p.~523]{LM}).

\subsection{The Drinfel'd double of a finite group} \label{DrinfDoub}
Important examples for factorizable Hopf algebras are the Drinfel'd doubles of finite-dimensional Hopf algebras (cf.~\cite[Def.~IX.4.1, p.~213]{Ka}). In the case where the Hopf algebra is the group ring~$H \deq K[G]$ of a finite group~$G$, we denote its Drinfel'd double by~$D(G)$, or briefly by~$D$. If we denote the dual basis element of~$a \in G$ by $\delta_a \in K^G \deq H^*$, then the Drinfel'd double has a basis consisting of the elements $\delta_{a} \ot b$ for~$a, b \in G$. It follows from~\cite[Lem.~IX.4.2, p.~214]{Ka} that the product of two of these basis elements is given by
\[(\delta_a \ot b)(\delta_c \ot d) = \delta_{a, b c b^{-1}} \, \delta_a \ot b d,\] 
where $\delta_{a, b c b^{-1}}$ is the Kronecker symbol. The unit element of~$D$ is $1_{D} = \varepsilon \ot e$, where~$e$ is the unit element of~$G$ and $\varepsilon = \sum_{a\in G} \delta_a$ is the unit element of~$K^G$. In particular, we have  
$(\varepsilon \ot a)(\delta_b \ot e) = \delta_{aba^{-1}} \ot a$ (cf.~\cite[Eq.~(4.11), p.~219]{Ka}). By~\cite[Eq.~(4.2), p.~214]{Ka}, we have for the coproduct of~$D$ that
\[\Delta_{D}(\delta_a \ot b) = \sum_{c \in G}(\delta_c \ot b) \ot (\delta_{ac^{-1}} \ot b),\]
while we have $\varepsilon_{D}(\delta_a \ot b) = \delta_{a,e}$ for the counit. The antipode is given by \[S_{D}(\delta_a \ot b) = \delta_{b^{-1} a^{-1} b} \ot b^{-1}\]
(cf.~\cite[Par.~2.2, p.~17]{SZ}).

The Drinfel'd double~$D$ is quasitriangular with respect to the \mbox{R-matrix} 
\[R \deq \sum_{a \in G} (\varepsilon \ot a) \ot (\delta_a \ot e) 
= \sum_{a,b \in G} (\delta_b \ot a) \ot (\delta_a \ot e)\]
(cf.~\cite[Eq.~(4.12), p.~220]{Ka}), and it is factorizable with respect to this R-matrix (cf.~\cite[Prop.~3.3, p.~28]{SZ}). The monodromy matrix from Paragraph~2.2 has therefore the form
\begin{align*}
Q = \sum_{a,b \in G} (\delta_b \ot a) \ot (\varepsilon \ot b) (\delta_a \ot e) 
= \sum_{a,b \in G} (\delta_b \ot a) \ot (\delta_{bab^{-1}} \ot b).                                                                                                                                                                                                                                                                                                                    \end{align*}
Associated with every R-matrix is its Drinfel'd element (cf.~\cite[Eq.~(4.1), p.~180]{Ka}), which in this case is given by~$u_D = \sum_{a \in G} \delta_a \ot a^{-1}$, while its inverse is given by~\mbox{$u_D^{-1} = \sum_{a \in G} \delta_a \ot a$}
(cf.~\cite[Par.~2.2, p.~18]{SZ}), so that 
\[S_D(u_D^{-1}) = \sum_{a \in G} \delta_{a^{-1} a^{-1} a} \ot a^{-1} = u_D^{-1}. \]
Since~$S_D$ is an involution, $u_D$ is central, so that the formulas given in~\cite[Par.~2.1, p.~17]{SZ} imply that we can use~$u_D^{-1}$ as a ribbon element in the sense of Paragraph~2.2, which we will do in the sequel.

It follows from~\cite[Par.~2.3, p.~18]{SZ} that the function~$\rho_D \in D^*$ defined by 
\[\rho_D(\delta_{a} \ot b) \deq \delta_{b,e}\]
is a two-sided integral on~$D$. In other words, if we set $A \deq D$, $v \deq u_D^{-1}$, and $\rho \deq \rho_D$, the hypotheses made in Paragraph~2.2 and Paragraph~2.4 are satisfied. We have $\rho_D(u_D) = \rho_D(u_D^{-1}) = 1$, so that~$\rho_D$ is ribbon-normalized in the sense recalled in Paragraph~\ref{AltRMat}. According to~\cite[Lem.~4.4, p.~39]{SZ}, this implies that
$(\rho_D \ot \rho_D)(Q) = 1$. As also explained there, these equations have the consequence that the action of the modular group~$\SL(2,\Z)$ on~$Z(D)$ considered in Paragraph~2.8 is not only projective, but even linear. As we saw in Paragraph~2.8, this action is conjugate to the action 
on~$\bar{C}(D) \subset L = D^*$, so that this action is also linear. 

The alternative R-matrix considered in Paragraph~\ref{AltRMat} is in this case given by
\[\tilde{R} = \sum_{a,b \in G} (\delta_a \ot e) \ot (\delta_b \ot a^{-1})\]
with associated monodromy matrix
$\tilde{Q} =
\sum_{a, b \in G} (\delta_{aba^{-1}} \ot  a) \ot (\delta_{a^{-1}} \ot  b^{-1})$. As pointed out in Paragraph~\ref{AltRMat}, we can choose the Drinfel'd element~$u_D$ as our ribbon element for the alternative R-matrix in this case, and as just mentioned,
the two-sided integral~$\rho_D$ is also ribbon-normalized for this ribbon element.

\subsection{The endomorphisms~$\fS$ and~$\fT$} \label{EndomCoend}
We have discussed in Paragraph~2.4 that the coend~$L = D^*$ is a module over~$D$ with respect to the left coadjoint action. As in~\cite[Par.~3.3, p.~27]{SZ}, we can use the vector space isomorphism
$D^* \cong H \ot H^*$ given by
\[H \ot H^* \rightarrow D^*,~h \ot \varphi \mapsto 
(\varphi' \ot h' \mapsto \varphi'(h) \varphi(h'))\]
to transport this action to~$H \ot H^*$. The basis of~$D$ described in Paragraph~\ref{DrinfDoub} has a dual basis whose elements we denote by~$\uu(a,b)$, so that
\[\uu(a,b)(\delta_c \ot d) = \delta_{a,c} \delta_{b,d}\]
for all $a,b,c,d \in G$. The coadjoint action is then given on these basis elements by the following formulas:
\begin{Proposition} \label{ActCoend} 
For elements $a,b,h \in G$, we have
\[(\varepsilon \ot h).\uu(a,b) = \uu(hah^{-1}, hbh^{-1}) \quad \text{and} \quad
(\delta_h \ot e).\uu(a,b) = \delta_{h,[b^{-1}, a]} \; \uu(a,b),\]
where $[b^{-1}, a] = b^{-1} a b a^{-1}$ denotes the commutator of~$b^{-1}$ and~$a$.
\end{Proposition}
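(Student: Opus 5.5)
The plan is to compute both formulas directly from the definition of the left coadjoint action of~$D$ on~$L = D^*$ from Paragraph~2.4, which I take to be
\[(x.\varphi)(y) = \varphi(S_D(x_{(1)})\, y\, x_{(2)})\]
for $x,y \in D$ and $\varphi \in D^*$. It suffices to let $x$ run through the two kinds of generators $\varepsilon \ot h$ and $\delta_h \ot e$, and $y$ through the basis elements $\delta_c \ot d$. The only inputs are the multiplication, coproduct and antipode formulas for~$D$ recalled in Paragraph~\ref{DrinfDoub}, together with the defining property $\uu(a,b)(\delta_c \ot d) = \delta_{a,c}\delta_{b,d}$ of the dual basis.

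For the first formula, summing the coproduct formula over all~$a$ gives
\[\Delta_D(\varepsilon \ot h) = (\varepsilon \ot h) \ot (\varepsilon \ot h),\]
so $\varepsilon \ot h$ is grouplike, and $S_D(\varepsilon \ot h) = \varepsilon \ot h^{-1}$. Using the product rule, I will compute
\[(\varepsilon \ot h^{-1})(\delta_c \ot d)(\varepsilon \ot h) = \delta_{h^{-1}ch} \ot h^{-1}dh .\]
Applying~$\uu(a,b)$ to this element gives $\delta_{a,h^{-1}ch}\,\delta_{b,h^{-1}dh} = \delta_{hah^{-1},c}\,\delta_{hbh^{-1},d}$. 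This is exactly $\uu(hah^{-1},hbh^{-1})(\delta_c \ot d)$, which proves the first formula.

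For the second formula, I use
\[\Delta_D(\delta_h \ot e) = \sum_{z} (\delta_z \ot e) \ot (\delta_{hz^{-1}} \ot e)
\qquad\text{and}\qquad S_D(\delta_z \ot e) = \delta_{z^{-1}} \ot e .\]
The product $(\delta_{z^{-1}} \ot e)(\delta_c \ot d)(\delta_{hz^{-1}} \ot e)$ is then evaluated in two steps. The left factor forces $c = z^{-1}$. The right factor contributes the Kronecker symbol $\delta_{c,\, d h z^{-1} d^{-1}}$. So only the summand $z = c^{-1}$ survives, and it equals
\[\delta_{c,\,dhcd^{-1}}\; \delta_c \ot d .\]
Evaluating~$\uu(a,b)$ on this element forces $c=a$ and $d=b$, which gives the factor $\delta_{a,\,bhab^{-1}}$. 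The condition $a = bhab^{-1}$ is equivalent to $h = b^{-1}aba^{-1} = [b^{-1},a]$. Hence
\[((\delta_h \ot e).\uu(a,b))(\delta_c \ot d) = \delta_{h,[b^{-1},a]}\,\uu(a,b)(\delta_c \ot d),\]
as claimed.

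Neither step is conceptually hard. The main obstacle is convention bookkeeping: I must make sure that the coadjoint action in Paragraph~2.4 is indeed $\varphi(S(x_{(1)})yx_{(2)})$ and not, say, $\varphi(x_{(2)}yS(x_{(1)}))$ or a variant with~$S^{-1}$. I must also keep track of the order of the Kronecker conditions in the product rule. If the convention in Paragraph~2.4 turns out to differ, I would redo the same two computations with that convention. The resulting formulas should then agree with the stated ones; this is a useful consistency check, since the first formula is just conjugation and the second must produce a commutator.
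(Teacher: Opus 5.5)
Your proposal is correct and is essentially the paper's proof. The paper uses exactly the convention $(x.\varphi)(y)=\varphi(S_D(x_{(1)})\,y\,x_{(2)})$ for the coadjoint action, so your convention worry does not arise. It then carries out the same two direct computations: in the second one the sum collapses to the single term $\delta_{c,dhcd^{-1}}\,\delta_c \ot d$ before evaluating against $\uu(a,b)$, just as you found.
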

\begin{proof}
From the definition of the coadjoint action in Paragraph~2.4, we get
\begin{align*}
&((\varepsilon \ot h).\uu(a,b))(\delta_c \ot d) =
\uu(a,b)(S_D(\varepsilon \ot h)(\delta_c \ot d)(\varepsilon \ot h)) \\
& \quad = \uu(a,b)((\varepsilon \ot h^{-1})(\delta_c \ot d h))
= \uu(a,b)(\delta_{h^{-1}ch} \ot h^{-1} d h)
= \delta_c(hah^{-1})\delta_{hbh^{-1}}(d)
\end{align*}
and 
\begin{align*}
&((\delta_h \ot e).\uu(a,b))(\delta_c \ot d) =
\sum_{k \in G} \uu(a,b) (S_D(\delta_{k} \ot e)(\delta_c \ot d)(\delta_{hk^{-1}} \ot e)) \\
&\qquad = \sum_{k \in G} \uu(a,b)
((\delta_{k^{-1}} \ot e)(\delta_c \ot d)(\delta_{hk^{-1}} \ot e)) 
= \uu(a,b) ((\delta_c \ot d)(\delta_{hc} \ot e)) \\
&\qquad = \uu(a,b) (\delta_{c,dhcd^{-1}} \delta_c \ot d)
= \delta_{h,b^{-1}aba^{-1}} \, \uu(a,b)(\delta_c \ot d),
\end{align*}
which implies the assertions.
\end{proof}

In Paragraph~\ref{AltRMat}, we have recalled the endomorphisms~$\fT$ and~$\fS$ of~$L$. Under the isomorphism~$L \cong H \ot H^*$ described above, they take the following explicit form:
\begin{samepage}
\begin{Proposition} \label{FormST}
\begin{enumerate}
\item 
$\fT(\uu(a,b)) = \uu(a,{a^{-1} b})$

\item 
$\fS(\uu(a,b)) = \uu(a b^{-1} a^{-1},a)$
\end{enumerate}
\end{Proposition}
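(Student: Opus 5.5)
The plan is to evaluate both sides on the basis $\delta_c \ot d$ of~$D$, using the explicit formulas for~$\fT$ and~$\fS$ from Proposition~2.3 together with the structure maps of~$D$ from Paragraph~\ref{DrinfDoub}. Here the ribbon element is $v = u_D^{-1} = \sum_{f \in G} \delta_f \ot f$, the monodromy matrix is $Q = \sum_{a,b} (\delta_b \ot a) \ot (\delta_{bab^{-1}} \ot b)$, and the integral is~$\rho_D$. Throughout, $\uu(a,b)$ is regarded as an element of $L = D^*$. The isomorphism $L \cong H \ot H^*$ only relabels basis vectors, so it plays no role in the computation.

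For~$\fT$, I use $\fT(\varphi)(x) = \varphi(xv)$, which appears in the proof of Proposition~\ref{PropAltRMat}. Multiplying out gives
\[(\delta_c \ot d)\, v = \sum_{f \in G} \delta_{c,dfd^{-1}}\, \delta_c \ot df.\]
Applying $\uu(a,b)$ forces $c = a$ and $df = b$. The condition $c = dfd^{-1}$ then reads $a = dfd^{-1}$, that is $f = d^{-1}ad$. Hence $df = ad$, and $ad = b$ gives $d = a^{-1}b$. Therefore $\fT(\uu(a,b))(\delta_c \ot d) = \delta_{c,a}\,\delta_{d,a^{-1}b}$, which is the first assertion.

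For~$\fS$, I would take the formula from Proposition~2.3, which with the ribbon-normalized integral has the shape
\[\fS(\varphi)(x) = \sum \rho_D\bigl(\text{(a product of $x$ with one leg of $Q$, possibly with $S_D$ applied)}\bigr)\, \varphi\bigl(\text{the other leg}\bigr).\]
I would insert the explicit form of~$Q$, and of~$S_D$ wherever it occurs. I would then use $\rho_D(\delta_c \ot d) = \delta_{d,e}$: this turns the $\rho_D$-factor into a Kronecker condition that fixes one summation index of~$Q$ in terms of~$c$ and~$d$. Evaluating $\varphi = \uu(a,b)$ on the remaining leg fixes the other index. This leaves a single term $\delta_{c,c_0}\,\delta_{d,d_0}$, and I expect $c_0 = ab^{-1}a^{-1}$ and $d_0 = a$.

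The main obstacle is bookkeeping in the $\fS$ computation. One must use exactly the conventions of Proposition~2.3: which tensor leg of~$Q$ is fed to~$\rho_D$, on which side~$x$ multiplies, and whether an antipode enters. A slip in any of these replaces $ab^{-1}a^{-1}$ by a conjugate or an inverse. As sanity checks I would verify that the resulting~$\fS$ is invertible, and that it satisfies $\fS^2(\uu(a,b)) = \uu(\,\cdot\,,\,\cdot\,)$ in a way consistent with~$\fS^2$ acting as a conjugation by the antipode. I would also check that $\fS$ and~$\fT$ commute with the action of $\varepsilon \ot h$ from Proposition~\ref{ActCoend}, since simultaneous conjugation of both arguments must be preserved; the proposed formula for~$\fS$ clearly passes this test.
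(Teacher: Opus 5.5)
Your computation for $\fT$ is correct and is the paper's own argument: $(\delta_c \ot d)\,u_D^{-1} = \sum_{f} \delta_{c,dfd^{-1}}\,\delta_c \ot df$, and the conditions $c=a$ and $df = cd = b$ give $d = a^{-1}b$.

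For $\fS$, however, there is a genuine gap. You never fix the formula for $\fS$ and never carry out the evaluation, so the second assertion is only predicted (``I expect $c_0 = ab^{-1}a^{-1}$ and $d_0 = a$''), not proved. You note yourself that the outcome depends on exactly the conventions you leave open.

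The sanity checks you propose would not catch a convention slip. Consider the map $\fS^{-1}$, $\uu(a,b) \mapsto \uu(b, b^{-1}a^{-1}b)$. By Lemma~\ref{LemAltRMat}, this is precisely what using $\tilde{R}$ instead of $R$ produces. It is invertible and even $D$-linear, yet it is not $\fS$.

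What is missing is the specific formula from the end of Paragraph~2.4, $\fS(\varphi)(x) = \varphi(Q^1)\,\rho_D(S_D(Q^2)\,x)$, together with the short computation it yields. Take $Q = \sum_{h,k}(\delta_h \ot k) \ot (\delta_{hkh^{-1}} \ot h)$. Evaluating $\uu(a,b)$ on the first leg fixes \emph{both} summation indices, $h=a$ and $k=b$; this differs from your expectation that each factor fixes one index. Using $S_D(\delta_{aba^{-1}} \ot a) = \delta_{b^{-1}} \ot a^{-1}$, one then gets
\[
\fS(\uu(a,b))(\delta_c \ot d) = \rho_D\bigl((\delta_{b^{-1}} \ot a^{-1})(\delta_c \ot d)\bigr) = \delta_{b^{-1},\,a^{-1}ca}\,\rho_D(\delta_{b^{-1}} \ot a^{-1}d) = \delta_{c,\,ab^{-1}a^{-1}}\,\delta_{d,a}.
\]
With this step written out, your plan coincides with the paper's proof.
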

\end{samepage}
\begin{proof}
From Proposition~2.3 in Paragraph~2.4, we obtain
\begin{align*}
\fT(\uu(a,b))(\delta_c \ot d) &= \uu(a,b)((\delta_c \ot d) u_D^{-1})
= \sum_{h \in G} \uu(a,b)((\delta_c \ot d)(\delta_{h} \ot h)) \\
&= \sum_{h \in G} \delta_{c,dhd^{-1}} \uu(a,b)(\delta_c \ot dh).
\end{align*}
Since $c = dhd^{-1}$ if and only if $dh = cd$, this means that
\begin{align*}
\fT(\uu(a,b))(\delta_c \ot d) &= \uu(a,b)(\delta_c \ot cd)
= \delta_c(a) \delta_b(cd) = \delta_c(a) \delta_{a^{-1} b}(d),
\end{align*}
which yields the first assertion. 

For the second assertion, we get from the last formula in Paragraph~2.4 that
\begin{align*}
&\fS(\uu(a,b))(\delta_c \ot d)
= \uu(a,b)(Q^1) \, \rho_D(S_D(Q^2) (\delta_c \ot d)) \\
&\qquad = \sum_{h,k \in G} \uu(a,b)(\delta_{h} \ot k) \,
\rho_D(S_D(\delta_{h k h^{-1}} \ot h)(\delta_c \ot d)) \\
&\qquad = \rho_D(S_D(\delta_{a b a^{-1}} \ot a)(\delta_c \ot d)) 
= \rho_D((\delta_{b^{-1}} \ot a^{-1})(\delta_c \ot d)) \\
&\qquad = \delta_{b^{-1}, a^{-1} c a} \, \rho_D(\delta_{b^{-1}} \ot a^{-1} d)
= \delta_{a b^{-1} a^{-1}, c} \, \delta_{a^{-1} d, e}
\end{align*}
as required.
\end{proof}

We have seen in Paragraph~\ref{AltRMat} that the corresponding endomorphisms for the alternative R-matrix~$\tilde{R}$ are just the inverses of the endomorphisms for the original R-matrix, so that we have $\tilde{\fT} = \fT^{-1}$ and~$\tilde{\fS} = \fS^{-1}$. Explicitly, these endomorphisms are given as follows:
\begin{Cor} \label{InvST}
\begin{enumerate}
\item
$\fT^{-1}(\uu(c, d)) = \uu(c, cd)$

\item
$\fS^{-1}(\uu(c, d)) = \uu(d, d^{-1} c^{-1} d)$
\end{enumerate}
\end{Cor}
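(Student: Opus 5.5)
Since Proposition~\ref{FormST} shows that $\fT$ and $\fS$ permute the basis $\{\uu(a,b) \mid a,b \in G\}$ of $L = D^*$, both are induced by bijections of $G \times G$. Their inverses are therefore induced by the inverse bijections. So for each of $\fT$ and $\fS$ I will take the proposed formula and check that composing it with the corresponding formula of Proposition~\ref{FormST}, in both orders, gives the identity on basis elements. This suffices because a linear map that permutes a basis is invertible, with inverse given by the inverse permutation.

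For the first statement, define $\fT'(\uu(c,d)) \deq \uu(c,cd)$. Then Proposition~\ref{FormST} gives
\[\fT(\fT'(\uu(c,d))) = \fT(\uu(c,cd)) = \uu(c, c^{-1}cd) = \uu(c,d),\]
and
\[\fT'(\fT(\uu(a,b))) = \fT'(\uu(a,a^{-1}b)) = \uu(a, a a^{-1} b) = \uu(a,b).\]
Hence $\fT' = \fT^{-1}$.

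For the second statement, define $\fS'(\uu(c,d)) \deq \uu(d, d^{-1}c^{-1}d)$. Then
\[\fS'(\fS(\uu(a,b))) = \fS'(\uu(ab^{-1}a^{-1}, a)) = \uu(a, a^{-1}(ab^{-1}a^{-1})^{-1}a).\]
Since $(ab^{-1}a^{-1})^{-1} = aba^{-1}$, the second entry is $a^{-1}aba^{-1}a = b$, so the result is $\uu(a,b)$. In the other order, put $a = d$ and $b = d^{-1}c^{-1}d$ in Proposition~\ref{FormST}. This gives
\[\fS(\fS'(\uu(c,d))) = \uu(d b^{-1} d^{-1}, d).\]
Here $b^{-1} = d^{-1}cd$, so $d b^{-1} d^{-1} = c$, and the result is $\uu(c,d)$.

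No step is a real obstacle. The only point needing care is the inversion $(ab^{-1}a^{-1})^{-1} = aba^{-1}$ and keeping the order of the factors straight in the group computations. Finally, by Proposition~\ref{PropAltRMat} and Lemma~\ref{LemAltRMat}, these formulas also give $\tilde{\fT}$ and $\tilde{\fS}$, although the corollary itself only concerns the inverses.
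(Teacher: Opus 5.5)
Your proof is correct and follows essentially the same route as the paper: you verify directly from Proposition~\ref{FormST} that the proposed maps invert $\fT$ and $\fS$ on basis elements. The only difference is that the paper solves $\fS(\uu(a,b)) = \uu(c,d)$ for $a,b$, which checks only $\fS^{-1} \circ \fS = \id_L$, and then appeals to the finite-dimensionality of~$L$; you check both compositions, so that appeal is not needed.
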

\begin{proof}
The first assertion is almost obvious. For the second assertion, suppose that
$c \deq a b^{-1} a^{-1}$ and $d \deq a$, so that
$\fS(\uu(a,b)) = \uu(c, d)$. We then have $c^{-1} = a b a^{-1}$ and consequently $b = d^{-1} c^{-1} d$, which gives the second assertion.
\end{proof}

We note that, written in terms of maps, the preceding computation shows that $\fS^{-1} \circ \fS = \id_L$, which is sufficient to establish the assertion in view of the finite-dimensionality of~$L$.

\subsection{Powers and commutators} \label{PowComm}
From Proposition~\ref{FormST}, we can compute frequently used powers of~$\fS$ with the help of commutators:
\begin{Proposition} \label{PowS}
For $a, b \in G$, we have
\begin{enumerate}
\item
$\fS^2(\uu(a,b)) = \uu([a, b^{-1}] a^{-1}, [a, b^{-1}] b^{-1})$

\item
$\fS^{-2}(\uu(c,d)) = \uu(c^{-1} [c, d^{-1}], d^{-1} [c, d^{-1}])$

\item
$\fS^4(\uu(a,b)) = (\varepsilon \ot [a, b^{-1}]). \uu(a,b)$

\item
$\fS^{-4}(\uu(c,d)) = (\varepsilon \ot [c, d^{-1}]^{-1}).\uu(c,d)$
\end{enumerate}
\end{Proposition}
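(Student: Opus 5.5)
The plan is to iterate the explicit formula $\fS(\uu(a,b)) = \uu(a b^{-1} a^{-1}, a)$ from Proposition~\ref{FormST} and then simplify using commutators. Throughout I use the convention $[x,y] = x y x^{-1} y^{-1}$, which is the one fixed in Proposition~\ref{ActCoend}. In particular, $[a,b^{-1}] = a b^{-1} a^{-1} b$.

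For the first assertion, I would apply Proposition~\ref{FormST} once to get $\uu(c,d)$ with $c = a b^{-1} a^{-1}$ and $d = a$. Applying it again gives $\uu(c d^{-1} c^{-1}, c)$. Expanding,
\[c d^{-1} c^{-1} = a b^{-1} a^{-1} \, a^{-1} \, a b a^{-1} = a b^{-1} a^{-1} b a^{-1} = [a,b^{-1}] a^{-1},\]
and similarly $c = a b^{-1} a^{-1} = [a,b^{-1}] b^{-1}$. This is exactly the claimed formula. For the second assertion, I would solve the first one for $(a,b)$, in the same way as in the proof of Corollary~\ref{InvST}. Set $c \deq [a,b^{-1}]a^{-1}$ and $d \deq [a,b^{-1}] b^{-1}$ and write $x \deq [a,b^{-1}]$. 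One checks directly that $[c,d^{-1}] = x$. Then $a = c^{-1} x$ and $b = d^{-1} x$, which is the stated expression. Alternatively, one could apply Corollary~\ref{InvST} twice and simplify.

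For the third assertion, I would apply the first assertion twice. With $x = [a,b^{-1}]$, the pair $(a,b)$ goes to $(p,q) \deq (x a^{-1}, x b^{-1})$. The key step, and essentially the only nontrivial computation, is to verify that the commutator is invariant, i.e.\ that $[p, q^{-1}] = x$. Expanding,
\[[p,q^{-1}] = x a^{-1} \, b x^{-1} \, a x^{-1} \, x b^{-1} = x \,(a^{-1} b x^{-1} a b^{-1}),\]
and inserting $x^{-1} = b^{-1} a b a^{-1}$ shows that the bracketed factor is trivial. Hence
\[\fS^4(\uu(a,b)) = \uu(x p^{-1}, x q^{-1}) = \uu(x a x^{-1}, x b x^{-1}).\]
By Proposition~\ref{ActCoend}, this equals $(\varepsilon \ot x).\uu(a,b)$.

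For the fourth assertion, I would invert the third. Suppose $\uu(c,d) = \fS^4(\uu(a,b)) = \uu(xax^{-1}, xbx^{-1})$. Commutators are compatible with conjugation, so $[c,d^{-1}] = x [a,b^{-1}] x^{-1} = x$. Therefore $a = x^{-1} c x$ and $b = x^{-1} d x$, which is $(\varepsilon \ot x^{-1}).\uu(c,d)$ by Proposition~\ref{ActCoend}. Since $L$ is finite-dimensional, it suffices, as in the remark after Corollary~\ref{InvST}, to check one composite. I expect the main obstacle to be only the bookkeeping in verifying $[p,q^{-1}] = x$.
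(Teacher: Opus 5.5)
Your proposal is correct and follows essentially the same route as the paper's proof. You iterate Proposition~\ref{FormST} to get $\fS^2$, invert it by solving for $(a,b)$ after checking $[c,d^{-1}]=[a,b^{-1}]$, and reuse that invariance to apply the $\fS^2$ formula twice. Finally, you invert $\fS^4$ using the compatibility of commutators with conjugation.
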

\begin{proof}
\begin{pflist}
\item
We have
\begin{align*}
\fS^2(\uu(a,b)) &= \fS(\uu(a b^{-1} a^{-1}, a))
= \uu((a b^{-1} a^{-1}) a^{-1} (a b^{-1} a^{-1})^{-1}, a b^{-1} a^{-1}) \\
&= \uu((a b^{-1} a^{-1} b) a^{-1}, (a b^{-1} a^{-1} b) b^{-1}).
\end{align*}
This proves the first assertion.

\item
To prove the second assertion, we use the abbreviation
$h \deq [a, b^{-1}] = a b^{-1} a^{-1} b$, so that $h^{-1} a b^{-1} a^{-1} b = e$, which implies $a^{-1} b h^{-1} a b^{-1} = e$. Therefore, if we define
$c \deq h a^{-1}$ and $d \deq h b^{-1}$, we get
\[ [c, d^{-1}] = (ha^{-1}) (b h^{-1}) (a h^{-1}) (hb^{-1})
= h a^{-1} b h^{-1} a b^{-1} = h\]
and thus $a = c^{-1} h = c^{-1} [c, d^{-1}]$ as well as $b = d^{-1} h = d^{-1} [c, d^{-1}]$, which proves the second assertion.

\item
Using the facts just established, we get
\[\fS^4(\uu(a,b)) = \fS^2(\uu(h a^{-1}, h b^{-1})) = \uu(h a h^{-1}, h b h^{-1}),\]
which yields the third assertion by Proposition~\ref{ActCoend}.

\item
The fourth assertion follows from a very similar argument: With the abbreviation
$h \deq [a, b^{-1}]$ as before, we set $c \deq h a h^{-1}$ and $d \deq h b h^{-1}$ to get
\begin{align*}
&[c, d^{-1}] = h [a, b^{-1}] h^{-1} = h h h^{-1} = h = [a, b^{-1}]
\end{align*}
and therefore
\begin{align*}
\fS^{-4}(\uu(c,d)) = \uu(a,b) = \uu(h^{-1} c h,h^{-1} d h)
= \uu([c, d^{-1}]^{-1} c [c, d^{-1}],[c, d^{-1}]^{-1} d [c, d^{-1}])
\end{align*}
as asserted.
\qedhere
\end{pflist}
\end{proof}

We give two additional proofs of the third assertion, which are less direct and more complicated, but on the other hand connect it to two different topics that we have discussed before.
\begin{parlist}
\item
For the second proof of the third assertion, we now proceed in steps. As discussed at the end of Paragraph~\ref{DrinfDoub}, we have $(\rho_D \ot \rho_D)(Q) = 1$. Therefore a formula stated after Lemma~2.4 in Paragraph~2.8 states here that
\[\hS^4(\delta_c \ot d) = S_D(u_{D (1)}) (\delta_c \ot d) u_{D (2)}.\]
We then get
\begin{align*}
&S_D(u_{D (1)}) (\delta_c \ot d) u_{D (2)}
= \sum_{h,k \in G} S_D(\delta_{h} \ot k^{-1}) (\delta_c \ot d) (\delta_{k h^{-1}} \ot k^{-1})  \\
&\qquad = \sum_{h,k \in G} \delta_{c, d k h^{-1} d^{-1}} (\delta_{k h^{-1} k^{-1}} \ot k) (\delta_{c} \ot d k^{-1}) \\
&\qquad = \sum_{h,k \in G} \delta_{c, d k h^{-1} d^{-1}} \delta_{k h^{-1} k^{-1}, k c k^{-1}}
(\delta_{k h^{-1} k^{-1}} \ot k d k^{-1}) \\
&\qquad = \sum_{k \in G} \delta_{c, d k c d^{-1}} (\delta_{k c k^{-1}} \ot k d k^{-1}).
\end{align*}
Now we have $c = d k c d^{-1}$ if and only if $k = d^{-1} c d c^{-1} = [c,d^{-1}]^{-1}$, so that
\begin{align*}
\hS^4(\delta_c \ot d)
&= \delta_{[c,d^{-1}]^{-1} c [c,d^{-1}]} \ot [c,d^{-1}]^{-1} d [c,d^{-1}].
\end{align*}

\item
Because~$S_D$ is an involution, the maps~$\iota$ and~$\bar{\iota}$ considered in Paragraph~2.8 are equal. It therefore follows from Lemma~2.4 there, combined with~\cite[Prop.~4.1, p.~35]{SZ}, that~$\fS = \hat{\fS}^*$, the transpose of~$\hat{\fS}$, so that we have
\begin{align*}
(\fS^4(\uu(a,b))(\delta_c \ot d) &= \uu(a,b)(\hat{\fS}^4(\delta_c \ot d)) \\
&= \uu(a,b)(\delta_{[c,d^{-1}]^{-1} c [c,d^{-1}]} \ot [c,d^{-1}]^{-1} d [c,d^{-1}]).
\end{align*}
This expression is equal to~$1$ if
$a = [c,d^{-1}]^{-1} c [c,d^{-1}]$ and $b = [c,d^{-1}]^{-1} d [c,d^{-1}]$, and equal to~$0$ otherwise. As seen in the last step of the proof of the preceding proposition, this condition is equivalent to the condition that
$c = [a, b^{-1}] a [a, b^{-1}]^{-1}$ and $d = [a, b^{-1}] b [a, b^{-1}]^{-1}$.
This implies that
\begin{align*}
(\fS^4(\uu(a,b))(\delta_c \ot d)
&= \uu([a, b^{-1}] a [a, b^{-1}]^{-1}, [a, b^{-1}] b [a, b^{-1}]^{-1})(\delta_{c} \ot d)\\
&= ((\varepsilon \ot [a, b^{-1}]). \uu(a,b))(\delta_{c} \ot d)
\end{align*}
by Proposition~\ref{ActCoend}. This completes the second proof of the formula for~$\fS^4$.

\item
We still give a third proof, which is computationally simpler, but uses more of the categorical framework developed previously. If~$X$ is a module over~$D$, this framework yields an action of the mapping class group~$\Gamma_{1,1}$ on the space $\Hom_\CC(X,L)$, where \mbox{$L=D^*$} is the coend with the structures described in Paragraph~\ref{EndomCoend}. As we saw at the end of Paragraph~1.12, we have~$\Gamma_{1,1} \cong B_3$, and as we have
\mbox{$\rho_D(u_D) = \rho_D(u_D^{-1}) = 1$}, the discussion in Paragraph~2.8 shows that this action is not only projective, but even linear. According to the 2-chain relation mentioned in Paragraph~1.12, the inverse Dehn twist~$\fd_1^{-1}$ represents the same mapping class as~$\fs_1^4$. According to Paragraph~2.6, the inverse Dehn twist~$\fd_1^{-1}$ acts by precomposition with $\theta_{X}^{-1}$ on $\Hom_\CC(X,L)$, which is the same as postcomposition with $\theta_{L}^{-1}$, as the twist is natural. Also from Paragraph~2.6, we know that~$\fs_1^4$
acts by postcomposition with $\fS^{4}$. By the Yoneda lemma also mentioned at the end of that paragraph, this implies that~$\fs_1^4 = \theta_{L}^{-1}$.

According to Paragraph~2.2, the twist~$\theta_L$ is given by acting with the ribbon
element~\mbox{$v = u_D^{-1}$}. Therefore, we have by Proposition~\ref{ActCoend} that
\begin{align*}
\fS^{4}(\uu(a,b)) &= \theta_L^{-1}(\uu(a,b)) = u_D.\uu(a,b)
= \sum_{c \in G}(\delta_{c} \ot c^{-1}).\uu(a,b) \\
&= \sum_{c \in G}(\delta_{c} \ot e).\uu(c^{-1} a c, c^{-1} b c)
= \sum_{c \in G} \delta_{c, [c^{-1} b^{-1} c, c^{-1} a c]} \uu(c^{-1} a c,c^{-1} b c)\\
&= \uu([a, b^{-1}] a [a, b^{-1}]^{-1}, [a, b^{-1}] b [a, b^{-1}]^{-1})
= (\varepsilon \ot [a, b^{-1}]).\uu(a,b),
\end{align*}
where we have used that
$[b^{-1}, a]^{-1} = [a, b^{-1}]$. This finishes our third proof of the formula
for~$\fS^{4}$.
\end{parlist}

\begin{Cor} \label{CorPowS}
If $[G,G] \subset Z(G)$, there is a unique homomorphism from~$\SL(2,\Z)$ to~$\GL(D^*)$ that maps~$\fs$ to~$\fS$ and~$\ft$ to~$\fT$.
\end{Cor}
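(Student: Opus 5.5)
The plan is to realize $\SL(2,\Z)$ as a quotient of the group $\Gamma_{1,1} \cong B_3$ and to use the action of $\Gamma_{1,1}$ on~$L = D^*$ that the framework already provides. Uniqueness is immediate, because~$\fs$ and~$\ft$ generate~$\SL(2,\Z)$. So the work lies in existence, i.e., in checking that~$\fS$ and~$\fT$ satisfy the defining relations of~$\SL(2,\Z)$ in~$\GL(D^*)$.

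\textbf{Step 1: the braid group relations.} As recalled in the third proof of Proposition~\ref{PowS}, the construction of Paragraph~2.6 gives an action of~$\Gamma_{1,1} \cong B_3$ on~$L$. The generators~$\fs_1$ and the Dehn twist act through~$\fS$ and~$\fT$, respectively. Since $\rho_D(u_D) = \rho_D(u_D^{-1}) = 1$ and $(\rho_D \ot \rho_D)(Q) = 1$, the discussion in Paragraph~2.8 shows that this action is linear and not merely projective. Hence~$\fS$ and~$\fT$ satisfy every relation of~$B_3$ on the nose.

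I would use the presentation of Paragraph~1.12 in which~$B_3$ is generated by the images of~$\fs$ and~$\ft$, subject to the braid-type relation and the statement that~$\fs^2$ is central. In that presentation, $\SL(2,\Z)$ is the quotient of~$B_3$ by the single additional relation $\fs^4 = 1$. By the 2-chain relation, $\fs^4$ is the (inverse) boundary Dehn twist, which generates the kernel of $\Gamma_{1,1} \to \Gamma_{1,0} \cong \SL(2,\Z)$.

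\textbf{Step 2: the extra relation.} It remains to show that $\fS^4 = \id_L$ under the hypothesis $[G,G] \subset Z(G)$. By Proposition~\ref{PowS}(3), we have
\[\fS^4(\uu(a,b)) = \uu(h a h^{-1}, h b h^{-1}) \qquad \text{with } h = [a,b^{-1}].\]
Since~$h$ is central by hypothesis, this equals~$\uu(a,b)$. Thus $\fS^4 = \id$ on a basis, and the homomorphism from~$B_3$ factors through~$\SL(2,\Z)$.

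\textbf{Main obstacle.} The hard step is matching conventions precisely: one has to confirm that, under the isomorphism $\Gamma_{1,1} \cong B_3$ and the chosen presentation of~$\SL(2,\Z)$, the generators $\fs, \ft$ really correspond to $\fS, \fT$, and that the kernel is exactly the normal closure of~$\fs^4$. If this bookkeeping turned out to be awkward, I would fall back on a direct verification with the explicit formulas of Proposition~\ref{FormST}. There I would check $(\fS\fT)^3 = \fS^2$ (or the appropriate variant of this relation) by tracking a basis element~$\uu(a,b)$ through the composite and comparing with Proposition~\ref{PowS}(1). This is a finite computation with words in~$a$ and~$b$, in which the centrality of commutators can be used if it is needed.
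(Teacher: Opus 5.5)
Your proposal is correct and follows essentially the same route as the paper. The paper uses the presentation of $\SL(2,\Z)$ from Paragraph~1.12 with the relations $\fs \ft \fs = \ft^{-1} \fs \ft^{-1}$ and $\fs^4 = 1$, takes the first relation from Paragraph~2.8 (via $\rho_D(u_D^{-1}) = 1$), and obtains the second from Proposition~\ref{PowS} together with the centrality of commutators. Your detour through $\Gamma_{1,1} \cong B_3$ is just a repackaging of that first step, so the convention-matching you flag as the main obstacle is already settled by that presentation, and the fallback computation is unnecessary.
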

\begin{proof}
As we saw in Paragraph~1.12, the group~$\SL(2,\Z)$ is generated by the matrices~$\fs$ and~$\ft$ with the defining relations $\fs \ft \fs = \ft^{-1} \fs \ft^{-1}$ and~$\fs^4=1$. Since~\mbox{$\rho_D(u_D^{-1}) = 1$}, we know from Paragraph~2.8 that the first relation is always satisfied. The second relation follows from our hypothesis in view of the formula for~$\fS^4$ in Proposition~\ref{PowS}.
\end{proof}

\subsection{The endomorphisms~$\fA$ and~$\fB$} \label{EndomAB}
It turns out that the endomorphisms~$\fS$ and~$\fT$ and their inverses are actually conjugate. In order to see that, we introduce two other endomorphisms~$\fA$ and~$\fB$:
\begin{Definition} \label{DefAB}
We define~$\fA \colon L \to L$ and~$\fB \colon L \to L$ as the $K$-linear maps that take the values
\[\fA(\uu(a,b)) \deq \uu(b^{-1} a b, b^{-1}) \qquad \text{and} \qquad
\fB(\uu(a,b)) \deq \uu(a^{-1}, a b a^{-1})\]
on the basis elements.
\end{Definition}

The basic properties of these maps are the following:
\begin{Proposition} \label{Conjug}
Both~$\fA$ and~$\fB$ are involutions. They satisfy
\begin{enumerate}
\item
$\fT \circ \fA = \fA \circ \fT^{-1}$

\item
$\fS \circ \fA = \fA \circ \fS^{-1}$

\item
$\fT \circ \fB = \fB \circ \fT^{-1}$

\item
$\fS \circ \fB = \fB \circ \fS^{-1}$
\end{enumerate}
Moreover, we have~$\fA \circ \fB = \fS^2$ and~$\fB \circ \fA = \fS^{-2}$.
\end{Proposition}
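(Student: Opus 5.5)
All maps involved permute the basis $\{\uu(a,b)\}_{a,b \in G}$ of $L$. It therefore suffices to check each identity on a basis element $\uu(a,b)$, using the explicit formulas for $\fT$, $\fS$ in Proposition~\ref{FormST}, for $\fT^{-1}$, $\fS^{-1}$ in Corollary~\ref{InvST}, and for $\fS^{\pm 2}$ in Proposition~\ref{PowS}. Each identity then reduces to an equality of pairs of group elements.

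First I show that $\fA$ and $\fB$ are involutions. For $\fA$, we get $\fA^2(\uu(a,b)) = \fA(\uu(b^{-1}ab, b^{-1}))$. Applying the definition with second entry $b^{-1}$ conjugates the first entry by $b$, giving $\uu(a,b)$. The computation for $\fB$ is symmetric: $\fB(\uu(a^{-1}, aba^{-1})) = \uu(a, a^{-1}(aba^{-1})a) = \uu(a,b)$.

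Next come the relations for $\fT$. We have
\[\fT(\fA(\uu(a,b))) = \uu(b^{-1}ab,\, b^{-1}a^{-1}b b^{-1}) = \uu(b^{-1}ab,\, b^{-1}a^{-1}),\]
while
\[\fA(\fT^{-1}(\uu(a,b))) = \fA(\uu(a,ab)) = \uu(b^{-1}a^{-1}\,a\,ab,\, b^{-1}a^{-1}) = \uu(b^{-1}ab,\, b^{-1}a^{-1}),\]
so the two sides agree. The $\fB$ analogue is checked the same way: both $\fT\fB$ and $\fB\fT^{-1}$ send $\uu(a,b)$ to $\uu(a^{-1}, a^2 b a^{-1})$.

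For the relations with $\fS$, I would first prove the last claim, $\fA \circ \fB = \fS^2$. We compute
\[\fA(\fB(\uu(a,b))) = \fA(\uu(a^{-1}, aba^{-1})) = \uu\bigl(ab^{-1}a^{-1}\,a^{-1}\,aba^{-1},\, ab^{-1}a^{-1}\bigr) = \uu\bigl(ab^{-1}a^{-1}ba^{-1},\, ab^{-1}a^{-1}\bigr).\]
This matches $\fS^2(\uu(a,b)) = \uu([a,b^{-1}]a^{-1},\, [a,b^{-1}]b^{-1})$ from Proposition~\ref{PowS}, since $[a,b^{-1}] = ab^{-1}a^{-1}b$. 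Since $\fA$ and $\fB$ are involutions, inverting gives $\fB \circ \fA = (\fA\circ\fB)^{-1} = \fS^{-2}$.

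For relation 2, I verify $\fS\fA = \fA\fS^{-1}$ directly. We have
\[\fS(\fA(\uu(a,b))) = \fS(\uu(b^{-1}ab, b^{-1})) = \uu(b^{-1}ab\,b\,b^{-1}a^{-1}b,\, b^{-1}ab) = \uu(b^{-1}aba^{-1}b,\, b^{-1}ab),\]
and on the other side
\[\fA(\fS^{-1}(\uu(a,b))) = \fA(\uu(b, b^{-1}a^{-1}b)) = \uu(b^{-1}ab\,b\,b^{-1}a^{-1}b,\, b^{-1}ab),\]
which is the same element. Relation 4 then follows without further computation. From $\fB = \fA\fS^2$ we get
\[\fS\fB = \fS\fA\fS^2 = \fA\fS^{-1}\fS^2 = \fA\fS = \fA\fS^2\fS^{-1} = \fB\fS^{-1}.\]

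The main obstacle is purely bookkeeping. One has to keep track of which entry gets conjugated, and of the inverse conventions in $\fS^{-1}$ and in the commutator $[a,b^{-1}]$. The identity $\fA\fB = \fS^2$ is the step most prone to sign and inverse errors, so I would double-check it against Proposition~\ref{PowS} item 1 with care.
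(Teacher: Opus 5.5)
Your proof is correct and follows the paper's route: every identity is checked on the basis elements $\uu(a,b)$ via Proposition~\ref{FormST}, Corollary~\ref{InvST}, and Proposition~\ref{PowS}, and your group-element computations agree with the paper's. The only deviation is that you derive relation~4 algebraically from relation~2 and $\fB = \fA \circ \fS^2$, whereas the paper verifies it directly (both sides send $\uu(a,b)$ to $\uu(b^{-1},a^{-1})$); either is fine.
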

\begin{proof}
\begin{parlist}
\item
The initial claim is almost obvious, as we have
\[\fA^2(\uu(a,b)) = \fA(\uu(b^{-1} a b, b^{-1})) = \uu(a,b)\]
and
$\fB^2(\uu(a,b)) = \uu(a^{-1}, a b a^{-1}) = \uu(a,b)$.

\item
From Proposition~\ref{FormST} and Corollary~\ref{InvST} above, we get that
\begin{align*}
\fT(\fA(\uu(a,b))) = \fT(\uu(b^{-1} a b, b^{-1})) =
\uu(b^{-1} a b, (b^{-1} a b)^{-1} b^{-1}) =
\uu(b^{-1} a b, b^{-1} a^{-1}),
\end{align*}
whereas
\begin{align*}
\fA(\fT^{-1}(\uu(a,b))) = \fA(\uu(a,ab))
= \uu((ab)^{-1} a (ab), (ab)^{-1})
= \uu(b^{-1} a b, b^{-1} a^{-1}).
\end{align*}
This proves the first equation.

\item
Similarly, we have for the second equation that
\begin{align*}
\fS(\fA(\uu(a,b))) &= \fS(\uu(b^{-1} a b, b^{-1}))
= \uu((b^{-1} a b) b (b^{-1} a b)^{-1}, b^{-1} a b)\\
&= \uu(b^{-1} a b a^{-1} b,b^{-1} a b)
\end{align*}
on the one hand and
\begin{align*}
\fA(\fS^{-1}(\uu(a,b))) &= \fA(\uu(b, b^{-1} a^{-1} b)) =
\uu((b^{-1} a^{-1} b)^{-1} b (b^{-1} a^{-1} b), b^{-1} a b) \\
&= \uu(b^{-1} a b a^{-1} b, b^{-1} a b)
\end{align*}
on the other hand. As these two expressions are equal, the second equation holds.

\item
For the third equation, we have
\begin{align*}
\fT(\fB(\uu(a,b))) = \fT(\uu(a^{-1}, a b a^{-1})) =
\uu(a^{-1}, a^2 b a^{-1})
\end{align*}
on the one hand and
$\fB(\fT^{-1}(\uu(a,b))) = \fB(\uu(a,ab)) = \uu(a^{-1}, a^2 b a^{-1})$
on the other hand, establishing the equation.

\item
Finally, for the fourth equation, we have
\begin{align*}
\fS(\fB(\uu(a,b))) = \fS(\uu(a^{-1}, a b a^{-1})) = \uu(b^{-1},a^{-1})
\end{align*}
on the one hand and
$\fB(\fS^{-1}(\uu(a,b))) = \fB(\uu(b,b^{-1} a^{-1} b)) = \uu(b^{-1},a^{-1})$
on the other hand, establishing the last equation.

\item
For the last two assertions, we have
\begin{align*}
(\fA \circ \fB)(\uu(a,b)) &= \fA(\uu(a^{-1}, a b a^{-1})) =
\uu((a b^{-1} a^{-1}) a^{-1} (a b a^{-1}), a b^{-1} a^{-1}) \\
&=\uu((a b^{-1} a^{-1} b) a^{-1}, (a b^{-1} a^{-1} b) b^{-1}) =
\uu([a,b^{-1}] a^{-1}, [a,b^{-1}] b^{-1}).
\end{align*}
In view of Proposition~\ref{PowS}, this shows that
$\fA \circ \fB = \fS^2$. Inverting this equation and using that~$\fA$ and~$\fB$ are involutions, we get that~$\fB \circ \fA = \fS^{-2}$.
\qedhere
\end{parlist}
\end{proof}

We note that, in the case where the elements~$a$ and~$b$ commute, the formulas for~$\tilde{\fT}$,~$\tilde{\fS}$, and~$\fA$ given in Corollary~\ref{InvST} and Definition~\ref{DefAB} reduce to
\[\tilde{\fT}(\uu(a,b)) = \uu(a,ab) \qquad
\tilde{\fS}(\uu(a,b)) = \uu(b,a^{-1}) \qquad
\fA(\uu(a,b)) = \uu(a,b^{-1}) \]
The matrices~$\ft$,~$\fs$, and~$\fa$ introduced in Paragraph~1.11 and~\cite[Par.~1.1, p.~7]{SZ} can therefore be considered as variants of these maps (cf.~also~\cite[Sec.~5, Eq.~(5.2), p.~8545]{KSSB}).

The endomorphisms considered here are compatible with commutators as follows:
\begin{Proposition} \label{TSABComm}
\begin{enumerate}
\item
If $\fT(\uu(a,b)) = \uu(c,d)$, then $[a,b^{-1}] = [c,d^{-1}]$.

\item
If $\fS(\uu(a,b)) = \uu(c,d)$, then $[a,b^{-1}] = [c,d^{-1}]$.

\item
If $\fA(\uu(a,b)) = \uu(c,d)$, then $[a,b^{-1}] = [c,d^{-1}]^{-1}$.

\item
If $\fB(\uu(a,b)) = \uu(c,d)$, then $[a,b^{-1}] = [c,d^{-1}]^{-1}$.
\end{enumerate}
\end{Proposition}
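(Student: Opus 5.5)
The proof is a direct computation from the explicit formulas in Proposition~\ref{FormST} and Definition~\ref{DefAB}. In each of the four cases I substitute the given expressions for $c$ and $d$ in terms of $a$ and $b$ into $[c,d^{-1}] = c d^{-1} c^{-1} d$ and simplify in the free group on $a,b$. No case distinctions are needed, since every map sends basis elements to basis elements.

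For $\fT$ we have $c = a$ and $d = a^{-1}b$, hence $d^{-1} = b^{-1}a$. Then
\[ [c,d^{-1}] = a (b^{-1} a) a^{-1} (a^{-1} b) = a b^{-1} a^{-1} b = [a,b^{-1}]. \]
For $\fS$ we have $c = a b^{-1} a^{-1}$ and $d = a$. Then
\[ [c,d^{-1}] = (a b^{-1} a^{-1}) a^{-1} (a b a^{-1}) a = a b^{-1} a^{-1} b, \]
which again equals $[a,b^{-1}]$.

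For $\fA$ we have $c = b^{-1} a b$ and $d = b^{-1}$, so $d^{-1} = b$. Then
\[ [c,d^{-1}] = (b^{-1} a b)\, b\, (b^{-1} a^{-1} b)\, b^{-1} = b^{-1} a b a^{-1}. \]
Its inverse is $a b^{-1} a^{-1} b = [a,b^{-1}]$, as claimed. For $\fB$ we have $c = a^{-1}$ and $d = a b a^{-1}$, so $d^{-1} = a b^{-1} a^{-1}$. Then
\[ [c,d^{-1}] = a^{-1} (a b^{-1} a^{-1})\, a\, (a b a^{-1}) = b^{-1} a b a^{-1}, \]
whose inverse is once more $[a,b^{-1}]$.

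There is no real obstacle. The only care needed is to invert $d$ correctly and to keep the order of factors in the commutator convention $[x,y] = x y x^{-1} y^{-1}$ fixed throughout.

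As a cross-check, the $\fS$ case agrees with Proposition~\ref{PowS}, where $\fS^4$ acts by conjugation with $[a,b^{-1}]$. Alternatively, the $\fA$ and $\fB$ cases could be derived from the $\fS$ case via $\fA\circ\fB = \fS^2$ from Proposition~\ref{Conjug}, but the direct computation is shorter.
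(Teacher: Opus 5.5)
Your computation is correct and is exactly the paper's proof: you substitute the explicit values of $c$ and $d$ from Proposition~\ref{FormST} and Definition~\ref{DefAB} into $c d^{-1} c^{-1} d$ and simplify, arriving at the same intermediate expressions in all four cases. One small caveat concerns your closing aside. The identity $\fA\circ\fB = \fS^2$, together with the $\fS$ case, only shows that $\fA$ inverts commutators if and only if $\fB$ does, so it could replace one of the last two computations but not both.
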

\begin{proof}
\begin{pflist}
\item
For the first assertion, we have
\[[c,d^{-1}] = [a, b^{-1} a] = a (b^{-1} a) a^{-1} (a^{-1} b) = a b^{-1} a^{-1} b
= [a,b^{-1}].\]

\item
For the second assertion, we have
\[[c,d^{-1}] = [a b^{-1} a^{-1}, a^{-1}] = (a b^{-1} a^{-1}) a^{-1} (a b a^{-1}) a
= a b^{-1} a^{-1} b = [a,b^{-1}].\]

\item
For the third assertion, we have
\[[c,d^{-1}] = [b^{-1} a b, b] = (b^{-1} a b) b (b^{-1} a^{-1} b) b^{-1}  =
b^{-1} a b a^{-1} = [a,b^{-1}]^{-1}.\]

\item
For the fourth assertion, we have
\[[c,d^{-1}] = [a^{-1}, a b^{-1} a^{-1}] = a^{-1} (a b^{-1} a^{-1}) a (a b a^{-1})
=  b^{-1} a b a^{-1}  = [a,b^{-1}]^{-1}\]
as asserted. \qedhere
\end{pflist}
\end{proof}

The above result can be interpreted in another way. First, we note that~$\fT$ is $D$-linear by construction and therefore satisfies
\[\fT((\delta_{h} \ot e).u(a,b)) = (\delta_{h} \ot e).\fT(u(a,b)).\]
If $\fT(\uu(a,b)) = \uu(c,d)$, this implies by Proposition~\ref{ActCoend} that
\[\delta_{h,[b^{-1},a]} \; \fT(u(a,b)) = \delta_{h,[d^{-1},c]} \; u(c,d),\]
which yields the first assertion in Proposition~\ref{TSABComm} above. The second assertion for~$\fS$ can be shown similarly.

In contrast, $\fA$ and~$\fB$ are not $D$-linear in general:
\begin{Cor} \label{ABLin}
$\fA$ and~$\fB$ are $H$-linear. Moreover, we have for $a,b,h \in G$ that
\begin{enumerate}
\item
$\fA((\delta_{h} \ot e).u(a,b)) = (\delta_{h^{-1}} \ot e).\fA(u(a,b))$

\item
$\fB((\delta_{h} \ot e).u(a,b)) = (\delta_{h^{-1}} \ot e).\fB(u(a,b))$
\end{enumerate}
\end{Cor}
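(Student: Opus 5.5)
The statement splits into two parts: $H$-linearity of $\fA$ and $\fB$, where $H = K[G]$ is embedded in~$D$ via $h \mapsto \varepsilon \ot h$, and the twisted behaviour under the elements $\delta_h \ot e$. I would verify both directly on the basis elements~$\uu(a,b)$, using the explicit formulas for the coadjoint action in Proposition~\ref{ActCoend} together with Definition~\ref{DefAB}. The second part could alternatively be read off from Proposition~\ref{TSABComm}, in the same way that the first assertion of that proposition was reinterpreted for~$\fT$ just above.

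\textbf{$H$-linearity.} For $h \in G$, Proposition~\ref{ActCoend} gives
\[\fA((\varepsilon \ot h).\uu(a,b)) = \fA(\uu(hah^{-1}, hbh^{-1})) = \uu(hb^{-1}abh^{-1}, hb^{-1}h^{-1}),\]
since $(hbh^{-1})^{-1}(hah^{-1})(hbh^{-1}) = hb^{-1}abh^{-1}$. On the other hand,
\[(\varepsilon \ot h).\fA(\uu(a,b)) = (\varepsilon \ot h).\uu(b^{-1}ab, b^{-1}) = \uu(hb^{-1}abh^{-1}, hb^{-1}h^{-1}),\]
so the two sides agree. The computation for~$\fB$ is identical in shape: both sides equal $\uu(ha^{-1}h^{-1}, haba^{-1}h^{-1})$. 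In other words, the formulas defining $\fA$ and $\fB$ are words in $a,b$, so they commute with simultaneous conjugation. Since the elements $\varepsilon \ot h$ span~$H$ inside~$D$, this gives $H$-linearity.

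\textbf{Twisted relations.} Write $\fA(\uu(a,b)) = \uu(c,d)$. By Proposition~\ref{ActCoend}, the left-hand side of the first relation is $\delta_{h,[b^{-1},a]}\,\uu(c,d)$, and the right-hand side is $\delta_{h^{-1},[d^{-1},c]}\,\uu(c,d)$. So it suffices to show $[d^{-1},c] = [b^{-1},a]^{-1}$. Using $[x,y]^{-1} = [y,x]$, this is equivalent to $[c,d^{-1}] = [a,b^{-1}]^{-1}$, which is exactly Proposition~\ref{TSABComm}(3). The relation for~$\fB$ follows in the same way from Proposition~\ref{TSABComm}(4).

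The only delicate point is bookkeeping of the commutator conventions. Proposition~\ref{ActCoend} uses $[b^{-1},a]$, while Proposition~\ref{TSABComm} is phrased with $[a,b^{-1}]$, so I would double-check the inversion identity $[b^{-1},a]^{-1} = [a,b^{-1}]$, as the paper itself does at the end of the third proof for~$\fS^4$. The final remark that $\fA$ and $\fB$ are generally not $D$-linear then follows: whenever $[a,b^{-1}]$ is not an involution, $h \neq h^{-1}$ for $h = [b^{-1},a]$, so the twisted relation differs from $D$-linearity.
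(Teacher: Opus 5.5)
Your proposal is correct and follows the paper's proof: $H$-linearity by direct computation on the basis elements via Proposition~\ref{ActCoend}, and the twisted relations for $\delta_h \ot e$ deduced from parts (3) and (4) of Proposition~\ref{TSABComm}, exactly as in the paper's remark on~$\fT$. Your commutator bookkeeping, in particular $[d^{-1},c]=[b^{-1},a]^{-1} \Leftrightarrow [c,d^{-1}]=[a,b^{-1}]^{-1}$, also checks out.
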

\begin{proof}
By Proposition~\ref{ActCoend}, we have
\begin{align*}
\fA((\varepsilon \ot h&).u(a,b)) = \fA(u(h a h^{-1}, h b h^{-1}))
= u((h b^{-1} h^{-1}) (h a h^{-1}) (h b h^{-1}), h b^{-1} h^{-1}) \\
&= u(h b^{-1} a b h^{-1}, h b^{-1} h^{-1})
= (\varepsilon \ot h).u(b^{-1} a b, b^{-1})
= (\varepsilon \ot h).\fA(u(a,b))
\end{align*}
and the verification for~$\fB$ is similar. The two equations involving $\delta_{h} \ot e$
follow from Proposition~\ref{TSABComm} by a similar computation as the one that we carried out for~$\fT$ directly before this corollary.
\end{proof}

\subsection{The endomorphism~$\fN$} \label{EndomN}
In Paragraph~\ref{AltRMat}, we have also recalled the endomorphism~$\fN$ of~$L \ot L$. The isomorphism~$L \cong H \ot H^*$ described in Paragraph~\ref{EndomCoend} leads to a corresponding isomorphism between~$L \ot L$ and~$H \ot H^* \ot H \ot H^*$, and we proceed now to describe the explicit form of~$\fN$ as an endomorphism of~$H \ot H^* \ot H \ot H^*$. This description becomes clearer if we introduce the following auxiliary map:
\begin{Definition}
We define~$\n \colon H \ot H^* \ot H \ot H^* \to H$ as the $K$-linear map that takes the value
\[\n(\uu(a,b) \ot \uu(a',b')) \deq a'^{-1} b^{-1} a b \]
on the basis elements.
\end{Definition}

With the help of the function~$\n$, we can express the endomorphism~$\fN$ as follows:
\begin{Proposition} \label{FormN}
Suppose that $n \deq \n(\uu(a,b) \ot \uu(a',b'))$. Then we have
\[\fN(\uu(a,b) \ot \uu(a',b')) = \uu(a, b n^{-1}) \ot \uu(n a' n^{-1}, n b').\]
\end{Proposition}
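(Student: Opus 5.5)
The approach is a direct computation, parallel to the proof of Proposition~\ref{FormST}. The starting point is the formula for~$\fN$ from Proposition~2.3, the same one used in the proof of Proposition~\ref{PropAltRMat}:
\[\fN(\varphi \ot \psi)(x \ot y) = \varphi(x v_{(1)})\, \psi(S_D(v_{(2)})\, y).\]
We apply it with $v = u_D^{-1} = \sum_{h \in G} \delta_h \ot h$, take $\varphi = \uu(a,b)$ and $\psi = \uu(a',b')$, and evaluate on a basis element $(\delta_c \ot d) \ot (\delta_{c'} \ot d')$ of $D \ot D$. The claim then amounts to showing that this value is $1$ exactly when $c = a$, $d = bn^{-1}$, $c' = na'n^{-1}$ and $d' = nb'$, and $0$ otherwise.

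First expand the coproduct of the ribbon element using the formula in Paragraph~\ref{DrinfDoub}:
\[\Delta_D(u_D^{-1}) = \sum_{h,k \in G} (\delta_k \ot h) \ot (\delta_{hk^{-1}} \ot h).\]
Next compute the two factors with the multiplication rule and the antipode formula.

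The first factor is $(\delta_c \ot d)(\delta_k \ot h) = \delta_{c,dkd^{-1}}\, \delta_c \ot dh$.

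For the second factor, $S_D(\delta_{hk^{-1}} \ot h) = \delta_{h^{-1}k} \ot h^{-1}$, and multiplying by $\delta_{c'} \ot d'$ gives $\delta_{h^{-1}k,\, h^{-1}c'h}\, \delta_{h^{-1}k} \ot h^{-1}d'$.

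Pairing with $\uu(a,b)$ and $\uu(a',b')$ turns the whole expression into a sum over $h,k$ of Kronecker deltas. These impose the conditions
\[c = a, \quad dh = b, \quad k = d^{-1}ad, \quad h^{-1}k = a', \quad k = c'h, \quad h^{-1}d' = b'.\]

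The main step is solving this system, and I expect it to be the only point needing care. From $d = bh^{-1}$ we get $k = h b^{-1} a b h^{-1}$. Combining this with $k = ha'$ forces $b^{-1}ab\,h^{-1} = a'$, that is, $h = a'^{-1}b^{-1}ab = n$. So the sum collapses to the single term $h = n$, $k = na'$. The remaining conditions then read $d = bn^{-1}$, $c' = kh^{-1} = na'n^{-1}$ and $d' = nb'$, which is exactly the evaluation of $\uu(a, bn^{-1}) \ot \uu(na'n^{-1}, nb')$ on the chosen basis element.

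Some bookkeeping is needed to check that no other pairs $(h,k)$ contribute and that each delta is used precisely once. If the convention for~$\fN$ in Proposition~2.3 turns out to involve $v_{(2)}$ and $S_D$ on the other side, the same computation goes through with the roles adjusted, and I would re-derive the constraint system accordingly.
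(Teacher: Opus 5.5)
Your proposal is correct and is essentially the paper's proof: evaluate $\fN(\uu(a,b)\ot\uu(a',b'))$ on $(\delta_c\ot d)\ot(\delta_{c'}\ot d')$ via Proposition~2.3 with $\Delta_D(u_D^{-1})$, reduce to a system of Kronecker-delta conditions with exactly one contributing summand, and solve it. The only difference is the order of elimination: the paper first solves for $d=a^{-1}ba'$, while you first solve for the summation index $h=n$. These agree, since $bn^{-1}=a^{-1}ba'$.
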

\begin{proof}
From Proposition~2.3, we get
\begin{align*}
&\fN(\uu(a,b) \ot \uu(a',b'))((\delta_c \ot d) \ot (\delta_{c'} \ot d')) \\
&= \uu(a,b)((\delta_c \ot d) u_{D (1)}^{-1}) \;
\uu(a',b')(S_D(u_{D (2)}^{-1})(\delta_{c'} \ot d')) \\
&= \sum_{h,k \in G} \uu(a,b)((\delta_c \ot d) (\delta_{h} \ot k)) \;
\uu(a',b')(S_D(\delta_{k h^{-1}} \ot k)(\delta_{c'} \ot d'))  \\
&= \sum_{h,k \in G} \delta_{c,dhd^{-1}} \uu(a,b)(\delta_c \ot dk) \;
\uu(a',b')((\delta_{k^{-1} h} \ot k^{-1})(\delta_{c'} \ot d')).
\end{align*}
From the first factor in this expression, we see that only one term in this sum is nonzero, namely the summand for $h=d^{-1}cd$ and $k=d^{-1}b$, for which $k^{-1} h = b^{-1} c d$. Our expression is therefore equal to
\begin{align*}
&\uu(a,b)(\delta_c \ot b) \;
\uu(a',b')((\delta_{b^{-1} c d} \ot b^{-1} d)(\delta_{c'} \ot d')) \\
&\qquad = \delta_{c,a} \delta_{b^{-1} c d, b^{-1} d c' d^{-1} b}\;
\uu(a',b') (\delta_{b^{-1} c d} \ot b^{-1} d d')
= \delta_{c,a} \delta_{c d, d c' d^{-1} b} \delta_{a', b^{-1} c d} \delta_{b', b^{-1} d d'}.
\end{align*}
This expression is equal to~$1$ if and only if 
\[c = a, \qquad c d = d c' d^{-1} b, \qquad a' = b^{-1} c d, \qquad b' = b^{-1} d d' \]
and equal to~$0$ otherwise. Replacing~$c$ by~$a$, this condition takes the equivalent form
$c = a$, $a d = d c' d^{-1} b$, $a' = b^{-1} a d$, and $b' = b^{-1} d d'$. Solving for~$d$ in the third equation, it reads $d = a^{-1} b a'$, and if we substitute this form of~$d$ into the other equations, we obtain the equivalent condition
\[c = a, \qquad b a' = (a^{-1} b a') c' (a^{-1} b a')^{-1} b, \qquad d = a^{-1} b a', \qquad 
b' = b^{-1} (a^{-1} b a') d' \]
or alternatively 
\[c = a, \qquad a'^{-1} b^{-1} a b a' b^{-1} a^{-1} b a' = c', \qquad d = a^{-1} b a', \qquad 
a'^{-1} b^{-1} a b b' = d'. \]
Expressed in terms of~$n$, these equations read
\[c = a, \qquad n a' n^{-1} = c', \qquad d = b n^{-1}, \qquad n b' = d', \]
which establishes our assertion.
\end{proof}

We note that, in the abelian case, our assertion reduces to the formula
\[\fN(\uu(a,b) \ot \uu(a',b')) =
\uu(a,b n^{-1}) \ot \uu(a',n b') =
\uu(a,a^{-1} b a') \ot \uu(a', a'^{-1} b' a). \]

It turns out that the function~$\n$ is actually invariant under~$\fN$:
\begin{Lemma} \label{InvarN}
$\n \circ \fN = \n$
\end{Lemma}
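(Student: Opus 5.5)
Since both $\n$ and $\fN$ are defined on the basis elements $\uu(a,b) \ot \uu(a',b')$, and $\fN$ sends each basis element to another basis element by Proposition~\ref{FormN}, it is enough to check the identity $\n(\fN(\uu(a,b) \ot \uu(a',b'))) = \n(\uu(a,b) \ot \uu(a',b'))$ on each such basis element. The statement then follows by $K$-linearity.

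Concretely, I fix $a,b,a',b' \in G$ and set $n \deq a'^{-1} b^{-1} a b$. By Proposition~\ref{FormN}, the image is $\uu(a, b n^{-1}) \ot \uu(n a' n^{-1}, n b')$. By the definition of $\n$, its value on this element is
\[(n a' n^{-1})^{-1} (b n^{-1})^{-1} a (b n^{-1}) = n a'^{-1} n^{-1} \cdot n b^{-1} a b n^{-1} = n a'^{-1} b^{-1} a b n^{-1}.\]
Now $a'^{-1} b^{-1} a b = n$, so this expression equals $n n n^{-1} = n$. This is exactly $\n(\uu(a,b) \ot \uu(a',b'))$, as required.

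The only point needing care is the bookkeeping of which tensor factor enters $\n$: by definition $\n$ uses the first argument $a'$ of the second factor and both arguments of the first factor, while the second argument $b'$ of the second factor plays no role. The rest is a short group-theoretic cancellation, so I do not expect any real obstacle.
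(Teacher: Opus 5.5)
Your proof is correct and is essentially the paper's own argument. You apply Proposition~\ref{FormN}, evaluate $\nu$ on $\uu(a, bn^{-1}) \ot \uu(na'n^{-1}, nb')$ to get $n\, a'^{-1} b^{-1} a b\, n^{-1}$, and cancel using $a'^{-1} b^{-1} a b = n$.
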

\begin{proof}
If we set again
$n \deq \n(\uu(a,b) \ot \uu(a',b')) = a'^{-1} b^{-1} a b$
and
\[c \deq a \qquad d \deq b n^{-1} \qquad c' \deq n a' n^{-1} \qquad d' \deq n b', \]
we have by Proposition~\ref{FormN} that
$\fN(\uu(a,b) \ot \uu(a',b')) = \uu(c,d) \ot \uu(c',d')$. Therefore we get
\begin{align*}
(\n \circ \fN)(\uu(a,b) \ot \uu(a',b')) &= c'^{-1} d^{-1} c d
= (n a'^{-1} n^{-1}) (n b^{-1}) a (b n^{-1}) \\
&= n (a'^{-1} b^{-1} a b) n^{-1} = n
\end{align*}
as asserted.
\end{proof}

We have seen in Paragraph~\ref{AltRMat} that the endomorphism corresponding to~$\fN$ for the alternative R-matrix~$\tilde{R}$ is just~$\tilde{\fN} = \fN^{-1}$. With the help of the preceding lemma, this endomorphism is easy to compute:
\begin{Cor} \label{InvN}
Suppose that $n \deq \n(\uu(c,d) \ot \uu(c',d')) = c'^{-1} d^{-1} c d$. Then we have
\[\fN^{-1}(\uu(c,d) \ot \uu(c',d')) =
\uu(c,d n) \ot \uu(n^{-1} c' n, n^{-1} d').\]
\end{Cor}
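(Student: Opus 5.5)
The plan is to verify directly that the proposed map is a left inverse of~$\fN$ on basis elements. Since~$\fN$ permutes basis elements and~$L \ot L$ is finite-dimensional, this suffices, just as in the remark after Corollary~\ref{InvST}. Equivalently, I will take a basis element~$\uu(a,b) \ot \uu(a',b')$, set $\uu(c,d) \ot \uu(c',d') \deq \fN(\uu(a,b) \ot \uu(a',b'))$, and express~$a,b,a',b'$ in terms of~$c,d,c',d'$.

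The key step is Lemma~\ref{InvarN}. With $n \deq \n(\uu(a,b) \ot \uu(a',b'))$, Proposition~\ref{FormN} gives
\[c = a, \qquad d = b n^{-1}, \qquad c' = n a' n^{-1}, \qquad d' = n b'.\]
Lemma~\ref{InvarN} then says that $\n(\uu(c,d) \ot \uu(c',d')) = n$ as well. So the element~$n$ appearing in the statement, which is computed from~$c,d,c',d'$, coincides with the one computed from~$a,b,a',b'$. This is the only nonformal point.

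Solving the four equations for the original entries gives
\[a = c, \qquad b = d n, \qquad a' = n^{-1} c' n, \qquad b' = n^{-1} d'.\]
Hence $\uu(c, dn) \ot \uu(n^{-1} c' n, n^{-1} d') = \uu(a,b) \ot \uu(a',b')$, where $n = c'^{-1} d^{-1} c d$. The map defined by the formula in the statement therefore sends $\fN(x)$ back to~$x$ for every basis element~$x$. So it equals~$\fN^{-1}$, since~$\fN$ is invertible by Proposition~\ref{PropAltRMat}; alternatively, a left inverse in finite dimension is automatically a two-sided inverse. There is no real obstacle beyond correctly invoking the invariance of~$\n$.
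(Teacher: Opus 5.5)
Your proposal is correct and matches the paper's proof: both use Lemma~\ref{InvarN} to see that the $n$ computed from $\uu(c,d) \ot \uu(c',d')$ equals the one computed from its preimage, and then solve the equations of Proposition~\ref{FormN} for $a$, $b$, $a'$, $b'$. The only difference is framing. The paper picks a preimage of a given basis element because $\fN$ permutes the basis, while you show that the formula is a left inverse and then invoke invertibility of $\fN$; these amount to the same argument.
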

\begin{proof}
Since~$\fN$ permutes the basis elements, there exist~$a,b,a',b' \in G$ such that
$\fN(\uu(a,b) \ot \uu(a',b')) = \uu(c,d) \ot \uu(c',d')$. In view of the preceding lemma and Proposition~\ref{FormN}, this means that
$\uu(c,d) \ot \uu(c',d') = \uu(a, b n^{-1}) \ot \uu(n a' n^{-1}, n b')$. The assertion then follows by solving this equation for~$a$, $b$, $a'$, and~$b'$.
\end{proof}

We note that, in the case where the elements~$c$, $d$, $c'$, and~$d'$ commute, the formula for~$\tilde{\fN}$ given above reduces to
\[\tilde{\fN}(\uu(c,d) \ot \uu(c',d')) =
\uu(c, c d c'^{-1}) \ot \uu(c', c^{-1} c' d').\]
In just the same way as it was discussed in Paragraph~\ref{EndomAB} for the other endomorphisms, the matrix~$\fn$ introduced in Paragraph~1.11 can be considered as a variant of this map.

The analogue of Proposition~\ref{TSABComm} for~$\fN$ is the following statement:
\begin{Proposition} \label{NComm}
Suppose that
\[\fN(\uu(a,b) \ot \uu(a',b')) = \uu(c,d) \ot \uu(c',d').\]
Then we have $[a,b^{-1}] [a',b'^{-1}] = [c,d^{-1}] [c',d'^{-1}]$.
\end{Proposition}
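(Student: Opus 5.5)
Two routes are available. The direct one substitutes the explicit formula from Proposition~\ref{FormN} and simplifies. The conceptual one uses $D$-linearity of $\fN$, in the same way as the remark after Proposition~\ref{TSABComm}. I would give the direct computation as the proof and mention the conceptual one as an explanation.

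For the direct computation, I set $n \deq \n(\uu(a,b) \ot \uu(a',b')) = a'^{-1} b^{-1} a b$. By Proposition~\ref{FormN}, we have $c = a$, $d = b n^{-1}$, $c' = n a' n^{-1}$ and $d' = n b'$. Then
\[[c,d^{-1}] = a (n b^{-1}) a^{-1} (b n^{-1}), \qquad [c',d'^{-1}] = (n a' n^{-1})(b'^{-1} n^{-1})(n a'^{-1} n^{-1})(n b') = n a' n^{-1} b'^{-1} a'^{-1} b'.\]
Multiplying these gives
\[a n b^{-1} a^{-1} b n^{-1} \cdot n a' n^{-1} b'^{-1} a'^{-1} b' = a n b^{-1} a^{-1} b a' n^{-1} [a',b'^{-1}].\]
The key observation is that $b^{-1} a^{-1} b a' = (a'^{-1} b^{-1} a b)^{-1} = n^{-1}$. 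The product therefore collapses to $a n n^{-1} n^{-1} [a',b'^{-1}]$, which is $a n^{-1} [a',b'^{-1}]$. Since $a n^{-1} = a b^{-1} a^{-1} b a' = [a,b^{-1}] a'$, this seems to leave a stray $a'$.

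So I must recheck this step carefully, and it is the main obstacle: the grouping of the factors. Keeping the $a'$ in the right place, the product reads
\[a n b^{-1} a^{-1} b a' n^{-1} b'^{-1} a'^{-1} b'.\]
Using $b^{-1} a^{-1} b a' = n^{-1}$, this becomes $a n n^{-1} n^{-1} b'^{-1} a'^{-1} b' = a n^{-1} b'^{-1} a'^{-1} b'$. Substituting $n^{-1} = b^{-1} a^{-1} b a'$ then gives
\[a b^{-1} a^{-1} b \, a' b'^{-1} a'^{-1} b' = [a,b^{-1}][a',b'^{-1}],\]
as claimed. The only real difficulty is this bookkeeping of group words, where one must avoid misplacing a factor.

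For the conceptual explanation, $\fN$ is a morphism of $D$-modules $L \ot L \to L \ot L$. The element $\delta_h \ot e$ acts on $L \ot L$ via the coproduct $\sum_k (\delta_k \ot e) \ot (\delta_{hk^{-1}} \ot e)$. By Proposition~\ref{ActCoend}, it acts on $\uu(a,b) \ot \uu(a',b')$ by the scalar $\delta_{h,[b^{-1},a][b'^{-1},a']}$. Arguing as after Proposition~\ref{TSABComm}, this gives $[b^{-1},a][b'^{-1},a'] = [d^{-1},c][d'^{-1},c']$. Inverting both sides yields $[a',b'^{-1}][a,b^{-1}] = [c',d'^{-1}][c,d^{-1}]$, which is the reversed order. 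The two versions are compatible because the direct computation above already establishes the order stated in the proposition; the difference reflects the coproduct convention, and I would check it against the ordering in $\Delta_D$ rather than rely on it.
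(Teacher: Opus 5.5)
Your direct computation is correct and is exactly the paper's proof: substitute $c=a$, $d=bn^{-1}$, $c'=na'n^{-1}$, $d'=nb'$ from Proposition~\ref{FormN}, multiply out, and collapse the word using $b^{-1}a^{-1}ba'=n^{-1}$. Your side remark on the $D$-linearity route multiplies in the wrong order, though: since $k=[b^{-1},a]$ and $hk^{-1}=[b'^{-1},a']$, the scalar is $\delta_{h,[b'^{-1},a'][b^{-1},a]}$, and inverting it gives exactly the stated order (this is the formula the paper quotes after its proof), whereas the ``reversed'' identity you derived is false in general (a counterexample exists already for $G=S_4$), so you should drop the claim that the two versions are compatible.
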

\begin{proof}
If we set
$n \deq \n(\uu(a,b) \ot \uu(a',b')) = a'^{-1} b^{-1} a b$,
we know from Proposition~\ref{FormN} that
$c = a$ and  $d = b n^{-1}$ and also that $c' = n a' n^{-1}$ and $d' = n b'$.
Therefore we have
\begin{align*}
[c,d^{-1}] [c',d'^{-1}] &= (c d^{-1} c^{-1} d) (c' d'^{-1} c'^{-1} d') \\
&= a (n b^{-1}) a^{-1} (b n^{-1}) (n a' n^{-1}) (b'^{-1} n^{-1}) (n a'^{-1} n^{-1}) (n b')
\\
&= a n (b^{-1} a^{-1} b a') n^{-1} b'^{-1} a'^{-1} b'
= a n n^{-1} n^{-1} b'^{-1} a'^{-1} b' \\
&= a n^{-1} b'^{-1} a'^{-1} b' = a (b^{-1} a^{-1} b a') b'^{-1} a'^{-1} b'
= [a,b^{-1}] [a',b'^{-1}]
\end{align*}
as asserted.
\end{proof}

As in the case of the other maps discussed at the end of Paragraph~\ref{EndomAB}, the preceding Proposition~\ref{NComm} can also be derived in a different way: By construction, $\fN$ is \mbox{$D$-linear}, so that we have
\[\fN((\delta_h \ot e).(\uu(a,b) \ot \uu(a',b'))) =
(\delta_h \ot e).\fN(\uu(a,b) \ot \uu(a',b')).\]
But as we will show in the proof of Lemma~\ref{LemBasis} below, we have
\[(\delta_h \ot e).(\uu(a,b) \ot \uu(a',b')) =
\delta_{[a,b^{-1}] [a',b'^{-1}], h^{-1}} \uu(a,b) \ot \uu(a',b').\]
This immediately yields the assertion.

\subsection{The natural transformation~$\fC$} \label{NatTrC}
We now introduce an important new structure element related to the R-matrix:
\begin{Definition} \label{DefC}
For an $H$-module~$V$, we define $\fC_V \colon L \ot V \to L \ot V$ as the linear map
\[\fC_V(\psi \ot v) = \sum_{h \in G} (\delta_h \ot e).\psi \ot h.v\]
\end{Definition}
In view of Proposition~\ref{ActCoend}, we then have in particular that
\[\fC_V(\uu(a, b) \ot v) = \uu(a, b) \ot [b^{-1}, a].v\]
for $a,b \in G$ and~$v \in V$, so that
$\fC_V^{-1}(\uu(a, b) \ot v) = \uu(a, b) \ot [a, b^{-1}].v$. The definition applies in particular to a $D$-module, because every $D$-module~$V$ becomes an \mbox{$H$-module} by restriction to~$H$, i.e., via $h.v \deq (\varepsilon \ot h).v$.

The first property of~$\fC_V$ is the following:
\begin{Lemma} \label{LinC}
$\fC_V$ is $H$-linear.
\end{Lemma}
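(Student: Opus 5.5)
The plan is to check $H$-linearity of $\fC_V$ directly on elements of the form $\uu(a,b) \ot v$ with $a,b \in G$ and $v \in V$. These span $L \ot V$, because the $\uu(a,b)$ form a basis of $L$. Since $H = K[G]$, it suffices to verify the identity
\[\fC_V(h.(\uu(a,b) \ot v)) = h.\fC_V(\uu(a,b) \ot v)\]
for group elements $h \in G$. Here $H$ acts on $L \ot V$ through its coproduct, so $h \in G$ acts diagonally:
\[h.(\psi \ot v) = (\varepsilon \ot h).\psi \ot h.v.\]
This is compatible with restricting the $D$-action to $H$, because
\[\Delta_D(\varepsilon \ot h) = \sum_{c \in G} (\delta_c \ot h) \ot (\varepsilon \ot h) \cdot \ldots,\]
which collapses to $(\varepsilon \ot h) \ot (\varepsilon \ot h)$ after summing over $c$ in the formula from Paragraph~\ref{DrinfDoub}.

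The first step computes the left-hand side. By Proposition~\ref{ActCoend},
\[h.(\uu(a,b) \ot v) = \uu(hah^{-1}, hbh^{-1}) \ot h.v.\]
The explicit formula for $\fC_V$ stated after Definition~\ref{DefC} then gives
\[\fC_V(h.(\uu(a,b) \ot v)) = \uu(hah^{-1}, hbh^{-1}) \ot [hb^{-1}h^{-1}, hah^{-1}]\, h.v.\]

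The second step uses that conjugation by $h$ is a group automorphism, so
\[[hb^{-1}h^{-1}, hah^{-1}] = h [b^{-1},a] h^{-1}.\]
The second tensor factor therefore becomes
\[h [b^{-1},a] h^{-1} h . v = h.([b^{-1},a].v).\]

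The third step computes the right-hand side. Applying the explicit formula first and then Proposition~\ref{ActCoend} gives
\[h.\fC_V(\uu(a,b) \ot v) = h.(\uu(a,b) \ot [b^{-1},a].v) = \uu(hah^{-1}, hbh^{-1}) \ot h.([b^{-1},a].v).\]
This agrees with the result of the second step, which proves the identity.

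There is no real obstacle in this argument. The only point needing care is the one settled at the start: the $H$-module structure on $L \ot V$ must be the diagonal one coming from the coproduct of $H$, which is the restriction of the $D$-module structure on the tensor product.

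If a basis-free argument is preferred, one can instead compute from Definition~\ref{DefC} for general $\psi \in L$:
\[\fC_V(h.(\psi \ot v)) = \sum_{k \in G} (\delta_k \ot e)(\varepsilon \ot h).\psi \ot kh.v.\]
Using the relation
\[(\delta_k \ot e)(\varepsilon \ot h) = (\varepsilon \ot h)(\delta_{h^{-1}kh} \ot e),\]
which follows from the multiplication rule in $D$, and then reindexing the sum by $k \mapsto h k h^{-1}$, turns this expression into $h.\fC_V(\psi \ot v)$.
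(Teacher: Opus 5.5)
Your proof is correct and is essentially the paper's: both evaluate $\fC_V$ on $h.(\uu(a,b)\ot v) = \uu(hah^{-1},hbh^{-1})\ot h.v$ using Proposition~\ref{ActCoend} and the formula after Definition~\ref{DefC}, and conclude from $[hb^{-1}h^{-1},hah^{-1}] = h[b^{-1},a]h^{-1}$. Your basis-free variant via $(\delta_k \ot e)(\varepsilon \ot h) = (\varepsilon \ot h)(\delta_{h^{-1}kh} \ot e)$ is also valid; your displayed formula for $\Delta_D(\varepsilon\ot h)$ is garbled, but the conclusion $\Delta_D(\varepsilon\ot h) = (\varepsilon\ot h)\ot(\varepsilon\ot h)$ is right, and in any case all you need is that $H$ acts diagonally on $L\ot V$.
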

\begin{proof}
For $a,b,c \in G$, we have by Proposition~\ref{ActCoend} that
\begin{align*}
\fC_V(c.(\uu(a, b) \ot v)) &= \fC_V(\uu(cac^{-1},cbc^{-1}) \ot c.v) \\
&= \uu(cac^{-1},cbc^{-1}) \ot [(cbc^{-1})^{-1}, cac^{-1}]c.v \\
&= \uu(cac^{-1},cbc^{-1}) \ot (c[b^{-1}, a] c^{-1})c.v \\
&= \uu(cac^{-1},cbc^{-1}) \ot c[b^{-1}, a].v
= c.\fC_V(\uu(a, b) \ot v)
\end{align*}
as required.
\end{proof}
If~$f \colon V \to W$ is an $H$-linear map between two \mbox{$H$-modules}, we clearly have
\[(\id_L \ot f) \circ \fC_V = \fC_W \circ(\id_L \ot f),\]
so that~$\fC$ is a natural transformation from a certain functor to itself, namely the functor that assigns to an $H$-module~$V$ the $H$-module~$L \ot V$. We record the following special case of this fact:
\begin{Cor} \label{CommC}
For two $H$-modules~$V$ and~$W$, we have
\[\fC_{V \ot L \ot W} \circ (\id_{L \ot V} \ot \fC_W) =
(\id_{L \ot V} \ot \fC_W) \circ \fC_{V \ot L \ot W}.\]
\end{Cor}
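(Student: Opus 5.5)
The identity is a special case of the naturality of $\fC$ just recorded, so I would deduce it from that rather than compute it directly. The relevant map is $f \deq \fC_W \colon L \ot W \to L \ot W$, which is $H$-linear by Lemma~\ref{LinC}. Here $L \ot W$ is an $H$-module via the diagonal action, i.e., the action of the group elements $h \in G$ on both tensor factors, since $H = K[G]$ is a group ring. I would then tensor on the left with the identity of the $H$-module~$V$.

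The first step is to check that $\id_V \ot \fC_W \colon V \ot L \ot W \to V \ot L \ot W$ is $H$-linear. A group element acts diagonally, $h.(v \ot x) = h.v \ot h.x$ for $x \in L \ot W$, and $\fC_W$ commutes with $h$ on $L \ot W$. Hence
\[(\id_V \ot \fC_W)(h.v \ot h.x) = h.v \ot h.\fC_W(x) = h.\bigl((\id_V \ot \fC_W)(v \ot x)\bigr).\]
Extending linearly gives $H$-linearity.

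The second step applies the naturality statement before Corollary~\ref{CommC} to the $H$-linear map $f = \id_V \ot \fC_W$ between the $H$-modules $V \ot L \ot W$ and itself. This yields
\[(\id_L \ot \id_V \ot \fC_W) \circ \fC_{V \ot L \ot W} = \fC_{V \ot L \ot W} \circ (\id_L \ot \id_V \ot \fC_W).\]
Since $\id_L \ot \id_V \ot \fC_W = \id_{L \ot V} \ot \fC_W$, this is exactly the assertion.

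For completeness I would also verify the naturality claim itself, which the paper calls clear. From Definition~\ref{DefC},
\[((\id_L \ot f) \circ \fC_V)(\psi \ot v) = \sum_h (\delta_h \ot e).\psi \ot f(h.v) = \sum_h (\delta_h \ot e).\psi \ot h.f(v) = \fC_W(\psi \ot f(v)).\]
I do not expect any real obstacle. The only point needing care is that the $H$-module structure on $V \ot L \ot W$ is the diagonal one, with $L$ restricted to $H$. This is the structure needed for $\fC_{V \ot L \ot W}$ to be defined and for the $H$-linearity of $\fC_W$ to transfer.
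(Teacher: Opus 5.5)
Your proposal is correct and follows the paper's proof: the paper likewise obtains the identity directly from the naturality of~$\fC$ applied to $f = \id_V \ot \fC_W$. Your extra checks, that $\id_V \ot \fC_W$ is $H$-linear via Lemma~\ref{LinC} and the diagonal action, and the one-line verification of naturality, fill in exactly the details the paper leaves implicit.
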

\begin{proof}
This is the naturality of~$\fC$ just discussed in the case where
$f= \id_{V} \ot \fC_W$.\qedhere
\end{proof}

The natural transformation~$\fC$ also satisfies commutation relations with endomorphisms of~$L$:
\begin{Proposition} \label{PropertC}
Suppose that~$V$ is an $H$-module. Then we have
\begin{enumerate}
\item
$(\fT \ot \id_V) \circ \fC_V = \fC_V \circ (\fT \ot \id_V)$

\item
$(\fS \ot \id_V) \circ \fC_V = \fC_V \circ (\fS \ot \id_V)$

\item
$(\fA \ot \id_V) \circ \fC_V^{-1} = \fC_V \circ (\fA \ot \id_V)$

\item
$(\fB \ot \id_V) \circ \fC_V^{-1} = \fC_V \circ (\fB \ot \id_V)$
\end{enumerate}
\end{Proposition}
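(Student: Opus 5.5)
The proof reduces to checking each identity on the basis elements $\uu(a,b) \ot v$ with $a,b \in G$ and $v \in V$, which span $L \ot V$. The key input is the explicit formula $\fC_V(\uu(a,b) \ot v) = \uu(a,b) \ot [b^{-1},a].v$ together with its inverse $\fC_V^{-1}(\uu(a,b) \ot v) = \uu(a,b) \ot [a,b^{-1}].v$, both recorded after Definition~\ref{DefC}. Each of the maps $\fT$, $\fS$, $\fA$, $\fB$ sends a basis element $\uu(a,b)$ to a single basis element $\uu(c,d)$. So both sides of each identity send $\uu(a,b) \ot v$ to $\uu(c,d) \ot g.v$ for some group element $g$. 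It therefore suffices to compare the two group elements acting on $v$.

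For the first two assertions, write $\fT(\uu(a,b)) = \uu(c,d)$.
\begin{itemize}
\item The left-hand side gives $\uu(c,d) \ot [b^{-1},a].v$.
\item The right-hand side gives $\uu(c,d) \ot [d^{-1},c].v$.
\end{itemize}
Proposition~\ref{TSABComm} gives $[a,b^{-1}] = [c,d^{-1}]$, and inverting yields $[b^{-1},a] = [d^{-1},c]$, since $[x,y]^{-1} = [y,x]$. The same argument, using the second part of Proposition~\ref{TSABComm}, handles $\fS$.

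For the third and fourth assertions, write $\fA(\uu(a,b)) = \uu(c,d)$.
\begin{itemize}
\item The left-hand side, with $\fC_V^{-1}$ applied first, gives $\uu(c,d) \ot [a,b^{-1}].v$.
\item The right-hand side gives $\uu(c,d) \ot [d^{-1},c].v = \uu(c,d) \ot [c,d^{-1}]^{-1}.v$.
\end{itemize}
The third part of Proposition~\ref{TSABComm} states precisely that $[a,b^{-1}] = [c,d^{-1}]^{-1}$, so the two sides agree. The case of $\fB$ is identical, using the fourth part of that proposition.

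There is no real obstacle here. The only point needing care is the orientation of the commutators, $[b^{-1},a]$ versus $[a,b^{-1}]$, in the formulas for $\fC_V$ and $\fC_V^{-1}$. This orientation is exactly what forces $\fC_V^{-1}$ on the left in the cases $\fA$ and $\fB$, matching the inversion of the commutator in Proposition~\ref{TSABComm}. For the first two parts one could alternatively argue via the $D$-linearity of $\fT$ and $\fS$, as in the remark after Proposition~\ref{TSABComm}. The direct basis computation is simpler and covers all four cases uniformly.
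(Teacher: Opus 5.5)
Your proof is correct and follows the paper's argument: both evaluate the two sides on basis tensors $\uu(a,b)\ot v$, use the explicit formulas for $\fC_V^{\pm 1}$, and reduce each identity to the corresponding commutator relation in Proposition~\ref{TSABComm}. The only cosmetic difference is that the paper writes out just the $\fA$ case, in the equivalent form $\fC_V^{-1}\circ(\fA\ot\id_V) = (\fA\ot\id_V)\circ\fC_V$, whereas you check the stated forms directly for all four maps.
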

\begin{proof}
This is a direct consequence of Proposition~\ref{TSABComm}. We show only the third statement; the proof of the other three is very similar. If
$\fA(u(a,b)) = u(c,d)$, we know from Proposition~\ref{TSABComm} that
$[c,d^{-1}] = [a,b^{-1}]^{-1} = [b^{-1},a]$. Therefore we get
\begin{align*}
(\fC_V^{-1} \circ (&\fA \ot \id_V))(\uu(a, b) \ot v) = \fC_V^{-1}(\uu(c,d) \ot v)
= \uu(c,d) \ot [c,d^{-1}].v \\
&= \uu(c,d) \ot [b^{-1},a]v = (\fA \ot \id_V)(\uu(a,b) \ot [b^{-1},a].v) \\
&= ((\fA \ot \id_V) \circ \fC_V)(\uu(a, b) \ot v)
\end{align*}
as asserted.
\end{proof}
We note that, in the case $V=L$, we also have
\[(\id_L \ot \fA) \circ \fC_L = \fC_L \circ(\id_L \ot \fA)\]
and similarly for~$\fT$,~$\fS$, and~$\fB$. This is a consequence of the naturality of~$\fC$ discussed above, as~$\fT$,~$\fS$,~$\fA$, and~$\fB$ are $H$-linear, as pointed out at the end of Paragraph~\ref{EndomAB}.

With the help of the natural transformation~$\fC$, we can now show that~$\fN$ and~$\fN^{-1}$ are conjugate. We will derive this fact from a more general formula that shows that
$\fN \ot \id_V$ and~$\fN^{-1} \ot \id_V$ are conjugate:
\begin{Proposition} \label{PropertN}
For any $H$-module~$V$, we have
\begin{align*}
&(\fB \ot \fB \ot \id_V)  \circ \fC_{L \ot V}^{-1} \circ (\id_L \ot \fC_V^{-1})
\circ (\fN \ot \id_V) \\
& \quad =
(\fN^{-1} \ot \id_V) \circ (\fB \ot \fB \ot \id_V)  \circ \fC_{L \ot V}^{-1}
\circ (\id_L \ot \fC_V^{-1}).
\end{align*}
\end{Proposition}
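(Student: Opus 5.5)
The plan is to verify the identity on the basis elements $\uu(a,b) \ot \uu(a',b') \ot v$ with $a,b,a',b' \in G$ and $v \in V$. All maps involved send such elements to elements of the same form: $\fN$ and~$\fB$ permute the basis of~$L$ or~$L \ot L$, and the $\fC$'s only act on the remaining tensor factors by group elements. So both sides can be computed explicitly.

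First I will record the common right-hand factor
\[\Phi \deq (\fB \ot \fB \ot \id_V) \circ \fC_{L \ot V}^{-1} \circ (\id_L \ot \fC_V^{-1})\]
on a basis element. Put $k \deq [a,b^{-1}]$ and $k' \deq [a',b'^{-1}]$. By the formula for~$\fC_V^{-1}$ after Definition~\ref{DefC}, the map $\id_L \ot \fC_V^{-1}$ yields $\uu(a,b) \ot \uu(a',b') \ot k'.v$. The map $\fC_{L \ot V}^{-1}$ then acts on $L \ot V$ by~$k$ via the diagonal $H$-action, using Proposition~\ref{ActCoend} on the $L$-factor. This gives
\[\uu(a,b) \ot \uu(ka'k^{-1}, kb'k^{-1}) \ot kk'.v .\]
Finally $\fB \ot \fB$ is applied to the first two factors using Definition~\ref{DefAB}.

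The key observation is that the $V$-component of $\Phi(\uu(a,b) \ot \uu(a',b') \ot v)$ is $[a,b^{-1}][a',b'^{-1}].v$. This depends only on the product of the two commutators, and by Proposition~\ref{NComm} that product is invariant under~$\fN$. Hence on the left-hand side, where $\Phi$ is applied after $\fN \ot \id_V$, the $V$-component is again $kk'.v$. On the right-hand side, $\fN^{-1} \ot \id_V$ does not touch~$V$, so the $V$-components agree. The statement therefore reduces to an identity of permutations of the basis of $L \ot L$:
\[(\fB \ot \fB) \circ \gamma \circ \fN = \fN^{-1} \circ (\fB \ot \fB) \circ \gamma, \qquad
\gamma(\uu(a,b) \ot \uu(a',b')) \deq \uu(a,b) \ot \uu(ka'k^{-1}, kb'k^{-1}),\]
with $k = [a,b^{-1}]$ computed from the first factor. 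This identity does not involve~$V$ at all.

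To prove this reduced identity, I will compute both sides explicitly.

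For the left side, start from $\fN(\uu(a,b) \ot \uu(a',b')) = \uu(a, bn^{-1}) \ot \uu(na'n^{-1}, nb')$ with $n = a'^{-1}b^{-1}ab$ (Proposition~\ref{FormN}). Then conjugate the second factor by $[a, nb^{-1}]$ and apply~$\fB$ to each factor.

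For the right side, first compute $(\fB \ot \fB)\gamma$. The first factor becomes $\uu(a^{-1}, aba^{-1})$, and the second becomes $\fB$ of the conjugated pair. Then apply Corollary~\ref{InvN}, computing its~$n$ from the new elements.

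The main obstacle is this group-theoretic bookkeeping: simplifying $[a, nb^{-1}]$ and the new value of~$\nu$ and checking that all four components coincide. I would first simplify $[a,nb^{-1}]$ directly in terms of $a,b,a'$. I would also use the $H$-linearity of~$\fB$ (Corollary~\ref{ABLin}) to move the conjugations past~$\fB$ when comparing the two sides. As a sanity check I would verify the abelian case, where $\gamma$ is trivial and $\fB(\uu(a,b)) = \uu(a^{-1},b)$, against the explicit abelian formulas for $\fN$ and $\fN^{-1}$ given after Proposition~\ref{FormN} and Corollary~\ref{InvN}.
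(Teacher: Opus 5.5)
Your proposal is correct and follows essentially the same route as the paper. Both verify the identity on basis elements $\uu(a,b)\ot\uu(a',b')\ot v$ using Proposition~\ref{FormN}, Corollary~\ref{InvN}, and Proposition~\ref{NComm} to control the $V$-component. Your only repackaging is that you split off the $V$-factor first, reducing to the case $V=K$, whereas the paper does this inline. The bookkeeping you defer closes as you anticipate: on the left side the key simplification is $[a,nb^{-1}]\,n = aa'^{-1}$, and on the right side the new value of~$\nu$ is $[a,b^{-1}]\,a'a^{-1}$.
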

\begin{proof}
\begin{pflist}
\item
If we set $n \deq \nu(\uu(a,b) \ot \uu(a',b')) = a'^{-1} b^{-1} a b$, we have
\begin{align*}
&(\fC_{L \ot V}^{-1} \circ (\id_L \ot \fC_V^{-1}) \circ (\fN \ot \id_V))
(\uu(a,b) \ot \uu(a',b') \ot v) = \\
&(\fC_{L \ot V}^{-1} \circ (\id_L \ot \fC_V^{-1}))
(\uu(a, bn^{-1}) \ot \uu(na'n^{-1}, nb') \ot v) =  \\
&\fC_{L \ot V}^{-1}
(\uu(a, bn^{-1}) \ot \uu(na'n^{-1}, nb') \ot [na'n^{-1}, b'^{-1} n^{-1}].v) =  \\
&\uu(a, bn^{-1}) \ot
\uu([a, nb^{-1}]na'n^{-1}[a, nb^{-1}]^{-1}, [a, nb^{-1}]nb'[a, nb^{-1}]^{-1} \\
&\mspace{300mu} \ot [a, nb^{-1}][na'n^{-1}, b'^{-1} n^{-1}].v).
\end{align*}
By Proposition~\ref{NComm}, this expression is equal to
\begin{align*}
&\uu(a, bn^{-1}) \ot
\uu([a, nb^{-1}]na'n^{-1}[a, nb^{-1}]^{-1}, [a, nb^{-1}]nb'[a, nb^{-1}]^{-1}
\ot [a, b^{-1}][a', b'^{-1}].v).
\end{align*}
Since we have
$[a, nb^{-1}] n = a nb^{-1} a^{-1} b n^{-1} n = a nb^{-1} a^{-1} b = a a'^{-1}$,
this expression can be rewritten further as
\begin{align*}
&\uu(a, bn^{-1}) \ot
\uu((a a'^{-1}) a' (a' a^{-1}), (a a'^{-1}) b' (n a' a^{-1}))
\ot [a, b^{-1}][a', b'^{-1}].v = \\
&\uu(a, bn^{-1}) \ot \uu(a a' a^{-1}, a a'^{-1} b' n a' a^{-1})
\ot [a, b^{-1}][a', b'^{-1}].v.
\end{align*}
If we apply~$\fB$ to the first tensor factor in this expression, we obtain
\[\fB(\uu(a, bn^{-1})) = \uu(a^{-1}, abn^{-1}a^{-1}) = \uu(a^{-1}, b a' a^{-1}),\]
and for the second tensor factor we get similarly that
\begin{align*}
\fB(\uu(a a' a^{-1}, a a'^{-1} b' n a' a^{-1})) &=
\uu(a a'^{-1} a^{-1}, (a a' a^{-1}) a a'^{-1} b' n a' a^{-1} (a a'^{-1} a^{-1})) \\
&= \uu(a a'^{-1} a^{-1}, a b' n a^{-1})
= \uu(a a'^{-1} a^{-1}, a b' a'^{-1} b^{-1} a b a^{-1}).
\end{align*}
In total, we therefore have
\begin{align*}
&((\fB \ot \fB \ot \id_V)  \circ \fC_{L \ot V}^{-1} \circ (\id_L \ot \fC_V^{-1})
\circ (\fN \ot \id_V))(\uu(a,b) \ot \uu(a',b') \ot v) = \\
&\uu(a^{-1}, b a' a^{-1}) \ot \uu(a a'^{-1} a^{-1}, a b' a'^{-1} b^{-1} a b a^{-1})
\ot [a, b^{-1}][a', b'^{-1}].v.
\end{align*}

\item
On the other hand, if we define $c \deq [a,b^{-1}] = a b^{-1} a^{-1} b$, we have
\begin{align*}
&((\fB \ot \fB \ot \id_V)  \circ \fC_{L \ot V}^{-1}
\circ (\id_L \ot \fC_V^{-1}))(\uu(a,b) \ot \uu(a',b') \ot v) = \\
&((\fB \ot \fB \ot \id_V)  \circ \fC_{L \ot V}^{-1})
(\uu(a,b) \ot \uu(a',b') \ot [a', b'^{-1}].v) = \\
&(\fB \ot \fB \ot \id_V)
(\uu(a,b) \ot \uu(c a' c^{-1}, c b' c^{-1}) \ot c [a', b'^{-1}].v) = \\
&\uu(a^{-1}, a b a^{-1}) \ot \uu(c a'^{-1} c^{-1}, c a' b' a'^{-1} c^{-1})
 \ot c [a', b'^{-1}].v.
\end{align*}
We now set
\begin{align*}
n' &\deq \nu(\uu(a^{-1}, a b a^{-1}) \ot \uu(c a'^{-1} c^{-1}, c a' b' a'^{-1} c^{-1})) \\
&\mspace{5mu}
= c a' c^{-1} (a b^{-1} a^{-1}) a^{-1} (a b a^{-1})
= c a' c^{-1} (a b^{-1} a^{-1} b) a^{-1}
= c a' a^{-1},
\end{align*}
so that $n'^{-1} = a a'^{-1} c^{-1}$. Then we have by Corollary~\ref{InvN} that
\begin{align*}
&((\fN^{-1} \ot \id_V) \circ (\fB \ot \fB \ot \id_V)  \circ \fC_{L \ot V}^{-1}
\circ (\id_L \ot \fC_V^{-1}))(\uu(a,b) \ot \uu(a',b') \ot v) = \\
&\uu(a^{-1}, a b a^{-1} n') \ot
\uu(n'^{-1} c a'^{-1} c^{-1} n', n'^{-1} c a' b' a'^{-1} c^{-1})
 \ot c [a', b'^{-1}].v= \\
&\uu(a^{-1}, a b a^{-1} (c a' a^{-1})) \ot
\uu((a a'^{-1}) a'^{-1} (a' a^{-1}), (a a'^{-1}) a' b' a'^{-1} c^{-1})
 \ot c [a', b'^{-1}].v = \\
&\uu(a^{-1}, b a' a^{-1}) \ot \uu(a a'^{-1} a^{-1}, a b' a'^{-1} c^{-1})
 \ot c [a', b'^{-1}].v = \\
&\uu(a^{-1}, b a' a^{-1}) \ot \uu(a a'^{-1} a^{-1} ,a b' a'^{-1} (b^{-1} a b a^{-1}))
 \ot c [a', b'^{-1}].v
\end{align*}
as required to establish the assertion.
\qedhere
\end{pflist}
\end{proof}

We note that the two maps $\fC_{L \ot V}^{-1}$ and $\id_L \ot \fC_V^{-1}$ that appear in Proposition~\ref{PropertN} above actually commute by Corollary~\ref{CommC}.
We also note that in the case $V=K$, the base field considered as a trivial $H$-module, Proposition~\ref{PropertN} implies that
\[(\fB \ot \fB)  \circ \fC_L^{-1} \circ \fN =
\fN^{-1}  \circ (\fB \ot \fB) \circ \fC_L^{-1},\]
which shows that~$\fN$ and~$\fN^{-1}$ are conjugate.

\subsection{Representations of the Drinfel'd double} \label{RepDrinf} 
The modules of the Drinfel'd double can be described via centralizers, even if the characteristic of the base field is positive. We briefly review this description, variants of which can be found in several places in the literature, for example in~\cite[Cor.~2.3, p.~314]{W1}:
\begin{Proposition} \label{PropRepDrinf}
Suppose that $h \in G$ and that~$V$ is a representation of its centralizer $\Cent(h)$. Then the induced module
\[I(h, V) \deq K[G] \ot_{K[\Cent(h)]} V \]
becomes a module over~$D$ via
\[(\delta_a \ot b).(c \ot v) = \delta_{b^{-1}ab, chc^{-1}} \, bc \ot v, \]
so that in particular
$(\delta_a \ot e).(c \ot v) = \delta_{a, chc^{-1}} \, c \ot v$ and
$(\varepsilon \ot b).(c \ot v) = bc \ot v$.
\end{Proposition}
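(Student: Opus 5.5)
The plan is to check directly that the stated formula gives a well-defined $D$-module structure on $I(h,V)$. I will proceed in three steps: well-definedness on the balanced tensor product, compatibility with the unit, and compatibility with the multiplication formula for $D$ from Paragraph~\ref{DrinfDoub}.

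First, well-definedness. The element $c \ot v$ depends only on the class of $c$ modulo the right action of $\Cent(h)$. I therefore verify that, for $z \in \Cent(h)$, the expressions for $cz \ot v$ and $c \ot z.v$ agree. Since $(cz)h(cz)^{-1} = chc^{-1}$, the Kronecker factor $\delta_{b^{-1}ab,\, chc^{-1}}$ is unchanged. The element $bcz \ot v$ equals $bc \ot z.v$ in $I(h,V)$. Hence the map $K[G] \times V \to I(h,V)$, $(c,v) \mapsto \delta_{b^{-1}ab, chc^{-1}}\, bc \ot v$ (extended linearly in $c$), is $\Cent(h)$-balanced. It is also bilinear, so it descends to $I(h,V)$. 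Linearity in the element of $D$ is then automatic by defining the action on the basis $\delta_a \ot b$.

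Second, the unit. We have $1_D = \varepsilon \ot e = \sum_a \delta_a \ot e$, and
\[\sum_a \delta_{a, chc^{-1}}\, c \ot v = c \ot v,\]
so $1_D$ acts as the identity.

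Third, associativity. Using the product formula $(\delta_a \ot b)(\delta_{a'} \ot b') = \delta_{a, b a' b^{-1}}\, \delta_a \ot bb'$, I compute both sides on $c \ot v$.
\begin{itemize}
\item Acting first with $\delta_{a'} \ot b'$ gives $\delta_{b'^{-1}a'b', chc^{-1}}\, b'c \ot v$.
\item Then acting with $\delta_a \ot b$ gives the factor $\delta_{b^{-1}ab,\, b'chc^{-1}b'^{-1}}\, bb'c \ot v$.
\item Acting with the product gives $\delta_{a, ba'b^{-1}}\, \delta_{(bb')^{-1}a(bb'), chc^{-1}}\, bb'c \ot v$.
\end{itemize}
Both sides are nonzero exactly when $a' = b'chc^{-1}b'^{-1}$ and $a = bb'chc^{-1}b'^{-1}b^{-1}$, so they agree. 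This Kronecker bookkeeping is the only real content, and it is the step I expect to need the most care: one must check that the two pairs of conditions are equivalent, not merely that one implies the other.

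Finally, the two special cases follow by substitution. Setting $b=e$ gives $(\delta_a \ot e).(c \ot v) = \delta_{a, chc^{-1}}\, c \ot v$. Summing over $a$ with fixed $b$ gives $(\varepsilon \ot b).(c \ot v) = bc \ot v$, since exactly one $a$, namely $a = bchc^{-1}b^{-1}$, contributes.
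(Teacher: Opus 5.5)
Your direct verification is correct: balancedness over $K[\Cent(h)]$, the unit check, and the equivalence of the two pairs of Kronecker conditions all hold. It is, however, a different route from the paper's, which computes nothing.

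The paper identifies $D$-modules with left-right Yetter--Drinfel'd modules over $K[G]$ (Kassel, Thm.~IX.5.2). It observes that, because $K[G]$ is cocommutative, these are the same as left-left Yetter--Drinfel'd modules. It then quotes the standard induced Yetter--Drinfel'd module $K[G] \ot_{K[\Cent(h)]} V$ from Heckenberger--Schneider (Def.~1.4.15), in which $c \ot v$ is homogeneous of degree $chc^{-1}$.

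You instead check directly that the basis-wise formula defines a unital, multiplicative linear map $D \to \mathrm{End}_K(I(h,V))$. This has two advantages. It is self-contained and uses only the multiplication formula for $D$ stated in the paper, so it avoids matching conventions across the cited sources (Kassel's model of the double, left-right versus left-left Yetter--Drinfel'd modules, the side on which $G$ acts). It also shows that the only inputs are that $\Cent(h)$ fixes $h$ under conjugation, and that the grading $c \ot v \mapsto chc^{-1}$ is equivariant for left multiplication by $G$.

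The paper's citation is shorter and places the construction inside the Yetter--Drinfel'd framework. It reuses that framework immediately in the next proposition, via Heckenberger--Schneider's Lemma~1.4.16, to show that every $D$-module is a direct sum of such induced modules. A bare verification of the module axioms does not give that decomposition.
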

\begin{proof}
The modules over the Drinfel'd double can be described as left-right Yetter-Drinfel'd modules, also known as crossed bimodules (cf.~\cite[Thm.~IX.5.2, p.~221]{Ka}). Because~$K[G]$ is cocommutative, these are the same as left-left Yetter-Drinfel'd modules, so that the assertion follows from~\cite[Def.~1.4.15, p.~32]{HeS}.  
\end{proof}

For a different approach to this proposition from the viewpoint of Hopf algebra extensions, see~\cite[Prop.~7.4, p.~54]{KSZ} and the references given there. 

An arbitrary module of the Drinfel'd double can in fact be built from the special modules just introduced:

\begin{Proposition}
Suppose that $C_1,\ldots,C_r$ are the conjugacy classes of~$G$, and that $h_i \in C_i$ is a representative, where~$C_1 = \{e\}$ and therefore~$h_1=e$. If~$W$ is a module over the Drinfel'd double~$D$ of~$G$, then there are $\Cent(h_i)$-modules~$V_i$, for $i=1,\ldots,r$, so that
\[W \cong \bigoplus_{i=1}^r I(h_i, V_i).\]
The module~$W$ is projective if and only if all of the modules $V_1,\ldots,V_r$ are projective. 
Furthermore, $W$ is simple, or indecomposable, if and only if there is a unique~\mbox{$j \le r$} such that~$V_j$ is nonzero, and this unique module~$V_j$ is simple or indecomposable, respectively. 
\end{Proposition}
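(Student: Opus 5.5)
Given a $D$-module $W$, I first decompose it by the grading coming from the commutative subalgebra $K^G \ot e$. The elements $\delta_a \ot e$ for $a\in G$ are orthogonal idempotents summing to $1_D$. Hence $W = \bigoplus_{a \in G} W_a$ with $W_a \deq (\delta_a \ot e).W$. By the multiplication rule $(\varepsilon\ot b)(\delta_a\ot e) = \delta_{bab^{-1}}\ot b = (\delta_{bab^{-1}}\ot e)(\varepsilon\ot b)$, the element $\varepsilon\ot b$ maps $W_a$ into $W_{bab^{-1}}$, bijectively since $\varepsilon\ot b^{-1}$ gives the inverse. Setting $W_{C_i} \deq \bigoplus_{a\in C_i} W_a$, each $W_{C_i}$ is a $D$-submodule and $W = \bigoplus_i W_{C_i}$. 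Moreover $V_i \deq W_{h_i}$ is stable under $\varepsilon\ot c$ for $c\in \Cent(h_i)$, so it is a $\Cent(h_i)$-module.

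Next I show $W_{C_i}\cong I(h_i,V_i)$. Define $K[G]\ot_{K[\Cent(h_i)]} V_i \to W_{C_i}$ by $c\ot v \mapsto (\varepsilon\ot c).v$. This is well defined because the map is balanced over $\Cent(h_i)$. It is $D$-linear by comparison with the formulas in Proposition~\ref{PropRepDrinf}: $(\delta_a\ot e)$ acts on $(\varepsilon\ot c).v\in W_{ch_ic^{-1}}$ by $\delta_{a,ch_ic^{-1}}$, and $\varepsilon\ot b$ acts by left multiplication. For bijectivity, choose coset representatives $c_1,\dots,c_m$ of $G/\Cent(h_i)$. Then $I(h_i,V_i)=\bigoplus_k c_k\ot V_i$ and $W_{C_i}=\bigoplus_k W_{c_kh_ic_k^{-1}}$, and the map sends the $k$-th summand isomorphically onto the $k$-th summand via $\varepsilon\ot c_k$. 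This gives the decomposition, with $V_1 = W_e$ a $G$-module.

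For the remaining claims, I would first show that $V\mapsto I(h,V)$ is an equivalence between $\Cent(h)$-modules and $D$-modules supported on $C$, with inverse $W\mapsto W_h$. The argument above provides the natural isomorphism one way, and $I(h,V)_h = e\ot V\cong V$ the other way. Simplicity and indecomposability then follow. A direct sum $\bigoplus_i I(h_i,V_i)$ is indecomposable (respectively simple) exactly when a single summand is nonzero and that summand is indecomposable (respectively simple). This uses that every submodule and every direct summand of $W$ splits along the $C_i$, because the central idempotents $e_{C_i}\deq\sum_{a\in C_i}\delta_a\ot e$ act on it; the equivalence then transports the property.

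Projectivity is the step needing the most care. Since projectivity is preserved by direct sums and summands, it suffices to show that $I(h,V)$ is $D$-projective iff $V$ is $\Cent(h)$-projective. The equivalence of categories just described identifies $D e_C$-modules with $K[\Cent(h)]$-modules. Since $e_C$ is a central idempotent, $D e_C$ is a direct factor of $D$. Projectivity over $D$ of a module annihilated by $1-e_C$ is therefore the same as projectivity over $De_C$, and equivalences preserve projectives. Alternatively, one checks directly that $I(h,K[\Cent(h)]) \cong De_h$ with $e_h\deq\delta_h\ot e$, a projective $D$-module, and that additivity extends this to all projectives.
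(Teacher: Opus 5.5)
Your proof is correct and follows the same route as the paper. You decompose $W$ into the homogeneous components $W_a$, group them along conjugacy classes (the paper's ideals $I_i$ are exactly your blocks $De_{C_i}$), and identify each block with $I(h_i,V_i)$ via $c \ot v \mapsto (\varepsilon \ot c).v$.

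The only difference is packaging. The paper cites Heckenberger--Schneider for this isomorphism and leaves the projectivity, simplicity, and indecomposability claims to ``short arguments'', using that $V_i$ is a direct summand of the restriction of $I(h_i,V_i)$ to $\Cent(h_i)$. You instead prove the isomorphism directly via coset representatives and derive all remaining claims from the equivalence between $K[\Cent(h)]$-modules and $De_C$-modules given by $V \mapsto I(h,V)$ and $W \mapsto W_h$.
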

\begin{proof}
The elements $\delta_a \ot e$, for $a \in G$, form a complete system of orthogonal idempotents. Accordingly, $W$ is the direct sum of the subspaces $W_a$ defined as
\[W_a \deq \{w \in W \mid (\delta_a \ot e).w = w \}\]
and called the homogeneous components of degree~$a$. The formula for the product in the Drinfel'd double shows that $a.W_b = W_{aba^{-1}}$. In particular, $W_a$ is a module over~$\Cent(a)$, so that we can define $V_i \deq W_{h_i}$.

For~$i=1,\ldots,r$, we let $I_i$ be the subspace spanned by $\delta_a \ot b$ for all $a \in C_i$ and~$b \in G$. Then it follows again from the formula for the product in the Drinfel'd double that~$D$ is the direct sum of the ideals~$I_i$. This leads to a corresponding decomposition of~$W$ (cf.~\cite[Chap.~0, Exerc.~6, p.~17]{FD}), in which~$W$ is the direct sum of the modules $U_i \deq \sum_{a \in C_i} W_a$. These are $D$-submodules of~$W$ that can also be considered as modules over~$I_i$. By \cite[Chap.~I, Prop.~4.5, p.~24]{HiS}, $W$ is projective over~$D$ if and only if every~$U_i$ is projective over~$D$, which happens if and only if every~$U_i$ is projective over~$I_i$.

Now it is shown in~\cite[Lem.~1.4.16, p.~32]{HeS} that the map 
\[I(h_i, V_i) \to U_i,~a \ot w \mapsto a.w\]
is a $D$-linear isomorphism and that~$V_i$ is isomorphic to a direct summand of $I(h_i, V_i)$ restricted to~$\Cent(h_i)$. This implies the statement about the general decomposition and, after a short argument, also the statement about projectivity.

If~$W$ is indecomposable or even simple, there can be only one~$U_j$ that is nonzero. But a decomposition or submodule of~$V_j$ leads to a decomposition or submodule of 
$U_j \cong K[G] \ot_{K[\Cent(h_j)]} V_j$, and conversely a decomposition or submodule of~$U_j$ leads by another short argument to a decomposition or submodule of~$V_j = W_{h_j} \subset U_j$. This proves the remaining assertions.
\end{proof}

The case $h = e$ will play a special role in the sequel:
\begin{Cor} \label{TrivConjClass}
Every $K[G]$-module~$V$ becomes a module over~$D = D(G)$ by defining
\[(\delta_a \ot b).v \deq \delta_{a,e} \, b.v.\]
If~$V$ is projective as a module over~$K[G]$, it is also projective as a module over~$D$.
\end{Cor}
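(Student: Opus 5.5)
The plan is to identify the module structure defined in the statement with the induced module $I(e,V)$ from Proposition~\ref{PropRepDrinf}, and then read off projectivity from the preceding proposition. Since the centralizer of $h=e$ is all of~$G$, we have $\Cent(e) = G$, and the induced module $I(e,V) = K[G] \ot_{K[G]} V$ is canonically isomorphic to~$V$ via $c \ot v \mapsto c.v$. Transporting the $D$-action of Proposition~\ref{PropRepDrinf} along this isomorphism gives, for $v \in V$ corresponding to $e \ot v$,
\[(\delta_a \ot b).(e \ot v) = \delta_{b^{-1}ab, e} \, b \ot v = \delta_{a,e} \, e \ot b.v,\]
because $b^{-1} a b = e$ holds exactly when $a = e$. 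Thus the formula $(\delta_a \ot b).v = \delta_{a,e}\, b.v$ is precisely the transported structure, and in particular it is a module structure over~$D$. Alternatively, one can check the module axioms directly from the product formula $(\delta_a \ot b)(\delta_c \ot d) = \delta_{a,bcb^{-1}}\, \delta_a \ot bd$ together with $1_D = \varepsilon \ot e$. For the unit, $\varepsilon \ot e$ acts as $\sum_a \delta_{a,e} \id = \id$. For associativity, both $(\delta_a \ot b).((\delta_c \ot d).v)$ and $((\delta_a \ot b)(\delta_c \ot d)).v$ reduce to $\delta_{a,e}\delta_{c,e}\, bd.v$, using that $\delta_{a,bcb^{-1}}\delta_{a,e} = \delta_{a,e}\delta_{c,e}$.

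For projectivity, we apply the preceding proposition to $W = V$ with this $D$-structure. The homogeneous components are $W_a = \{w \mid (\delta_a \ot e).w = w\}$, which equals~$V$ for $a = e$ and is~$0$ otherwise. In the notation of that proposition we therefore get $V_1 = W_{h_1} = W_e = V$ and $V_i = 0$ for $i \ge 2$. Since $\Cent(h_1) = G$, the module $V_1 = V$ is projective over $K[G]$ by hypothesis, and the zero modules are trivially projective. The proposition then yields that $W \cong I(e,V)$ is projective over~$D$.

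There is one point that needs care, and it is the only step I expect could cause trouble. In the preceding proposition, $V_1 = W_e$ carries the $\Cent(e)$-action by restriction along $b \mapsto \varepsilon \ot b$, so I must confirm that this restricted action coincides with the original $G$-action on~$V$. It does: $(\varepsilon \ot b).v = \sum_a \delta_{a,e}\, b.v = b.v$. As a fallback that avoids the proposition altogether, I could argue directly. If $V$ is a direct summand of a free module $K[G]^{n}$, then $V$ with the above structure is a direct summand of $D(\delta_e \ot e)^{n}$, because $D(\delta_e \ot e) \cong K[G]$ as $D$-modules via $\delta_e$-homogeneity. Moreover, $D(\delta_e \ot e)$ is projective, being generated by an idempotent.
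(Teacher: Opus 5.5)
Your proposal is correct and takes essentially the paper's route: the paper also identifies $V$ with $I(e,V)=K[G]\ot_{K[G]}V$ via $g\ot v\mapsto g.v$ (using $\Cent(e)=G$), and reads off both the module structure and the projectivity from Proposition~\ref{PropRepDrinf} and the decomposition proposition after it, with $V_1=V$ and all other $V_i=0$. Your direct check of the module axioms and your idempotent argument via $D(\delta_e\ot e)\cong K[G]$ are valid additions; for a $V$ that is not finitely generated, replace $K[G]^n$ by an arbitrary direct sum of copies of $K[G]$.
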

\begin{proof}
If $h = e$, we have $\Cent(h) = G$. With the help of the isomorphism 
\[I(e, V) = K[G] \ot_{K[G]} V \to V,~g \ot v \mapsto g.v\]
the assertions follow from the preceding propositions.
\end{proof}

The $D(G)$-module structure on~$V$ can of course be viewed as the pullback along the algebra homomorphism
\[D(G) \to K[G],~\delta_a \ot b \mapsto \delta_{a,e} \, b.\]
Pullback along this homomorphism defines a functor from the category of \mbox{$K[G]$-mod}\-ules to the category of $D(G)$-modules. This functor has a right adjoint, which assigns to a $D(G)$-module~$W$ the $K[G]$-module
\[W_e = \{w \in W \mid (\delta_e \ot e).w = w \}.\]
This adjunction can be deduced from the usual hom-tensor adjunction (cf.~\cite[Lem.~19.11, p.~225]{AF}) for the bimodule~$K[G]$, viewed as a left $D(G)$-module via the pullback just described and as a right $K[G]$-module via multiplication, as the map
\[\Hom_{D(G)}(K[G],W) \to W_e,~f \mapsto f(e)\]
is an isomorphism of $K[G]$-modules.

\pagebreak

We will need another general fact about the action of the Drinfel'd element:
\begin{Lemma} \label{ActDrinf}
Suppose that~$W$ is a $D(G)$-module and that~$w \in W_a$ for $a \in G$. Then we have $u_D.w = a^{-1}.w \in W_a$ and $u_D^{-1}.w = a.w \in W_a$.
\end{Lemma}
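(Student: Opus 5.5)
The plan is a direct computation from the explicit formula $u_D = \sum_{c \in G} \delta_c \ot c^{-1}$ given in Paragraph~\ref{DrinfDoub}. We factor each summand as
\[\delta_c \ot c^{-1} = (\delta_c \ot e)(\varepsilon \ot c^{-1}),\]
which we check with the product formula $(\delta_a \ot b)(\delta_{a'} \ot d) = \delta_{a, b a' b^{-1}}\, \delta_a \ot bd$. Summing $(\delta_c \ot e)(\delta_{a'} \ot c^{-1})$ over~$a'$ leaves only the term $a' = c$, so the factorization holds. It follows that
\[u_D.w = \sum_{c \in G} (\delta_c \ot e).\bigl(c^{-1}.w\bigr).\]

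The key input is the relation $b.W_a = W_{bab^{-1}}$ from the proof of the preceding proposition. It can also be read off from the identity $(\delta_x \ot e)(\varepsilon \ot b) = (\varepsilon \ot b)(\delta_{b^{-1}xb} \ot e)$, again a consequence of the product formula. Applied to $w \in W_a$, it gives $c^{-1}.w \in W_{c^{-1}ac}$. Since the elements $\delta_x \ot e$ are orthogonal idempotents, the $c$-th summand equals $\delta_{c,\, c^{-1} a c}\; c^{-1}.w$.

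The condition $c = c^{-1} a c$ is equivalent to $c = a$, so only the term $c = a$ survives, and $u_D.w = a^{-1}.w$. Because $a^{-1}$ commutes with~$a$, the relation $b.W_a = W_{bab^{-1}}$ shows that $a^{-1}.w \in W_a$.

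For the inverse, we use $u_D^{-1} = \sum_{c} \delta_c \ot c$ and the same factorization $\delta_c \ot c = (\delta_c \ot e)(\varepsilon \ot c)$. The surviving condition becomes $c = cac^{-1}$, which again forces $c = a$, and we get $u_D^{-1}.w = a.w \in W_a$. Alternatively, once $u_D$ is known to act on~$W_a$ as~$a^{-1}$, the second claim follows by inverting.

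The only real care needed is in getting the factorization order and the conjugation direction right; everything else is bookkeeping with Kronecker symbols.
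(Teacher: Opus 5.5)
Your proposal is correct and follows essentially the same route as the paper: write each summand of $u_D^{\pm 1}$ as $(\delta_x \ot e)$ times a group element, use $b.W_a = W_{bab^{-1}}$ so that a single Kronecker delta survives, and note that one formula follows from the other by inversion. The only difference is that the paper indexes by $b = c^{-1}$, writing $u_D = \sum_{b} \delta_{b^{-1}} \ot b$, and you additionally verify the factorization and the conjugation identity explicitly.
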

\begin{proof}
As already pointed out above, we have for $b \in G$ that~$b.w \in W_{bab^{-1}}$ and therefore
\[(\delta_{b^{-1}} \ot b).w = (\delta_{b^{-1}} \ot e).(b.w)
= \delta_{b^{-1},bab^{-1}} b.w = \delta_{b^{-1}, a} \, b.w\]
and similarly $(\delta_{b} \ot b).w = \delta_{b, a} \, b.w$. This implies that
\[u_D.w = \sum_{b \in G} (\delta_{b^{-1}} \ot b).w = a^{-1}.w\]
and similarly $u_D^{-1}.w= \sum_{b \in G} (\delta_{b} \ot b).w = a.w$. Of course, one equation follows from the other.
\end{proof}

This lemma is in fact a special case of a more general property of finite-dimensional Hopf algebras (cf.~\cite[Par.~3.9, p.~445]{So}).

\enlargethispage{5mm}
\vspace{4mm}

\subsection{A special module} \label{SpecMod}
As we have discussed at the end of Paragraph~2.6, the module
$W \deq L^{\ot g} = (D^*)^{\ot g}$ carries an action of the mapping class group~$\Gamma_{g,1}$. We will use the action that arises from the alternative R-matrix~$\tilde{R}$, because it leads to simpler formulas; we will show in Paragraph~\ref{StandRMat} that the  representation of~$\Gamma_{g,1}$ that arises from the standard \mbox{R-matrix} is isomorphic to the present one. Because our action is by~$D$-linear maps, it preserves the homogeneous components~$W_h$. Such a homogeneous component has the following basis:
\vspace{1mm}
\begin{Lemma} \label{LemBasis}
For $h \in G$, a basis of the vector space~$W_h$ consists of those decomposable tensors
\[\uu(a_1,b_1) \ot \dots \ot \uu(a_g,b_g)\]
for which the elements
$a_1, b_1, \ldots, a_g, b_g \in G$ satisfy $[a_1, b_1^{-1}] [a_2, b_2^{-1}] \cdots [a_g, b_g^{-1}] = h^{-1}$.
\end{Lemma}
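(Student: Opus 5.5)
The plan is to compute the action of the idempotents $\delta_h \ot e$ on the decomposable basis tensors of $W = L^{\ot g}$ directly. Once we show that each such tensor is an eigenvector with eigenvalue $0$ or $1$, the claim follows. The tensors $\uu(a_1,b_1) \ot \dots \ot \uu(a_g,b_g)$ with all $a_i,b_i \in G$ form a basis of~$W$. Also, $W$ is the direct sum of the~$W_h$, since the $\delta_h \ot e$ form a complete system of orthogonal idempotents. Hence it suffices to prove the formula
\[(\delta_h \ot e).(\uu(a_1,b_1) \ot \dots \ot \uu(a_g,b_g)) = \delta_{[a_1,b_1^{-1}] \cdots [a_g,b_g^{-1}], h^{-1}} \; \uu(a_1,b_1) \ot \dots \ot \uu(a_g,b_g).\]
It then follows that each basis tensor lies in exactly one~$W_h$, namely for $h^{-1}$ equal to the product of commutators. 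The basis tensors lying in~$W_h$ therefore span~$W_h$ and are linearly independent.

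To prove the formula, I use the iterated coproduct from Paragraph~\ref{DrinfDoub}:
\[\Delta^{(g-1)}(\delta_h \ot e) = \sum_{c_1 c_2 \cdots c_g = h} (\delta_{c_1} \ot e) \ot \dots \ot (\delta_{c_g} \ot e).\]
This follows by induction from $\Delta_D(\delta_a \ot b) = \sum_c (\delta_c \ot b) \ot (\delta_{ac^{-1}} \ot b)$ with $b=e$. By Proposition~\ref{ActCoend}, the factor $\delta_{c_i} \ot e$ acts on $\uu(a_i,b_i)$ by the scalar $\delta_{c_i,[b_i^{-1},a_i]}$. So the only surviving term has $c_i = [b_i^{-1},a_i] = [a_i,b_i^{-1}]^{-1}$ for every~$i$. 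The total coefficient is therefore~$1$ if $h = [a_1,b_1^{-1}]^{-1} \cdots [a_g,b_g^{-1}]^{-1}$, and~$0$ otherwise.

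The main obstacle is the order of multiplication in the coproduct. One has to check carefully that the condition $c_1 \cdots c_g = h$ comes out in this order, as opposed to the reversed order $c_g \cdots c_1 = h$. Inverting $h = [a_1,b_1^{-1}]^{-1} \cdots [a_g,b_g^{-1}]^{-1}$ gives $h^{-1} = [a_g,b_g^{-1}] \cdots [a_1,b_1^{-1}]$, which is not the product in the statement. The order in the statement arises instead from the convention that $K^G$ carries the coproduct dual to the opposite multiplication. This convention is encoded in the formula $\Delta_D(\delta_a \ot b) = \sum_c (\delta_c \ot b)\ot(\delta_{ac^{-1}} \ot b)$, where the second index is $ac^{-1}$. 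Writing $c_2 = c_1^{-1}a$ or $ac_1^{-1}$ matters for $g \ge 2$.

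I will therefore redo the induction explicitly. For $g=2$, the coproduct gives pairs $(c, ac^{-1})$, so the condition on the factors is $c_2 c_1 = h$. The general condition is then $c_g \cdots c_1 = h$. Inverting yields $h^{-1} = c_1^{-1} \cdots c_g^{-1} = [a_1,b_1^{-1}] \cdots [a_g,b_g^{-1}]$, exactly as asserted. This also confirms the $g=2$ formula quoted after Proposition~\ref{NComm}.
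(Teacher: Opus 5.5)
Your argument is correct and is essentially the paper's proof. You expand $\Delta^{(g-1)}(\delta_h \otimes e)$ with index condition $c_g \cdots c_2 c_1 = h$, let each factor act by the scalar $\delta_{c_i,[b_i^{-1},a_i]}$ from Proposition~\ref{ActCoend}, and invert using $[b_i^{-1},a_i]^{-1} = [a_i,b_i^{-1}]$. The only blemish is your first display of the iterated coproduct with condition $c_1 c_2 \cdots c_g = h$: it is wrong for this coproduct, as your own $g=2$ check shows, so state the condition $c_g \cdots c_1 = h$ from the outset rather than retracting it later.
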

\begin{proof}
The formula
$(\delta_h \ot e).\uu(a,b) = \delta_{h, [b^{-1}, a]} \; \uu(a,b)$
from Proposition~\ref{ActCoend} implies that
\begin{align*}
&(\delta_h \ot e).(\uu(a_1,b_1) \ot \dots \ot \uu(a_g,b_g)) \\
&\qquad =
\sum_{\substack{h_1,\ldots,h_g \in G \\ h_g \cdots h_2 h_1 = h}}
(\delta_{h_1} \ot e).\uu(a_1,b_1)  \ot \dots \ot (\delta_{h_g} \ot e).\uu(a_g,b_g) \\
&\qquad =
\sum_{\substack{h_1,\ldots,h_g \in G \\ h_g \dots h_2 h_1 = h}}
\delta_{h_1, [b_1^{-1}, a_1]} \; \uu(a_1,b_1)  \ot \dots \ot
\delta_{h_g, [b_g^{-1}, a_g]} \; \uu(a_g,b_g) \\
&\qquad =
\begin{cases}
\uu(a_1,b_1) \ot \dots \ot \uu(a_g,b_g) &: [b_g^{-1}, a_g] \cdots [b_1^{-1}, a_1]  = h \\
0 &:  [b_g^{-1}, a_g] \cdots [b_1^{-1}, a_1] \neq h
\end{cases}
\end{align*}
which implies our assertion by inversion, since $[b_i^{-1}, a_i]^{-1} = [a_i, b_i^{-1}]$.
\end{proof}

Now recall the capping homomorphism
$C_1 \colon \Gamma_{g,1} \to \Gamma_{g}(y) = \Gamma_{g,0}(y)$ from Paragraph~1.9, which arose by gluing a punctured disk, whose marked point was denoted by~$y$, in the place of the boundary component. We claim that the action of our mapping class group descends along the capping homomorphism:
\begin{Proposition} \label{DescCap}
The action of~$\Gamma_{g,1}$ on~$W_e$ descends along the capping homomorphism to an action of~$\Gamma_{g}(y)$.
\end{Proposition}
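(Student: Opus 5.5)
We plan to use the standard description of the kernel of the capping homomorphism from Paragraph~1.9. For a surface with one boundary component, capping by a punctured disk gives an exact sequence in which the kernel of $C_1 \colon \Gamma_{g,1} \to \Gamma_{g}(y)$ is generated by the Dehn twist $\fd$ about a curve parallel to the boundary component. This is the cyclic group generated by the boundary twist, and it is central. Since $C_1$ is surjective, it suffices to show that the boundary Dehn twist acts trivially on $W_e$. The induced action of $\Gamma_{g}(y)$ is then well defined and unique.

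To identify the action of the boundary twist, we use the framework from Paragraph~2.6. On $\Hom_\CC(X, L^{\ot g})$, a Dehn twist about the boundary circle acts by composition with the twist $\theta$ of the object attached to that boundary, and by naturality this is postcomposition with $\theta_{W}^{\pm 1}$ for $W = L^{\ot g}$. This is the same argument used in the third proof of the formula for $\fS^4$, where $\fd_1^{-1}$ was identified with $\theta_L^{-1}$. Via the Yoneda lemma, the action of $\fd$ on $W$ itself is given by $\theta_W^{\pm1}$. This is the step we expect to require the most care: we must make sure that the action of $\Gamma_{g,1}$ on $W$ described at the end of Paragraph~2.6 really sends the boundary twist to the ribbon twist of $W$. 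Since we use the alternative R-matrix $\tilde{R}$, the relevant ribbon element is $\tilde{v} = u_D$ rather than $u_D^{-1}$. The sign of the exponent does not matter for the argument.

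By Paragraph~2.2, $\theta_W$ is given by the action of the ribbon element, so the boundary twist acts on $W$ by $u_D$ or $u_D^{-1}$. By Lemma~\ref{ActDrinf}, for $w \in W_h$ we have $u_D^{\pm1}.w = h^{\mp1}.w$. For $h = e$, this is the action of the unit of~$G$, which is the identity. Hence the kernel of $C_1$ acts trivially on $W_e$, and the action descends.

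As a sanity check, Lemma~\ref{LemBasis} shows that $W_e$ is spanned by tensors with $\prod_i [a_i,b_i^{-1}] = e$. These are exactly the homomorphisms from the fundamental group of the closed surface to~$G$. This matches the expectation that capping the boundary component produces an action of the mapping class group of the closed surface with marked point~$y$.
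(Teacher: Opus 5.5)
Your proposal is correct and follows essentially the same route as the paper: the capping sequence reduces the claim to showing that the boundary Dehn twist~$\fd_1$ acts trivially, $\fd_1$ acts by~$\theta_W$ (the end of Paragraph~2.6), i.e.\ by the ribbon element~$u_D$ for~$\tilde{R}$, and this action is trivial on~$W_e$ by Lemma~\ref{ActDrinf}. Your closing sanity check via Lemma~\ref{LemBasis} is a consistent heuristic but is not needed for the argument.
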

\begin{proof}
In view of the capping sequence from Proposition~1.8 in Paragraph~1.9, we have to show that~$\fd_1$ acts trivially. As we already pointed out at the end of Paragraph~2.6, $\fd_1$ acts by~$\theta_W$, in other words, by applying the ribbon element~$u_D$ for the R-matrix~$\tilde{R}$. But that this action is trivial on~$W_e$ follows immediately from Lemma~\ref{ActDrinf}.
\end{proof}

We now encounter again the small conceptual difficulty with the polygon model that we have already encountered in Paragraph~1.12, namely that the point~$y$ just introduced is different from the base point~$x$ that is the common image of all the vertices of the polygon, except for the start point and the end point of the edge labeled by~$\rho_1$. Almost as in Paragraph~1.12, we choose a diffeomorphism~$\phi \colon \Sigma_{g,0} \to \Sigma_{g,0}$ that is isotopic to the identity and satisfies~$\phi(y) = x$. We can assume that~$\phi$ restricts to the identity outside a closed embedded disk that contains both~$x$ and~$y$ in its interior and inside this disk just pushes~$y$ to~$x$. We then have the isomorphism
\[ \Gamma_{g,0}(y) \to \Gamma_{g,0}(x),~[\psi] \mapsto [\phi \circ \psi \circ \phi^{-1}]\]
that satisfies $F_x([\phi \circ \psi \circ \phi^{-1}]) = F_y([\psi])$. We can use this isomorphism to transport the action of~$\Gamma_{g}(y)$ on~$W_e$ to an action of~$\Gamma_{g}(x)$. For the maps introduced in Paragraph~1.7, we have by~\cite[Sec.~3.3, Fact~3.7, p.~73]{FM} that
\[[\ft'_i] = [\phi \circ \ft_i \circ \phi^{-1}], \qquad
[\fr''_i] = [\phi \circ \fr_i \circ \phi^{-1}], \qquad
[\fn''_i] = [\phi \circ \fn_i \circ \phi^{-1}],\]
and therefore also $[\fs'_i] = [\phi \circ \fs_i \circ \phi^{-1}]$, as follows from the discussion given there in the proof of Proposition~1.6.

From the Dehn-Lickorish theorem~1.4 in Paragraph~1.7, the mapping classes
$[\ft_i]$, $[\fr_i]$, and $[\fn_l]$, for $i=1,\dots,g$ and $l=1,\dots, g-1$, generate~$\Gamma_{g,1}$, and therefore also~$\Gamma_{g}(y)$ by Proposition~1.8 in Paragraph~1.9. Consequently, the mapping classes~$[\ft'_i]$, $[\fr''_i]$, and $[\fn''_l]$ generate~$\Gamma_{g}(x)$. Alternatively, we can also use $\fs'_i = \ft_i'^{-1} \fr_i''^{-1} \ft_i'^{-1}$, which was also defined in Paragraph~1.7, instead of~$\fr''_i$.

\subsection{The relation with the representation variety} \label{RelRepVar}
As we discussed in Paragraph~1.6, the group $\Gamma_{g}(x)$ acts on the fundamental group $\pi_1(\Sigma_{g},x)$. Therefore, this group also acts on the set
\[\Omega^g \deq \Hom(\pi_1(\Sigma_{g},x),G)\]
of group homomorphisms from the fundamental group to~$G$ by defining
\[([\psi].f)([\gamma]) \deq f([\psi^{-1} \circ \gamma])  \]
for $[\psi] \in \Gamma_{g}(x)$, $[\gamma] \in \pi_1(\Sigma_{g},x)$, and
$f \in \Omega^g$. The set~$\Omega^g$ is often called the representation variety of~$\pi_1(\Sigma_{g},x)$ in~$G$ (cf.~\cite[Sec.~2, p.~5]{GM}; \cite[p.~339]{PX}; \cite[Sec.~5, p.~5182]{Si}). By linear extension, we therefore obtain an action
of~$\Gamma_{g}(x)$ on the vector space~$M^g \deq K[\Omega^g]$ with basis~$\Omega^g$, which we call the linearized representation variety. We will show now that this $\Gamma_{g}(x)$-representation is isomorphic to the one considered in Paragraph~\ref{SpecMod}. For this, recall from Paragraph~1.2 that the fundamental group~$\pi_1(\Sigma_{g},x)$ is generated by the homotopy classes $[\alpha_1],[\beta_1],\ldots,[\alpha_g],[\beta_g]$ that satisfy the one defining relation
\[ [[\alpha_1], [\beta_1] ] \cdot [[\alpha_2], [\beta_2]] \cdots [[\alpha_g], [\beta_g]] = 1\]
in which the commutator brackets need to be distinguished from the homotopy class brackets.
\begin{Theorem} \label{ThmModTilde}
The map 
$\tilde{\Theta} \colon M^g \to W_e = (L^{\ot g})_e$ that maps a basis element
\mbox{$f \in \Hom(\pi_1(\Sigma_{g},x),G)$} of~$M^g$ to 
\[\tilde{\Theta}(f) \deq \uu(a_1,b_1^{-1}) \ot \dots \ot \uu(a_g,b_g^{-1}),\]
where $a_i \deq f([\alpha_i])$ and $b_i \deq f([\beta_i])$ for $i=1,\ldots,g$, is an isomorphism of the \mbox{$\Gamma_{g}(x)$-modules} $M^g$ and~$W_e$.
\end{Theorem}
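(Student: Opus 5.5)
We prove the theorem in two steps: first that $\tilde{\Theta}$ is a linear isomorphism, then that it intertwines the two actions on a generating set of~$\Gamma_{g}(x)$.

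\textbf{Bijectivity.} Since $\pi_1(\Sigma_{g},x)$ has the presentation recalled above, a homomorphism~$f$ is the same as a tuple $(a_1,b_1,\ldots,a_g,b_g) \in G^{2g}$ with $[a_1,b_1]\cdots[a_g,b_g]=e$. Under $b_i \mapsto b_i^{-1}$, this condition becomes the condition $[a_1,(b_1^{-1})^{-1}]\cdots[a_g,(b_g^{-1})^{-1}] = e$ of Lemma~\ref{LemBasis} for $h=e$. Hence $\tilde{\Theta}$ maps the basis~$\Omega^g$ of~$M^g$ bijectively onto the basis of~$W_e$ described there, and so it is a linear isomorphism.

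\textbf{Equivariance.} Both actions are by groups, so it suffices to check equivariance on the generators $[\ft'_i]$, $[\fs'_i]$ (or $[\fr''_i]$) and $[\fn''_l]$ of~$\Gamma_{g}(x)$ from Paragraph~\ref{SpecMod}.

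On the side of~$W_e$, these generators act as in Paragraph~2.6, transported along~$\phi$, but with the alternative R-matrix. Thus $\ft'_i$ acts by $\tilde{\fT} = \fT^{-1}$ on the $i$-th tensor factor and $\fs'_i$ acts by $\tilde{\fS} = \fS^{-1}$ there. The element $\fn''_l$ acts by $\tilde{\fN} = \fN^{-1}$ on factors $l,l+1$. Depending on the conventions of Paragraph~2.6, this may be conjugated by braidings that move these factors into position, which for basis tensors is again computable via the R-matrix. The relevant explicit formulas are Corollary~\ref{InvST} and Corollary~\ref{InvN}.

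On the side of~$M^g$, we need the action of the corresponding diffeomorphisms on $\pi_1(\Sigma_g,x)$, as computed in Paragraphs~1.6–1.7 for the polygon model. Roughly:
\begin{itemize}
\item A Dehn twist along~$\alpha_i$ sends $\beta_i \mapsto \beta_i\alpha_i^{\pm1}$, up to the precise placement of the base point.
\item The rotation~$\fs_i$ sends $(\alpha_i,\beta_i)$ to something like $(\beta_i, \beta_i^{-1}\alpha_i^{-1}\beta_i)$.
\item $\fn_l$ modifies $\beta_l,\alpha_{l+1},\beta_{l+1}$ by conjugations involving $\alpha_l^{-1}\alpha_{l+1}$-type loops.
\end{itemize}
Using $([\psi].f)(\gamma) = f(\psi^{-1}\gamma)$, we then compare, for each generator, the image $\tilde{\Theta}([\psi].f)$ with $\tilde{\mathfrak{X}}(\tilde{\Theta}(f))$ for the corresponding map~$\mathfrak{X}$. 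For instance, for $\ft'_i$ we must show that $\fT^{-1}(\uu(a,b^{-1})) = \uu(a, ab^{-1})$ equals $\uu(a', b'^{-1})$, where $a' = f(\ft_i^{-1}\alpha_i)$ and $b' = f(\ft_i^{-1}\beta_i)$. This means $b' = b a^{-1}$, which should match the action of the inverse twist on~$\beta_i$. For $\fs'_i$, Corollary~\ref{InvST} gives $\uu(b^{-1}, b a^{-1} b^{-1})$, so the new pair is $(b^{-1}, b a b^{-1})$. We check this against $\fs_i^{-1}$ acting on $(\alpha_i,\beta_i)$. For $\fn''_l$, the quantity~$n$ in Corollary~\ref{InvN} becomes $a_{l+1}^{-1}\, b_l\, a_l\, b_l^{-1}$, and the conjugations by~$n$ must match the effect of the twist on the loops $\beta_l$, $\alpha_{l+1}$ and $\beta_{l+1}$.

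\textbf{Main obstacle.} The hard part is this last bookkeeping, for $\fn''_l$ and to a lesser extent $\fs'_i$. We need the precise action of these mapping classes on the chosen generators of~$\pi_1$, including orientation conventions, the base point shift by~$\phi$, and whether composition is by $\psi$ or $\psi^{-1}$. These must agree with Lyubashenko's conventions, so that the formulas of Corollary~\ref{InvN} are reproduced exactly. We first take these actions from Paragraphs~1.6–1.7 and verify the $\ft'_i$ case to fix the conventions. If a mismatch appears there or in the other cases, we compensate by using the involutions $\fA$ and~$\fB$ (Proposition~\ref{Conjug}), which interchange a map with its inverse. We also use that only compatibility on a generating set is needed.
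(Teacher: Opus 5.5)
Your strategy is the paper's: bijectivity from Lemma~\ref{LemBasis} via $b_i \mapsto b_i^{-1}$, then equivariance checked on the generators $[\ft'_i]$, $[\fs'_i]$, $[\fn''_l]$ using the explicit coend formulas. The one organisational difference is that the paper checks the inverses $[\ft_i'^{-1}]$, $[\fs_i'^{-1}]$, $[\fn_i''^{-1}]$. Then $([\psi^{-1}].f)([\gamma]) = f([\psi \circ \gamma])$ can be read off directly from Proposition~1.6 and compared with $\fT$, $\fS$, $\fN$ from Propositions~\ref{FormST} and~\ref{FormN}. Checking the generators themselves, as you do, forces you to invert the automorphisms of $\pi_1(\Sigma_g,x)$ first.

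The gap is that the step you call the main obstacle is the whole content of the proof. You leave it open, with formulas for the action on $\pi_1$ that are partly wrong. The correct ones, from Proposition~1.6, are as follows. $\ft'_i$ fixes $\alpha_i$ and sends $\beta_i \mapsto \beta_i\alpha_i$. $\fs'_i$ sends $\alpha_i \mapsto \alpha_i\beta_i\alpha_i^{-1}$ and $\beta_i \mapsto \alpha_i^{-1}$, not $\alpha_i \mapsto \beta_i$. $\fn''_l$ fixes $\alpha_l$ and sends $\beta_l \mapsto \mu_l\beta_l$, $\alpha_{l+1} \mapsto \mu_l\alpha_{l+1}\mu_l^{-1}$ and $\beta_{l+1} \mapsto \beta_{l+1}\mu_l^{-1}$, where $\mu_l = \alpha_{l+1}^{-1}\beta_l\alpha_l\beta_l^{-1}$. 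These are one-sided multiplications and a conjugation by $\mu_l$, not conjugations by loops of type $\alpha_l^{-1}\alpha_{l+1}$.

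With these formulas, the target identities you wrote down hold exactly:
\begin{itemize}
\item $\ft_i'^{-1}(\beta_i) = \beta_i\alpha_i^{-1}$, which gives $b' = ba^{-1}$.
\item $\fs_i'^{-1}$ sends $\alpha_i \mapsto \beta_i^{-1}$ and $\beta_i \mapsto \beta_i\alpha_i\beta_i^{-1}$, which gives $(b^{-1}, bab^{-1})$.
\item $\fn''_l$ fixes $\mu_l$, the topological counterpart of Lemma~\ref{InvarN}. Hence $\fn_l''^{-1}$ sends $\beta_l \mapsto \mu_l^{-1}\beta_l$, $\alpha_{l+1} \mapsto \mu_l^{-1}\alpha_{l+1}\mu_l$ and $\beta_{l+1} \mapsto \beta_{l+1}\mu_l$. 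With $n = f(\mu_l)$, this is precisely Corollary~\ref{InvN}.
\end{itemize}
No braidings are involved, since $[\fn''_l]$ acts simply by $\tilde{\fN}$ on the factors $l$ and $l+1$.

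Finally, discard the fallback of compensating a mismatch with $\fA$ or $\fB$. That would prove equivariance of a map different from $\tilde{\Theta}$, not the stated one. Moreover, for nonabelian $G$, $\fB^{\ot g}$ alone does not intertwine $\fN_l$ with $\fN_l^{-1}$; this requires the $\fC$-correction $\fG_g$ used in Theorem~\ref{RepIsom}.
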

\begin{proof}
\begin{pflist}
\item
In view of the defining relation of the fundamental group recalled above, Lemma~\ref{LemBasis} implies that~$\tilde{\Theta}$ is well-defined and bijective. As we also recalled above, the mapping classes~$[\ft'_i]$, $[\fs'_i]$, and $[\fn''_l]$ generate~$\Gamma_{g}(x)$, and of course their inverses generate this group, too. It is therefore sufficient to show the equivariance of~$\tilde{\Theta}$ for these generators. For~$[\ft'_i]$, we have by Proposition~1.6 in Paragraph~1.7 that
$([\ft_i'^{-1}].f)([\beta_i]) = f(\ft_i'([\beta_i])) = f([\beta_i \alpha_i]) = b_i a_i$ and therefore
\begin{align*}
\tilde{\Theta}([\ft_i'^{-1}].f) = \uu(a_1,b_1^{-1}) \ot \dots
\ot \uu(a_i,a_i^{-1} b_i^{-1}) \ot \dots \ot \uu(a_g,b_g^{-1}).
\end{align*}
On the other hand, as we are working with the alternative R-matrix~$\tilde{R}$ and~$\tilde{\fT} = \fT^{-1}$ by Proposition~\ref{PropAltRMat}, we get from the formulas in Paragraph~2.6 that
\begin{align*}
[\ft_i'^{-1}].\tilde{\Theta}(f) &=
(\id_{L^{\ot (i-1)}} \ot \tilde{\fT}^{-1} \ot \id_{L^{\ot (g-i)}}).\tilde{\Theta}(f) \\
&=
\uu(a_1,b_1^{-1}) \ot \dots \ot \fT(\uu(a_i,b_i^{-1})) \ot \dots \ot \uu(a_g,b_g^{-1}).
\end{align*}
Both expressions are equal by Proposition~\ref{FormST}.

\item
The computations for the generator~$[\fs'_i]$ are similar: We have by Proposition~1.6 that
\[([\fs_i'^{-1}].f)([\alpha_i]) = f(\fs_i'([\alpha_i])) = f([\alpha_i \beta_i \alpha_i^{-1}]) = a_i b_i a_i^{-1}\]
and
$([\fs_i'^{-1}].f)([\beta_i]) = f(\fs_i'([\beta_i])) = f([\alpha_i^{-1}]) = a_i^{-1}$, so that
\begin{align*}
\tilde{\Theta}([\fs_i'^{-1}].f) = \uu(a_1,b_1^{-1}) \ot \dots \ot
\uu(a_i b_i a_i^{-1},a_i) \ot \dots \ot \uu(a_g,b_g^{-1}).
\end{align*}
On the other hand, as~$\tilde{\fS} = \fS^{-1}$ by Lemma~\ref{LemAltRMat}, we get from the formulas in Paragraph~2.6 that
\begin{align*}
[\fs_i'^{-1}].\tilde{\Theta}(f) &=
(\id_{L^{\ot (i-1)}} \ot \tilde{\fS}^{-1} \ot \id_{L^{\ot (g-i)}}).\tilde{\Theta}(f) \\
&=
\uu(a_1,b_1^{-1}) \ot \dots \ot \fS(\uu(a_i,b_i^{-1})) \ot \dots \ot \uu(a_g,b_g^{-1}).
\end{align*}
Again, both expressions are equal by Proposition~\ref{FormST}.

\item
The last generator~$[\fn_i'']$, for $i=1,\dots, g-1$, is computationally the most complicated. Using the curve~$\mu_{i} = \alpha_{i+1}^{-1} \beta_i \alpha_i \beta_i^{-1}$ introduced in Paragraph~1.2, we have by Proposition~1.6 that
$([\fn_i''^{-1}].f)([\alpha_i]) = f(\fn_i''([\alpha_i])) = f([\alpha_i]) = a_i$
and
\[([\fn_i''^{-1}].f)([\beta_i]) = f(\fn_i''([\beta_i])) = f([\mu_i \beta_i])
= m_i b_i\]
with $m_i \deq f(\mu_i) = a_{i+1}^{-1} b_i a_i b_i^{-1}$. But in the case of this generator, we need to consider in addition that
$\fn''_i([\alpha_{i+1}]) = [\mu_i \alpha_{i+1} \mu_i^{-1}]$ and
$\fn''_i([\beta_{i+1}]) = [\beta_{i+1} \mu_i^{-1}]$, which implies that
\[([\fn_i''^{-1}].f)([\alpha_{i+1}]) = m_i a_{i+1} m_i^{-1}
\qquad \text{and} \qquad
([\fn_i''^{-1}].f)([\beta_{i+1}]) = b_{i+1} m_i^{-1}.\]
By inserting these equations, we get that $\tilde{\Theta}([\fn_i''^{-1}].f)$ is equal to
\begin{align*}
&\uu(a_1,b_1^{-1}) \ot \dots \ot \uu(a_i,b_i^{-1} m_i^{-1})
\ot \uu(m_i a_{i+1} m_i^{-1}, m_i b_{i+1}^{-1}) \ot \dots \ot \uu(a_g,b_g^{-1}).
\end{align*}
On the other hand, as~$\tilde{\fN} = \fN^{-1}$ by Proposition~\ref{PropAltRMat}, we get from the formulas in Paragraph~2.6 that
\begin{align*}
[\fn_i''^{-1}].\tilde{\Theta}(f) &=
(\id_{L^{\ot (i-1)}} \ot \tilde{\fN}^{-1} \ot \id_{L^{\ot (g-i-1)}}).\tilde{\Theta}(f) \\
&=
\uu(a_1,b_1^{-1}) \ot \dots \ot
\fN(\uu(a_i,b_i^{-1}) \ot \uu(a_{i+1},b_{i+1}^{-1})) \ot \dots \ot \uu(a_g,b_g^{-1}).
\end{align*}
Since $m_i = \nu(\uu(a_i,b_i^{-1}) \ot \uu(a_{i+1},b_{i+1}^{-1}))$, both expressions are equal by Proposition~\ref{FormN}.
\qedhere
\end{pflist}
\end{proof}

For $a \in G$ and $f \in \Hom(\pi_1(\Sigma_{g},x),G)$, the map
\[a.f \colon \pi_1(\Sigma_{g},x) \to G,~[\gamma] \mapsto a f([\gamma]) a^{-1}\]
is again a group homomorphism from the fundamental group~$\pi_1(\Sigma_{g},x)$ to~$G$. In this way, the adjoint action of~$G$ on itself leads to an action of~$G$
on the representation variety $\Omega^g = \Hom(\pi_1(\Sigma_{g},x),G)$ that can also be extended linearly to~$M^g$. Clearly, the action of~$G$ commutes with the action of~$\Gamma_{g}(x)$ on~$M^g$ introduced above.

On the other hand, we have seen in Paragraph~\ref{RepDrinf} that~$W_e$ is a $K[G]$-submodule of~$W = L^{\ot g}$. The map~$\tilde{\Theta}$ from Theorem~\ref{ThmModTilde} above is compatible with these \mbox{$G$-actions}:
\begin{Proposition} \label{GLin}
$\tilde{\Theta}$ is $K[G]$-linear.
\end{Proposition}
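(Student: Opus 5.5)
It suffices to check $K[G]$-linearity on the basis $\Omega^g$ of $M^g$ and for group elements $h \in G$, since these span $K[G]$ and $\tilde{\Theta}$ is linear. The plan is to compute both $\tilde{\Theta}(h.f)$ and $h.\tilde{\Theta}(f)$ explicitly in terms of the elements $a_i = f([\alpha_i])$ and $b_i = f([\beta_i])$, and compare.

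First, by the definition of the conjugation action on the representation variety, $(h.f)([\alpha_i]) = h a_i h^{-1}$ and $(h.f)([\beta_i]) = h b_i h^{-1}$. Hence
\[\tilde{\Theta}(h.f) = \uu(h a_1 h^{-1}, h b_1^{-1} h^{-1}) \ot \dots \ot \uu(h a_g h^{-1}, h b_g^{-1} h^{-1}).\]
Second, the $K[G]$-module structure on $W_e$ is the restriction of the $D$-module structure on $W = L^{\ot g}$ along $h \mapsto \varepsilon \ot h$. The coproduct formula from Paragraph~\ref{DrinfDoub} gives
\[\Delta_D(\varepsilon \ot h) = \sum_{a,c} (\delta_c \ot h) \ot (\delta_{ac^{-1}} \ot h) = (\varepsilon \ot h) \ot (\varepsilon \ot h).\]
So $\varepsilon \ot h$ is grouplike and acts diagonally on tensor powers, including the iterated coproduct on $L^{\ot g}$. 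Applying Proposition~\ref{ActCoend} in each tensor factor then gives
\[h.\tilde{\Theta}(f) = \uu(h a_1 h^{-1}, h b_1^{-1} h^{-1}) \ot \dots \ot \uu(h a_g h^{-1}, h b_g^{-1} h^{-1}),\]
where we use $h b_i^{-1} h^{-1} = (h b_i h^{-1})^{-1}$. The two expressions coincide, which proves the claim.

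The only point needing care is that the $G$-action on $W_e$ really is the diagonal one. This comes down to the grouplike computation above, and it also uses that $W_e$ is stable under $\varepsilon \ot h$, which is the relation $h.W_e = W_{heh^{-1}} = W_e$ from Paragraph~\ref{RepDrinf}. No genuine obstacle is expected.
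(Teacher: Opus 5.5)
Your proof is correct and follows the same route as the paper: you compute $\tilde{\Theta}(h.f)$ directly and identify it with $h.\tilde{\Theta}(f)$ by applying Proposition~\ref{ActCoend} in each tensor factor. The only difference is that you check explicitly that $\varepsilon \ot h$ is grouplike in $D$, so that it acts diagonally on $L^{\ot g}$; the paper uses this fact without comment.
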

\begin{proof}
For \mbox{$f \in \Hom(\pi_1(\Sigma_{g},x),G)$}, we set
$a_i \deq f([\alpha_i])$ and $b_i \deq f([\beta_i])$ as in Theorem~\ref{ThmModTilde}. If~$a \in G$, we then have
\[\tilde{\Theta}(a.f) \deq \uu(a a_1 a^{-1}, a b_1^{-1} a^{-1}) \ot \dots
\ot \uu(a a_g a^{-1}, a b_g^{-1} a^{-1}).\]
But by Proposition~\ref{ActCoend}, this expression is equal to~$a.\tilde{\Theta}(f)$.
\end{proof}

\subsection{The endomorphisms~$\fG$ and~$\fU$} \label{EndomGU}
As already mentioned at the beginning of Paragraph~\ref{SpecMod}, there are two representations of the mapping class group~$\Gamma_{g,1}$ on the
space~$W \deq L^{\ot g} = (D^*)^{\ot g}$, one coming from the standard R-matrix of the Drinfel'd double and one coming from the alternative R-matrix~$\tilde{R}$. To show that these two representations are isomorphic, we need to introduce two new maps:
\begin{Definition} \label{DefGU}
For all $g \ge 2$, we define the map $\fG_g \colon L^{\ot g} \to L^{\ot g}$, starting for $g=2$ with $\fG_2 \deq \fC_L^{-1}$ and then setting recursively
\[\fG_g \deq \fC_{L^{\ot (g-1)}}^{-1} \circ (\id_L \ot \fG_{g-1}).\]
For $g=1$, we define $\fG_1 \deq \id_L$.
Furthermore, we define $\fU \deq \fB^{\ot g} \circ \fG_g$.
\end{Definition}
Stated differently, we have for $g \ge 2$ that
\[\fG_g \deq \fC_{L^{\ot (g-1)}}^{-1} \circ (\id_L \ot \fC_{L^{\ot (g-2)}}^{-1})
\circ (\id_{L^{\ot 2}} \ot \fC_{L^{\ot (g-3)}}^{-1})
\circ \cdots \circ (\id_{L^{\ot (g-2)}} \ot \fC_{L}^{-1}).\]
Of course, it is possible to express~$\fG_g$ in terms of elements:
\begin{Lemma}
Suppose that $a_1,b_1,a_2,b_2, \ldots, a_g,b_g \in G$. For $i=1,\dots,g$, define
$c_i \deq [a_1,b_1^{-1}] [a_2,b_2^{-1}] \cdots [a_{i-1},b_{i-1}^{-1}]$, so that $c_1=e$. Then we have
\[\fG_g(\uu(a_1,b_1) \ot \dots \ot \uu(a_g,b_g)) =
\uu(c_1 a_1 c_1^{-1}, c_1 b_1 c_1^{-1}) \ot \dots
\ot \uu(c_g a_g c_g^{-1}, c_g b_g c_g^{-1}).\]
\end{Lemma}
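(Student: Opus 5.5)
We prove the formula by induction on~$g$, following the recursive definition $\fG_g = \fC_{L^{\ot (g-1)}}^{-1} \circ (\id_L \ot \fG_{g-1})$. For $g=1$ there is nothing to prove: $c_1 = e$ and $\fG_1 = \id_L$. For $g=2$, the element formula for~$\fC_V^{-1}$ given after Definition~\ref{DefC}, applied with $V=L$, yields
\[\fC_L^{-1}(\uu(a_1,b_1) \ot \uu(a_2,b_2)) = \uu(a_1,b_1) \ot [a_1,b_1^{-1}].\uu(a_2,b_2).\]
Here the $H$-module structure on~$L$ is the restriction $h.\psi = (\varepsilon \ot h).\psi$. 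By Proposition~\ref{ActCoend}, this action is conjugation of both arguments, which gives the claim with $c_2 = [a_1,b_1^{-1}]$. (The case $g=2$ is in fact covered by the general induction step below, but it helps to fix conventions.)

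For the induction step with $g \ge 2$, we assume the formula for~$\fG_{g-1}$. Applying it to the last $g-1$ tensor factors $\uu(a_2,b_2) \ot \dots \ot \uu(a_g,b_g)$ gives conjugating elements $c'_i = [a_2,b_2^{-1}] \cdots [a_{i-1},b_{i-1}^{-1}]$ for $i=2,\dots,g$, with $c'_2 = e$. We then apply $\fC_{L^{\ot (g-1)}}^{-1}$. By the formula after Definition~\ref{DefC}, this multiplies the remaining factors by $[a_1,b_1^{-1}]$, acting on the $H$-module $L^{\ot(g-1)}$. Since~$H = K[G]$ acts diagonally on tensor products via its coproduct $\Delta(h) = h \ot h$, the group element $[a_1,b_1^{-1}]$ conjugates every factor simultaneously (Proposition~\ref{ActCoend}). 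Composing the conjugations gives $[a_1,b_1^{-1}]\, c'_i = c_i$, which yields exactly the claimed formula. The first factor $\uu(a_1,b_1)$ is unchanged, consistent with $c_1 = e$.

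The only point requiring care is to make sure that the $H$-module structure on $L^{\ot(g-1)}$ used in $\fC_{L^{\ot(g-1)}}$ is indeed the diagonal restriction of the $D$-action to~$H$. This holds because $\Delta_D(\varepsilon \ot h) = (\varepsilon \ot h) \ot (\varepsilon \ot h)$, as can be read off from the coproduct formula in Paragraph~\ref{DrinfDoub}. A second, minor point is to confirm that the commutator attached to the first factor in~$\fC^{-1}$ is formed from the untouched first factor $\uu(a_1,b_1)$, so that the element is $[a_1,b_1^{-1}]$ and not some conjugate of it. This is immediate because $\fG_{g-1}$ acts only on the last $g-1$ factors, and it is also the reason why the conjugators accumulate as left products in the order $c_i = [a_1,b_1^{-1}] \cdots [a_{i-1},b_{i-1}^{-1}]$.
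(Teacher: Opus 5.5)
Your proposal is correct and follows essentially the same route as the paper. The paper also inducts on $g$, applies the induction hypothesis to the last $g-1$ factors (with the same auxiliary elements $c'_i$), and then uses that $\fC_{L^{\ot(g-1)}}^{-1}$ acts on them by $[a_1,b_1^{-1}]$, which conjugates every factor by Proposition~\ref{ActCoend}. Your explicit check that $\Delta_D(\varepsilon \ot h) = (\varepsilon \ot h) \ot (\varepsilon \ot h)$, so that the $H$-action on $L^{\ot(g-1)}$ is diagonal, makes precise a step the paper leaves implicit.
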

\begin{proof}
We argue by induction on~$g$. For $g=1$, the assertion is obvious. For $g=2$, the assertion follows from Proposition~\ref{ActCoend} and the formulas given after Definition~\ref{DefC}. For $g>2$, we define for $i \ge 2$ the element
$c'_i \deq  [a_2,b_2^{-1}] \cdots [a_{i-1},b_{i-1}^{-1}]$, so that $c'_2=e$, and therefore have $c_i = [a_1,b_1^{-1}] c'_i$. By the induction hypothesis, we have
\[\fG_{g-1}(\uu(a_2,b_2) \ot \dots \ot \uu(a_g,b_g)) =
\uu(c'_2 a_2 c'^{-1}_2, c'_2 b_2 c'^{-1}_2) \ot
\dots \ot \uu(c'_g a_g c'^{-1}_g, c'_g b_g c'^{-1}_g).\]
Since we have by Proposition~\ref{ActCoend} that
\begin{align*}
&\fC_{L^{\ot (g-1)}}^{-1}(\uu(a_1,b_1) \ot \uu(a_2,b_2) \ot \dots \ot \uu(a_g,b_g)) = \\
&\uu(a_1,b_1) \ot \uu([a_1,b_1^{-1}] a_2 [a_1,b_1^{-1}]^{-1}, [a_1,b_1^{-1}] b_2 [a_1,b_1^{-1}]^{-1}) \\
&\mspace{200mu}
\ot \dots \ot \uu([a_1,b_1^{-1}] a_g [a_1,b_1^{-1}]^{-1},[a_1,b_1^{-1}] b_g [a_1,b_1^{-1}]^{-1}),
\end{align*}
our assertion follows.
\end{proof}

We prove another lemma that shows how~$\fG_g$ and~$\fU$ behave with respect to commutators:
\begin{Lemma} \label{GUComm}
\begin{enumerate}
\item
Suppose that
$\fG_g(\uu(a_1,b_1) \ot \dots \ot \uu(a_g,b_g)) =
\uu(a'_1,b'_1) \ot \dots \ot \uu(a'_g,b'_g)$. Then we have
\[[a_1,b_1^{-1}][a_2,b_2^{-1}] \cdots [a_g,b_g^{-1}]
= [a'_g,b_g'^{-1}]  \cdots [a'_2,b_2'^{-1}][a'_1,b_1'^{-1}]. \]

\item
Suppose that
$\fU(\uu(a_1,b_1) \ot \dots \ot \uu(a_g,b_g)) =
\uu(a'_1,b'_1) \ot \dots \ot \uu(a'_g,b'_g)$. If
$h \deq [a_1,b_1^{-1}][a_2,b_2^{-1}] \cdots [a_g,b_g^{-1}]$, we have
$h^{-1} = [a'_1,b_1'^{-1}][a'_2,b_2'^{-1}] \cdots [a'_g,b_g'^{-1}]$.
\end{enumerate}
\end{Lemma}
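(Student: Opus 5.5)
For the first assertion I will use the explicit formula for~$\fG_g$ from the preceding lemma. With $c_i = [a_1,b_1^{-1}] \cdots [a_{i-1},b_{i-1}^{-1}]$ and $a'_i = c_i a_i c_i^{-1}$, $b'_i = c_i b_i c_i^{-1}$, we have $[a'_i, b_i'^{-1}] = c_i [a_i,b_i^{-1}] c_i^{-1}$. Writing $k_i \deq [a_i,b_i^{-1}]$, so that $c_{i+1} = c_i k_i$, this reads
\[[a'_i,b_i'^{-1}] = c_i k_i c_i^{-1} = c_{i+1} c_i^{-1}.\]
The reversed product therefore telescopes:
\[[a'_g,b_g'^{-1}] \cdots [a'_1,b_1'^{-1}] = (c_{g+1} c_g^{-1})(c_g c_{g-1}^{-1}) \cdots (c_2 c_1^{-1}) = c_{g+1} c_1^{-1} = c_{g+1}.\]
Since $c_1 = e$ and $c_{g+1} = k_1 \cdots k_g$, this is exactly the left-hand side. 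So the first assertion is a direct computation with no real difficulty.

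For the second assertion, write $\fG_g(\uu(a_1,b_1) \ot \dots \ot \uu(a_g,b_g)) = \uu(a''_1,b''_1) \ot \dots \ot \uu(a''_g,b''_g)$. Then $\fU$ applies~$\fB$ factorwise, so $\uu(a'_i,b'_i) = \fB(\uu(a''_i,b''_i))$. By Proposition~\ref{TSABComm}(4), this gives $[a'_i,b_i'^{-1}] = [a''_i,b_i''^{-1}]^{-1}$. Hence
\[[a'_1,b_1'^{-1}] \cdots [a'_g,b_g'^{-1}] = \bigl([a''_g,b_g''^{-1}] \cdots [a''_1,b_1''^{-1}]\bigr)^{-1},\]
and by the first assertion the bracketed product equals~$h$, so the whole expression equals~$h^{-1}$.

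The only point needing care is the order reversal. The first part produces the reversed product, and the inversion in the second part reverses the order once more, which is exactly why $\fU$ maps $W_{h'}$ to $W_{h'^{-1}}$ in the sense of Lemma~\ref{LemBasis}. I expect no genuine obstacle beyond keeping track of this bookkeeping.
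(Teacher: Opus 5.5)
Your proof is correct and essentially follows the paper. The second part is exactly the paper's argument via Proposition~\ref{TSABComm}(4) combined with the first part. For the first part, the paper argues by induction on~$g$ from the recursive definition of~$\fG_g$, whereas you use the closed formula of the preceding lemma and telescope via $[a'_i,b_i'^{-1}] = c_{i+1}c_i^{-1}$; since that closed formula is itself proved by the same induction, this is only a streamlining.
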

\begin{proof}
\begin{pflist}
\item
The first assertion is correct for $g=1$. For $g=2$, we set $c \deq [a_1,b_1^{-1}]$ and then have
\[\fG_2(\uu(a_1,b_1) \ot \uu(a_2,b_2)) =
\uu(a_1,b_1) \ot \uu(c a_2 c^{-1}, c b_2 c^{-1}),\]
which implies
\begin{align*}
[a'_2,b_2'^{-1}] [a'_1,b_1'^{-1}] &= [c a_2 c^{-1},c b_2^{-1} c^{-1}] [a_1,b_1^{-1}] \\
&= c [a_2, b_2^{-1}] c^{-1} [a_1,b_1^{-1}] = c [a_2, b_2^{-1}]
= [a_1,b_1^{-1}] [a_2, b_2^{-1}].
\end{align*}
Arguing by induction, we use the notation
\[\fG_{g-1}(\uu(a_2,b_2) \ot \dots \ot \uu(a_g,b_g)) =
\uu(a''_2,b''_2) \ot \dots \ot \uu(a''_g,b''_g).\]
Again with the notation $c \deq [a_1,b_1^{-1}]$, we then have
\begin{align*}
&\fG_g(\uu(a_1,b_1) \ot \uu(a_2,b_2) \ot \dots \ot \uu(a_g,b_g)) = \\
&\uu(a_1,b_1) \ot \uu(ca''_2c^{-1},cb''_2c^{-1}) \ot \dots \ot \uu(ca''_gc^{-1},cb''_gc^{-1})
\end{align*}
and therefore by the induction hypothesis
\begin{align*}
[a'_g,b_g'^{-1}&] \cdots [a'_2,b_2'^{-1}][a'_1,b_1'^{-1}] =
[ca''_gc^{-1}, cb_g''^{-1}c^{-1}] \cdots [ca''_2c^{-1}, cb_2''^{-1}c^{-1}] [a_1,b_1^{-1}]
\\
&= (c [a''_g, b_g''^{-1}] \cdots [a''_2, b_2''^{-1}] c^{-1}) [a_1,b_1^{-1}]
= (c [a_2,b_2^{-1}] \cdots [a_g,b_g^{-1}] c^{-1}) [a_1,b_1^{-1}] \\
&= c [a_2,b_2^{-1}] \cdots [a_g,b_g^{-1}]
= [a_1,b_1^{-1}] [a_2,b_2^{-1}] \cdots [a_g,b_g^{-1}].
\end{align*}
This proves the first assertion.

\item
If $g=1$, we have $\fU = \fB$, so the second assertion follows from Proposition~\ref{TSABComm} in this case. For $g \ge 2$, we write
\[\fG_g(\uu(a_1,b_1) \ot \dots \ot \uu(a_g,b_g)) =
\uu(a''_1,b''_1) \ot \dots \ot \uu(a''_g,b''_g)\]
and then have $(a'_i,b'_i) = \fB(a''_i,b''_i)$ for $i=1,\dots,g$ by definition, which implies $[a'_i,b_i'^{-1}] = [a''_i,b_i''^{-1}]^{-1}$ by Proposition~\ref{TSABComm}. The first assertion now implies that
\begin{align*}
&[a'_1,b_1'^{-1}] [a'_2,b_2'^{-1}]\cdots [a'_g,b_g'^{-1}] =
[a''_1,b_1''^{-1}]^{-1} [a''_2,b_2''^{-1}]^{-1} \cdots [a''_g,b_g''^{-1}]^{-1} \\
&= ([a''_g,b_g''^{-1}] \cdots [a''_2,b_2''^{-1}] [a''_1,b_1''^{-1}])^{-1}
= ([a_1,b_1^{-1}][a_2,b_2^{-1}] \cdots [a_g,b_g^{-1}])^{-1} = h^{-1}
\end{align*}
as asserted.
\qedhere
\end{pflist}
\end{proof}

The second assertion of the preceding lemma can be restated as follows:
\begin{Cor} \label{CorDrinfU}
If $w \in W_h$, then $\fU(w) \in W_{h^{-1}}$. Consequently, we have
\[\fU(u_D^{-1}.w) = u_D.\fU(w).\]
\end{Cor}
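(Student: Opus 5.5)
The plan is to deduce both assertions directly from Lemma~\ref{GUComm} and Lemma~\ref{LemBasis}, and then invoke Lemma~\ref{ActDrinf}.

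\textbf{First assertion.} By Lemma~\ref{LemBasis}, the homogeneous component $W_h$ is spanned by the decomposable tensors $\uu(a_1,b_1) \ot \dots \ot \uu(a_g,b_g)$ with
\[[a_1,b_1^{-1}] \cdots [a_g,b_g^{-1}] = h^{-1}.\]
Since $\fU = \fB^{\ot g} \circ \fG_g$ maps such a basis tensor to another basis tensor $\uu(a'_1,b'_1) \ot \dots \ot \uu(a'_g,b'_g)$, the second part of Lemma~\ref{GUComm}, applied with $h^{-1}$ in place of~$h$, gives
\[[a'_1,b_1'^{-1}] \cdots [a'_g,b_g'^{-1}] = (h^{-1})^{-1} = h.\]
By Lemma~\ref{LemBasis} again, this image lies in $W_{h^{-1}}$. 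Extending linearly yields $\fU(W_h) \subset W_{h^{-1}}$.

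\textbf{Second assertion.} Since $W$ is the direct sum of its homogeneous components, it suffices to check the identity for $w \in W_h$. Lemma~\ref{ActDrinf} gives $u_D^{-1}.w = h.w$, so the left-hand side is $\fU(h.w)$. Since $\fU(w) \in W_{h^{-1}}$, the same lemma gives
\[u_D.\fU(w) = (h^{-1})^{-1}.\fU(w) = h.\fU(w).\]
It therefore remains to show that $\fU$ is $H$-linear, i.e.\ that $\fU(h.w) = h.\fU(w)$.

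\textbf{$H$-linearity of $\fU$.} The factor $\fB^{\ot g}$ is $H$-linear because $\fB$ is $H$-linear by Corollary~\ref{ABLin}, and $H$ acts diagonally on tensor products. For $\fG_g$, I will use the recursive definition together with induction. Each $\fC_V$ is $H$-linear by Lemma~\ref{LinC}, hence so is $\fC_V^{-1}$. The map $\id_L \ot \fG_{g-1}$ is $H$-linear by the induction hypothesis, since $H$ acts diagonally. Composing, $\fG_g$ is $H$-linear.

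Alternatively, the element formula for $\fG_g$ from the preceding lemma shows directly that conjugating all $a_i, b_i$ by some $k \in G$ conjugates every $c_i$ by~$k$, which again gives $H$-linearity.

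I expect no real obstacle. The only point needing care is that the roles of $h$ and $h^{-1}$ are assigned consistently when combining Lemma~\ref{LemBasis} with Lemma~\ref{GUComm}.
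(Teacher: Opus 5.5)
Your proof is correct and follows the paper's argument essentially step for step. Lemma~\ref{LemBasis} together with the second part of Lemma~\ref{GUComm} gives $\fU(W_h) \subset W_{h^{-1}}$, and Lemma~\ref{ActDrinf} combined with the $H$-linearity of $\fU$ (from Corollary~\ref{ABLin} and Lemma~\ref{LinC}) gives the identity; your inductive check that $\fG_g$ is $H$-linear just spells out what the paper states in one line.
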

\begin{proof}
By Lemma~\ref{LemBasis} in Paragraph~\ref{SpecMod}, the elements
$\uu(a_1,b_1) \ot \dots \ot \uu(a_g,b_g)$ with
$[a_1,b_1^{-1}][a_2,b_2^{-1}] \cdots [a_g,b_g^{-1}] = h^{-1}$ form a basis of~$W_h$. The second assertion of the preceding lemma shows that~$\fU$ maps these elements to~$W_{h^{-1}}$, proving our first claim. For the second claim, we get from Lemma~\ref{ActDrinf} in Paragraph~\ref{RepDrinf} that
\[\fU(u_D^{-1}.w) = \fU(h.w) = h.\fU(w) = u_D.\fU(w), \]
where we have used that~$\fU$ is $H$-linear. This in turn follows from the fact that~$\fB^{\ot g}$ is $H$-linear by Corollary~\ref{ABLin} in Paragraph~\ref{EndomAB} and that~$\fG_g$ is $H$-linear as a consequence of Lemma~\ref{LinC} in Paragraph~\ref{NatTrC}.
\end{proof}

We record the following two commutation relations for the map $\fG_g$:
\begin{Proposition} \label{PropCommABG}
\begin{enumerate}
\item
$\fA^{\ot g} \circ \fG_g = \fG_g^{-1} \circ \fA^{\ot g}$

\item
$\fB^{\ot g} \circ \fG_g = \fG_g^{-1} \circ \fB^{\ot g}$
\end{enumerate}
\end{Proposition}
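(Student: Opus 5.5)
Both identities are equivalent to the statement that $\fA^{\ot g} \circ \fG_g \circ \fA^{\ot g} = \fG_g^{-1}$, and similarly for~$\fB$. This uses that~$\fA$ and~$\fB$ are involutions by Proposition~\ref{Conjug}, so $\fA^{\ot g}$ and $\fB^{\ot g}$ are involutions as well. I would prove this by induction on~$g$, using the recursive Definition~\ref{DefGU} together with Proposition~\ref{PropertC}.

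For $g=1$ there is nothing to prove, since $\fG_1 = \id_L$. For $g=2$, we have $\fG_2 = \fC_L^{-1}$, and the third statement of Proposition~\ref{PropertC} with $V = L$ gives $(\fA \ot \id_L) \circ \fC_L^{-1} = \fC_L \circ (\fA \ot \id_L)$. To pass from $\fA \ot \id_L$ to $\fA \ot \fA$, I use the remark following Proposition~\ref{PropertC}: since~$\fA$ is $H$-linear by Corollary~\ref{ABLin}, naturality of~$\fC$ gives $(\id_L \ot \fA) \circ \fC_L^{\pm 1} = \fC_L^{\pm 1} \circ (\id_L \ot \fA)$. Composing the two relations yields
$\fA^{\ot 2} \circ \fC_L^{-1} = \fC_L \circ \fA^{\ot 2}$, which is the claim for $g=2$.

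For the induction step, write $\fG_g = \fC_{L^{\ot (g-1)}}^{-1} \circ (\id_L \ot \fG_{g-1})$. By exactly the same two ingredients, namely Proposition~\ref{PropertC}(3) with $V = L^{\ot (g-1)}$ and naturality of~$\fC$ applied to the $H$-linear map $\fA^{\ot (g-1)}$, we obtain
$\fA^{\ot g} \circ \fC_{L^{\ot(g-1)}}^{-1} = \fC_{L^{\ot(g-1)}} \circ \fA^{\ot g}$. Here $H$-linearity of tensor powers of~$\fA$ holds because~$H$ acts diagonally via the coproduct. By the induction hypothesis, $\fA^{\ot g} \circ (\id_L \ot \fG_{g-1}) = (\id_L \ot \fG_{g-1}^{-1}) \circ \fA^{\ot g}$. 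Combining the two gives
\[\fA^{\ot g} \circ \fG_g = \fC_{L^{\ot(g-1)}} \circ (\id_L \ot \fG_{g-1}^{-1}) \circ \fA^{\ot g},\]
whereas $\fG_g^{-1} = (\id_L \ot \fG_{g-1}^{-1}) \circ \fC_{L^{\ot(g-1)}}$.

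The main obstacle is therefore to show that $\fC_{L^{\ot(g-1)}}$ commutes with $\id_L \ot \fG_{g-1}^{-1}$. This is precisely the naturality of~$\fC$ applied to the $H$-linear map $f = \fG_{g-1}^{-1}$, which is $H$-linear by Lemma~\ref{LinC}; it is the same reasoning as in Corollary~\ref{CommC}. Should this naturality argument feel too slick, I would instead verify the commutation on basis elements using the explicit formula for~$\fG_g$ in the preceding lemma together with Lemma~\ref{GUComm}(1). That lemma shows that conjugating all entries by the total commutator product is compatible with~$\fG_{g-1}$.

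The case of~$\fB$ is word for word the same, using Proposition~\ref{PropertC}(4) and the $H$-linearity of~$\fB$ from Corollary~\ref{ABLin}.
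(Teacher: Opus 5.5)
Your proof is correct and uses the same ingredients as the paper: Proposition~\ref{PropertC} together with naturality of~$\fC$ to move $\fA^{\ot g}$ past each factor $\id_{L^{\ot j}} \ot \fC_{L^{\ot(g-1-j)}}^{-1}$, and naturality of~$\fC$ once more to reorder. The only difference is organizational. The paper unrolls~$\fG_g$ into all $g-1$ factors and then reverses their order in one step via Corollary~\ref{CommC}, whereas you induct on the recursive definition and absorb that reordering into a single application of naturality to the $H$-linear map~$\fG_{g-1}^{-1}$.
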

\begin{proof}
For $g=1$, these relations are obvious. We now assume that $g \ge 2$ and then have for $j=0,\dots,g-2$ that
\begin{align*}
&\fA^{\ot g} \circ (\id_{L^{\ot j}} \ot \fC_{L^{\ot (g-1-j)}}^{-1}) =
(\fA^{\ot j} \ot \fA \ot \fA^{\ot (g-1-j)})
\circ (\id_{L^{\ot j}} \ot \fC_{L^{\ot (g-1-j)}}^{-1}) = \\
&(\id_{L^{\ot j}} \ot \fA \ot \id_{L^{\ot (g-1-j)}})
\circ (\id_{L^{\ot j}} \ot \fC_{L^{\ot (g-1-j)}}^{-1}) \circ
(\fA^{\ot j} \ot \id_{L} \ot \fA^{\ot (g-1-j)}) = \\
&(\id_{L^{\ot j}} \ot \fC_{L^{\ot (g-1-j)}})
\circ (\fA^{\ot j} \ot \fA \ot \fA^{\ot (g-1-j)}) =
(\id_{L^{\ot j}} \ot \fC_{L^{\ot (g-1-j)}}) \circ \fA^{\ot g},
\end{align*}
where the third equality follows from Proposition~\ref{PropertC} and the second follows from the naturality of~$\fC$ as in the remark directly following that proposition. We therefore have
\begin{align*}
\fA^{\ot g} \circ \fG_g &=
\fA^{\ot g} \circ \fC_{L^{\ot (g-1)}}^{-1} \circ (\id_L \ot \fC_{L^{\ot (g-2)}}^{-1})
\circ \cdots \circ (\id_{L^{\ot (g-2)}} \ot \fC_{L}^{-1}) \\
&= \fC_{L^{\ot (g-1)}} \circ (\id_L \ot \fC_{L^{\ot (g-2)}})
\circ \cdots \circ (\id_{L^{\ot (g-2)}} \ot \fC_{L}) \circ \fA^{\ot g}.
\end{align*}
Since the terms at the beginning all commute by Corollary~\ref{CommC}, we can reverse their order and rewrite this expression as
\begin{align*}
\fA^{\ot g} \circ \fG_g &= (\id_{L^{\ot (g-2)}} \ot \fC_{L}) \circ \cdots \circ (\id_L \ot \fC_{L^{\ot (g-2)}})
\circ \fC_{L^{\ot (g-1)}} \circ \fA^{\ot g} = \fG_g^{-1} \circ \fA^{\ot g}.
\end{align*}
This proves the first assertion about~$\fA$; the proof of the second assertion about~$\fB$ is identical.
\end{proof}

From this proposition, we see that $\fU$ is its own inverse:
\begin{Cor}
$\fU$ is an involution.
\end{Cor}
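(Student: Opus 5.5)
The plan is to compute $\fU^2$ directly from the definition $\fU = \fB^{\ot g} \circ \fG_g$ and use the second relation of Proposition~\ref{PropCommABG} to move one factor of~$\fB^{\ot g}$ past~$\fG_g$. Expanding, we have
\[\fU \circ \fU = \fB^{\ot g} \circ \fG_g \circ \fB^{\ot g} \circ \fG_g .\]
We then rewrite the middle part $\fG_g \circ \fB^{\ot g}$ using Proposition~\ref{PropCommABG}.

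The relation $\fB^{\ot g} \circ \fG_g = \fG_g^{-1} \circ \fB^{\ot g}$ needs to be turned around. Since~$\fB$ is an involution by Proposition~\ref{Conjug}, so is~$\fB^{\ot g}$. Conjugating the relation by~$\fB^{\ot g}$ gives $\fG_g \circ \fB^{\ot g} = \fB^{\ot g} \circ \fG_g^{-1}$. This is the same as inverting both sides of the relation and using $(\fB^{\ot g})^{-1} = \fB^{\ot g}$.

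Substituting this, we obtain
\[\fU^2 = \fB^{\ot g} \circ \fB^{\ot g} \circ \fG_g^{-1} \circ \fG_g = \id_{L^{\ot g}},\]
again because~$\fB^{\ot g}$ is an involution. The case $g=1$ is covered as well, since then $\fU = \fB$ and Proposition~\ref{Conjug} applies directly.

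None of these steps is a genuine obstacle. The only points to check are that~$\fG_g$ is invertible, which holds because each~$\fC_V$ is invertible with the explicit inverse given after Definition~\ref{DefC}, and that the inversion of the commutation relation is done in the right order.
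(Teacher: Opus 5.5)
Your argument is correct and is essentially the paper's proof. Both combine the second relation of Proposition~\ref{PropCommABG} with the fact that $\fB$ is an involution; the paper simply applies the relation directly to the second factor, writing $\fU = \fG_g^{-1} \circ \fB^{\ot g}$, so that $\fU \circ \fU = \fB^{\ot g} \circ \fG_g \circ \fG_g^{-1} \circ \fB^{\ot g} = \id_{L^{\ot g}}$ without any conjugation step. One small slip: inverting both sides of $\fB^{\ot g} \circ \fG_g = \fG_g^{-1} \circ \fB^{\ot g}$ only returns the same relation, so it is your conjugation by $\fB^{\ot g}$, not inversion, that yields $\fG_g \circ \fB^{\ot g} = \fB^{\ot g} \circ \fG_g^{-1}$.
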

\begin{proof}
Since~$\fB$ is an involution by Proposition~\ref{Conjug}, the preceding proposition implies that
\[\fU \circ \fU = (\fB^{\ot g} \circ \fG_g) \circ (\fG_g^{-1} \circ \fB^{\ot g})
= \fB^{\ot g} \circ \fB^{\ot g} = \id_{L^{\ot g}}\]
as asserted.
\end{proof}

\subsection{The standard R-matrix} \label{StandRMat}
The map~$\fU$ introduced in Definition~\ref{DefGU} yields the required isomorphism:
\begin{Theorem} \label{RepIsom}
$\fU$ is an isomorphism from the $\Gamma_{g,1}$-action on~$L^{\ot g}$ coming from the standard R-matrix to the one coming from the alternative R-matrix~$\tilde{R}$.
\end{Theorem}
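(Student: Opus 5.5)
Since $\fU$ is an involution, hence bijective, it only remains to show equivariance with respect to a generating set of~$\Gamma_{g,1}$. By the Dehn--Lickorish theorem~1.4, the mapping classes $[\ft_i]$, $[\fs_i]$ (equivalently $[\fr_i]$), $i=1,\dots,g$, and $[\fn_l]$, $l=1,\dots,g-1$, generate. By Paragraph~2.6, in the standard action they act by $\fT$, $\fS$ in the $i$-th factor and $\fN$ in the factors $l,l+1$. In the alternative action they act by $\tilde{\fT}=\fT^{-1}$, $\tilde{\fS}=\fS^{-1}$, $\tilde{\fN}=\fN^{-1}$, by Proposition~\ref{PropAltRMat} and Lemma~\ref{LemAltRMat}. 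So I have to show
\[\fU \circ (\id \ot \fT \ot \id) = (\id \ot \fT^{-1} \ot \id) \circ \fU,\]
the analogous relation for~$\fS$, and
\[\fU \circ (\id_{L^{\ot(l-1)}} \ot \fN \ot \id) = (\id_{L^{\ot(l-1)}} \ot \fN^{-1} \ot \id) \circ \fU.\]
A cleaner route is to write $\fG_g$ as the composition of the factors $\id_{L^{\ot j}} \ot \fC_{L^{\ot(g-1-j)}}^{-1}$ and commute each generator through them.

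\emph{Single-factor generators.} Let $X=\fT$ or $\fS$ act in slot~$i$. For a factor $\id_{L^{\ot j}} \ot \fC^{-1}_{L^{\ot(g-1-j)}}$ there are three cases.
\begin{itemize}
\item If $j+1=i$, the relations $(\fT\ot\id)\circ\fC=\fC\circ(\fT\ot\id)$ and $(\fS\ot\id)\circ\fC=\fC\circ(\fS\ot\id)$ from Proposition~\ref{PropertC} let $X$ commute with the factor.
\item If $j+1<i$, then $X$ acts inside the module $V=L^{\ot(g-1-j)}$. It is $H$-linear by Corollary~\ref{ABLin} and the remarks before it, so naturality of~$\fC$ gives commutation.
\item If $j+1>i$, the two maps act on disjoint factors.
\end{itemize}
Hence $\fG_g$ commutes with $X$ in slot~$i$. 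Then $\fB^{\ot g}$ turns $X$ into $X^{-1}$ by Proposition~\ref{Conjug}, parts (3) and (4), which gives the claim.

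\emph{The generator $[\fn_l]$.} This is the main obstacle. Here I use Proposition~\ref{PropertN} with $V=L^{\ot(g-l-1)}$. It states that
\[(\fB\ot\fB\ot\id_V)\circ\fC^{-1}_{L\ot V}\circ(\id_L\ot\fC_V^{-1})\]
conjugates $\fN\ot\id_V$ into $\fN^{-1}\ot\id_V$. The composite $\fC^{-1}_{L\ot V}\circ(\id_L\ot\fC^{-1}_V)$ is precisely the pair of factors of~$\fG_g$ with $j=l-1$ and $j=l$, tensored with $\id_{L^{\ot(l-1)}}$.

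For the other factors of $\fG_g$ I argue as follows.
\begin{itemize}
\item Factors with $j\ge l+1$ act on slots beyond $l+1$ and commute with $\fN$ in slots $l,l+1$.
\item Factors with $j\le l-2$ have the form $\id\ot\fC_{V'}^{-1}$, where $V'$ contains $L\ot L$ in the relevant place. Since $\fN$ is $D$-linear, hence $H$-linear, naturality of $\fC$ gives commutation.
\end{itemize}
It is essential that the factors of $\fG_g$ appear in the order $j=0,1,\dots,g-2$. This order lets me move $\fN$ past the factors with $j\ge l+1$ first, then apply Proposition~\ref{PropertN}, and finally pass the factors with $j\le l-2$ via naturality. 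After that, $\fN^{-1}$ must also pass the remaining $\fB$'s on the other slots; since these act on disjoint factors, they commute trivially.

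I expect the bookkeeping of this ordering to be the delicate point: the $\fB^{\ot g}$ sits outside all $\fC$-factors, whereas Proposition~\ref{PropertN} wants $\fB\ot\fB$ adjacent to the two relevant $\fC$-factors. To fix this I plan to use Proposition~\ref{PropCommABG} and Proposition~\ref{PropertC}(4) to move the $\fB$'s on slots $<l$ through the earlier factors, which converts $\fC^{-1}$ into $\fC$. If that proves messy, I would instead verify the $\fN$-relation directly on basis elements $\uu(a_1,b_1)\ot\cdots\ot\uu(a_g,b_g)$. That check uses the element formula for~$\fG_g$, the elements $c_i$, Proposition~\ref{NComm} (which leaves the $c_i$ for $i\ge l+2$ unchanged), and then Proposition~\ref{PropertN} conjugated by the common element $c_l$, using $H$-linearity of $\fN$ and $\fB$.
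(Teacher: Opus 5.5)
Your proposal is correct and follows the paper's proof essentially step by step. It uses the same generators. It commutes $\fT_i,\fS_i$ with each factor $\id_{L^{\ot j}}\ot\fC^{-1}_{L^{\ot(g-1-j)}}$ via Proposition~\ref{PropertC}, naturality and disjointness, and then applies Proposition~\ref{Conjug}. It reduces the $\fN_l$-relation to Proposition~\ref{PropertN} with $V=L^{\ot(g-l-1)}$.

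On the bookkeeping you flag: the order of the factors is not actually essential, since the paper uses that these $\fC$-factors commute pairwise (Corollary~\ref{CommC}) to move those with $j\le l-2$ to the right past $\fN_l$ and cancel them. If you prefer your own route, note that the $\fB$'s that must pass the early factors are those on slots $\ge l$, not $<l$. These pass by plain naturality of $\fC$, because $\fB$ is $H$-linear.
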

\begin{proof}
\begin{pflist}
\item
For the case $g=1$, we have discussed in Paragraph~1.12 that~$\Gamma_{1,1}$ is generated by~$[\ft_1]$ and~$[\fs_1]$, and from the discussion in Paragraph~2.6, we know that, in the representation coming from the standard R-matrix, these generators act by~$\fT$ and~$\fS$, respectively. For the representation coming from the alternative R-matrix~$\tilde{R}$, we get from Proposition~\ref{PropAltRMat} and Lemma~\ref{LemAltRMat} that these generators act by~$\fT^{-1}$ and~$\fS^{-1}$ instead. Since~$\fU = \fB$ if~$g=1$, our assertion follows from Proposition~\ref{Conjug} in this case.

\item
Suppose now that $g \ge 2$. We then know from the Dehn-Lickorish theorem~1.4 and the discussion preceding it that $\Gamma_{g,1}$ is generated by $[\ft_i]$, $[\fs_i]$, and $[\fn_l]$, for $i=1,\dots,g$ and \mbox{$l=1,\dots,g-1$}. Again from the discussion in Paragraph~2.6, we know that, in the representation coming from the standard R-matrix, these generators act by
\[\fT_i \deq \id_{L^{\ot (i-1)}} \ot \fT \ot \id_{L^{\ot (g-i)}}
\qquad \text{and} \qquad
\fS_i \deq \id_{L^{\ot (i-1)}} \ot \fS \ot \id_{L^{\ot (g-i)}}\]
for $i=1,\ldots,g$ and
$\fN_l \deq \id_{L^{\ot (l-1)}} \ot \fN \ot \id_{L^{\ot (g-l-1)}}$
for $l=1,\ldots,g-1$. For the representation coming from the alternative R-matrix~$\tilde{R}$, we get, as in the previous~case, from Proposition~\ref{PropAltRMat} and Lemma~\ref{LemAltRMat} that these generators act by~$\fT_i^{-1}$, $\fS_i^{-1}$, and~$\fN_l^{-1}$, respectively.
We therefore have to show that
$\fU \circ \fT_i = \fT_i^{-1} \circ \fU$ and
$\fU \circ \fS_i = \fS_i^{-1} \circ \fU$ for $i=1,\ldots,g$ as well as
$\fU \circ \fN_l = \fN_l^{-1} \circ \fU$
for $l=1,\ldots,g-1$.

\item
For $j=0,\dots,g-2$, we claim that~$\fT_i$ and~$\fS_i$ commute with
$\id_{L^{\ot j}} \ot \fC_{L^{\ot (g-1-j)}}^{-1}$. This is obvious if $i \le j$. If $i=j+1$, this follows from Proposition~\ref{PropertC}, and if $i>j+1$, this follows from the naturality of~$\fC$ as in the comment directly after Proposition~\ref{PropertC}. In total, we see that~$\fT_i$ and~$\fS_i$ commute with~$\fG_g$. The equations $\fU \circ \fT_i = \fT_i^{-1} \circ \fU$ and
$\fU \circ \fS_i = \fS_i^{-1} \circ \fU$ therefore follow from Proposition~\ref{Conjug}.

\item
The third equation $\fU \circ \fN_l = \fN_l^{-1} \circ \fU$ expands to
\begin{align*}
&\fB^{\ot g} \circ \fC_{L^{\ot (g-1)}}^{-1} \circ (\id_L \ot \fC_{L^{\ot (g-2)}}^{-1})
\circ (\id_{L^{\ot 2}} \ot \fC_{L^{\ot (g-3)}}^{-1})
\circ \cdots \circ (\id_{L^{\ot (g-2)}} \ot \fC_{L}^{-1}) \circ \fN_l= \\
&\fN_l^{-1} \circ \fB^{\ot g} \circ \fC_{L^{\ot (g-1)}}^{-1} \circ (\id_L \ot \fC_{L^{\ot (g-2)}}^{-1})
\circ (\id_{L^{\ot 2}} \ot \fC_{L^{\ot (g-3)}}^{-1})
\circ \cdots \circ (\id_{L^{\ot (g-2)}} \ot \fC_{L}^{-1}).
\end{align*}
Here we can cancel the last terms
$\id_{L^{\ot j}} \ot \fC_{L^{\ot (g-1-j)}}^{-1}$
for $j=l+1,\dots,g-2$, because they commute with~$\fN_l$. We are left with the condition
\begin{align*}
&\fB^{\ot g} \circ \fC_{L^{\ot (g-1)}}^{-1} \circ (\id_L \ot \fC_{L^{\ot (g-2)}}^{-1})
\circ (\id_{L^{\ot 2}} \ot \fC_{L^{\ot (g-3)}}^{-1})
\circ \cdots \circ (\id_{L^{\ot l}} \ot \fC_{L^{\ot (g-1-l)}}^{-1}) \circ \fN_l= \\
&\fN_l^{-1} \circ \fB^{\ot g} \circ \fC_{L^{\ot (g-1)}}^{-1} \circ (\id_L \ot \fC_{L^{\ot (g-2)}}^{-1})
\circ (\id_{L^{\ot 2}} \ot \fC_{L^{\ot (g-3)}}^{-1})
\circ \cdots \circ (\id_{L^{\ot l}} \ot \fC_{L^{\ot (g-1-l)}}^{-1}).
\end{align*}
Furthermore, also the first terms
$\id_{L^{\ot j}} \ot \fC_{L^{\ot (g-1-j)}}^{-1}$
for $j=0,\dots,l-2$ commute with~$\fN_l$ by the naturality of~$\fC$, so that the expression reduces to
\begin{align*}
&\fB^{\ot g} \circ (\id_{L^{\ot (l-1)}} \ot \fC_{L^{\ot (g-l)}}^{-1}) \circ (\id_{L^{\ot l}} \ot \fC_{L^{\ot (g-1-l)}}^{-1}) \circ \fN_l= \\
&\fN_l^{-1} \circ \fB^{\ot g} \circ (\id_{L^{\ot (l-1)}} \ot \fC_{L^{\ot (g-l)}}^{-1}) \circ (\id_{L^{\ot l}} \ot \fC_{L^{\ot (g-1-l)}}^{-1}),
\end{align*}
where we have also used that the terms $\id_{L^{\ot j}} \ot \fC_{L^{\ot (g-1-j)}}^{-1}$ commute with each other, a fact already used in the proof of Proposition~\ref{PropCommABG}. Inserting the definition of~$\fN_l$, this equation reads
\begin{align*}
&\fB^{\ot g} \circ (\id_{L^{\ot (l-1)}} \ot \fC_{L^{\ot (g-l)}}^{-1}) \circ (\id_{L^{\ot l}} \ot \fC_{L^{\ot (g-1-l)}}^{-1}) \circ (\id_{L^{\ot (l-1)}} \ot \fN \ot \id_{L^{\ot (g-l-1)}}) = \\
&(\id_{L^{\ot (l-1)}} \ot \fN^{-1} \ot \id_{L^{\ot (g-l-1)}}) \circ \fB^{\ot g} \circ (\id_{L^{\ot (l-1)}} \ot \fC_{L^{\ot (g-l)}}^{-1})
\circ (\id_{L^{\ot l}} \ot \fC_{L^{\ot (g-1-l)}}^{-1}),
\end{align*}
which follows from
\begin{align*}
&\fB^{\ot (g-l+1)} \circ \fC_{L^{\ot (g-l)}}^{-1} \circ (\id_{L} \ot \fC_{L^{\ot (g-1-l)}}^{-1}) \circ (\fN \ot \id_{L^{\ot (g-l-1)}}) = \\
&(\fN^{-1} \ot \id_{L^{\ot (g-l-1)}}) \circ \fB^{\ot (g-l+1)}
\circ \fC_{L^{\ot (g-l)}}^{-1} \circ (\id_{L} \ot \fC_{L^{\ot (g-1-l)}}^{-1}),
\end{align*}
which is in turn equivalent to
\begin{align*}
&(\fB \ot \fB \ot \id_{L^{\ot (g-l-1)}}) \circ \fC_{L^{\ot (g-l)}}^{-1} \circ (\id_{L} \ot \fC_{L^{\ot (g-1-l)}}^{-1}) \circ (\fN \ot \id_{L^{\ot (g-l-1)}}) = \\
&(\fN^{-1} \ot \id_{L^{\ot (g-l-1)}}) \circ (\fB \ot \fB \ot \id_{L^{\ot (g-l-1)}})
\circ \fC_{L^{\ot (g-l)}}^{-1} \circ (\id_{L} \ot \fC_{L^{\ot (g-1-l)}}^{-1}).
\end{align*}
But this is exactly Proposition~\ref{PropertN} for the $H$-module~$V=L^{\ot (g-l-1)}$.
\qedhere
\end{pflist}
\end{proof}

We note that it is a consequence of the Dehn-Lickorish theorem~1.4 that the mapping class~$[\fd_1]$ of the Dehn twist~$\fd_1$ is a word in the generators considered in the preceding proof. As discussed at the end of Paragraph~2.6, $\fd_1$ acts by the ribbon twist~$\theta_W$, which in our case is given by the action of the ribbon element, as we explained at the end of Paragraph~2.2. As explained in Paragraph~\ref{DrinfDoub}, the inverse Drinfel'd element~$u_D^{-1}$ is a ribbon element for the standard R-matrix~$R$, while~$u_D$ itself is a ribbon element for the alternative R-matrix~$\tilde{R}$, so that it follows from Theorem~\ref{RepIsom} above that $\fU(u_D^{-1}.w) = u_D.\fU(w)$ for all $w \in W = L^{\ot g}$. This gives another proof of this fact, which we have established in Corollary~\ref{CorDrinfU} directly.

After these preparations, we can give the following analogue of Theorem~\ref{ThmModTilde} for the standard R-matrix:
\begin{Cor} \label{CorMod}
The map
$\Theta \colon M^g \to W_e = (L^{\ot g})_e$ that maps a basis element
\mbox{$f \in \Hom(\pi_1(\Sigma_{g},x),G)$} of~$M^g$ to
\[\Theta(f) \deq \fU(\uu(a_1,b_1^{-1}) \ot \dots \ot \uu(a_g,b_g^{-1}))\]
where $a_i \deq f([\alpha_i])$ and $b_i \deq f([\beta_i])$ for $i=1,\ldots,g$, is an isomorphism of the \mbox{$\Gamma_{g}(x)$-modules} $M^g$ and~$W_e$.
\end{Cor}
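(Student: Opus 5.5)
The plan is to write $\Theta = \fU \circ \tilde{\Theta}$ and combine Theorem~\ref{ThmModTilde} with Theorem~\ref{RepIsom}. By Theorem~\ref{ThmModTilde}, $\tilde{\Theta} \colon M^g \to W_e$ is an isomorphism of $\Gamma_{g}(x)$-modules, where $W_e$ carries the action coming from the alternative R-matrix~$\tilde{R}$. This action descends along the capping homomorphism by Proposition~\ref{DescCap}, and is transported from $\Gamma_g(y)$ to $\Gamma_g(x)$ via~$\phi$. It therefore suffices to show two things: first, that~$\fU$ restricts to a bijection of~$W_e$; second, that it intertwines the descended action on~$W_e$ for~$\tilde{R}$ with the corresponding descended action for the standard R-matrix~$R$.

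For the first point, I would use Corollary~\ref{CorDrinfU}. With $h=e$ it gives $\fU(W_e) \subset W_{e^{-1}} = W_e$. Since~$\fU$ is an involution, it restricts to a bijection $W_e \to W_e$, and it is its own inverse there.

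For the second point, Theorem~\ref{RepIsom} says that~$\fU$ intertwines the two $\Gamma_{g,1}$-actions on all of~$L^{\ot g}$. Since~$\fU$ is an involution, it intertwines them in both directions: it maps the $\tilde{R}$-action to the $R$-action as well. I also need the $R$-action to descend along the capping homomorphism on~$W_e$. Here~$\fd_1$ acts by the ribbon element~$u_D^{-1}$, which acts trivially on~$W_e$ by Lemma~\ref{ActDrinf}, exactly as in the proof of Proposition~\ref{DescCap}. Both descended actions of~$\Gamma_g(y)$ are obtained from the $\Gamma_{g,1}$-actions by restricting to~$W_e$ and factoring through the surjective capping map. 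The intertwining relation therefore passes to~$\Gamma_g(y)$, and then to~$\Gamma_g(x)$ under the same conjugation by~$\phi$.

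Composing, $\Theta = \fU|_{W_e} \circ \tilde{\Theta}$ is a composition of two $\Gamma_g(x)$-equivariant bijections, so it is an isomorphism $M^g \to W_e$, with~$W_e$ now carrying the action from the standard R-matrix. The only delicate step is the bookkeeping of directions. Theorem~\ref{RepIsom} is stated as going from the standard action to the alternative one, so the involutivity of~$\fU$ is what lets it be used in the reverse direction. Beyond that, I expect no real obstacle, since all the computational work is contained in Theorem~\ref{RepIsom}.
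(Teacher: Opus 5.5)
Your proposal is correct and follows essentially the same route as the paper. Both factor $\Theta = \fU \circ \tilde{\Theta}$, use Lemma~\ref{ActDrinf} to see that the standard-$R$ action descends on~$W_e$, use Corollary~\ref{CorDrinfU} to get $\fU(W_e) \subset W_e$, and invoke Theorem~\ref{RepIsom} for equivariance. Your explicit use of the involutivity of~$\fU$ to run Theorem~\ref{RepIsom} in the reverse direction (from the $\tilde{R}$-action to the $R$-action) is a detail the paper leaves implicit.
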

\begin{proof}
By Lemma~\ref{ActDrinf}, the inverse Drinfel'd element~$u_D^{-1}$ acts trivially on~$W_e$, so that the $\Gamma_{g,1}$-representation on~$W_e$ coming from the standard R-matrix descends to an action of~$\Gamma_{g}(x)$ for the same reason as the one coming from the alternative \mbox{R-matrix} does, which was discussed in Paragraph~\ref{SpecMod}. This follows of course also from Theorem~\ref{RepIsom}.
It then follows from Corollary~\ref{CorDrinfU} that~$\fU$ maps~$W_e$ to itself and therefore induces an isomorphism between these two $\Gamma_{g}(x)$-representations by Theorem~\ref{RepIsom}. Composing it with the isomorphism~$\tilde{\Theta}$ from Theorem~\ref{ThmModTilde}, we get the assertion.
\end{proof}

It may be noted that the discussion in this paragraph is not interesting in the case~\mbox{$g=0$}, because the group $\Gamma_{0,1}$ is trivial by the Alexander lemma, as already mentioned after the statement of the Dehn-Lickorish theorem~1.4.

\section{Derived block spaces and group cohomology} \label{Sec:DerBlCoh}
\subsection{Yoneda products} \label{YonProd}
We return for a moment to a more general situation and assume that~$\CC$ is a \mbox{$K$-linear} abelian category, where~$K$ is, as usual, our base field. As we stated at the beginning of Paragraph~3.1, there are different approaches to the definition of the Ext-functors. The approach primarily used in~\cite{Mi} is originally due to~N.~Yoneda (cf.~\cite{Y}). For two objects~$X$ and~$Y$ of~$\CC$ and~$m \ge 1$, we denote, following the notation in~\cite[Chap.~IV, Sec.~9, p.~148ff]{HiS}, by~$\Yext^m(X,Y)$ the equivalence classes of $m$-extensions, i.e., exact sequences~$E$ of the form
\[E \colon 0 \to Y \to Z_{m-1} \to \dots \to Z_1 \to Z_0 \to X \to 0\]
under an equivalence relation specified in~\cite[Chap.~VII, \S~3, p.~172]{Mi}.
$\Yext^m(X,Y)$ is a (big) abelian group under the Baer sum (cf.~\cite[Chap.~VII, Thm.~3.3, p.~174]{Mi}). In fact, $\Yext^m(X,Y)$ is a bifunctor that is covariant in~$Y$, contravariant in~$X$, and additive in both variables (cf.~\cite[Chap.~VII, \S~3, p.~175]{Mi}). Furthermore, there is an operation
\[\Yext^m(X,Y) \times \Yext^l(W,X) \to \Yext^{m+l}(W,Y)\]
that is called the Yoneda product and is defined as follows: If
\[F \colon 0 \to X \to Z'_{l-1} \to \dots \to Z'_1 \to Z'_0 \to W \to 0\]
represents an element in~$\Yext^l(W,X)$, we can splice it together with the extension above to obtain the sequence
\[EF \colon 0 \to Y \to Z_{m-1} \to \dots \to Z_1 \to Z_0 \to Z'_{l-1} \to \dots \to Z'_1 \to Z'_0 \to W \to 0\]
that represents the required element in~$\Yext^{m+l}(W,Y)$ (cf.~\cite[p.~171]{Mi}).

In our situation,~$\Yext^m(X,Y)$ is not only a (big) abelian group, but rather a (big) vector space over our base field~$K$. In order to introduce this vector space structure, we need the following lemma, for which we unfortunately do not know a reference:
\begin{Lemma} \label{YonedaNat}
Suppose that $\nu$ is a natural transformation from the
identity functor of~$\CC$ to itself. Then we have
\[\Yext^m(\nu_X,Y) = \Yext^m(X,\nu_Y)\]
for all objects~$X$ and~$Y$ of~$\CC$.
\end{Lemma}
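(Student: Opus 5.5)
We need to show that pulling back an $m$-extension $E$ of $X$ by $Y$ along $\nu_X$ gives the same class as pushing it out along $\nu_Y$. The plan is to build a chain of morphisms of extensions linking $E\nu_X$ and $\nu_Y E$, using naturality of $\nu$ at every object of $E$.

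Let
\[E \colon 0 \to Y \xrightarrow{f_m} Z_{m-1} \xrightarrow{f_{m-1}} \dots \xrightarrow{f_1} Z_0 \xrightarrow{f_0} X \to 0\]
represent a class in $\Yext^m(X,Y)$. Recall the universal properties from~\cite[Chap.~VII, \S~3]{Mi}:
\begin{itemize}
\item $\Yext^m(\nu_X,Y)[E]$ is the class of an extension $E'$ with a morphism of extensions $E' \to E$ that is the identity on $Y$ and $\nu_X$ on $X$. Any extension admitting such a morphism is equivalent to $E'$.
\item Dually, $\Yext^m(X,\nu_Y)[E]$ is the class of any extension $E''$ admitting a morphism $E \to E''$ that is $\nu_Y$ on $Y$ and the identity on $X$.
\end{itemize}

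The key observation is that the family $(\nu_Y,\nu_{Z_{m-1}},\dots,\nu_{Z_0},\nu_X)$ is an endomorphism of the complex $E$. Each square commutes by naturality, since $f_i\circ\nu = \nu\circ f_i$.

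Now form $E'$ as the pullback of $E$ along $\nu_X$. Concretely, replace $Z_0$ by $P\deq Z_0\times_X X$ (the pullback of $f_0$ and $\nu_X$) and keep the rest. This gives a morphism $\pi\colon E'\to E$ with components $\id_Y,\id_{Z_{m-1}},\dots,\id_{Z_1},p_1,\nu_X$.

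Next define a morphism $\sigma\colon E\to E'$ with components $\nu_Y,\nu_{Z_{m-1}},\dots,\nu_{Z_1},\tau,\id_X$. Here $\tau\colon Z_0\to P$ is induced by the pair $(\nu_{Z_0},f_0)$, which is compatible because $f_0\circ\nu_{Z_0}=\nu_X\circ f_0$. The squares to check are:
\begin{itemize}
\item at $X$: the second projection of $P$ composed with $\tau$ is $f_0$, which is correct;
\item at $Z_1\to Z_0$: the map $Z_1\to P$ is $(f_1,0)$, and $\tau f_1=(\nu_{Z_0}f_1,f_0f_1)=(f_1\nu_{Z_1},0)$, as required.
\end{itemize}
So $\sigma$ is a morphism of extensions that is $\nu_Y$ on $Y$ and the identity on $X$. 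By the dual universal property, $[E']=\Yext^m(X,\nu_Y)[E]$. By construction, $[E']=\Yext^m(\nu_X,Y)[E]$, which proves the lemma.

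For $m=0$, $\Yext^0=\Hom$ and the claim is just naturality, $\nu_Y\circ g=g\circ\nu_X$.

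The main obstacle is making sure the cited characterizations of $\Yext(\alpha,Y)$ and $\Yext(X,\beta)$ hold exactly as used: a morphism of extensions that is the identity at one end and the given map at the other determines the induced class, in either direction. This is the standard lemma in~\cite[Chap.~III, Lemma~1.2 and \S~VII.3]{Mi}. The only other point needing care is the pullback compatibility at the $Z_1\to Z_0$ square; when $m=1$, this square is instead the one at $Y$, and the same computation applies with $f_1\colon Y\to Z_0$.
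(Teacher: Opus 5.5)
Your proof is correct. For $m=1$ it is the paper's argument. Your $\tau=(\nu_{Z_0},f_0)$ is the paper's morphism $h$ into the pullback. Your check of the square at $Y$ is the paper's identity $f'\circ\nu_Y=h\circ f$. The dual characterization you invoke is what the paper verifies by hand when it constructs the comparison morphism $k$ out of the pushout.

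The routes differ for $m>1$. The paper factors $E=E_mE_{m-1}\cdots E_1$ into short exact sequences and slides $\nu$ across one splice at a time. It uses the case $m=1$, with $\nu$ evaluated at the intermediate image objects, together with the identity $(E\beta)F\equiv E(\beta F)$ for Yoneda composites. This needs the universal properties only for short sequences.

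You instead treat all $m$ at once. Since $E\nu_X$ differs from $E$ only at $Z_0$, the components $\nu_{Z_i}$ assemble into a morphism $E\to E\nu_X$ with ends $\nu_Y$ and $\id_X$. This is shorter and uniform in $m$. In effect it proves, for the endomorphism $\nu$ of $E$, the general fact that a morphism $E\to E'$ of $m$-extensions with ends $\beta$ and $\alpha$ forces $\beta E\equiv E'\alpha$.

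The price is that you need the $m$-fold version of the pushout characterization, which you only cite. It should be spelled out, but it is short. Suppose $\sigma\colon E\to E''$ is $\beta$ on $Y$ and $\id_X$ on $X$. Take the morphism from the pushout of $f_m$ and $\beta$ to $Z''_{m-1}$ induced by $f''_m$ and $\sigma_{m-1}$, which are compatible because $f''_m\beta=\sigma_{m-1}f_m$. Together with the remaining components $\sigma_{m-2},\dots,\sigma_0$ (if $m\ge 2$) and $\id_X$, it gives a morphism $\beta E\to E''$ that is the identity at both ends, exactly as with the paper's $k$.
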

\begin{proof}
In the notation of~\cite[Chap.~VII, \S~1, p.~162f]{Mi}, our assertion is that, for an extension~$E$ as above, the extensions~$\nu_Y E$ and~$ E \nu_X$ are equivalent. We first consider the case~$m=1$. Suppose that
\[E \colon 0 \xlongrightarrow{} Y \xlongrightarrow{f} Z \xlongrightarrow{g} X
\xlongrightarrow{} 0\]
is a representative of an equivalence class in~$\Yext^1(X,Y)$. By definition, its image under~$\Yext^1(\nu_X,Y)$ is represented by the top line in the diagram
\begin{center}
\begin{tikzcd}[column sep = large] {}
0 \arrow{r}{} &
Y \arrow{d}{\id_Y} \arrow{r}{f'} &
V \arrow{d}{h'} \arrow{r}{g'} &
X \arrow{d}{\nu_X} \arrow{r}{} &
0 &\\
0 \arrow{r}{} &
Y \arrow{r}{f} &
Z \arrow{r}{g} &
X \arrow{r}{} &
0
\end{tikzcd}
\end{center}
where the right square is a pullback diagram and~$f'$ is determined by the universal property of the pullback with respect to the morphism~$f$ and the zero morphism. On the other hand, its image under~$\Yext^1(X,\nu_Y)$ is represented by the bottom line in the diagram
\begin{center}
\begin{tikzcd}[column sep = large] {}
0 \arrow{r}{} &
Y \arrow{d}{\nu_Y} \arrow{r}{f} &
Z \arrow{d}{h''} \arrow{r}{g} &
X \arrow{d}{\id_X} \arrow{r}{} &
0 &\\
0 \arrow{r}{} &
Y \arrow{r}{f''} &
W \arrow{r}{g''} &
X \arrow{r}{} &
0
\end{tikzcd}
\end{center}
where the left square is a pushout diagram and~$g''$ is determined by the universal property of the pushout with respect to the morphism~$g$ and the zero morphism.

Because we have $g \circ \nu_Z = \nu_X \circ g$ by naturality, the universal property of the pullback implies that there is a morphism~\mbox{$h \colon Z \to V$} that is uniquely determined by the property that
$h' \circ h = \nu_Z$ and $g' \circ h = g$. By the uniqueness property of the pullback, we then get $f' \circ \nu_Y = h \circ f$, as we have
\[h' \circ f' \circ \nu_Y = f \circ \nu_Y = \nu_Z \circ f = h' \circ h \circ f\]
and
\[g' \circ f' \circ \nu_Y = 0 = g \circ f = g' \circ h \circ f.\]
The universal property of the pushout therefore yields the existence of a morphism~\mbox{$k \colon W \to V$} with the property that $k \circ f'' = f'$ and $k \circ h'' = h$. This morphism makes the diagram
\begin{center}
\begin{tikzcd}[column sep = large] {}
0 \arrow{r}{} &
Y \arrow{d}{\id_Y} \arrow{r}{f''} &
W \arrow{d}{k} \arrow{r}{g''} &
X \arrow{d}{\id_X} \arrow{r}{} &
0 &\\
0 \arrow{r}{} &
Y \arrow{r}{f'} &
V \arrow{r}{g'} &
X \arrow{r}{} &
0
\end{tikzcd}
\end{center}
commutative. This is obvious for the left square and follows for the right square from the universal property of the pushout, as we have
\[g' \circ k \circ f'' = g' \circ f' = 0 = g'' \circ f''\]
and
\[g' \circ k \circ h'' = g' \circ h = g = g'' \circ h''\]
as required. This proves our assertion in the case~$m=1$.

For the case~$m>1$, we know from~\cite[p.~171]{Mi} that an extension~$E$ can be written as the Yoneda product $E = E_m E_{m-1} \dots E_2 E_1$ of~$m$ short exact sequences. If the first of these is the short exact sequence
\[E_m \colon 0 \xlongrightarrow{} Y \xlongrightarrow{f} Z_{m-1} \xlongrightarrow{g} X_{m-1}
\xlongrightarrow{} 0\]
it follows from the case~$m=1$ that the extension $\nu_Y E = (\nu_Y E_m) E_{m-1} \dots E_2 E_1$ is equivalent to the extension $(E_m \nu_{X_{m-1}}) E_{m-1} \dots E_2 E_1$. Now~\cite[Chap.~VII, \S~3, Eq.~(3), p.~171]{Mi} implies that this extension is equivalent to
$E_m (\nu_{X_{m-1}} E_{m-1}) \dots E_2 E_1$. Applying the case~$m=1$ again and continuing in this way, we finally see that the extensions~$\nu_Y E$ and~$E \nu_X$ are equivalent.
\end{proof}

If $\lambda \in K$ is a scalar, the morphisms $\nu_X \deq \lambda \id_X$ constitute a natural transformation from the identity functor to itself, and by applying it to extensions as indicated above,~$\Yext^m(X,Y)$ becomes a vector space over~$K$ (cf.~\cite[Sec.~5.1, p.~68]{He}). It follows easily from the preceding lemma and \cite[Chap.~VII, Lem.~3.2, p.~173]{Mi} that the Yoneda product is bilinear with respect to this vector space structure.

\vspace{1mm}

Therefore, if we define~$\Yext^0(X,Y) \deq \Hom_\CC(X,Y)$, introduce the direct sum
\[\Yext(X,Y) \deq \bigoplus_{m=0}^\infty \Yext^m(X,Y),\]
and also extend the definition of the Yoneda product to degree zero by applying morphisms to extensions by functoriality as indicated above, then~$\Yext(X,X)$
becomes an associative algebra that is graded over the nonnegative integers~$\N_0$, and $\Yext(X,Y)$ becomes a graded bimodule over~$\Yext(Y,Y)$ on the left and~$\Yext(X,X)$ on the right (cf.~\cite[Chap.~VII, \S~3, p.~175]{Mi}). As a consequence, the canonical projection from~$\Yext(X,X)$ to~$\Yext^0(X,X) = \Hom_\CC(X,X)$ is an algebra homomorphism, which we call the augmentation of~$\Yext(X,X)$. In particular, if~$\CC$ satisfies the assumptions stated in Paragraph~2.1, we can take for~$X$ the unit object~$\1$ of the category and get an algebra homomorphism from~$\Yext(\1,\1)$ to~$\Hom_\CC(\1,\1) \cong K$, which we can use to view any vector space over~$K$ as a module over~$\Yext(\1,\1)$ by pullback along this augmentation. We will refer to $\Yext(\1,\1)$-modules that arise in this way as trivial modules.

\vspace{1mm}

We have also assumed in Paragraph~2.1 that~$\CC$ has enough projectives, and it is shown in~\cite[Chap.~VII, \S~7, p.~182ff]{Mi} that the groups~$\Yext^m(X,Y)$ and the groups~$\Ext^m(X,Y)$ discussed in Paragraph~3.1 are then naturally isomorphic. For a chosen projective resolution
\[\cdots \xlongrightarrow{d_3} P_2 \xlongrightarrow{d_2} P_1 \xlongrightarrow{d_1} P_0
\xlongrightarrow{\xi} X\]
of~$X$, this isomorphism associates with an element of~$\Yext^m(X,Y)$ that is represented by the extension
\vspace{0.5mm}
\[E \colon 0 \to Y \to Z_{m-1} \to \dots \to Z_1 \to Z_0 \to X \to 0\]
the cohomology class of the morphism~$\phi_m \in \Hom_{\CC}(P_m,Y)$ that arises by completing the extension to a positive chain complex by adding zeros in the higher degrees and\hfill
\pagebreak

then applying the comparison theorem already mentioned in Paragraph~3.1 to obtain the diagram
\begin{center}
\begin{tikzcd}[column sep = small] {}
\cdots \arrow{r}{} &
P_{m+2} \arrow{d}{\phi_{m+2}} \arrow{r}{d_{m+2}} &
P_{m+1} \arrow{d}{\phi_{m+1}} \arrow{r}{d_{m+1}} &
P_m \arrow{d}{\phi_m} \arrow{r}{d_m} &
P_{m-1} \arrow{d}{\phi_{m-1}} \arrow{r}{} &
\cdots \arrow{r}{} &
P_1 \arrow{d}{\phi_1} \arrow{r}{d_1} &
P_0 \arrow{d}{\phi_0} \arrow{r}{\xi} &
X \arrow{d}{\id_X} \arrow{r}{} &
0 &\\
\cdots \arrow{r}{} &
0 \arrow{r}{} &
0 \arrow{r}{} &
Y \arrow{r}{} &
Z_{m-1} \arrow{r}{} &
\cdots \arrow{r}{} &
Z_1 \arrow{r}{} &
Z_0 \arrow{r}{} &
X \arrow{r}{} &
0
\end{tikzcd}
\end{center}
(cf.~also~\cite[Sec.~2.6, p.~39]{Be}).
Note that the diagram shows that
\[d_{m+1}^*(\phi_m) = \phi_m \circ d_{m+1} = 0,\]
so that~$\phi_m$ is a cocycle. With the help of this isomorphism, the Yoneda product can be carried over to the groups~$\Ext^m(X,Y)$. As a consequence, the direct sum
\[\Ext(X,Y) \deq \bigoplus_{m=0}^\infty \Ext^m(X,Y)\]
is a graded bimodule over the graded algebra~$\Ext(Y,Y)$ on the left and the graded algebra~$\Ext(X,X)$ on the right with respect to the induced Yoneda product.

We note that the preceding lemma has an analogue for~$\Ext^m(X,Y)$ that is substantially easier to prove:
\begin{Lemma} \label{ExtNat}
Suppose that $\nu$ is a natural transformation from the
identity functor of~$\CC$ to itself. Then we have
\[\Ext^m(\nu_X,Y) = \Ext^m(X,\nu_Y)\]
for all objects~$X$ and~$Y$ of~$\CC$.
\end{Lemma}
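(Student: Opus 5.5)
We compute both sides using a fixed projective resolution of~$X$, as in Paragraph~3.1,
\[\cdots \xlongrightarrow{d_2} P_1 \xlongrightarrow{d_1} P_0 \xlongrightarrow{\xi} X.\]
Then $\Ext^m(X,Y)$ is the $m$-th cohomology of the cochain complex $\Hom_\CC(P_\bullet,Y)$ with differentials~$d_{m+1}^*$.

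The map $\Ext^m(X,\nu_Y)$ is induced by postcomposition. It sends the class of a cocycle $\phi \in \Hom_\CC(P_m,Y)$ to the class of $\nu_Y \circ \phi$.

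The map $\Ext^m(\nu_X,Y)$ is computed from a lift of $\nu_X$ to a chain map $P_\bullet \to P_\bullet$ over $\nu_X$, as provided by the comparison theorem. The key observation is that the natural transformation itself supplies such a lift, namely the family $(\nu_{P_k})_{k \ge 0}$. Naturality of~$\nu$ with respect to the morphisms $d_k \colon P_k \to P_{k-1}$ gives
\[d_k \circ \nu_{P_k} = \nu_{P_{k-1}} \circ d_k,\]
so $(\nu_{P_k})$ is a chain map. Naturality with respect to~$\xi$ gives $\xi \circ \nu_{P_0} = \nu_X \circ \xi$, so this chain map lies over~$\nu_X$. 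By the uniqueness part of the comparison theorem, any two lifts are chain homotopic, so we may use this one. Hence $\Ext^m(\nu_X,Y)$ sends the class of~$\phi$ to the class of $\phi \circ \nu_{P_m}$.

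It remains to apply naturality of~$\nu$ once more, now to the morphism $\phi \colon P_m \to Y$. This gives
\[\phi \circ \nu_{P_m} = \nu_Y \circ \phi\]
already on the level of cochains, before passing to cohomology. Therefore the two induced maps on $\Ext^m(X,Y)$ coincide, which proves the assertion.

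There is no real obstacle in this argument. The only point needing care is the justification that $(\nu_{P_k})$ may be used as the lift, and this is exactly the independence of the induced map from the choice of lift guaranteed by the comparison theorem. This explains why the statement is much easier here than in Lemma~\ref{YonedaNat}: everything happens at the level of morphisms, and no pullback or pushout constructions are needed.
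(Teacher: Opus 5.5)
Your proposal is correct and follows the paper's own proof: lift $\nu_X$ by the chain map $(\nu_{P_k})$ supplied by naturality, so $\Ext^m(\nu_X,Y)$ is induced by precomposition with $\nu_{P_m}$, which agrees with postcomposition by $\nu_Y$ on $\Hom_\CC(P_m,Y)$ by naturality once more. Your explicit appeal to the uniqueness-up-to-homotopy part of the comparison theorem just spells out a step the paper leaves implicit.
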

\begin{proof}
For a projective resolution of~$X$ as above, naturality implies that~$\nu_X$ lifts to a chain map, so that the diagram
\begin{center}
\begin{tikzcd}[column sep = large] {}
\cdots \arrow{r}{} &
P_2 \arrow{d}{\nu_{P_2}} \arrow{r}{d_2} &
P_1 \arrow{d}{\nu_{P_1}} \arrow{r}{d_1} &
P_0 \arrow{d}{\nu_{P_0}} \arrow{r}{\xi} &
X \arrow{d}{\nu_X} \arrow{r}{} &
0 &\\
\cdots \arrow{r}{} &
P_2 \arrow{r}{d_2} &
P_1 \arrow{r}{d_1} &
P_0 \arrow{r}{\xi} &
X \arrow{r}{} &
0
\end{tikzcd}
\end{center}
commutes. Therefore the morphism $\Ext^m(\nu_X,Y)$ is induced by precomposition with~$\nu_{P_m}$ on the space~$\Hom_\CC(P_m,Y)$, while $\Ext^m(X,\nu_Y)$ is induced by postcomposition with~$\nu_{Y}$ on this space. By naturality, these two morphisms agree.
\end{proof}

The Yoneda product is closely related to the so-called cup product; in fact, the two products agree in special cases. Although we will not need the cup product in the sequel, we discuss the relation of the two products in Paragraph~\ref{CupProd} of the appendix, because the majority of the related literature uses the language of cup products instead of Yoneda products.

\subsection{Reduction to group cohomology} \label{RedGroupCohom}
We now return to the case of the Drinfel'd double of a finite group that we introduced in Paragraph~\ref{DrinfDoub}. In Paragraph~\ref{RepDrinf}, we have discussed the $K[G]$-module~$W_e$ that is associated with an arbitrary $D(G)$-module~$W$. The arising cohomology groups of the Drinfel'd double can be computed via the group cohomology of~$W_e$, a fact recorded for the case~$W=K$ in~\cite[Lem.~1.3, p.~311]{W1} (cf.~also~\cite[Chap.~IV, Sec.~12, Eq.~(12.1), p.~163]{HiS}):
\begin{Proposition} \label{GroupCohom}
$\Ext_D^m(K, W) \cong H^m(G, W_e)$
\end{Proposition}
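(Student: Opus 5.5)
We will derive the isomorphism from the adjunction between pullback along the algebra homomorphism $\pi \colon D(G) \to K[G]$, $\delta_a \ot b \mapsto \delta_{a,e}\, b$, and the functor $W \mapsto W_e$, which was described in Paragraph~\ref{RepDrinf}. The unit object~$K$ of the category of $D$-modules carries the action given by the counit $\varepsilon_D(\delta_a \ot b) = \delta_{a,e}$. This is exactly the pullback along~$\pi$ of the trivial $K[G]$-module~$K$. Therefore it suffices to compare a projective resolution of~$K$ over~$K[G]$ with one over~$D$.

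The first step is to choose a projective resolution $\cdots \to P_1 \to P_0 \to K \to 0$ of the trivial module over~$K[G]$, for example the standard bar resolution. We then view each~$P_m$ as a $D$-module by pullback along~$\pi$. Pullback is exact, since it does not change the underlying vector spaces or maps, so the pulled-back complex is still a resolution of~$K$, now over~$D$. By Corollary~\ref{TrivConjClass}, every~$P_m$ remains projective over~$D$. Hence this complex is a projective resolution of the unit object over~$D$, and we may use it to compute $\Ext_D^m(K,W)$ as in Paragraph~3.1.

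The second step applies the adjunction termwise. It gives isomorphisms
\[\Hom_D(\pi^* P_m, W) \cong \Hom_{K[G]}(P_m, W_e).\]
In terms of maps, a $D$-linear map $\pi^*P_m \to W$ has its image in~$W_e$, because $\delta_e \ot e$ acts as the identity on~$\pi^* P_m$. Conversely, a $K[G]$-linear map $P_m \to W_e$ is automatically $D$-linear, because $(\delta_a \ot e)$ acts on~$W_e$ by~$\delta_{a,e}$. We must check that these isomorphisms are natural in the first variable, so that they commute with the coboundary maps~$d_m^*$. This is immediate from the explicit description: in both cases the isomorphism is the identity on the underlying maps. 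Thus the two cochain complexes are isomorphic. Taking cohomology yields $\Ext_D^m(K,W) \cong \Ext^m_{K[G]}(K, W_e) = H^m(G,W_e)$.

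The only point requiring care is the projectivity of the pulled-back modules, and Corollary~\ref{TrivConjClass} supplies it directly. The remaining verifications are routine.

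We will also record that the isomorphism is natural in~$W$, which is clear from the construction. This is presumably needed later for the mapping class group equivariance and for compatibility with the Yoneda product. Naturality in~$W$ ensures that the isomorphism intertwines the maps $f_*$ induced by $D$-linear endomorphisms of~$W$, such as the mapping class group action restricted to~$W_e$.
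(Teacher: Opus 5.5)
Your proposal is correct and follows essentially the same route as the paper: you pull a projective resolution of the trivial $K[G]$-module back along $D(G)\to K[G]$, invoke Corollary~\ref{TrivConjClass} for projectivity over $D$, and apply the adjunction $\Hom_D(P_m,W)\cong\Hom_{K[G]}(P_m,W_e)$ termwise before taking cohomology. Your explicit checks, namely that the unit object is this pullback, that the termwise isomorphisms commute with the coboundaries, and that they are natural in $W$, fill in points the paper leaves implicit.
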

\begin{proof}
We choose a projective resolution of the trivial $K[G]$-module~$K$:
\[K \longleftarrow P_1 \longleftarrow P_2 \longleftarrow \cdots \]
From Corollary~\ref{TrivConjClass}, we know that we can turn~$P_m$ into a projective module over~$D$ via the action $(\delta_a \ot b).v \deq \delta_{a,e} \, b.v$, so that we obtain a projective resolution of the trivial module~$K$ over~$D$. In view of the functor adjunction discussed after that corollary, we have
\[\Hom_{D}(P_m, W) \cong \Hom_G(P_m, W_e).\]
But the cohomology of the latter cochain complex is just group cohomology of~$G$ with coefficients in the $G$-module $W_e$. It should be noted in this context that the cohomology groups~$\Ext_{K[G]}^m(K, W_e)$ and~$\Ext_{\Z[G]}^m(\Z, W_e)$ are isomorphic (cf.~\cite[Sec.~1.1, p.~3]{E}).
\end{proof}

As we have discussed in Paragraph~\ref{YonProd}, $\Ext_D^m(K, W)$ and $\Yext_D^m(K, W)$ are also isomorphic. It is immediate from the description there that, if
\[0 \to W \to Z_{m-1} \to \dots \to Z_1 \to Z_0 \to K \to 0\]
represents an element in $\Yext_D^m(K, W)$, the corresponding element in
$\Yext_G^m(K, W_e)$ is
\[0 \to W_e \to (Z_{m-1})_e \to \dots \to (Z_1)_e \to (Z_0)_e \to K \to 0,\]
where we have used that $K_e=K$. In view of this fact, the isomorphisms described in the preceding proposition combine to an algebra isomorphism between the graded algebra~$\Ext_D(K, K)$ and the cohomology ring
\[H(G, K) \deq \bigoplus_{m=0}^\infty H^m(G, K)\]
endowed with the Yoneda product, which is, as will be explained in Paragraph~\ref{CupProd} of the appendix, equal to the cup product in this case. Using this fact, the right $\Ext_D(K, K)$-module $\Ext_D(K, W)$ then becomes isomorphic to the right $H(G, K)$-module $H(G, W_e) \deq \bigoplus_{m=0}^\infty H^m(G, W_e)$.

In particular, this discussion applies to the module $W \deq L^{\ot g}$ considered in Paragraph~\ref{SpecMod}. As recalled there, this module carries an action of the mapping class group~$\Gamma_{g,1}$ that preserves the homogeneous components, and therefore especially~$W_e$. Since this action is by~$D$-linear maps, it is also by $K[G]$-linear maps. We recall in addition from Paragraph~\ref{RelRepVar} and Paragraph~\ref{StandRMat} that this $K[G]$-module is isomorphic to the linearized representation variety~$M^g$ via~$\tilde{\Theta}$ and~$\Theta$, respectively.

In Paragraph~3.2, we have shown that the action of~$\Gamma_{g,1}$ descends to an action of~\mbox{$\Gamma_{g} = \Gamma_{g,0}$} in cohomology, i.e., on the derived block spaces
$\TFT^m(\Sigma_{g}) = \Ext_D^m(K, L^{\ot g})$ introduced there. As a consequence, this must also hold for the corresponding cohomology groups of~$G$:
\begin{Cor} \label{Group}
For all $m \ge 0$, we have $\TFT^m(\Sigma_{g}) \cong H^m(G, M^g)$ as $\Gamma_{g}$-modules.
\end{Cor}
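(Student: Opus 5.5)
The plan is to combine Proposition~\ref{GroupCohom} with the equivariant identification of Theorem~\ref{ThmModTilde} and Proposition~\ref{GLin}. I will make each isomorphism in the chain natural in the coefficient module, so that the mapping class group action can be carried along. The degree zero part of the argument is formally identical to the higher degrees, so I treat all $m \ge 0$ uniformly.

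\textbf{Step 1: the isomorphism at cochain level.} Choose a projective resolution $P_\bullet \to K$ over $K[G]$. As in the proof of Proposition~\ref{GroupCohom}, inflate it along $D(G) \to K[G]$ to a projective resolution over $D$. The adjunction following Corollary~\ref{TrivConjClass} then gives
\[\Hom_D(P_m, W) \cong \Hom_G(P_m, W_e).\]
This isomorphism is natural in the $D$-module $W$, since it is restriction of a $D$-linear map to its image in the $e$-component. Consequently every $D$-linear endomorphism $\phi$ of $W=L^{\ot g}$ acts compatibly: postcomposition with $\phi$ on the left corresponds to postcomposition with $\phi|_{W_e}$ on the right. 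Note that $\phi$ preserves $W_e$, so this restriction makes sense.

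\textbf{Step 2: compare with the action from Paragraph~3.2.} This is the main obstacle. There, the $\Gamma_g$-action on $\TFT^m(\Sigma_g)=\Ext^m_D(K,L^{\ot g})$ is obtained as follows.
\begin{enumerate}
\item $\Gamma_{g,1}$ acts on $L^{\ot g}$ by $D$-linear maps.
\item This gives an action on $\Ext_D^m(K,L^{\ot g})$ by postcomposition, which is independent of the chosen projective resolution.
\item One shows that this action descends along the capping map $\Gamma_{g,1}\to\Gamma_g$.
\end{enumerate}
I need to check that the model used in Paragraph~3.2, with an extra boundary component labeled by a projective resolution of the unit, reduces up to the canonical comparison isomorphisms to exactly this postcomposition action on $\Ext$.

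Once this is granted, Step 1 shows that the $\Gamma_{g,1}$-action on $\Ext^m_D(K,W)$ corresponds to the action on $H^m(G,W_e)$ induced functorially by the $\Gamma_{g,1}$-action on the coefficient module $W_e$. By Proposition~\ref{DescCap}, the latter action already factors through $\Gamma_g(y)$ on the coefficients. Hence the descended actions on the two sides agree, because capping is surjective.

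\textbf{Step 3: replace $W_e$ by $M^g$.} Theorem~\ref{ThmModTilde} gives a $\Gamma_g(x)$-equivariant isomorphism $\tilde\Theta\colon M^g\to W_e$, using the transport $\Gamma_g(y)\cong\Gamma_g(x)$ via $\phi$. Proposition~\ref{GLin} shows that $\tilde\Theta$ is moreover $K[G]$-linear. It is therefore an isomorphism of $K[G]$-modules commuting with the mapping class group action. Functoriality of $H^m(G,-)$ then yields a $\Gamma_g$-equivariant isomorphism
\[H^m(G,M^g)\cong H^m(G,W_e).\]

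\textbf{Step 4: choice of R-matrix.} If $\TFT^m$ is defined with the standard R-matrix, I use $\Theta$ from Corollary~\ref{CorMod} instead of $\tilde\Theta$. Its $K[G]$-linearity follows from Proposition~\ref{GLin} together with the $H$-linearity of $\fU$ established in the proof of Corollary~\ref{CorDrinfU}. This is consistent with Theorem~\ref{RepIsom}.

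Composing the isomorphisms of Steps 1–3 (with Step 4 if needed) gives the claimed isomorphism $\TFT^m(\Sigma_g)\cong H^m(G,M^g)$ of $\Gamma_g$-modules.
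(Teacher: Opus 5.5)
Your proposal is correct and follows essentially the same route as the paper. The paper obtains the corollary by applying Proposition~\ref{GroupCohom}, which is natural in the coefficient module, to $W = L^{\ot g}$, identifying $W_e$ with $M^g$ as $K[G]$-modules and mapping class group modules via $\tilde{\Theta}$ or $\Theta$, and invoking the descent to $\Gamma_g$ from Paragraph~3.2. Your Step~2 concern is settled by the fact, noted at the end of Paragraph~2.6, that the action on $\Hom_D(P_m, L^{\ot g})$ is by postcomposition; your use of Proposition~\ref{DescCap} (the inflated resolution maps into $W_e$, so the descent is visible already at cochain level) is a harmless refinement of what the paper simply takes from Part~I.
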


The derived block spaces in the various degrees can be combined together into a graded vector space by forming the direct sum
\[\TFT^\infty(\Sigma_g) \deq \bigoplus_{m=0}^\infty \TFT^m(\Sigma_g) \]
that is by the preceding corollary isomorphic to~$H(G, M^g)$. For $g=0$, we have $L^{\ot g} \cong K$ and therefore $\TFT^\infty(\Sigma_0) \cong H(G, K)$. The considerations in Paragraph~\ref{YonProd} therefore imply that~$\TFT^\infty(\Sigma_g)$ is a right module over $\TFT^\infty(\Sigma_0)$. The action of the mapping class group is linear with respect to this module structure:

\begin{Proposition} \label{MapClassYon}
$\Gamma_{g}$ acts on $\TFT^\infty(\Sigma_g)$ by~$\TFT^\infty(\Sigma_0)$-linear maps.
\end{Proposition}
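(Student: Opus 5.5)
The plan is to reduce the statement to the naturality of the Yoneda product in its left argument. Recall from Paragraph~3.2 how the action on $\TFT^m(\Sigma_g) = \Ext_D^m(K, L^{\ot g})$ arises. An element of $\Gamma_{g}$ is lifted to an element of $\Gamma_{g,1}$, which acts on $W = L^{\ot g}$ by a $D$-linear automorphism $\phi$ (a word in $\fT_i$, $\fS_i$, $\fN_l$ and their inverses). The action in cohomology is then induced by postcomposition, i.e., it is $\phi_*$ on $\Ext_D^m(K, W)$, and it is independent of the chosen lift. It therefore suffices to show the following: for any $D$-linear map $\phi \colon W \to W'$, the induced map $\phi_* \colon \Ext_D(K, W) \to \Ext_D(K, W')$ is a homomorphism of right $\Ext_D(K,K)$-modules.

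To prove this, I would pass to the Yoneda description, using the natural isomorphism $\Ext \cong \Yext$ from Paragraph~\ref{YonProd}. Under this isomorphism, $\phi_*$ on $\Ext^m_D(K,W)$ corresponds to $\Yext^m(K,\phi)$, the pushout of extensions along $\phi$; this is the naturality of the comparison isomorphism in the second variable. Now take an $m$-extension $E$ of $K$ by $W$ and an $l$-extension $F$ of $K$ by $K$. The pushout $\phi E$ only modifies the leftmost short exact piece $E_m$ of $E = E_m \cdots E_1$. Hence $\phi(EF)$ and $(\phi E)F$ are literally the same spliced sequence, or at least equivalent by \cite[Chap.~VII, \S~3, Eq.~(3), p.~171]{Mi}, which expresses associativity of splicing with morphisms. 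In degree~$0$, where $E$ is a morphism $K \to W$, the claim is simply associativity of composition, $\phi \circ (E \circ F) = (\phi \circ E)\circ F$, together with the convention that degree-zero elements act by functoriality. Linearity over $K$ of $\phi_*$ follows from bilinearity of the Yoneda product and additivity of $\Yext$ in the second variable.

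The main obstacle is bookkeeping. One must make sure that the action of $\Gamma_g$ on the derived block spaces in Paragraph~3.2 really is given by postcomposition with a $D$-linear map on $\Ext_D(K, L^{\ot g})$. In that paragraph, the construction was phrased via a surface with an additional boundary component labeled by a projective resolution of $\1$. I would check that the resulting map on cohomology is $\Ext^m_D(P_\bullet \to K, \phi)$, with $\phi$ the action on $L^{\ot g}$, and that the module structure by $\TFT^\infty(\Sigma_0) = \Ext_D(K,K)$ acts through the first variable. Given this, the proposition is the bimodule statement recalled at the end of Paragraph~\ref{YonProd}. That statement says $\Ext(X,Y)$ is an $\Ext(Y,Y)$-$\Ext(X,X)$-bimodule, and it extends to the fact that postcomposition with morphisms $Y \to Y'$ commutes with the right action.
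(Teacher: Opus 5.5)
Your proposal is correct and follows essentially the same route as the paper: the action of an element of $\Gamma_g$ is induced by a $D$-linear map $f\colon W\to W$, writing $E$ as a splice of short exact sequences gives $f(EF)=(fE)F$, and the degree-zero cases follow from functoriality of $\Yext$ and associativity of composition. The paper handles the case $m\ge 1$, $l=0$ by citing \cite[Chap.~VII, \S~3, Eq.~(2), p.~171]{Mi}; your $K$-linearity remark covers it just as well, since degree-zero elements of $\Ext_D(K,K)\cong K$ act by scalars.
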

\begin{proof}
Suppose that
\[E \colon 0 \to W \to Z_{m-1} \to \dots \to Z_1 \to Z_0 \to K \to 0\]
represents an element in~$\Yext_D^m(K,W)$ and that
\[F \colon 0 \to K \to Z'_{l-1} \to \dots \to Z'_1 \to Z'_0 \to K \to 0\]
represents an element in~$\Yext_D^l(K,K)$, where~$m \ge 1$ and~$l \ge 1$. The action of an element of~$\Gamma_{g}$ is induced by a $D$-linear map~$f \colon W \to W$. By writing~$E$ as a product of short exact sequences as in the proof of Lemma~\ref{YonedaNat}, we get, as already used there, that $f(EF) = (fE)F$, which is the assertion in this case. If~$m=0$, we
have~$\Yext_D^m(K,W) = \Hom_D(K,W)$, and if~$e \in \Hom_D(K,W)$ and~$l \ge 1$, we have by~\cite[Chap.~VII, \S~3, Eq.~(2), p.~171]{Mi} that~$f(eF) = (fe)F$, which is the assertion in this case. The case~$m \ge 1$ and~$l = 0$ follows from the very same reference, and the case~$m=l=0$ follows from the associativity of composition.

We note that a different proof of this proposition can be given by viewing the Yoneda product as a special case of the cup product and describing the latter through projective resolutions. In this picture, the proof of the assertion is even easier, as~$f$ acts only on the coefficients~$W = L^{\ot g}$. This viewpoint is explained in greater detail in Paragraph~\ref{CupProd} and Paragraph~\ref{CupProdRes} of the appendix.
\end{proof}

As we will see in Paragraph~\ref{Abel}, the $\TFT^\infty(\Sigma_0)$-module structure on~$\TFT^\infty(\Sigma_g)$ is particularly easy in the case of abelian groups.

Let us mention a connection of Corollary~\ref{Group} to geometry. As shown in
\cite[Chap.~VI, Prop.~3.1, p.~191]{HiS}, the group $H^0(G, M^g)$ is isomorphic to the group of invariant elements of~$M^g$. Now an element $m = \sum \zeta_f f \in M^g$, where the sum is over all group homomorphisms~$f \in \Omega^g = \Hom(\pi_1(\Sigma_{g},x),G)$ and only finitely many coefficients $\zeta_f \in K$ are nonzero, is invariant if and only if $\zeta_{a.f} = \zeta_f$ for all $a \in G$. Therefore, this space is spanned by the characteristic functions on the orbits of the \mbox{$G$-action} on the representation variety~$\Omega^g$ with respect to the action described at the end of Paragraph~\ref{RelRepVar}. The space of these orbits is known as the character variety of~$\pi_1(\Sigma_{g},x)$ in~$G$ (cf.~\cite[Sec.~11, p.~5191]{Si}). Since~$G$ is finite, every such orbit in the character variety corresponds to an isomorphism class of principal $G$-bundles over~$\Sigma_{g}$ (cf.~\cite[Exerc.~14.3, p.~196]{F}; \cite[\S~13.9, p.~66]{St}). In this way, we can interpret the ordinary block space~$\TFT^0(\Sigma_{g}) = \TFT(\Sigma_{g})$ as the linear span of the isomorphism classes of principal $G$-bundles over~$\Sigma_{g}$. A similar point in the case~$g=1$ is made in~\cite[Prop.~3.3, p.~3705]{SW}. This connection suggests that the derived block spaces for the Drinfel'd double of a finite group might play a role in a potential derived version of Dijkgraaf-Witten theory over fields of finite characteristic~(cf.~\cite{DW},~\cite{FQ}).

\section{Decomposition of the representation} \label{Sec:DecRep}
\subsection{The general case} \label{GenCase}
In Paragraph~\ref{RelRepVar}, we have introduced the representation variety $\Omega^g$ and the linearized representation variety~$M^g$ with~$\Omega^g = \Hom(\pi_1(\Sigma_g,x),G)$ as a basis, as we just recalled. For a subgroup~$U \le G$ of~$G$, we now refine this definition and introduce the set
\[\Omega^{g}_U \deq \{f \in \Omega^g \mid \im(f) = U \}\]
as well as the subspace~$M^g_U \deq K[\Omega^{g}_U]$ of~$M^g$. It is immediate from this definition that~$\Omega^g$ is the disjoint union of its subsets~$\Omega^{g}_U$ for the various subgroups~$U$ of~$G$, and consequently
\[M^g =\bigoplus_{U \le G} M^g_U\]
is the direct sum of its subspaces~$M^g_U$. The set~$\Omega^{g}_U$, and consequently the vector space~$M^g_U$, are invariant under the action of~$\Gamma_{g}(x)$ described in Paragraph~\ref{RelRepVar}, but not under the action of~$G$ also described there, unless~$U$ is normal. To remedy that, we make still another definition: For a conjugacy class~$O$ of subgroups, i.e., the orbit
\[O = \{a U a^{-1} \mid a \in G \}\]
of a given subgroup~$U$ of~$G$ under the conjugation action of~$G$, we define
\[\Omega^{g}(O) \deq \bigcup_{V \in O} \Omega^{g}_V
\qquad \text{and} \qquad
M^g(O) \deq \bigoplus_{V \in O} M^g_V.\]
If $O_1,\dots,O_r$ are the distinct conjugacy classes of subgroups of~$G$, we have
\[\Omega^{g} = \Omega^{g}(O_1) \cup \dots \cup \Omega^{g}(O_r)\]
and consequently
$M^{g} = M^{g}(O_1) \oplus \dots \oplus M^{g}(O_r)$.

We can describe these sets in a way that separates the action of~$G$ and the action of~$\Gamma_{g}(x)$ more clearly. For this, suppose that $O = \{a U a^{-1} \mid a \in G \}$ is the orbit of the subgroup~$U$ of~$G$ under conjugation. The centralizer
\[C \deq \{a \in G \mid ab = ba \text{ for all } b \in U\}\]
of~$U$ is a normal subgroup of the normalizer
\[N \deq \{a \in G \mid aba^{-1} \in U \text{ for all } b \in U\}\]
of~$U$, and we denote the arising quotient group by~$Z \deq N/C$. Note that~$C$ is the kernel of the natural homomorphism~$N \to \Aut(U)$ that is given by conjugation, and therefore this homomorphism induces a group homomorphism~$Z \to \Aut(U)$. If $f \in \Hom(\pi_1(\Sigma_g,x),G)$ has image~$U$, we can compose~$f$ with conjugation by an element from~$N$ and get in this way an action of~$Z$ on~$\Omega_U^{g}$. On the other hand, for a left coset $aC \in G/C$ of~$C$ and an element~$b \in N$, we have that
\[aCb = (ab) (b^{-1} C b) = abC\]
is again a left coset, so that we get a right action of~$N$ on~$G/C$. Because~$C$ obviously acts trivially on~$G/C$, this action also descends to an action of~$Z$.

The Cartesian product $G/C \times \Omega_U^{g}$ carries an action of~$G$ via the standard action of~$G$ on the coset space~$G/C$ and an action of~$\Gamma_{g}(x)$ from its action
on~$\Omega_U^{g}$. It also carries a left action of~$Z$ coming from the right action of~$Z$ on~$G/C$ and its left action on~$\Omega_U^{g}$ via
\[z.(aC, f) \deq (aC.z^{-1}, z.f)\]
for $z \in Z$, $aC \in G/C$, and~$f \in \Omega_U^{g}$. All three actions commute. In analogy with the construction of associated bundles (cf.~\cite[Sec.~I-5, p.~54f]{KN}), we denote the set of orbits of the $Z$-action by~$G/C \times_Z \Omega_U^{g}$. Because the actions commute, this quotient space inherits both a $G$-action and a~$\Gamma_{g}(x)$-action from the Cartesian product. This space is isomorphic to~$\Omega^g(O)$:
\begin{Proposition} \label{PropDecomp}
The map
\[\varphi \colon G/C \times \Omega_U^{g} \to \Omega^g(O),~(aC,f) \mapsto
([\gamma] \mapsto af([\gamma])a^{-1})\]
descends to a bijection between~$G/C \times_Z \Omega_U^{g}$ and~$\Omega^g(O)$ that is equivariant with respect to both the $G$-action and the~$\Gamma_{g}(x)$-action.
\end{Proposition}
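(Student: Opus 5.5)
We verify, in order, that $\varphi$ is well-defined on $G/C \times \Omega_U^{g}$, that it is constant on $Z$-orbits, that it lands in $\Omega^g(O)$, that the induced map on orbits is surjective and injective, and finally that it is equivariant. Throughout we write $c_a$ for conjugation by~$a$, so that $\varphi(aC,f) = c_a \circ f$. This is a homomorphism from $\pi_1(\Sigma_g,x)$ to~$G$ with image $aUa^{-1} \in O$, hence an element of $\Omega^g(O)$.

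Well-definedness and $Z$-invariance are direct computations. If $aC = a'C$, then $a' = ac$ with $c \in C$. Since $c$ centralizes $U = \im(f)$, we get $c_{a'} \circ f = c_a \circ c_c \circ f = c_a \circ f$. For $z = bC \in Z$ with $b \in N$, the action reads $z.(aC,f) = (ab^{-1}C, c_b \circ f)$. Here $c_b \circ f$ again has image $bUb^{-1} = U$, so it lies in $\Omega_U^{g}$. Applying $\varphi$ gives $c_{ab^{-1}} \circ c_b \circ f = c_a \circ f$, so $\varphi$ is constant on $Z$-orbits and descends to the quotient.

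Surjectivity: if $f' \in \Omega^g_V$ with $V = aUa^{-1}$, then $f \deq c_{a^{-1}} \circ f'$ has image~$U$, and $\varphi(aC,f) = f'$.

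Injectivity is the only step needing an idea. Suppose $c_a \circ f = c_{a'} \circ f'$ with $f, f' \in \Omega_U^{g}$, and set $b \deq a'^{-1}a$. Then $c_b \circ f = f'$. Comparing images gives $bUb^{-1} = U$, so $b \in N$. Putting $z \deq bC$, we have $z.(aC,f) = (ab^{-1}C, c_b \circ f) = (a'C, f')$, so the two pairs lie in the same $Z$-orbit.

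Equivariance is then immediate. For the $G$-action, $\varphi(gaC, f) = c_g \circ c_a \circ f = g.\varphi(aC,f)$. For the $\Gamma_g(x)$-action, $([\psi].f)([\gamma]) = f([\psi^{-1}\circ\gamma])$ acts on the source of~$f$ while conjugation acts on the target. These operations commute, so $\varphi(aC,[\psi].f) = [\psi].\varphi(aC,f)$.

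The main thing to check carefully is the convention in the injectivity step: the $Z$-action must use $aC.z^{-1}$, i.e., right multiplication by $b^{-1}$. This is exactly what makes $b = a'^{-1}a$ work. The rest is routine.
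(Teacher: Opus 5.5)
Your proof is correct and follows the paper's argument step for step: well-definedness because $C$ centralizes $\im(f)=U$, $Z$-invariance via $z.(aC,f)=(ab^{-1}C,z.f)$, injectivity by noting that $a'^{-1}a$ normalizes $U$ since $\im(f)=\im(f')=U$, and the two equivariances by direct computation. The only difference is that you write out the well-definedness and surjectivity checks, which the paper dismisses as following from the construction.
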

\begin{proof}
It follows from the construction that~$\varphi$ is surjective and that the definition of~$\varphi$ does not depend on the representative~$a$ of the coset~$aC$. The $G$-equivariance of~$\varphi$ follows from the computation
\begin{align*}
\varphi(a.(bC),f)([\gamma]) &= \varphi(abC,f)([\gamma]) = ab f([\gamma]) b^{-1}a^{-1} \\
&= a (\varphi(bC,f)([\gamma])) a^{-1} = (a.\varphi(bC,f))([\gamma])
\end{align*}
for $[\gamma] \in \pi_1(\Sigma_g,x)$. If $[\psi] \in \Gamma_{g}(x)$, we have by the definition from Paragraph~\ref{RelRepVar} that
\begin{align*}
\varphi(aC, [\psi].f)([\gamma]) &= a f([\psi^{-1} \circ \gamma]) a^{-1}
= \varphi(aC,f)([\psi^{-1} \circ \gamma])
= ([\psi].\varphi(aC,f))([\gamma]),
\end{align*}
which shows the $\Gamma_{g}(x)$-equivariance. If~$b \in N$ and~$z \deq bC \in Z = N/C$, we have seen above that $z.(aC, f) = (ab^{-1}C, z.f)$ and therefore have
\begin{align*}
\varphi(z.(aC, f))([\gamma]) &= ab^{-1} (z.f)([\gamma]) (ab^{-1})^{-1}
= ab^{-1} (b f([\gamma]) b^{-1}) (ab^{-1})^{-1} \\
&= a f([\gamma]) a^{-1} = \varphi(aC, f)([\gamma]),
\end{align*}
which shows that~$\varphi$ induces a map
$\bar{\varphi}\colon G/C \times_Z \Omega_U^{g}\to O^g(O)$ that is still surjective and equivariant, but in addition is now injective: If $\varphi(a C,f) = \varphi(a' C,f')$,
then $a'^{-1} a f([\gamma]) a^{-1} a' = f'([\gamma])$ for all $[\gamma]\in \pi_1(\Sigma_g,x)$. Since
$\im(f) = \im(f') = U$, this implies that $a'^{-1} a \in N$.
If $z \deq a'^{-1} a C \in Z = N/C$, this equation also shows that~$f' = z.f$ and therefore
$(a' C,f') = z.(a C,f)$.
\end{proof}

Because~$\Omega^g(O)$ is a basis of the space~$M^g(O)$, the bijection found in the proposition above yields a vector space isomorphism between $K[G/C \times_Z \Omega_U^{g}]$ and $M^g(O)$. Now the space
\mbox{$K[G/C \times_Z \Omega_U^{g}]$} is isomorphic to $K[G/C] \ot_{K[Z]} M_U^{g}$, where the right \mbox{$K[Z]$-module} structure on~$K[G/C]$ and the left $K[Z]$-module structure on~$M_U^{g}$ are the linear extensions of the respective~$Z$-actions on~$G/C$ and on~$\Omega_U^{g}$. This can be deduced from the commutative diagram
\begin{center}
\begin{tikzcd}[column sep = large] {}
K[G/C \times \Omega_U^{g}] \arrow{d}{} \arrow{r}{} &
K[G/C] \ot_K M_U^{g} \arrow{d}{}  \\
K[G/C \times_Z \Omega_U^{g}] \arrow{r}{} &
K[G/C] \ot_{K[Z]} M_U^{g}
\end{tikzcd}
\end{center}
by observing that $K[G/C \times_Z \Omega_U^{g}]$ satisfies the necessary universal property of the tensor product over~$K[Z]$ with regard to balancing. The preceding proposition therefore also yields a vector space isomorphism
\[K[G/C] \ot_{K[Z]} M_U^{g} \to M^g(O)\]
that is still equivariant with respect to both the $G$-action and the~$\Gamma_{g}(x)$-action. This isomorphism descends to cohomology:
\begin{Theorem} \label{ThmDecomp}
$H^m(G,M^g(O)) \cong H^m(G,K[G/C]) \ot_{K[Z]} M_U^{g}$
\end{Theorem}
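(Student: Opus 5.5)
The plan is to show that $M_U^{g}$ is a \emph{free} left $K[Z]$-module of finite rank. Then the tensor product over $K[Z]$ commutes with taking cohomology of $G$, by the simplest possible case of the universal coefficient theorem.

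First, I will check that the action of $Z=N/C$ on $\Omega_U^{g}$ is free. Suppose $b \in N$ and $bC$ fixes some $f \in \Omega_U^{g}$, so that $b f([\gamma]) b^{-1} = f([\gamma])$ for all $[\gamma]$. Since $\im(f)=U$, this says exactly that $b$ centralizes $U$, i.e.\ $b \in C$. Hence the stabilizers are trivial. Choosing a set $R \subset \Omega_U^{g}$ of representatives of the $Z$-orbits, which is finite since $G$ is finite, we get $M_U^{g} \cong \bigoplus_{f \in R} K[Z]\,f$ as left $K[Z]$-modules. In particular, $M_U^{g}$ is flat over $K[Z]$, so $-\ot_{K[Z]} M_U^{g}$ is exact.

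Next, I fix a projective resolution $P_\bullet \to K$ of the trivial $K[G]$-module, say with finitely generated terms. The right $K[Z]$-module structure on $K[G/C]$ is by $G$-linear maps. Hence $X^\bullet \deq \Hom_G(P_\bullet, K[G/C])$ is a cochain complex of right $K[Z]$-modules, and $H^m(G,K[G/C])$ inherits a right $K[Z]$-module structure by functoriality in the coefficients; this is the structure used on the right-hand side of the statement. There is a natural map of complexes
\[\Hom_G(P_m, K[G/C]) \ot_{K[Z]} M_U^{g} \to \Hom_G(P_m, K[G/C] \ot_{K[Z]} M_U^{g}),\quad \phi \ot x \mapsto (p \mapsto \phi(p) \ot x).\]
Here $G$ acts on the target only through the first factor, which is how the $G$-module structure on $K[G/C] \ot_{K[Z]} M_U^{g}$ was defined. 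Because $M_U^{g}$ is a finite direct sum of copies of $K[Z]$, both sides reduce to finite direct sums of copies of $\Hom_G(P_m,K[G/C])$, so this map is an isomorphism. Since $-\ot_{K[Z]} M_U^{g}$ is exact, it commutes with taking cohomology, giving
\[H^m(G, K[G/C] \ot_{K[Z]} M_U^{g}) \cong H^m(X^\bullet \ot_{K[Z]} M_U^{g}) \cong H^m(G,K[G/C]) \ot_{K[Z]} M_U^{g}.\]
Composing with the isomorphism $K[G/C] \ot_{K[Z]} M_U^{g} \cong M^g(O)$ established just before the theorem then yields the assertion.

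Finally, I will note that the isomorphism is $\Gamma_{g}(x)$-equivariant. The mapping class group acts on $K[G/C] \ot_{K[Z]} M_U^{g}$ only through the second factor, by $K[Z]$-linear maps, since its action on $\Omega_U^{g}$ commutes with the $Z$-action. The comparison map above is natural in $M_U^{g}$, so equivariance follows.

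The main point requiring care is the freeness of the $Z$-action, which is what makes the tensor product exact; everything else is formal. I also need to avoid a basis-dependent identification, and doing the construction at the level of cochain complexes, as above, handles this.
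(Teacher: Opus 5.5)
Your proposal is correct and follows essentially the same route as the paper: the freeness of the $Z$-action on $\Omega_U^{g}$ makes $M_U^{g}$ a free $K[Z]$-module of finite rank, and then the natural comparison map of cochain complexes (for which the paper cites Bourbaki) is an isomorphism that is compatible with passing to cohomology. The only cosmetic difference is that you invoke exactness of tensoring with a free module, whereas the paper argues componentwise via $M_U^{g} \cong K[Z]^s$ and the additivity of cohomology.
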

\begin{proof}
\begin{parlist}
\item
Suppose that $f_1,\dots,f_s \in \Omega_U^{g}$ is a system of representatives for the orbits of the $Z$-action on~$\Omega_U^{g}$ described above. If $a \in N$ satisfies~$(aC).f_i = f_i$, we have~$a f_i([\gamma]) a^{-1} = f_i([\gamma])$ for all $[\gamma] \in \pi_1(\Sigma_g,x)$, and  therefore~$a b a^{-1} = b$ for all~$b \in U$, so that~$a \in C$. This shows that~$\Omega_U^{g}$
is the disjoint union of~$s$ orbits of~$Z$, each of which is isomorphic to~$Z$. Consequently,
$M_U^{g}$ is a free $K[Z]$-module with basis~$f_1,\dots,f_s$.

\item
For a projective resolution
\[K \longleftarrow P_1 \longleftarrow P_2 \longleftarrow \cdots \]
of the trivial $G$-module~$K$, we then get from~\cite[Chap.~II, \S~4.2, Prop.~2, p.~269]{Bo} an isomorphism of cochain complexes
\[\Hom_{K[G]}(P_m, K[G/C]) \ot_{K[Z]} M_U^{g} \cong
\Hom_{K[G]}(P_m, K[G/C] \ot_{K[Z]} M_U^{g})\]
that combines with the isomorphism above to an isomorphism
\[\Hom_{K[G]}(P_m, K[G/C]) \ot_{K[Z]} M_U^{g} \cong
\Hom_{K[G]}(P_m, M^g(O)).\]
Because $M_U^{g}$ is free of finite rank, this isomorphism induces an isomorphism
\[\Ext^m_{K[G]}(K, K[G/C]) \ot_{K[Z]} M_U^{g} \cong
\Ext^m_{K[G]}(K, M^g(O))\]
as asserted. This last isomorphism can be viewed as a special instance of the universal coefficient theorem (cf.~\cite[Sec.~3.4, p.~30]{E}; \cite[Thm.~7.55, p.~448]{Ro}), but is in fact more directly a consequence of the additivity of homology and cohomology (cf.~\cite[Prop.~6.8, p.~330]{Ro}): As we have shown in the first step that $M_U^{g} \cong K[Z]^s$, we get
\begin{align*}
\Hom_{K[G]}(P_m, K[G/C]) \ot_{K[Z]} M_U^{g} \cong \Hom_{K[G]}(P_m, K[G/C])^s
\end{align*}
and passing to cohomology in the cochain complex on the left means doing that component-wise in the cochain complex on the right, so that the result on the right~is
\begin{align*}
\Ext^m_{K[G]}(K, K[G/C])^s \cong \Ext^m_{K[G]}(K, K[G/C]) \ot_{K[Z]} M_U^{g}
\end{align*}
as required.
\qedhere
\end{parlist}
\end{proof}

With the help of the Eckmann-Shapiro lemma (cf.~\cite[Prop.~4.1.3, p.~36]{E}; \cite[Lem.~6.3.2, p.~171]{We}), we can simplify the formula in the preceding theorem further:
\begin{Cor} \label{CorDecomp}
$H^m(G,M^g(O)) \cong H^m(C,K) \ot_{K[Z]} M_U^{g}$
\end{Cor}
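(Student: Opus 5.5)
We start from Theorem~\ref{ThmDecomp}, which gives $H^m(G,M^g(O)) \cong H^m(G,K[G/C]) \ot_{K[Z]} M_U^{g}$. It then suffices to identify $H^m(G,K[G/C])$ with $H^m(C,K)$ as right $K[Z]$-modules. We will not need compatibility with the $\Gamma_{g}(x)$-action beyond what Theorem~\ref{ThmDecomp} already provides, because $\Gamma_{g}(x)$ acts only on the second tensor factor $M_U^{g}$.

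First, we observe that $K[G/C] \cong K[G] \ot_{K[C]} K$ as left $K[G]$-modules. Since $G$ is finite, induction and coinduction from $C$ to $G$ agree, so $K[G/C] \cong \Hom_{K[C]}(K[G],K)$. The Eckmann--Shapiro lemma then yields $H^m(G,K[G/C]) \cong H^m(C,K)$. Concretely, take a projective resolution $P_\bullet$ of $K$ over $K[G]$. Restricted to $C$, it is still a projective resolution, because $K[G]$ is free over $K[C]$. The adjunction gives
\[\Hom_{K[G]}(P_m, \Hom_{K[C]}(K[G],K)) \cong \Hom_{K[C]}(P_m, K),\]
and this isomorphism is compatible with the differentials. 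Explicitly, a $G$-map $\phi \colon P_m \to K[G/C]$ corresponds to the composition of $\phi$ with the functional $\delta_{C}$ that picks out the coefficient of the trivial coset $C$.

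The main point is to keep track of the right $K[Z]$-module structure, since the tensor product is formed over $K[Z]$. On $K[G/C]$, an element $z = bC \in Z$ with $b \in N$ acts from the right by $aC \mapsto abC$. This is a $G$-module endomorphism, so it induces a right action of $Z$ on $H^m(G,K[G/C])$. We have to transport this action through the Shapiro isomorphism. The expected answer is the natural action of $N/C$ on $H^m(C,K)$ by conjugation. This action is defined because $C$ is normal in $N$, and it descends to $Z$ because inner automorphisms of $C$ act trivially on $H^m(C,K)$. Concretely, we would define it on cochains by $\psi \mapsto \psi \circ b$, where $b$ is viewed as an endomorphism of $P_m$ that twists the $C$-module structure by conjugation with $b$.

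To verify the compatibility, we compute on the cochain level. Let $\phi$ be a $G$-map, and write $\phi \cdot z$ for its composite with right multiplication by $b$. The coefficient of $C$ in $(\phi\cdot z)(p)$ equals the coefficient of $b^{-1}C = Cb^{-1}$ in $\phi(p)$, which equals the coefficient of $C$ in $\phi(b.p)$ by $G$-equivariance. Hence the corresponding $C$-cochain is $p \mapsto \psi(b.p)$, and this is precisely the conjugation action. Inner elements $b \in C$ act trivially in cohomology; this is the standard fact, obtained by comparing with a chain homotopy, and here it also follows from the construction. Therefore the Shapiro isomorphism is an isomorphism of right $K[Z]$-modules, and tensoring it with $M_U^{g}$ over $K[Z]$ gives the corollary.

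I expect the bookkeeping of left versus right actions, together with the inverse $b$ versus $b^{-1}$, to be the only real obstacle. If the signs come out reversed, we would define the right $Z$-action on $H^m(C,K)$ accordingly, so that the statement holds with that module structure.
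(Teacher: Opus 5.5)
Your proposal is correct and follows the paper's proof: Theorem~\ref{ThmDecomp}, then $K[G/C] \cong K[G] \ot_{K[C]} K \cong \Hom_{K[C]}(K[G],K)$, then the hom-tensor adjunction with $P_m$ restricted to the projective $K[C]$-modules. The one difference is that the paper simply transports the right $K[Z]$-module structure from $H^m(G,K[G/C])$ to $H^m(C,K)$ through this isomorphism, so your cochain-level identification of it with the conjugation action $\psi \mapsto (p \mapsto \psi(b.p))$ is correct but goes beyond what the statement requires.
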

\begin{proof}
The $K[G]$-module~$K[G/C]$ appearing in the theorem can be viewed as induced from the trivial $K[C]$-module structure, so that $K[G/C] \cong K[G] \ot_{K[C]} K$. In the case of finite groups, induced modules are isomorphic to coinduced modules (cf.~\cite[Prop.~4.1.2, p.~36]{E}; \cite[Lem.~6.3.4, p.~172]{We}), which yields in our situation that
\[K[G/C] \cong \Hom_{K[C]}(K[G],K).\]
Another instance of the hom-tensor adjunction already mentioned in Paragraph~\ref{RepDrinf} yields
\[\Hom_{K[G]}(P_m, \Hom_{K[C]}(K[G],K)) \cong \Hom_{K[C]}(P_m, K),\]
where~$P_m$ denotes a module from the projective resolution that appears in the proof of the theorem above. Since these modules are also projective over~$K[C]$ (cf.~\cite[p.~35]{E}; \cite[p.~172]{We}), we obtain that
\[H^m(G,K[G/C]) \cong H^m(C,K).\]
The right $K[Z]$-module structure on $H^m(G,K[G/C])$ then induces a corresponding structure on
$H^m(C,K)$.
\end{proof}

In summary, the preceding results yield the following approach to the computation
of the~$\Gamma_g$-module~$H^m(G,M^g)$: If $O_1,\dots,O_r$ are the distinct conjugacy classes of subgroups of~$G$, we choose a representative~$U_i \in O_i$ for all $i=1,\dots,r$. If~$N_i$ is its normalizer and~$C_i$ is its centralizer, we form the factor group~$Z_i \deq N_i/C_i$ and obtain that
\[H^m(G,M^g) \cong \bigoplus_{i=1}^r H^m(C_i,K) \ot_{K[Z_i]} M_{U_i}^{g}. \]
Here, the mapping class group acts only on the tensor factors~$M_{U_i}^{g}$.

\subsection{The abelian case} \label{Abel}
In the case where the subgroup~$U$ of~$G$ is contained in the center of~$G$, its orbit~$O$ defined in the previous paragraph consists only of~$U$ itself, so that we have
\mbox{$\Omega^g(O) = \Omega^g_U$} and~$M^g(O) = M^g_U$. For both the normalizer~$N$ and the centralizer~$C$ of~$U$ introduced there, we then have~$N=C=G$, so that the
quotient group~$Z \deq N/C$ consists only of one element and we have~$K[Z] \cong K$. The formulas in both Theorem~\ref{ThmDecomp} and Corollary~\ref{CorDecomp} therefore reduce to
\[H^m(G,M_U^{g}) \cong H^m(G,K) \ot_{K} M_U^{g}.\]
If~$G$ is an abelian group, obviously every subgroup is central, and by summing over the various subgroups~$U$ and additionally reversing the tensor factors, we obtain
$H^m(G,M^{g}) \cong M^{g} \ot_{K} H^m(G,K)$. In view of Corollary~\ref{Group}, this equation can be written in the form
\[\TFT^\infty(\Sigma_g) \cong \TFT^0(\Sigma_g) \ot_K \TFT^\infty(\Sigma_0),\]
where we have used that the $G$-action on~$M^{g}$ is trivial in the abelian case. By construction, this isomorphism is $\Gamma_g$-equivariant if
$\TFT^0(\Sigma_g) \ot_K \TFT^\infty(\Sigma_0)$ is endowed with the $\Gamma_g$-action coming from the left tensor factor. However, this isomorphism is also compatible with the Yoneda product:
\begin{Proposition} \label{AbelFreeGen}
Suppose that~$G$ is a finite abelian group. Then we have
\[\TFT^\infty(\Sigma_g) \cong \TFT^0(\Sigma_g) \ot_K \TFT^\infty(\Sigma_0)\]
as graded right modules over~$\TFT^\infty(\Sigma_0)$.
\end{Proposition}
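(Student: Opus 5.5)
The plan is to make the isomorphism explicit at the cochain level and then check the module compatibility there. Because $G$ is abelian, $G$ acts trivially on $M^g$, so $M^g \cong K^{\Omega^g}$ is a direct sum of copies of the trivial module $K$, one for each $f \in \Omega^g$. For each $f \in \Omega^g$ let $\iota_f \colon K \to M^g$ be the $K[G]$-linear map with $1 \mapsto f$. Define
\[\Phi \colon M^g \ot_K H(G,K) \to H(G,M^g), \qquad f \ot \xi \mapsto (\iota_f)_*(\xi),\]
where $(\iota_f)_*$ is induced by postcomposition. Using a projective resolution $P_\bullet$ of $K$ and the additivity argument from the proof of Theorem~\ref{ThmDecomp}, $\Hom_{K[G]}(P_m,M^g) \cong \bigoplus_f \Hom_{K[G]}(P_m,K)$, so $\Phi$ is a graded isomorphism. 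This is exactly the isomorphism of Paragraph~\ref{Abel}, with the factors reversed.

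Next I transport everything through Corollary~\ref{Group} and Proposition~\ref{GroupCohom}. Under these, the right $\TFT^\infty(\Sigma_0)$-module structure on $\TFT^\infty(\Sigma_g)$ becomes the right $H(G,K)$-module structure on $H(G,M^g)$ given by the Yoneda product. This identification is part of the discussion after Proposition~\ref{GroupCohom}, where the algebra isomorphism $\Ext_D(K,K)\cong H(G,K)$ and the module isomorphism are obtained from the $(\,\cdot\,)_e$ functor applied to Yoneda extensions. The identification $\TFT^0(\Sigma_g)\cong M^g$ is the $m=0$ case, using $(M^g)^G=M^g$. It therefore suffices to show that $\Phi$ is right $H(G,K)$-linear, where $M^g\ot_K H(G,K)$ carries the right action by multiplication on the second factor.

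The key step, and the only nonformal one, is the identity
\[(\iota_f)_*(\xi\eta) = ((\iota_f)_*\xi)\,\eta \qquad \text{for } \xi\in H^m(G,K),\ \eta\in H^l(G,K).\]
In Yoneda language, postcomposition with $\iota_f$ is the Yoneda product with the degree-zero element $\iota_f\in\Yext^0(K,M^g)$, and associativity of the Yoneda product gives $\iota_f(\xi\eta)=(\iota_f\xi)\eta$. This is the same argument as in Proposition~\ref{MapClassYon}, citing~\cite[Chap.~VII, \S~3, Eq.~(2), p.~171]{Mi}, with the $D$-linear map there replaced by the $K[G]$-linear map $\iota_f$. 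The degree-zero cases follow from associativity of composition.

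The main obstacle is bookkeeping rather than mathematics. One has to make sure that the isomorphism of Paragraph~\ref{Abel}, obtained through Theorem~\ref{ThmDecomp} and the Bourbaki tensor–Hom isomorphism, really coincides with $\Phi$, and that the passage from $D$ to $G$ respects Yoneda products on both sides. I would settle the first point by checking it on basis elements $f$, where both maps send a cocycle $\phi\in\Hom(P_m,K)$ to $\iota_f\circ\phi$. The second point is already recorded after Proposition~\ref{GroupCohom}. Gradings are preserved because $M^g$ sits in degree zero.
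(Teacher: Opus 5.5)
Your proposal is correct and follows essentially the same route as the paper. The paper likewise identifies the isomorphism of Paragraph~\ref{Abel}, via $M_U^g \cong \Hom_{K[G]}(K,M_U^g)$, with postcomposition, i.e.\ the Yoneda product restricted to $\Yext^0_{K[G]}(K,M_U^g) \ot_K \Yext^m_{K[G]}(K,K)$, and then deduces linearity from associativity of the module structure map; the only difference is that you split $M^g$ directly into the lines $K f$ for $f \in \Omega^g$, whereas the paper sums over the subgroups~$U$.
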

\begin{proof}
In the proof of Theorem~\ref{ThmDecomp}, we have seen that there is an isomorphism
\[\Hom_{K[G]}(P_m, K[G/C]) \ot_{K[Z]} M_U^{g} \cong \Hom_{K[G]}(P_m, M^g(O))\]
that in our situation upon reversing the tensor factors on the left yields
\[M_U^{g} \ot_{K} \Hom_{K[G]}(P_m, K)  \cong \Hom_{K[G]}(P_m, M_U^{g})\]
and, after summing over the various subgroups~$U$, induces the required isomorphism in cohomology. But if we view~$M_U^{g}$ as the space~$\Hom_K(K,M_U^{g})$ by evaluating a linear map from~$K$ to~$M_U^{g}$ at the unit element of~$K$, this isomorphism becomes the isomorphism
\[\Hom_{K[G]}(K,M_U^{g}) \ot_K \Hom_{K[G]}(P_m,K) \cong \Hom_{K[G]}(P_m,M_U^{g})\]
given by composition. Here, we have used again the fact that the module structure on~$M_U^{g}$ is trivial, so that $\Hom_{K[G]}(K,M_U^{g}) = \Hom_{K}(K,M_U^{g})$. Consequently the isomorphism
\[\Hom_{K[G]}(K,M_U^{g}) \ot_K \Ext^m_{K[G]}(K,K) \to \Ext^m_{K[G]}(K,M_U^{g})\]
induced in cohomology just arises from the fact that~$\Ext^m$ is a covariant functor in the second argument. As discussed in Paragraph~\ref{YonProd}, this isomorphism becomes the Yoneda product
\[\Yext^0_{K[G]}(K,M_U^{g}) \ot_K \Yext^m_{K[G]}(K,K) \to \Yext^m_{K[G]}(K,M_U^{g})\]
in the Yoneda picture of the functor, as we also noted in the proof of Proposition~\ref{MapClassYon}. In other words, this isomorphism is just the restriction of the module structure map to $\Yext^0_{K[G]}(K,M_U^{g}) \ot_K \Yext^m_{K[G]}(K,K)$, and the module structure map is clearly $\TFT^\infty(\Sigma_0)$-linear.
\end{proof}

The previous proposition shows that, in the case of an abelian group, $\TFT^\infty(\Sigma_g)$ is a free $\TFT^\infty(\Sigma_0)$-module that is generated by~$\TFT^0(\Sigma_g)$ as a~$\TFT^\infty(\Sigma_0)$-module. As we will see in Paragraph~\ref{YonChar3}, $\TFT^\infty(\Sigma_g)$ is not always generated in degree zero in the nonabelian case.

If~$G$ is abelian, a group homomorphism $f \colon \pi_1(\Sigma_g,x) \to G$ clearly factors over the commutator factor group. As we have discussed in Paragraph~1.11, the commutator factor group of the fundamental group~$\pi_1(\Sigma_g,x)$ is isomorphic to the first singular homology group~$H_1(\Sigma_g, \Z)$. We have also discussed there that this group is free abelian of rank~$2g$ with the homology classes of the curves $\alpha_1, \beta_1, \alpha_2, \beta_2,  \ldots, \alpha_g, \beta_g$ as a basis and that the algebraic intersection number induces a symplectic form on~$H_1(\Sigma_g, \Z)$, whose fundamental matrix with respect to this basis is
\[J \deq \begin{pmatrix}
0 & -1 &  &  & \\
1 & 0 &  &  & \\
& & \ddots & &\\
& & & 0 & -1\\
& & & 1 & 0\\
\end{pmatrix}.\]
The action of an element~$[\psi] \in \Gamma_g$ is therefore in this basis represented by a matrix~$A \in \Sp(2g, \Z)$, i.e., satisfies $A^T J A = J$.

As a consequence, the map
\[\check{\Theta} \colon \Omega^g \to G^{2g},~f \mapsto v \deq (a_1,b_1,\dots,a_g,b_g)\]
where $a_i \deq f([\alpha_i])$ and $b_i \deq f([\beta_i])$, which is in general only injective, is bijective in the abelian case. This map is similar, but not equal, to the mappings~$\tilde{\Theta}$ and~$\Theta$ discussed in Paragraph~\ref{RelRepVar} and Paragraph~\ref{StandRMat}. The definition of the action of an element~$[\psi] \in \Gamma_g(x)$ on the representation variety~$\Omega^g$ in Paragraph~\ref{RelRepVar} then yields, after a short computation, the equivariance property
\[\check{\Theta}([\psi].f) = (A^T)^{-1} \check{\Theta}(f)\]
for $f \in \Omega^g$. Here, matrices in~$\Sp(2g, \Z)$ act on~$G^{2g}$ as matrices act on column vectors; it is sometimes more convenient to write~$G$ additively instead of multiplicatively when working with this action. Note that $(A^T)^{-1} = J A J^{-1}$ by the defining property of a symplectic matrix given above.
The bijection~$\check{\Theta}$ between the representation variety~$\Omega^g$ and~$G^{2g}$ clearly yields a vector space isomorphism between the linearized representation variety~$M^g$ and~$K[G^{2g}]$, the free vector spaces that have the respective sets as bases.

\subsection{The cyclic case} \label{Cycl}
An important special case of the preceding considerations is the case where $G=\Z_n$ is a cyclic group of order~$n$, written additively. Obviously, the action of~$\Sp(2g, \Z)$ on column vectors with entries in~$\Z_n$ factors over~$\Sp(2g, \Z_n)$. In this context, it is worth pointing out that the canonical map from~$\Sp(2g, \Z)$ to~$\Sp(2g, \Z_n)$ is surjective (cf.~\cite[Thm.~1, p.~85]{NS}; \cite[Thm.~VII.21, p.~132]{N}). The following proposition, whose proof can be found in~\cite[Thm.~3.5, p.~18]{FF}, describes the orbits of the action of~$\Sp(2g, \Z_n)$
on~$\Z_n^{2g}$ via greatest common divisors:
\begin{Proposition} \label{SymplOrb}
Suppose that $v = (\bar{k}_1, \bar{l}_1,\ldots, \bar{k}_g, \bar{l}_g)$
and $w = (\bar{k}'_1, \bar{l}'_1,\ldots, \bar{k}'_g, \bar{l}'_g)$ are two vectors in~$\Z_n^{2g}$. Then the following assertions are equivalent:
\begin{enumerate}
\item
There exists $A \in \Sp(2g, \Z_n)$ so that $w = Av$.

\item
$\gcd(k_1, l_1,\ldots, k_g, l_g,n) = \gcd(k'_1, l'_1,\ldots, k'_g, l'_g,n)$
\end{enumerate}
\end{Proposition}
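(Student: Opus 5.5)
Both directions will be proved by reducing to a normal form. Throughout, $\gcd(k_1,\ldots,l_g,n)$ depends only on the residues $\bar k_i,\bar l_i$, so it is well defined on $\Z_n^{2g}$; for $d \mid n$ write $\bar d$ for its class.

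For (1)$\Rightarrow$(2), I will use the following observation. For a divisor $d$ of~$n$, the set of vectors whose entries all lie in the subgroup $d\Z_n$ is a submodule. Any $\Z_n$-linear map, in particular any $A \in \Sp(2g,\Z_n)$, maps this submodule into itself. The gcd of $v$ is the largest divisor $d$ of $n$ with $v \in (d\Z_n)^{2g}$. Applying this to $A$ and $A^{-1}$ shows that the gcd of $Av$ equals the gcd of~$v$.

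For (2)$\Rightarrow$(1), I will show that every $v$ with gcd $d$ lies in the orbit of $e \deq (\bar d,0,\ldots,0)$. I work with explicit symplectic elementary matrices. Since the canonical map $\Sp(2g,\Z)\to\Sp(2g,\Z_n)$ is surjective, it is enough, and convenient, to lift to integers. The operations I use are:
\begin{enumerate}
\item within a block $(k_i,l_i)$, the $\SL(2,\Z)$-operations $(k,l)\mapsto(k,l+tk)$ and $(k,l)\mapsto(k+tl,l)$, which are symplectic in the $i$-th block;
\item the block swap $(k_i,l_i)\leftrightarrow(k_j,l_j)$;
\item mixing maps between blocks, for instance the transvection $x\mapsto x+\omega(x,w)w$ with $w=e_{k_i}+e_{k_j}$. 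Up to sign, this adds $l_i+l_j$ times $w$; a suitable composite adds $t\,k_j$ to $l_i$ and $t\,k_i$ to $l_j$.
\end{enumerate}

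The Euclidean algorithm within a block (operation 1) brings each block to the form $(\gcd(k_i,l_i),0)$. Next, I use the mixing transvection to add $k_j$ to $l_1$ and $k_1$ to $l_j$, and then reduce block~$1$ again. Block~$1$ then becomes $(\gcd(k_1,k_j),0)$ up to a correction appearing in block~$j$; I clear that correction in block~$j$ by its own Euclidean reduction against the new block-$1$ value, which now divides~$k_j$. Iterating over $j$ yields $(\gcd(k_1,l_1,\ldots,k_g,l_g),0,\ldots,0)$ over~$\Z$.

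It remains to pass from the integer gcd to the gcd with~$n$. Choose lifts $k_1 = D$ with $D=\gcd$ of the lifts, so that the vector is $(\bar D,0,\ldots,0)$ with $\gcd(D,n)=d$. Adding a multiple of $n$ to the lift in the second slot does not change the vector mod~$n$, so I may replace $l_1$ by $n$. A further Euclidean reduction in block~$1$ then gives $(\bar d,0,\ldots)$. Two vectors with the same $d$ therefore share the orbit of~$e$.

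The main obstacle is the bookkeeping in the inter-block step, namely checking that the mixing transvection really is symplectic for the form $J$ and produces the claimed effect. I will verify this by checking $\omega(Tx,Ty)=\omega(x,y)$ for the general transvection formula rather than by matrix multiplication. Alternatively, for $g\ge 2$ I could simply cite \cite[Thm.~3.5, p.~18]{FF}, as the paper does.
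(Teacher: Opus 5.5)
Your proposal is correct. It also takes a genuinely different route, because the paper gives no proof of this proposition: it defers to \cite[Thm.~3.5, p.~18]{FF}, for every $g$ and not only for $g\ge 2$.

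Your direction (1)$\Rightarrow$(2) uses the submodules $(d\Z_n)^{2g}$. It is clean, and it even shows that the gcd is invariant under all of $\GL(2g,\Z_n)$.

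For (2)$\Rightarrow$(1) you run a symplectic Euclidean algorithm with reductions of explicit integer symplectic matrices. Besides independence from the literature, this buys a direct proof that the action of $\Sp(2g,\Z)$ is transitive on each gcd class, and $\Sp(2g,\Z)$ is the group through which $\Gamma_g$ actually acts on $\Omega^g$. For this you do not need the surjectivity of $\Sp(2g,\Z)\to\Sp(2g,\Z_n)$ that you invoke. You only need that reduction mod~$n$ sends integer symplectic matrices to symplectic ones. The citation route is shorter, but to get transitivity for the mapping class group it must be combined with that surjectivity theorem.

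Two steps should be written more carefully.

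First, the transvection $x\mapsto x+\omega(x,w)w$ with $w=e_{k_i}+e_{k_j}$ changes the entries $k_i,k_j$ (by $\pm(l_i+l_j)$), not the $l$-entries. To get the operation you actually use, take $w=e_{l_i}+e_{l_j}$. For the form given by $J$, the map $x\mapsto x+\lambda\,\omega(x,w)w$ then subtracts $\lambda(k_i+k_j)$ from both $l_i$ and $l_j$. Take $\lambda=-t$ and follow it with the within-block shears $l_i\mapsto l_i-tk_i$ and $l_j\mapsto l_j-tk_j$. The net effect is $l_i\mapsto l_i+tk_j$ and $l_j\mapsto l_j+tk_i$, with all $k$'s fixed.

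Second, the clearing step needs an inter-block move. A Euclidean reduction inside block~$j$ alone can never make that block zero unless it already is, since the gcd of a block is an $\SL(2,\Z)$-invariant; it only brings $(k_j,k_1)$ to $(\pm D,0)$. Instead, start from block~$1$ equal to $(D,0)$, with $D$ dividing both entries of block~$j$. First reduce block~$j$ to the form $(0,c)$. Then apply the shear above with $i=1$ and $t=-c/D$. Since $k_j=0$, this leaves block~$1$ untouched and sends $l_j$ to $c+tD=0$.

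With these two repairs your proof is complete. Signs produced by the Euclidean algorithm are harmless, since they can be removed with $-I\in\SL(2,\Z)$ in the relevant block.
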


In other words, if we define  the set
\[\Omega^{g,n,m} \deq
\{(\bar{k}_1, \bar{l}_1,\ldots, \bar{k}_g, \bar{l}_g) \in \Z_n^{2g}
\mid m = \gcd(k_1, l_1,\ldots, k_g, l_g,n) \}\]
for every divisor~$m$ of~$n$, then the sets $\Omega^{g,n,m}$ are exactly the different
$\Sp(2g, \Z_n)$-orbits, so that we have the partition
\[\Z_n^{2g} = \bigcup_{m|n} \Omega^{g,n,m}\]
into disjoint sets. Here, the notation has been chosen to be parallel to the
notation~$\Omega_U^{g}$ introduced in Paragraph~\ref{GenCase}: If~$U$ is the subgroup of~$\Z_n$ that is generated by~$\bar{m}$ and has order~$n/m$, then~$\Omega^{g,n,m}$ is the image of~$\Omega_U^{g}$ under the bijection~$\check{\Theta}$ between~$\Omega^{g}$ and~$\Z_n^{2g}$ discussed in Paragraph~\ref{Abel}. The decomposition of a basis into disjoint sets leads to a direct sum decomposition of the associated vector space, so that we have
\[K[\Z_n^{2g}] = \bigoplus_{m|n} K[\Omega^{g,n,m}].\]
Clearly, the map
\[\Omega^{g,n/m,1} \to \Omega^{g,n,m},~(\bar{k}_1, \bar{l}_1,\ldots, \bar{k}_g, \bar{l}_g)
\mapsto (m\bar{k}_1, m\bar{l}_1,\ldots, m\bar{k}_g, m\bar{l}_g)\]
is bijective, and it is also $\Sp(2g, \Z_n)$-equivariant if the action of~$\Sp(2g, \Z_n)$ on the left-hand side is defined as the pullback of the action of~$\Sp(2g, \Z_{n/m})$ along the canonical map. We can therefore concentrate on the action of~$\Sp(2g, \Z_n)$ on
$\Omega^{g,n} \deq \Omega^{g,n,1}$.

The decomposition above can be refined: The group of units~$\Z_n^\times$ acts on~$\Omega^{g,n}$ by multiplication, and therefore by linear extension on~$K[\Omega^{g,n}]$. We denote the set of orbits of~$\Omega^{g,n}$ under this action by $\P^{2g-1}_n \deq \Omega^{g,n}/\Z_n^\times$. This notation is motivated by the fact that, in the case where~$n$ is prime, $\Z_n$ is a field
and~$\Omega^{g,n}$ is the set of nonzero vectors in~$\Z_n^{2g}$. The
quotient~$\Omega^{g,n}/\Z_n^\times$ is therefore the set of nonzero vectors modulo nonzero scalar multiples, which corresponds to the set of one-dimensional subspaces of~$\Z_n^{2g}$, i.e., the corresponding projective space.

We now assume that~$K$ contains a primitive $\phi(n)$-th root of unity, where
\mbox{$\phi(n) \deq |\Z_n^\times|$} denotes Euler's totient function. Then the character group
$\hat{\Z}^\times_n \deq \Hom(\Z_n^\times, K^\times)$ contains also $\phi(n)$ elements; these elements are called Dirichlet characters. Because in this case the characteristic of the base field cannot divide~$\phi(n)$, Maschke's theorem implies that the group algebra~$K[\Z_n^\times]$ is semisimple, and so its action on~$K[\Omega^{g,n}]$ decomposes into eigenspaces
\[K_\chi[\P^{2g-1}_n] \deq \{v \in K[\Omega^{g,n}]
\mid \bar{k} v = \chi(\bar{k}) v \; \text{for all} \; \bar{k} \in \Z_n^\times\}\]
that are indexed by Dirichlet characters~$\chi \in \hat{\Z}^\times_n$. Note that these eigenspaces are clearly invariant under the $\Sp(2g, \Z_n)$-action. The notation is chosen in order to make a connection with associated line bundles: Since~$\Z_n^\times$ acts freely
on~$\Omega^{g,n}$, it is a principal fiber bundle over the orbit space with respect to the projection map
\[\pi \colon \Omega^{g,n} \to \P^{2g-1}_n.\]
For a one-dimensional representation given by the
Dirichlet character~$\chi \colon \Z_n^\times \to K^\times$, we can therefore form the associated line bundle, a construction that was already mentioned in Paragraph~\ref{GenCase}. This line bundle is the space of orbits of the action of~$\Z_n^\times$ on the
Cartesian product~$K \times \Omega^{g,n}$ under the action
\[\bar{k}.(\lambda,v) \deq (\lambda \chi(\bar{k}^{-1}),\bar{k} v),\]
where $\bar{k}^{-1} \in \Z_n^\times$ denotes the multiplicative inverse of~$\bar{k}$. We denote the individual orbit by~$[\lambda,v]$, the entire orbit space, i.e., the associated line bundle, by~$K \times_{\chi} \Omega^{g,n}$, and the space of its sections by~$\Gamma[K \times_{\chi} \Omega^{g,n}]$. Such a section $\sigma \colon \P^{2g-1}_n \to K \times_{\chi} \Omega^{g,n}$ assigns to the orbit~$\pi(v)$ of~$v \in \Omega^{g,n}$ an element $[\lambda,w]$ with the property that~$w$ lies in the same orbit as~$v$. Because the action of~$\Z_n^\times$ on~$\Omega^{g,n}$ is free, there is in fact a unique representative in~$[\lambda,w]$ whose second component is~$v$. Consequently, there is a function $\tau \colon \Omega^{g,n} \to K$ with the property that
\[\sigma(\pi(v)) = [\tau(v),v].\] For $\bar{k} \in \Z_n^\times$, we have that $\sigma(\pi(v)) = \sigma(\pi(\bar{k} v))$ and therefore
\[[\tau(v),v] = [\tau(\bar{k} v),\bar{k} v] = [\chi(\bar{k}) \tau(\bar{k} v), v],\]
so that $\tau(v) = \chi(\bar{k}) \tau(\bar{k} v)$ or $\tau(\bar{k} v) = \chi(\bar{k}^{-1}) \tau(v)$. Conversely, every function~$\tau$ that satisfies the last equation yields a section of our line bundle via \mbox{$\sigma(\pi(v)) \deq [\tau(v),v]$}. This correspondence allows us to relate the sections of our line bundle to the eigen\-spaces introduced before:
\begin{Lemma}
The map
\[\Gamma[K \times_{\chi} \Omega^{g,n}] \to K_\chi[\P^{2g-1}_n],~\sigma
\mapsto \sum_{v \in \Omega^{g,n}} \tau(v) v\]
is bijective.
\end{Lemma}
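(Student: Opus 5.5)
The plan is to compose two bijections. The first is the correspondence established just before the lemma, between sections $\sigma$ and functions $\tau \colon \Omega^{g,n} \to K$ satisfying $\tau(\bar{k} v) = \chi(\bar{k}^{-1}) \tau(v)$. The second identifies such functions with vectors $\sum_v \tau(v) v$ in $K_\chi[\P^{2g-1}_n]$. Since $\Omega^{g,n}$ is finite, the assignment $\tau \mapsto \sum_{v} \tau(v) v$ is a linear bijection from all functions $\Omega^{g,n} \to K$ onto $K[\Omega^{g,n}]$. It therefore suffices to check that the equivariance condition on $\tau$ corresponds exactly to the eigenvector condition defining $K_\chi[\P^{2g-1}_n]$.

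First I make the correspondence $\sigma \leftrightarrow \tau$ precise. Given $\sigma$, the function $\tau$ is uniquely determined because the action of $\Z_n^\times$ on $\Omega^{g,n}$ is free. The text has already shown that $\tau$ satisfies the transformation rule, and conversely that every such $\tau$ defines a section. The one point to confirm in the converse is well-definedness. If $\pi(v) = \pi(\bar{k} v)$, then
\[[\tau(\bar{k} v), \bar{k} v] = [\chi(\bar{k}^{-1}) \tau(v), \bar{k} v] = \bar{k}.[\tau(v), v] = [\tau(v), v].\]
So $\sigma \mapsto \tau$ is a bijection onto the set of functions satisfying the rule.

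Next I compute the action on $m \deq \sum_v \tau(v) v$. For $\bar{k} \in \Z_n^\times$ we have $\bar{k} m = \sum_v \tau(v)\, \bar{k} v$, and reindexing by $w = \bar{k} v$ gives
\[\bar{k} m = \sum_w \tau(\bar{k}^{-1} w)\, w.\]
On the other hand $\chi(\bar{k}) m = \sum_w \chi(\bar{k}) \tau(w)\, w$. Comparing coefficients in the basis $\Omega^{g,n}$, the condition $\bar{k} m = \chi(\bar{k}) m$ for all $\bar{k}$ is equivalent to $\tau(\bar{k}^{-1} w) = \chi(\bar{k}) \tau(w)$ for all $\bar{k}$ and $w$. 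Replacing $\bar{k}$ by $\bar{k}^{-1}$, this reads $\tau(\bar{k} w) = \chi(\bar{k}^{-1}) \tau(w)$, which is precisely the rule for sections.

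Composing the two bijections gives the statement. The only real care needed is this inversion bookkeeping in the reindexing step; everything else is routine.
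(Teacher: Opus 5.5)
Your proposal is correct and follows essentially the same route as the paper: the paper also reindexes $\sum_{v} \tau(v)\,\bar{k} v = \sum_{v} \tau(\bar{k}^{-1} v)\, v$ to show that the image is a $\chi$-eigenvector, and conversely expands an arbitrary eigenvector in the basis $\Omega^{g,n}$ to obtain coefficients satisfying $\tau(\bar{k} v) = \chi(\bar{k}^{-1}) \tau(v)$. Your extra check that such a $\tau$ defines a well-defined section only makes explicit what the paper asserts in the paragraph preceding the lemma.
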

\begin{proof}
We have
\[\sum_{v \in \Omega^{g,n}} \tau(v) \bar{k} v =
\sum_{v \in \Omega^{g,n}} \tau(\bar{k}^{-1} v) v = \sum_{v \in \Omega^{g,n}} \chi(\bar{k}) \tau(v) v,\]
so that the image of~$\sigma$ is an eigenvector with eigenvalue $\chi(\bar{k})$, which shows that the map is well-defined. On the other hand, any vector in~$K[\Omega^{g,n}]$ can be expanded in the form $\sum_{v \in \Omega^{g,n}} \tau(v) v$ with coefficients~$\tau(v) \in K$, and the condition that the vector is an eigenvector yields the condition $\tau(\bar{k} v) = \chi(\bar{k}^{-1}) \tau(v)$ for the coefficients.
\end{proof}

We note that in the case where the Dirichlet character~$\chi$ is constantly equal to~$1$, the condition $\tau(\bar{k} v) = \chi(\bar{k}^{-1}) \tau(v)$ states that~$\tau$ is constant on the orbits of the~\mbox{$\Z_n^\times$-action}, so that in this case the $\Sp(2g, \Z_n)$-modules~$K_\chi[\P^{2g-1}_n]$ and~$K[\P^{2g-1}_n]$ are isomorphic, where $K[\P^{2g-1}_n]$ carries the permutation representation that arises from the action
of~$\Sp(2g, \Z_n)$ on~$\Omega^{g,n}$ and its quotient
\mbox{$\P^{2g-1}_n = \Omega^{g,n}/\Z_n^\times$}.

In the general case, we can collect in the sum $\sum_{v \in \Omega^{g,n}} \tau(v) v$ the terms that belong to the same $\Z_n^\times$-orbit, and find
\[\sum_{\bar{k} \in \Z_n^\times} \tau(\bar{k} v) \bar{k} v =
\sum_{\bar{k} \in \Z_n^\times} \chi(\bar{k}^{-1}) \tau(v) \bar{k} v =
\tau(v) \sum_{\bar{k} \in \Z_n^\times} \chi(\bar{k}^{-1}) \bar{k} v.\]
This shows that $K_\chi[\P^{2g-1}_n]$ has a basis consisting of the vectors
$\sum_{\bar{k} \in \Z_n^\times} \chi(\bar{k}^{-1}) \bar{k} v$, where~$v$ runs over a system of representatives for the orbits of~$\Omega^{g,n}$ under the action of~$\Z_n^\times$. Changing this representative changes the corresponding basis vector by a scalar multiple. If we act on this vector with a matrix~$A \in \Sp(2g,\Z_n)$, it is mapped to the analogous vector in the orbit of~$Av$. However, as~$Av$ may not be the representative that we have chosen in that orbit, the image may differ from the corresponding vector by a scalar multiple. We therefore see that the action of~$A$ on~$K_\chi[\P^{2g-1}_n]$ is represented with respect to this basis by a monomial matrix.

This representation is in fact an induced representation. To see this, we suppose that
$v = (1,0,0,\dots,0) \in \Z_n^{2g}$ is the first canonical basis vector, and introduce the subgroup
\[P \deq \{A \in \Sp(2g,\Z_n) \mid
A v = \bar{k} v \; \text{for some} \; \bar{k} \in \Z_n^\times\}\]
of symplectic matrices whose first column is a multiple of~$v$. The assignment
\[P \to \Z_n^\times,~A \mapsto \bar{k}\]
for $A v = \bar{k} v$ that assigns to~$A \in P$ its first entry is clearly a group homomorphism. It is surjective, because the matrix
$A \deq \diag(\bar{k}, \bar{k}^{-1}, 1, 1, \ldots, 1)$ is symplectic for
all~$\bar{k} \in \Z_n^\times$. A given Dirichlet character $\chi \colon \Z_n^\times \to K^\times$ can be composed with the homomorphism just constructed to obtain a group homomorphism from~$P$ to~$K^\times$. In this way, the base field~$K$ becomes a~$P$-module, and we can form the induced module
$K[\Sp(2g,\Z_n)] \ot_{K[P]} K$. It is isomorphic to the eigenspace under consideration:
\begin{Proposition}
If $x \deq \sum_{\bar{k} \in \Z_n^\times} \chi(\bar{k}^{-1}) \bar{k} v$, the map
\[K[\Sp(2g,\Z_n)] \ot_{K[P]} K \to K_\chi[\P^{2g-1}_n],~h \ot \lambda \mapsto \lambda h.x\]
is an isomorphism of $\Sp(2g,\Z_n)$-representations.
\end{Proposition}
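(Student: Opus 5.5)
We will verify in turn that the map is well defined, that it is equivariant, that it is surjective, and that dimensions agree. For well-definedness we must check that the assignment $(h,\lambda) \mapsto \lambda h.x$ is $K[P]$-balanced, i.e., that $A.x = \chi(\bar{k})\,x$ for $A \in P$ with $Av = \bar{k}v$. Since $A$ acts linearly on $\Z_n^{2g}$, we have $A(\bar{l}v) = \bar{l}\bar{k}v$, and therefore
\[A.x = \sum_{\bar{l} \in \Z_n^\times} \chi(\bar{l}^{-1})\, \bar{l}\bar{k} v
= \sum_{\bar{m} \in \Z_n^\times} \chi(\bar{k}\bar{m}^{-1})\, \bar{m} v = \chi(\bar{k})\, x\]
after substituting $\bar{m} \deq \bar{l}\bar{k}$. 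Hence the bilinear map descends to the tensor product over~$K[P]$. Its image lies in $K_\chi[\P^{2g-1}_n]$, because the actions of~$\Sp(2g,\Z_n)$ and~$\Z_n^\times$ on~$\Omega^{g,n}$ commute and $x$ itself is a $\chi$-eigenvector. The map is $\Sp(2g,\Z_n)$-equivariant by construction, since $h'h \ot \lambda \mapsto \lambda h'h.x$.

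For surjectivity we use the basis of $K_\chi[\P^{2g-1}_n]$ described before the proposition. It consists of the vectors $x_w \deq \sum_{\bar{k}} \chi(\bar{k}^{-1})\,\bar{k}w$, where $w$ runs over a system of representatives of the $\Z_n^\times$-orbits in~$\Omega^{g,n}$. Now $v = (1,0,\dots,0)$ has $\gcd$ equal to~$1$, so Proposition~\ref{SymplOrb} shows that $\Sp(2g,\Z_n)$ acts transitively on~$\Omega^{g,n}$. Thus every $w$ has the form $w = hv$ for some $h \in \Sp(2g,\Z_n)$, and then $h.x = x_w$. Consequently every basis vector lies in the image.

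For injectivity we compare dimensions. Choose coset representatives $h_1,\dots,h_t$ of $\Sp(2g,\Z_n)/P$; the induced module then has basis $h_i \ot 1$ and dimension~$t$. The group $\Sp(2g,\Z_n)$ acts transitively on $\P^{2g-1}_n = \Omega^{g,n}/\Z_n^\times$, and the stabilizer of the orbit $\pi(v)$ is exactly~$P$, because $A\pi(v) = \pi(v)$ holds if and only if $Av = \bar{k}v$ for some unit~$\bar{k}$. The orbit–stabilizer theorem therefore gives $t = |\P^{2g-1}_n|$, which equals $\dim K_\chi[\P^{2g-1}_n]$ by the basis just recalled. A surjective linear map between spaces of equal finite dimension is bijective, which proves the proposition.

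An alternative to the dimension count is a direct argument: the images $h_i.x$ are the vectors $x_{h_iv}$, and these involve the pairwise distinct orbits $\pi(h_iv)$, so they are linearly independent. The main obstacle in the whole proof is transitivity, and that is supplied entirely by Proposition~\ref{SymplOrb}; everything else is bookkeeping of orbits and stabilizers.
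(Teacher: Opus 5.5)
Your proposal is correct and follows essentially the same route as the paper. Both proofs obtain balancedness from $A.x=\chi(\bar{k})\,x$ for $A\in P$ (you verify it by re-indexing, the paper by linearity together with the eigenvector property of~$x$), get surjectivity from the transitivity supplied by Proposition~\ref{SymplOrb}, and conclude bijectivity by comparing $[\Sp(2g,\Z_n):P]$ with $|\P^{2g-1}_n|$ via the orbit–stabilizer theorem.
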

\begin{proof}
If~$A \in P$ is a symplectic matrix with $A v = \bar{k} v$, we have by linearity that also
$A x = \bar{k} x$. But since $x \in K_\chi[\P^{2g-1}_n]$, we have $\bar{k} x = \chi(\bar{k}) x$.
From this fact we see that the stated map is well-defined, as it is $K[P]$-balanced. Since the action of~$\Sp(2g,\Z_n)$ on~$\Omega^{g,n}$ is transitive by Proposition~\ref{SymplOrb}, it follows from the discussion preceding this proposition that our map is surjective. Now
$K[\Sp(2g,\Z_n)]$ is a free right \mbox{$K[P]$-module} with a basis consisting of a set of left coset representatives. Therefore we have
\[\dim(K[\Sp(2g,\Z_n)] \ot_{K[P]} K) = [\Sp(2g,\Z_n):P].\]
On the other hand, we have seen above that
$\dim(K_\chi[\P^{2g-1}_n]) = |\P^{2g-1}_n|$. But by definition, $P$ is the stabilizer of the orbit~$\pi(v)$ of~$v$ in~$\P^{2g-1}_n$. So the dimensions of both sides are equal by the orbit-stabilizer theorem, and consequently the map is bijective. It is obvious that it is
$\Sp(2g,\Z_n)$-equivariant.
\end{proof}

\subsection{Cyclic subgroups} \label{CyclSub}
We now return to the situation of Paragraph~\ref{GenCase} and assume that the subgroup~$U$ of~$G$ considered there is cyclic, so that $U = \langle h \rangle$ for a generator~$h$, whose order we denote by~$n$. In comparison with the previous paragraph, a small difficulty arises from the fact that here we use multiplicative instead of additive notation for the group operation. To address this difficulty, we denote the image of~$\Omega^g_U$ and~$\Omega^g(O)$ under~$\check{\Theta}$ by~$\check{\Omega}^g_U$ and~$\check{\Omega}^g(O)$, respectively. We then have the bijection
\[\omega_{g,n} \colon \Omega^{g,n} \to \check{\Omega}^g_U,~(\bar{k}_1, \bar{l}_1,\ldots, \bar{k}_g, \bar{l}_g)
\mapsto (h^{k_1}, h^{l_1}, \ldots, h^{k_g}, h^{l_g})\]
and a related bijection for~$\Omega^g(O)$ will be discussed below. Because $\omega_{g,n}$ is derived from the group isomorphism~$\Z_n \to U,~\bar{k} \mapsto h^k$, it is $\Sp(2g, \Z)$-equivariant by construction.

An element $a \in N$ determines an automorphism of~$U$ that maps~$u \in U$ to $a u a^{-1}$. Because~$\Aut(U)$ is isomorphic to~$\Z_n^\times$ (cf.~\cite[2.2.2, p.~50]{KS}), there exists
$\bar{k} \in \Z_n^\times$ so that~$a u a^{-1} = u^k$ for all~$u \in U$. Clearly, this exponent depends only on the coset~$aC \in Z = N/C$, so that we have a group homomorphism
\[\rho \colon Z \to \Z_n^\times,~z \mapsto \rho(z)\]
with $\bar{k} = \rho(aC)$ if~$a h a^{-1} = h^k$. We can use this group homomorphism to express a certain equivariance property:
\begin{Proposition} \label{RhoEquiv}
For $f \in \Omega_U^g$ and $z \in Z$, we have
\[(\omega_{g,n}^{-1} \circ \check{\Theta})(z.f) =
\rho(z) (\omega_{g,n}^{-1} \circ \check{\Theta})(f).\]
\end{Proposition}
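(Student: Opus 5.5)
The plan is to reduce the claim to a direct computation on the components of $\check{\Theta}(f)$, using only the definitions of the $Z$-action on $\Omega_U^g$, of $\check{\Theta}$, of $\omega_{g,n}$, and of $\rho$.

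First, fix $f \in \Omega_U^g$ and $z = aC \in Z$ with $a \in N$, and write $\bar{k} \deq \rho(z)$, so that $a h a^{-1} = h^k$. Since $\check{\Theta}(f) \in \check{\Omega}^g_U$ and $\omega_{g,n}$ is a bijection onto this set, we can write
\[(\omega_{g,n}^{-1} \circ \check{\Theta})(f) = (\bar{k}_1, \bar{l}_1, \ldots, \bar{k}_g, \bar{l}_g).\]
This means that $f([\alpha_i]) = h^{k_i}$ and $f([\beta_i]) = h^{l_i}$ for all $i$.

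Second, recall from Paragraph~\ref{GenCase} that $z.f$ is $f$ composed with conjugation by $a$, so that $(z.f)([\gamma]) = a f([\gamma]) a^{-1}$. This is well defined on cosets because elements of $C$ centralize $U = \im(f)$. Since conjugation by $a$ is a group automorphism of $U$, it follows that
\[(z.f)([\alpha_i]) = a h^{k_i} a^{-1} = (a h a^{-1})^{k_i} = h^{k k_i},\]
and similarly $(z.f)([\beta_i]) = h^{k l_i}$. Also $\im(z.f) = a U a^{-1} = U$, because $a \in N$. Hence $z.f \in \Omega_U^g$, and both sides of the claimed equation are defined.

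Third, applying $\check{\Theta}$ gives the tuple $(h^{kk_1}, h^{kl_1}, \ldots, h^{kk_g}, h^{kl_g})$. By the definition of $\omega_{g,n}$ this tuple is $\omega_{g,n}(\bar{k}\bar{k}_1, \bar{k}\bar{l}_1, \ldots, \bar{k}\bar{k}_g, \bar{k}\bar{l}_g)$. Because $\omega_{g,n}$ is injective, applying $\omega_{g,n}^{-1}$ yields $\bar{k}$ times the original vector, which is exactly $\rho(z)\,(\omega_{g,n}^{-1}\circ\check{\Theta})(f)$.

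There is no real obstacle here. The only point requiring care is bookkeeping: the exponents are well defined modulo $n$, the order of $h$, so that $h^{kk_i}$ depends only on the residue classes $\bar{k}$ and $\bar{k}_i$. One should also confirm that the scalar multiplication by $\rho(z) \in \Z_n^\times$ on $\Omega^{g,n}$ is the componentwise multiplication used in Paragraph~\ref{Cycl}.
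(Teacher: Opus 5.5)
Your proposal is correct and matches the paper's proof: you write $(\omega_{g,n}^{-1}\circ\check{\Theta})(f)$ componentwise, compute $(z.f)([\alpha_i]) = a h^{k_i} a^{-1} = h^{kk_i}$ (and likewise for $\beta_i$), and read off the result. Your additional checks that $z.f \in \Omega_U^g$ and that the exponents are well defined modulo $n$ are left implicit in the paper.
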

\begin{proof}
For $\bar{k} = \rho(z)$ and~$z = aC$, suppose that $(\omega_{g,n}^{-1} \circ \check{\Theta})(f) = (\bar{k}_1,\bar{l}_1,\dots,\bar{k}_g,\bar{l}_g)$, which means that
\mbox{$h^{k_i} \deq f([\alpha_i])$} and $h^{l_i} \deq f([\beta_i])$. We then have by definition that
\[(z.f)([\alpha_i]) = a f([\alpha_i]) a^{-1} = a h^{k_i} a^{-1} = h^{k k_i}\]
and similarly $(z.f)([\beta_i]) = h^{k l_i}$. This establishes our assertion.
\end{proof}

The group homomorphism~$\rho$ can also be used to introduce a $Z$-action on the conjugacy class
\[I \deq \{a h a^{-1} \mid a \in G\}\]
of~$h$: If $\bar{k} = \rho(bC)$, we define the action of~$z=bC$ as
\[I \to I,~c \mapsto c^k.\]
Note that~$c^k \in I$, since $c = a h a^{-1}$ implies that
$c^k = a h^k a^{-1} = a b h b^{-1} a^{-1}$. This action of~$Z$ on~$I$ can be viewed as both a left and a right action, because~$\Z_n^\times$ is abelian.
\begin{Lemma} \label{Equiv}
The map
\[G/C \to I,~aC \mapsto a h a^{-1}\]
is $Z$-equivariant.
\end{Lemma}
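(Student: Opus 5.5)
We have to check that the map $G/C \to I$, $aC \mapsto aha^{-1}$, intertwines the two $Z$-actions. On $G/C$ the action is the right action $aC.z = abC$ for $z = bC$ with $b \in N$. On $I$ the action of $z$ is $c \mapsto c^k$ with $\bar{k} = \rho(z)$. The plan is to first confirm that the map is well defined and then compute both sides directly.

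For well-definedness, if $aC = a'C$, then $a' = ac$ with $c \in C$. Since $c$ centralizes $U = \langle h \rangle$, we get $a'ha'^{-1} = achc^{-1}a^{-1} = aha^{-1}$. Because $Z$ is abelian-acted on $I$ through $\Z_n^\times$, the right/left distinction on $I$ is irrelevant, as the paper already notes. So we only need
\[\text{image of } (aC.z) = (\text{image of } aC)^k.\]

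For the computation, take $z = bC$ with $b \in N$, and write $\bar{k} = \rho(z)$, so that by definition $bhb^{-1} = h^k$. The image of $aC.z = abC$ is
\[abhb^{-1}a^{-1} = ah^k a^{-1} = (aha^{-1})^k,\]
and the right-hand side is exactly the action of $z$ on the image $aha^{-1}$ of $aC$. This is the whole argument. Independence of the choice of representative $b$ of $z$ follows because $\rho$ is well defined on $Z$.

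If one insists on viewing the action on $G/C$ as a left action via $z.aC = aC.z^{-1}$, as in Paragraph~\ref{GenCase}, the same computation with $b^{-1}$ gives $(aha^{-1})^{k'}$ with $\bar{k}' = \bar{k}^{-1} = \rho(z^{-1})$. This is again compatible, since on $I$ the left action is then by $\rho(z^{-1})$ as well.

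There is no real obstacle here. The only point needing care is keeping straight which convention, right action or inverse left action, is used on $G/C$, and that $\rho$ is a homomorphism, so that inverses match up.
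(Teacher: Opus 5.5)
Your proof is correct and is the same argument as the paper's. With $z = bC$ and $\rho(z) = \bar{k}$, the coset $aC.z = abC$ maps to $abhb^{-1}a^{-1} = ah^ka^{-1} = (aha^{-1})^k = (aha^{-1}).z$. The paper takes well-definedness on cosets of $C$ for granted, so your explicit check of it and your remark on left-action conventions are correct but not needed.
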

\begin{proof}
For a coset $z = bC \in Z$, suppose that $\rho(z) = \bar{k}$. As we have seen in Paragraph~\ref{GenCase}, we then have
$(aC).z = abC$, and this coset is mapped to
\[a b h b^{-1} a^{-1} = a h^k a^{-1} = (a h a^{-1})^k = (a h a^{-1}).z\]
as required.
\end{proof}

The maps appearing above can be arranged into the commutative diagram
\begin{center}
\begin{tikzcd}[column sep = large] {}
N/C \arrow{d}{} \arrow{r}{} &
G/C \arrow{d}{} \arrow{r}{} &
G/N \arrow{d}{}  \\
U \cap I \arrow{r}{} &
I \arrow[swap]{r}{b \, \mapsto \, \langle b \rangle} &
O
\end{tikzcd}
\end{center}
of sets, in which the first two vertical maps are given by $aC \mapsto a h a^{-1}$ and the third vertical map is given by $aN \mapsto a h a^{-1}$. The rows of this diagram are exact in the sense that the composition in both rows is constant, and the image of the first map is the preimage of the respective distinguished points, which is~$N$ for the first row and~$U$ for the second row.

This lemma and the preceding proposition can be used to turn the bijection
between~$\Omega^g(O)$ and~$G/C \times_Z \Omega^g_U$ established in Proposition~\ref{PropDecomp} into a bijection
between~$\check{\Omega}^g(O)$ and~$I \times_Z \Omega^{g,n}$ in such a way that the diagram
\begin{center}
\begin{tikzcd}[column sep = large] {}
G/C \times_Z \Omega^g_{U} \arrow{d}{} \arrow{r}{} &
\Omega^g(O) \arrow{d}{\check{\Theta}}  \\
I \times_Z \Omega^{g,n} \arrow{r}{} &
\check{\Omega}^g(O)
\end{tikzcd}
\end{center}
commutes. Here, the map from Proposition~\ref{PropDecomp} appears on top, while the left vertical map is induced by the map
\[G/C \times \Omega^g_{U} \to I \times \Omega^{g,n},~(a, f) \mapsto
(a h a^{-1}, (\omega_{g,n}^{-1} \circ \check{\Theta})(f)),\]
which descends to a bijection between~$G/C \times_Z \Omega^g_{U}$
and~$I \times_Z \Omega^{g,n}$ in view of the equivariance properties established in Proposition~\ref{RhoEquiv} and Lemma~\ref{Equiv}.
The lower horizontal map is induced by
\[I \times \Omega^{g,n} \to \check{\Omega}^g(O),~(c,(\bar{k}_1, \bar{l}_1,\ldots, \bar{k}_g, \bar{l}_g))
\mapsto (c^{k_1}, c^{l_1}, \ldots, c^{k_g}, c^{l_g}).\]
By linear extension of the arising bijection between~$\Omega^g(O)$ and~$I \times_Z \Omega^{g,n}$, we obtain an isomorphism
\[M^g(O) \cong K[I] \otimes_{K[Z]} K[\Omega^{g,n}].\]
This isomorphism can be used to derive the following variant of Theorem~\ref{ThmDecomp} for cyclic subgroups:
\begin{Cor} \label{DecompCohCyl}
$H^m(G,M^g(O)) \cong H^m(G,K[I]) \ot_{K[Z]} K[\Omega^{g,n}]$
\end{Cor}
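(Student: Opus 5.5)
The plan is to repeat the proof of Theorem~\ref{ThmDecomp} almost verbatim, replacing the pair $(K[G/C], M_U^g)$ by the pair $(K[I], K[\Omega^{g,n}])$. The input is the isomorphism $M^g(O) \cong K[I] \ot_{K[Z]} K[\Omega^{g,n}]$ obtained just before the statement. For this to be usable, it has to be an isomorphism of left $K[G]$-modules, and I would check this first. The $G$-action on $K[I]$ is conjugation, which commutes with the right $Z$-action $c \mapsto c^k$, since $(aca^{-1})^k = ac^ka^{-1}$. The lower horizontal map $(c,(\bar{k}_i,\bar{l}_i)) \mapsto (c^{k_i}, c^{l_i})$ intertwines conjugation on $I$ with conjugation on $\check{\Omega}^g(O)$. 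The remaining vertical maps in the square are also $G$-equivariant: $aC \mapsto aha^{-1}$ is, and $\check{\Theta}$ is. Hence the bijection $I \times_Z \Omega^{g,n} \to \check{\Omega}^g(O) \cong \Omega^g(O)$ is $G$-equivariant, and so is its linearization.

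The second step is to show that $K[\Omega^{g,n}]$ is a free $K[Z]$-module of finite rank. Here $Z$ acts through $\rho \colon Z \to \Z_n^\times$ by multiplication, by Proposition~\ref{RhoEquiv}. The homomorphism $\rho$ is injective, because $aha^{-1} = h$ forces $a \in C$ as $h$ generates $U$. Moreover, $\Z_n^\times$ acts freely on $\Omega^{g,n}$: if $\bar{k}v = v$ with $\gcd$ of the entries of $v$ with $n$ equal to $1$, then $k \equiv 1 \bmod n$. So $Z$ acts freely on the finite set $\Omega^{g,n}$. Choosing orbit representatives $v_1,\dots,v_s$ gives $K[\Omega^{g,n}] \cong K[Z]^s$ as left $K[Z]$-modules. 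Alternatively, this follows by transporting the first step of the proof of Theorem~\ref{ThmDecomp} along the $Z$-equivariant bijection $\omega_{g,n}^{-1}\circ\check{\Theta}$ of Proposition~\ref{RhoEquiv}.

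With freeness in hand, I take a projective resolution $P_\bullet$ of $K$ over $K[G]$ and use the isomorphism from~\cite[Chap.~II, \S~4.2, Prop.~2, p.~269]{Bo}:
\[\Hom_{K[G]}(P_m, K[I]) \ot_{K[Z]} K[\Omega^{g,n}] \cong \Hom_{K[G]}(P_m, K[I] \ot_{K[Z]} K[\Omega^{g,n}]).\]
This isomorphism is natural in $P_m$, hence compatible with the differentials. Because $K[\Omega^{g,n}] \cong K[Z]^s$, the left-hand side is $\Hom_{K[G]}(P_m,K[I])^s$ as a complex. Cohomology therefore commutes with this finite direct sum, which yields $H^m(G,K[I])^s \cong H^m(G,K[I]) \ot_{K[Z]} K[\Omega^{g,n}]$. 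Combining this with the first step gives the assertion.

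The main obstacle is the bookkeeping in the first step: one must verify carefully that the composite isomorphism is $G$-linear and that the right $Z$-module structure on $K[I]$ is by $K[G]$-linear maps. Only then is $H^m(G,K[I])$ a right $K[Z]$-module, so that the tensor product over $K[Z]$ makes sense. Once this is checked, the rest is the same additivity argument used for Theorem~\ref{ThmDecomp}.
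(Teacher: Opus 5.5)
Your proposal is correct and is essentially the paper's argument. The paper obtains this corollary by feeding the isomorphism $M^g(O) \cong K[I] \ot_{K[Z]} K[\Omega^{g,n}]$ into the proof of Theorem~\ref{ThmDecomp}, and your checks supply the details the paper leaves implicit: the $G$-equivariance, the freeness of $K[\Omega^{g,n}]$ over $K[Z]$ (via injectivity of $\rho$ and freeness of the $\Z_n^\times$-action), and the additivity step. (Since $C$ is the centralizer of the generator $h$, the map $G/C \to I$ is in fact a $Z$-equivariant bijection, so you could equally transport Theorem~\ref{ThmDecomp} directly along it and along $\omega_{g,n}^{-1}\circ\check{\Theta}$.)
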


As we have discussed in Paragraph~\ref{Cycl}, we can decompose~$K[\Omega^{g,n}]$ into the eigen\-spaces~$K_\chi[\P^{2g-1}_n]$ if we assume that~$K$ contains a primitive $\phi(n)$-th root of unity. We want to decompose~$K[I]$ in a similar way. By construction, the action of~$Z$ on~$I$ is the pullback of the action of the subgroup~$\rho(Z)$ of~$\Z_n^\times$ on~$K[I]$, so that a decomposition into the eigenspaces for the action of this subgroup is also a decomposition into eigenspaces for the action of~$Z$. The group algebra~$K[\rho(Z)]$ of this subgroup is a subalgebra of
$K[\Z_n^\times]$, which is semisimple and splits completely, as already mentioned in Paragraph~\ref{Cycl}. The same therefore also holds for this subalgebra, and any group homomorphism from~$\rho(Z)$ to~$K^\times$ is the restriction of a Dirichlet character in~$\hat{\Z}_n^\times$. As a consequence, $K[I]$ decomposes completely into eigenspaces of~$Z$, which we denote, in analogy with Paragraph~\ref{Cycl}, by
\begin{align*}
K_\eta[I/Z] \deq
\{x \in K[I]\mid x.z = \eta(z) x \; \text{for all} \;  z \in Z\}
\end{align*}
for $\eta \in \hat{Z} \deq \Hom(Z,K^\times)$. By combining this decomposition with the one from Paragraph~\ref{Cycl}, we obtain
\begin{align*}
K[I] \ot_{K[Z]} K[\Omega^{g,n}]
= \bigoplus_{\eta \in \hat{Z}} \bigoplus_{\chi \in \hat{\Z}_n^\times}
K_\eta[I/Z] \ot_{K[Z]} K_\chi[\P_n^{2g-1}].
\end{align*}
Now we have for $x \in K_\eta[I/Z]$ and~$v \in K_\chi[\P_n^{2g-1}]$ that
\[\eta(z) \; x \ot_{K[Z]} v = (x.z) \ot_{K[Z]} v = x \ot_{K[Z]} \rho(z) v =
\chi(\rho(z)) \; x \ot_{K[Z]} v,\]
so that $x \ot_{K[Z]} v = 0$ unless $\eta = \chi \circ \rho$. On the other hand, if this condition holds, we have for the tensor product over the base field that
\[(x.z) \ot_K v = \eta(z) \; x \ot_K v =
\chi(\rho(z)) \; x \ot_K v  = x \ot_K \rho(z) v, \]
so that the tensor products over~$K[Z]$ and over~$K$ are isomorphic. Therefore the above decomposition reduces to
\begin{align*}
K[I] \ot_{K[Z]} K[\Omega^{g,n}]
= \bigoplus_{\chi \in \hat{\Z}_n^\times}
K_{\chi \circ \rho}[I/Z] \ot K_\chi[\P_n^{2g-1}].
\end{align*}
In the case where~$K$ contains a primitive $\phi(n)$-th root of unity, we therefore get the following decomposition of our cohomology group:
\begin{Cor} \label{DecompCohEig}
$\displaystyle
H^m(G,M^g(O)) \cong
\bigoplus_{\chi \in \hat{\Z}_n^\times}
H^m(G,K_{\chi \circ \rho}[I/Z]) \ot K_\chi[\P_n^{2g-1}]$
\end{Cor}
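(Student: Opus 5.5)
We start from Corollary~\ref{DecompCohCyl}, which gives $H^m(G,M^g(O)) \cong H^m(G,K[I]) \ot_{K[Z]} K[\Omega^{g,n}]$. Then we feed in the two eigenspace decompositions established just before the statement, namely
\[K[I] = \bigoplus_{\eta \in \hat{Z}} K_\eta[I/Z] \qquad\text{and}\qquad K[\Omega^{g,n}] = \bigoplus_{\chi} K_\chi[\P^{2g-1}_n].\]
The first point to check is that the decomposition of $K[I]$ is one of $K[G]$-modules. This holds because the right $Z$-action $c \mapsto c^k$ commutes with $G$-conjugation, since $(aca^{-1})^k = ac^ka^{-1}$. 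Each $K_\eta[I/Z]$ is therefore a $G$-submodule, and by additivity of cohomology we get
\[H^m(G,K[I]) \cong \bigoplus_\eta H^m(G,K_\eta[I/Z]).\]
This decomposition is compatible with the right $K[Z]$-module structure. Indeed, $Z$ acts on $K[I]$ by $G$-linear maps, hence on $H^m(G,K[I])$ by functoriality, and on the summand $H^m(G,K_\eta[I/Z])$ it acts by the scalar $\eta(z)$.

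Next we use that $\ot_{K[Z]}$ commutes with direct sums. This gives
\[H^m(G,K[I]) \ot_{K[Z]} K[\Omega^{g,n}] \cong \bigoplus_{\eta,\chi} H^m(G,K_\eta[I/Z]) \ot_{K[Z]} K_\chi[\P^{2g-1}_n].\]
We then repeat the argument given before the statement, now with $x$ a cohomology class on which $z$ acts by $\eta(z)$ rather than an element of $K_\eta[I/Z]$. The terms with $\eta \neq \chi\circ\rho$ vanish, provided we can divide by $\eta(z)-\chi(\rho(z))$. This is possible because $\eta(z)-\chi(\rho(z))$ is a nonzero scalar for some $z$. For the terms with $\eta = \chi\circ\rho$, the tensor product over $K[Z]$ agrees with the tensor product over $K$. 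To see this, we note that $\ot_{K[Z]}$ is the quotient of $\ot_K$ by the span of the elements $x.z \ot v - x \ot \rho(z)v$, and these elements all vanish when both sides act by the same scalar. Finally, because $\rho$ is a homomorphism, for each $\eta$ there is at most one relevant $\chi$-block only in the sense of the indexing: we reindex the double sum by $\chi$ alone, keeping for each $\chi$ the summand with $\eta=\chi\circ\rho$. If $\chi\circ\rho$ happens to be a character $\eta$ of $Z$, its eigenspace appears, and every $\eta$ occurring is of this form.

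The main obstacle is the bookkeeping of the $Z$-action on cohomology. One has to confirm that the right $K[Z]$-module structure on $H^m(G,K[I])$ used in Corollary~\ref{DecompCohCyl} is exactly the one induced by functoriality from the $G$-linear action on $K[I]$. Once this is settled, the isotypic decomposition passes to cohomology. We would verify this at the cochain level via $\Hom_{K[G]}(P_m,-)$, as in the proof of Theorem~\ref{ThmDecomp}. There the $Z$-action is by postcomposition, and the decomposition $K[I]=\bigoplus_\eta K_\eta[I/Z]$ splits the cochain complex as a direct sum. Everything else is formal, using the semisimplicity of $K[\rho(Z)]$ guaranteed by the root-of-unity assumption.
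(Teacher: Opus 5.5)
Your argument is correct and is essentially the paper's second (``alternatively'') proof: you split $H^m(G,K[I])$ into the summands $H^m(G,K_\eta[I/Z])$ by additivity, note that $z$ acts on each by $\eta(z)$ via functoriality, tensor over $K[Z]$ with $\bigoplus_\chi K_\chi[\P^{2g-1}_n]$, and kill the terms with $\eta \neq \chi\circ\rho$; the paper's primary route does the same bookkeeping at module level and then passes to cohomology by the simple universal coefficient theorem. One harmless slip: your remark that for each $\eta$ there is at most one relevant $\chi$ is false when $\rho(Z)$ is a proper subgroup of $\Z_n^\times$, since several Dirichlet characters can then restrict to the same $\eta$; what you actually use, namely that each $\chi$ leaves exactly one surviving summand with $\eta=\chi\circ\rho$, is correct and gives the stated reindexing.
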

\begin{proof}
This follows directly from the preceding discussion. That taking cohomology commutes with taking the tensor product follows from a simple version of the universal coefficient theorem, as in the proof of Theorem~\ref{ThmDecomp}. Alternatively, this can be deduced from Corollary~\ref{DecompCohCyl}, as we have
\[H^m(G,K[I]) \cong
\bigoplus_{\eta \in \hat{Z}} H^m(G,K_{\eta}[I/Z])\]
because the cohomology functor is additive. If we take the tensor product with
$K[\Omega^{g,n}] = \bigoplus_{\chi \in \hat{\Z}_n^\times} K_\chi[\P_n^{2g-1}]$ over~$K[Z]$ on both sides, the same discussion as above shows that many terms in the double sum vanish, and the remaining terms form, as before, the sum that appears on the right-hand side of our assertion.
\end{proof}

\subsection{The symmetric group on three letters} \label{SymmThree}
We now apply the decomposition method explained in Paragraph~\ref{CyclSub} to the smallest nonabelian group and assume that~$G=S_3$, the symmetric group on three letters. We also assume that the genus is~$g=1$. The group~$S_3$ contains six elements, namely the identity mapping as unit element~$e$, three transposition~$t_1$, $t_2$, and~$t_3$ with the property that~$i$ is the unique fixed point of~$t_i$, and two 3-cycles denoted by~$s \deq (1 2 3)$
and~$s^{-1} = (1 3 2)$. There are three conjugacy classes, namely~$I_1 \deq \{e\}$,
$I_2 \deq \{t_1, t_2, t_3\}$, and~$I_3 \deq \{s,s^{-1}\}$.

As we have seen in Paragraph~1.12, the fundamental group of the torus~$\Sigma_{1}$ is free abelian with the homotopy classes of the paths~$\alpha_1$ and~$\beta_1$ as a basis. The image of a group homomorphism from $\pi_1(\Sigma_{1},x)$ to~$S_3$ is therefore an abelian subgroup of~$S_3$. There are five such abelian subgroups, namely the trivial group~$T \deq \{e\}$ consisting only of the identity element, three subgroups~$U_i \deq \{e,t_i\}$ of order~$2$, and the alternating
group~\mbox{$A_3 = \{e, s , s^{-1}\}$} of order~$3$. The groups~$T$ and~$A_3$ are normal, while the three groups of order~$2$ are conjugate. This means that we have three conjugacy classes of abelian subgroups, namely~$O_1 \deq \{T\}$,~$O_2 \deq \{U_1, U_2, U_3\}$,
and~$O_3 \deq \{A_3\}$. As discussed in Paragraph~\ref{GenCase}, this leads to the decomposition
\[M^1 = M^1(O_1) \oplus M^1(O_2) \oplus M^1(O_3).\]
As already stated in Paragraph~\ref{Abel}, the map
\[\check{\Theta} \colon \Omega^1 \to S_3^2,~f \mapsto v \deq (a,b)\]
where $a \deq f([\alpha_1])$ and $b \deq f([\beta_1])$, is injective. Its image consists of the pairs of commuting elements in~$S_3$, and it is equivariant if we define the \mbox{$S_3$-action} on pairs via $c.(a,b) \deq (cac^{-1}, cbc^{-1})$ for $a,b,c \in S_3$. As in Paragraph~\ref{Abel}, we have for $f \in \Omega^1$ that
\[\check{\Theta}([\psi].f) = (A^T)^{-1} \check{\Theta}(f)\]
if~$[\psi] \in \Gamma_1(x)$ corresponds to the matrix~\mbox{$A \in \Sp(2, \Z)$}, where we recall from Paragraph~1.12 that $\Sp(2, \Z) = \SL(2, \Z)$ and that this correspondence is bijective.

The map~$\check{\Theta}$ takes the unique element in~$\Omega^1_T$ to the pair~$(e,e)$. The images of the other groups are parametrized by the maps
\[\omega_{1,2} \colon \Omega^{1,2} \to \check{\Omega}^1_{U_i} = \{(e,t_i), (t_i,e), (t_i, t_i)\},~(\bar{k},\bar{l}) \mapsto (t_i^k, t_i^l) \]
in the case of~$U_i$ and
\[\omega_{1,3} \colon \Omega^{1,3} \to \check{\Omega}^1_{A_3},~(\bar{k},\bar{l}) \mapsto (s^k,s^l) \]
in the case of~$A_3$. Since the action of~$\SL(2, \Z)$ on~$\Omega^{1,2}$ and~$\Omega^{1,3}$ is transitive (cf.~\cite[Kap.~II, Satz~6.14, p.~183]{Hu}), we see that also the action of~$\Gamma_1(x)$ on~$\Omega^1_{T}$, $\Omega^1_{U_i}$, and~$\Omega^1_{A_3}$ is transitive.

We now discuss the form that our decomposition takes for the various subgroups. It is obvious that in the case~$U=T$, both the action of the mapping class
group~$\Gamma_1(x)$ and the action of~$S_3$ are trivial. If~$U=U_i$, we have that both the centralizer~$C$ and the normalizer~$N$ are equal to~$U_i$, so that~$Z=N/C$ is trivial. From the formula stated before Corollary~\ref{DecompCohCyl} in Paragraph~\ref{CyclSub}, we therefore get that~$M^1(O_2)$ decomposes completely into a tensor product
\[M^1(O_2) \cong K[I_2] \otimes_{K} K[\Omega^{1,2}]\]
over the base field~$K$. This map is $S_3$-equivariant if~$S_3$ acts on $I_2$ by conjugation, and it is also \mbox{$\SL(2, \Z)$-equi}\-variant for the $\SL(2, \Z)$-action coming from the right factor~$\Omega^{1,2}$.

In the case where~$U = A_3$, we have~$N = S_3$ and~$C = A_3$, so that \mbox{$Z=N/C \cong \Z_2$}.
The homomorphism~$\rho \colon Z \to \Z_3^\times$ maps the unique nonidentity element to~$-\bar{1}$, and in the quotient~$I_3 \times_Z \Omega^{1,3}$ of the Cartesian product, the action of~$z \in Z$ on the right factor~$\Omega^{1,3}$ is given via multiplication by~$\rho(z)$. On the other hand, the unique nonidentity element of~$Z$ interchanges~$s$ and~$s^{-1}$ when acting on the left factor~$I_3$.

If the characteristic of the base field~$K$ is different from~$2 = \phi(3)$, then~$-1_K$ is a primitive second root of unity, and we can apply the decomposition
\begin{align*}
K[I_3] \ot_{K[Z]} K[\Omega^{1,3}]
= \bigoplus_{\chi \in \hat{\Z}_3^\times}
K_{\chi \circ \rho}[I_3/Z] \ot K_\chi[\P_3^{1}]
\end{align*}
stated before Corollary~\ref{DecompCohEig}. We have $\hat{\Z}_3^\times = \{\chi_1, \chi_2\}$, where the first Dirichlet character~$\chi_1$ is constantly equal to~$1_K$ and the second Dirichlet character~$\chi_2$ satisfies~$\chi_2(-\bar{1}) = -1_K$. The eigenspace
$K_{\chi_1 \circ \rho}[I_3/Z]$ is spanned by~$s + s^{-1}$, while the eigenspace
$K_{\chi_2 \circ \rho}[I_3/Z]$ is spanned by~$s - s^{-1}$. On the first space, $S_3$ operates trivially, while on the second, it operates via the sign representation. We therefore get the decomposition
\begin{align*}
K[I_3] \ot_{K[Z]} K[\Omega^{1,3}]
= K[s + s^{-1}] \ot K_{\chi_1}[\P_3^{1}]
\oplus K[s - s^{-1}] \ot K_{\chi_2}[\P_3^{1}].
\end{align*}

If the characteristic of the base field~$K$ is equal to~$2$, we cannot apply this decomposition. In this case, the $S_3$-module~$K[I_3]$ is indecomposable, as the unique nontrivial submodule is spanned by~$s + s^{-1}$. The set $\Omega^{1,3}$ consists of the eight nonzero vectors in~$\Z_3^2$, and we can choose four vectors~$v_1$, $v_2$, $v_3$, and~$v_4$ so that
\[\Omega^{1,3} = \{v_1, v_2, v_3, v_4, -v_1, -v_2, -v_3, -v_4\}.\]
Then the set $I_3 \times_Z \Omega^{1,3}$ consists of the eight equivalence classes
\[\overline{(s,v_i)} = \{(s,v_i), (s^{-1},-v_i)\} \qquad \text{and} \qquad
\overline{(s^{-1},v_i)} = \{(s^{-1},v_i), (s,-v_i)\}\]
for $i=1,2,3,4$. So the mapping
\[I_3 \times \{v_1, v_2, v_3, v_4\} \to I_3 \times_Z \Omega^{1,3},\]
which is the composition of the inclusion into $I_3 \times \Omega^{1,3}$ and the canonical quotient map, is bijective and $S_3$-equivariant. This implies that
\[K[I_3] \ot_{K[Z]} K[\Omega^{1,3}] \cong K[I_3]^4\]
as $S_3$-modules.

\section{Computation of the cohomology groups} \label{Sec:CompCoh}
\subsection{Cohomology in degree zero} \label{DegZero}
In this section, we reach our goal to determine explicitly the derived block spaces for the Drinfel'd double of~$S_3$ in genus~$g=1$, i.e., for the torus, and their module structure over the mapping class group~$\Gamma_1 \cong \SL(2,\Z)$. Recall that, in view \linebreak of Corollary~\ref{Group},
the derived block spaces are isomorphic to the cohomology \linebreak groups~$H^m(S_3, M^1)$, and we will now use the decomposition described in Paragraph~\ref{SymmThree} to compute these groups. Another important tool will be the periodic resolution for the cyclic group, which is reviewed in Paragraph~\ref{PerResCycl} of the appendix.

In degree zero, we find by construction the non-derived block space, i.e., the space $\Ext_D^0(K, L) \cong \Hom_D(K, L)$, which is by Corollary~\ref{Group} isomorphic to~$H^0(S_3, M^1)$. As already mentioned toward the end of Paragraph~\ref{RedGroupCohom}, this cohomology group is given by the fixed points of the action, so that the decomposition described in Paragraph~\ref{SymmThree} yields
\[(M^1)^{S_3} = M^1(O_1)^{S_3} \oplus M^1(O_2)^{S_3} \oplus M^1(O_3)^{S_3}.\]
Because the action of~$S_3$ commutes with the action of the mapping class group, the modular group~$\SL(2,\Z)$ still acts on the fixed points, and this action is trivial in the case of~$M^1(O_1)^{S_3} \cong K$. In the case of the second summand, the decomposition
\[M^1(O_2) \cong K[I_2] \otimes_{K} K[\Omega^{1,2}]\]
found in Paragraph~\ref{SymmThree} shows that $M^1(O_2)^{S_3} \cong K[\Omega^{1,2}]$ as $\SL(2,\Z)$-modules.
Note that $\Omega^{1,2} \cong \P^1_2$, since~$\Z_2^\times$ is trivial. In the case of the third summand, the discussion at the end of Paragraph~\ref{SymmThree} implies that the space of fixed points is four-dimensional, and that the four vectors
\[\overline{(s,v_i)} + \overline{(s^{-1},v_i)} =
\overline{(s^{-1},-v_i)} + \overline{(s,-v_i)} \]
form a basis of this space. This shows that $M^1(O_3)^{S_3}$ is isomorphic to~$K[\P^1_3]$ as an $\SL(2,\Z)$-module.

We note that non-derived block spaces have been investigated by many authors (cf.~for example \cite{FF}, \cite{KSSB}, \cite{SZ}). As we have explained in Paragraph~3.4, in our case this space is isomorphic to the zeroth Hochschild cohomology group~$HH^0(D,D)$ of the Drinfel'd double~$D$, which is in turn equal to the center~$Z(D)$ of~$D$ (cf.~\cite[Sec.~1, Examp.~1.1, p.~403]{LMSS1}; \cite[Sec.~1.2, p.~10]{W2}).

\subsection{Cohomology in characteristic~$2$} \label{Char2}
If the characteristic of the base field~$K$ is different from~$2$ or~$3$, the group algebra~$K[S_3]$ is semisimple by Maschke's theorem, and consequently the higher cohomology groups~$H^m(S_3, M^1)$ with $m>0$ vanish. This is also a consequence of the fact that these cohomology groups are annihilated by multiplication with both the group order and the characteristic of the base field (cf.~\cite[Chap.~2, \S~7, Cor.~2, p.~210]{Su}). So we only need to treat these two characteristics, and we begin with the case that the characteristic of~$K$ is~$2$. To deal with this case, we will need two closely related lemmata:
\begin{Lemma} \label{ResChar2}
Suppose that~$W$ is a $K[S_3]$-module. Then the restriction map
\[\Res^{S_3}_{U_i} \colon H^m(S_3, W) \to H^m(U_i, W)\]
is a $K$-linear isomorphism for all $i=1,2,3$ and all~$m > 0$.
\end{Lemma}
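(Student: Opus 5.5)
The plan is to argue in characteristic~$2$, which is the setting of this paragraph; in characteristic~$3$ the map $\Res^{S_3}_{U_i}$ is zero in positive degrees. In characteristic~$2$, the subgroup $U_i = \{e,t_i\}$ has order~$2$ and index~$3$ in~$S_3$, so it is a Sylow $2$-subgroup. The main tool is the transfer (corestriction) map
\[\mathrm{Cor}^{U_i}_{S_3} \colon H^m(U_i,W) \to H^m(S_3,W),\]
together with the standard identity $\mathrm{Cor}\circ\Res = [S_3:U_i]\cdot\id = 3\cdot\id$. Since $3$ is invertible in~$K$, this identity shows that $\Res^{S_3}_{U_i}$ is injective for all $m \ge 0$.

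For surjectivity I would use the stable element criterion of Cartan--Eilenberg. The image of restriction from a group to a Sylow subgroup consists precisely of the stable classes. For $U_i$ these are the classes $x\in H^m(U_i,W)$ such that, for every $g\in S_3$,
\[\Res^{U_i}_{U_i\cap gU_ig^{-1}}(x) = \Res^{gU_ig^{-1}}_{U_i\cap gU_ig^{-1}}(c_g^*x).\]
Concretely, one computes $\Res\circ\mathrm{Cor}$ by the double coset (Mackey) formula and checks that it is an isomorphism.

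The double cosets $U_i\backslash S_3/U_i$ are two in number: the trivial one $U_i$, and $U_i s U_i$ of size~$4$. For $g\notin U_i$, the intersection $U_i\cap gU_ig^{-1}$ is trivial, because distinct transposition subgroups meet only in~$e$. Hence the Mackey formula gives
\[\Res\circ\mathrm{Cor} = \id + \mathrm{Cor}^{\{e\}}_{U_i}\circ c_g^*\circ \Res^{U_i}_{\{e\}}.\]
In positive degrees the second term factors through $H^m(\{e\},W)=0$. So $\Res\circ\mathrm{Cor}=\id$ on $H^m(U_i,W)$ for $m>0$, which gives surjectivity of restriction. Combined with injectivity, restriction is a $K$-linear isomorphism. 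It is $K$-linear because restriction is induced by the inclusion of projective resolutions: a $K[S_3]$-projective resolution of~$K$ is also $K[U_i]$-projective, as noted in the proof of Corollary~\ref{CorDecomp}.

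The main obstacle is that the paper has not introduced the transfer or the Mackey formula. I would cite them from standard references such as Evens~\cite{E} or Brown, or I would verify them at the cochain level on a resolution. Everything else reduces to the observation that $U_i\cap gU_ig^{-1}=\{e\}$ for $g\notin U_i$, so the Sylow subgroup has trivial intersection with its conjugates. This observation also explains why the statement requires $m>0$: in degree~$0$, restriction is the inclusion $W^{S_3}\subset W^{U_i}$, which need not be surjective.
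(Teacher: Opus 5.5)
Your argument is correct, and it takes a different route from the paper. The paper does not prove the lemma internally. It cites a result from Suzuki's group theory text, which is stated there for integral cohomology, together with a remark on passing to $K$-coefficients. It also mentions the Lyndon--Hochschild--Serre spectral sequence as an alternative.

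Your transfer argument is self-contained up to two standard facts:
\begin{itemize}
\item $\mathrm{Cor}\circ\Res=[S_3:U_i]$;
\item the Mackey double coset formula.
\end{itemize}
Both hold for arbitrary $K[S_3]$-modules $W$, with cohomology defined as $\Ext_{K[S_3]}(K,W)$, so you avoid the detour through $\mathbb{Z}$.

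Your argument also isolates exactly what is used. First, the index $3$ is invertible in $K$. Second, $U_i$ meets each of its other conjugates trivially, which kills the non-identity Mackey term in positive degrees. The second point is precisely why $m>0$ is needed.

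A small sharpening: since $3=1$ in $K$, your two identities show that $\mathrm{Cor}^{U_i}_{S_3}$ is a two-sided inverse of $\Res^{S_3}_{U_i}$ for $m>0$. For the same reason, the stable-element paragraph is unnecessary, because the Mackey computation $\Res\circ\mathrm{Cor}=\mathrm{id}$ already gives surjectivity.
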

\begin{proof}
This follows from~\cite[Chap.~2, \S~7, (7.28), p.~211]{Su}. Note that there group cohomology is defined over the integers~$\Z$, while we here work over our base field~$K$ of characteristic~$2$, which in particular forces the cohomology to be 2-torsion. The connection between the two approaches is explained in~\cite[Sec.~1.1, p.~3]{E}, as we already mentioned in the proof of Proposition~\ref{GroupCohom}.
\end{proof}

To put this lemma to use, we will need an additional fact. For a $K[S_3]$-module~$W$, we consider the space~$W^{A_3}$ of invariants under the action of the alternating group~$A_3$. There is a natural inclusion mapping~$\iota \colon W^{A_3} \to W$ as well as a projection mapping
\[\pi \colon W \to W^{A_3},~w \mapsto \frac{1}{3} (w + s.w + s^{-1}.w).\]
The fact that~$A_3$ is normal in~$S_3$ implies that~$\pi$ is $S_3$-equivariant, so that in particular its image~$W^{A_3}$ is an $S_3$-submodule. These two modules have isomorphic cohomology groups:
\begin{Lemma} \label{InclChar2}
The map $\iota_* : H^m(S_3,W^{A_3}) \to H^m(S_3,W)$ is bijective.
\end{Lemma}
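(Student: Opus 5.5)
Since $3$ is invertible in $K$ (characteristic $2$), the map $\pi$ is an $S_3$-equivariant idempotent projection with $\pi \circ \iota = \id_{W^{A_3}}$. Hence $W = W^{A_3} \oplus \ker(\pi)$ as $K[S_3]$-modules, and additivity of cohomology gives
\[H^m(S_3,W) \cong H^m(S_3,W^{A_3}) \oplus H^m(S_3,\ker \pi),\]
with $\iota_*$ the inclusion of the first summand. So the plan is to show $H^m(S_3,\ker\pi) = 0$ for all $m \ge 0$; bijectivity of $\iota_*$ then follows.

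Put $W' \deq \ker(\pi)$. For $m > 0$, Lemma~\ref{ResChar2} applied to $W'$ reduces the claim to $H^m(U_1, W') = 0$. Restricted to the cyclic group $U_1 = \{e,t_1\}$, I will show that $W'$ is a free (hence projective) $K[U_1]$-module, so its higher cohomology vanishes. On $W'$ we have $w + s.w + s^{-1}.w = 0$, i.e.\ $s$ acts with $1+s+s^2 = 0$. Since $x^2+x+1$ is separable in characteristic~$2$ and $K$ is algebraically closed, $W'$ splits into eigenspaces of $s$ for the two primitive cube roots of unity $\zeta,\zeta^{2}$. Because $t_1 s t_1 = s^{-1}$, the element $t_1$ interchanges these two eigenspaces. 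Choosing a basis $w_j$ of the $\zeta$-eigenspace, the vectors $w_j, t_1.w_j$ then form a basis of $W'$, and this exhibits $W'$ as free over $K[U_1]$. Freeness does not depend on finite-dimensionality, since the argument is basis-wise.

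For $m = 0$, the invariants $(W')^{S_3}$ lie in $(W')^{A_3} = W' \cap W^{A_3} = 0$, because $\pi$ is the identity on $W^{A_3}$ and zero on $W'$. (Lemma~\ref{ResChar2} is only stated for $m>0$, so degree zero needs this separate argument.)

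The main obstacle is the structural step showing that $W'$ is free over $K[U_1]$; everything else is formal. An alternative route would be to use that $H^m(A_3,W')=0$ because $3$ is invertible, but that alone does not control $S_3$-cohomology in characteristic~$2$. So the eigenspace-swapping argument, or equivalently recognizing $W'$ as a sum of copies of the projective two-dimensional simple $K[S_3]$-module, is the key point.
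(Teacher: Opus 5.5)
Your proof is correct and follows essentially the paper's argument. You split $W = W^{A_3} \oplus \ker(\pi)$, pass to $U_i$ via Lemma~\ref{ResChar2}, and use the $s$-eigenspace decomposition, with $t_i$ swapping the $\zeta$- and $\zeta^{2}$-eigenspaces, to see that $\ker(\pi)$ restricts to a direct sum of copies of $K[U_i]$. Splitting over $S_3$ before restricting, rather than after as the paper does, is harmless. So is skipping the paper's field-extension step by invoking the standing assumption that $K$ is algebraically closed.

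One small point of justification: the vanishing of $H^m(U_1,\ker(\pi))$ for $m>0$ does not follow from projectivity as such, since $\Ext^m(K,-)$ vanishes on injective rather than projective coefficients. It does hold here, because $K[U_1]$ is self-injective and so free $K[U_1]$-modules are injective; alternatively, as in the paper, use additivity of cohomology together with Corollary~\ref{CohomReg}.
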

\begin{proof}
\begin{pflist}
\item
As just recalled in Paragraph~\ref{DegZero}, we have $H^m(S_3,W) = W^{S_3}$ if~$m=0$, and taking invariants in~$W^{A_3}$ yields the same result, so the assertion holds. We will now consider the case~$m>0$. In view of Lemma~\ref{ResChar2}, it is sufficient to show that the analogous map
\mbox{$\iota_* : H^m(U_i,W^{A_3}) \to H^m(U_i,W)$} is bijective. Since $\pi \circ \iota = \id$, we have
\[W = \im(\iota) \oplus \ker(\pi).\]
To prove our assertion, we need to establish that $H^m(U_i,\ker(\pi)) = 0$.

\item
We first treat the case where~$K$ contains a primitive third root of unity~$\zeta$. We then have
$(\zeta - 1)(\zeta^2 + \zeta + 1) = \zeta^3 - 1 = 0$, which implies
$\zeta^2 + \zeta + 1 = 0$. Since $s^3 = e$, the action of~$s$ on~$W$ satisfies a polynomial with distinct roots, and therefore~$W$~decomposes into the eigenspaces
\[W_j \deq \{w \in W \mid s.w = \zeta^{j} w\}\]
for $j=0,1,2$. From the equation derived above, we get that $W_0 = \im(\iota)$ and $W_1 \oplus W_2 = \ker(\pi)$. If $w \in W_1$, we get from the relation $s t_i = t_i s^{-1}$ that
\[s t_i.w = t_i s^{-1}.w = \zeta^{-1} t_i.w,\]
so that $t_i.w \in W_2$. A very similar computation shows that $t_i.w \in W_1$ if
$w \in W_2$.

We now choose a basis $w_1,\ldots,w_n$ of~$W_1$. Then $t_i.w_1,\ldots,t_i.w_n$ is a basis of~$W_2$.
The spaces $P_k := \Span_K(w_k,t_i.w_k)$ are subspaces that are invariant under both~$t_i$ and~$s$ and are therefore two-dimensional~$K[S_3]$-submodules with
$$W_1 \oplus W_2 = P_1 \oplus \dots \oplus P_n.$$
All of them are isomorphic to $P_1$ under the isomorphism that maps~$w_1$ to~$w_k$ and
$t_i.w_1$ to~$t_i.w_k$. The restriction of~$P_k$ to~$K[U_i]$ is actually isomorphic to~$K[U_i]$ under the isomorphism that maps~$1$ to~$w_k$ and~$t_i$ to~$ t_i.w_k$. We therefore have that
\[H^m(U_i,W_1 \oplus W_2) \cong H^m(U_i,P_1) \oplus \dots \oplus H^m(U_i,P_n) = 0\]
by Corollary~\ref{CohomReg}, as required.

\item
If~$K$ does not contain a primitive third root of unity, there is a finite field extension~$K' \supset K$ that does contain a primitive third root of unity
(cf.~\cite[Chap.~I, \S~5, Thm.~6, p.~32]{J}), and we can consider the $S_3$-module~$W \ot_K K'$.
From general principles of multilinear algebra (cf.~\cite[Sec.~1.19, p.~24]{Gr2}), it follows that
\[\ker(\pi \ot \id_{K'}) = \ker(\pi) \ot K'.\]
Using the same principles, the description of the cohomology groups given in Proposition~\ref{CohomCycl} shows that
\[H^m(U_i,\ker(\pi) \ot K') = H^m(U_i,\ker(\pi)) \ot K'.\]
This fact is also a consequence of the simple version of the universal coefficient theorem already used in the proofs of Theorem~\ref{ThmDecomp} and Corollary~\ref{DecompCohEig}. But by the preceding step, the left-hand side of this equation vanishes, which establishes our claim that~$H^m(U_i,\ker(\pi)) = 0$.
\qedhere
\end{pflist}
\end{proof}

We can now compute the arising mapping class group representations on the derived block spaces. In this case, there are no representations in higher degree that did not already appear in degree zero, which we considered in Paragraph~\ref{DegZero}:
\begin{Theorem} \label{ModGroupChar2}
We have
\[\TFT^m(\Sigma_{1}) \cong
\begin{cases}
K \oplus K[\P^1_2] \oplus K[\P^1_3] &: m = 0\\
K \oplus K[\P^1_2] &: m \neq 0
\end{cases}\]
as $\SL(2,\Z)$-modules.
\end{Theorem}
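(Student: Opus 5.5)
By Corollary~\ref{Group}, it suffices to compute $H^m(S_3,M^1)$ as an $\SL(2,\Z)$-module. We use the decomposition $M^1 = M^1(O_1)\oplus M^1(O_2)\oplus M^1(O_3)$ from Paragraph~\ref{SymmThree} and additivity of cohomology, and treat the three summands separately. The case $m=0$ was settled in Paragraph~\ref{DegZero}, so we assume $m>0$ throughout. Each isomorphism below has to be checked to be $\SL(2,\Z)$-equivariant, and this holds because the mapping class group acts only on coefficients through $S_3$-linear maps.

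For $O_1$ the module is $K$ with trivial actions of both groups. Lemma~\ref{ResChar2} gives $H^m(S_3,K)\cong H^m(U_i,K)$, which is one-dimensional for $m>0$ by the periodic resolution (Proposition~\ref{CohomCycl}, characteristic $2$). Hence this summand contributes $K$ with the trivial action.

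For $O_2$ we use $M^1(O_2)\cong K[I_2]\ot_K K[\Omega^{1,2}]$. The simple universal coefficient argument from the proof of Theorem~\ref{ThmDecomp} then gives $H^m(S_3,K[I_2])\ot_K K[\Omega^{1,2}]$. By Corollary~\ref{CorDecomp}, with $C=U_i$ and $Z$ trivial, we have $H^m(S_3,K[I_2])\cong H^m(U_i,K)\cong K$. Since $\Omega^{1,2}\cong\P^1_2$, this summand contributes $K[\P^1_2]$.

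For $O_3$ the claim is that the cohomology vanishes in positive degree. In characteristic $2$, the end of Paragraph~\ref{SymmThree} gives $M^1(O_3)\cong K[I_3]^4$ as $S_3$-modules. On $K[I_3]$, however, $A_3$ acts with $s$ fixing both $s$ and $s^{-1}$, since $A_3$ is abelian. So the invariance trick has to be applied differently. Lemma~\ref{ResChar2} reduces the problem to $H^m(U_i,K[I_3])$. As a $U_i$-module, $t_i$ swaps $s$ and $s^{-1}$, so $K[I_3]\cong K[U_i]$ is free. Corollary~\ref{CohomReg} then gives $H^m(U_i,K[I_3])=0$ for $m>0$, and the whole $O_3$ summand vanishes. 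Lemma~\ref{InclChar2} is not needed here.

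Summing the three contributions gives $K\oplus K[\P^1_2]$ for $m\neq0$, which proves the theorem. The main obstacle is the $O_2$ summand: one must make sure that the $\SL(2,\Z)$-action on $H^m(S_3,K[I_2])\ot K[\Omega^{1,2}]$ comes only from the right tensor factor. This follows from the equivariance of the tensor decomposition in Paragraph~\ref{SymmThree} together with the naturality of the universal coefficient isomorphism.
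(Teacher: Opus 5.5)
Your proof is correct. For the summands $M^1(O_1)$ and $M^1(O_3)$ it is the paper's argument: Lemma~\ref{ResChar2}, and for $O_3$ the observation that $K[I_3]$ restricts to the regular $K[U_i]$-module, followed by Corollary~\ref{CohomReg}.

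The difference is in the $O_2$ summand.
\begin{itemize}
\item The paper computes $H^m(S_3,K[I_2])$ with Lemma~\ref{InclChar2}. Since $K[I_2]^{A_3}=K(t_1+t_2+t_3)$ is the trivial module, the inclusion induces $\iota_*\colon H^m(S_3,K)\cong H^m(S_3,K[I_2])$. Lemma~\ref{ResChar2} then gives $K$.
\item You use Eckmann--Shapiro in the form of Corollary~\ref{CorDecomp}. Since $C(t_i)=U_i$, the map $aU_i\mapsto at_ia^{-1}$ identifies $K[I_2]\cong K[S_3/U_i]$. Hence $H^m(S_3,K[I_2])\cong H^m(U_i,K)\cong K$ directly. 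This is exactly the summary formula after Corollary~\ref{CorDecomp} with $Z$ trivial. That identification is implicit in your appeal to $C=U_i$ and deserves a sentence.
\end{itemize}

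Your route is shorter and works in any characteristic. It avoids Lemma~\ref{InclChar2}, with its third-root-of-unity and field-extension argument, and for this summand it does not even need Lemma~\ref{ResChar2}.

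The paper's route has a different payoff. Its isomorphism is induced by a map of coefficient modules, the inclusion $\iota$ of the trivial submodule. So every class in $H^m(S_3,M^1(O_2))$ is visibly of the form $\iota_*(\bar F)$, the Yoneda product of a degree-zero class with a class $\bar F\in H^m(S_3,K)$. This is exactly what Theorem~\ref{ModStrucChar2} reuses to show that $H(S_3,M^1(O_2))$ is free over $H(S_3,K)$ and generated in degree zero. The Shapiro isomorphism does not display this without further work on its compatibility with products.
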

\begin{proof}
\begin{pflist}
\item
From Corollary~\ref{Group}, we know that $\TFT^m(\Sigma_{1}) \cong H^m(S_3, M^1)$, and in view of the discussion in Paragraph~\ref{SymmThree}, we have
\[H^m(S_3, M^1) \cong H^m(S_3, M^1(O_1)) \oplus H^m(S_3, M^1(O_2))
\oplus H^m(S_3, M^1(O_3)).\]
We have already treated the case $m=0$ in Paragraph~\ref{DegZero}, so let us now assume that~\mbox{$m>0$}. By Lemma~\ref{ResChar2}, we have
\[H^m(S_3, M^1(O_1)) \cong H^m(U_i, M^1(O_1)) \cong H^m(\Z_2, K),\]
and this is isomorphic to~$K$ by the discussion following Proposition~\ref{CohomCycl}. As already stated in Paragraph~\ref{DegZero}, the $\SL(2,\Z)$-module structure on this space is trivial.

\item
For the second summand, we have seen in Paragraph~\ref{SymmThree} that
\[M^1(O_2) \cong K[I_2] \otimes_{K} K[\Omega^{1,2}],\]
where $\Omega^{1,2} \cong \P^1_2$, as already stated in Paragraph~\ref{DegZero}. On the right-hand side, the~$S_3$-action is on the first factor~$K[I_2]$ and the $\SL(2,\Z)$-action is on the second factor~$K[\P^1_2]$.

The space $K[I_2]^{A_3}$ is spanned by~$t_1 + t_2 + t_3$ and is therefore isomorphic to~$K$ as an $S_3$-module. By Lemma~\ref{ResChar2}, Lemma~\ref{InclChar2}, and the fact that $H^m(U_i, K) \cong K$ just observed, we therefore get
\[H^m(S_3, K[I_2]) \cong H^m(S_3, K[I_2]^{A_3}) \cong H^m(S_3, K)
\cong H^m(U_i, K) \cong K.\]
Now Corollary~\ref{DecompCohCyl} yields
\[H^m(S_3, M^1(O_2)) \cong H^m(S_3, K[I_2]) \otimes_{K} K[\P^1_2] \cong K[\P^1_2]\]
because~$Z$ is trivial in this case.
By construction, this isomorphism is $\SL(2, \Z)$-equi\-variant.

\item
For the third summand, we have seen at the end of Paragraph~\ref{SymmThree} that~$M^1(O_3)$ is isomorphic to~$K[I_3]^4$ as an $S_3$-module. But if we restrict the action of~$S_3$ on~$K[I_3]$ to any~$U_i$, the resulting module is isomorphic to~$K[U_i]$ under the $K$-linear bijection that maps~$1$ to~$s$ and~$t_i$ to~$s^{-1}$. By Lemma~\ref{ResChar2} and Corollary~\ref{CohomReg}, we therefore get
\[H^m(S_3, M^1(O_3)) \cong H^m(S_3, K[I_3]^4) \cong H^m(U_i, K[I_3])^4
\cong H^m(U_i, K[U_i])^4 = 0\]
as required.
\qedhere
\end{pflist}
\end{proof}

\subsection{The Yoneda module structure in characteristic~$2$} \label{YonChar2}
We have explained in Paragraph~\ref{RedGroupCohom} that~$\TFT^\infty(\Sigma_1)$ is a right module over $\TFT^\infty(\Sigma_0)$ via the Yoneda product, and we have also explained there that this module structure reduces in our case to the right module structure of~$H(S_3, M^1)$ over~$H(S_3, K)$. Staying in the case where~$K$ has characteristic~$2$, we now make this module structure explicit.

We first need to understand the ring structure of~$H(S_3, K)$. For this, let~$V$ be a two-dimensional vector space over~$K$ with basis~$b_1,b_2$, and consider the unique $K$-linear map~$f \colon V \to V$ with the property that
\[f(b_1) = b_1 \qquad \text{and} \qquad f(b_2) = b_1 + b_2.\]
This map has order~$2$, and we can therefore introduce a $\Z_2$-module structure on~$V$ by requiring that the unique nonzero element acts via~$f$. Since~$S_3/A_3 \cong \Z_2$, we can pull this module structure back along the quotient map to get an $S_3$-module structure on~$V$ in which~$s$ acts as the identity and all~$t_i$ act via~$f$.

The unique $K$-linear map from~$K$ to~$V$ that maps~$1_K$ to~$b_1$ is clearly $S_3$-equivariant. Similarly, the unique $K$-linear map from~$V$ to~$K$ that maps~$b_1$ to zero and~$b_2$ to~$1_K$ is $S_3$-equivariant. We therefore have an extension
\[E \colon 0 \xlongrightarrow{} K \xlongrightarrow{} V \xlongrightarrow{} K
\xlongrightarrow{} 0\]
that represents an element in~$\Yext_{S_3}^1(K,K) \cong H^1(S_3, K)$. This element is not zero: Since the action on~$V$ is not trivial, we do not have $V \cong K \oplus K$, so that the extension does not split (cf.~\cite[Chap.~III, Sec.~2, p.~93]{HiS}).

If~$K[x]$ denotes the polynomial ring in one indeterminate~$x$, the universal property of the polynomial ring yields that there is a unique algebra homomorphism from~$K[x]$ to~$H(S_3, K)$ that maps~$x$ to the class of~$E$. It turns out that this homomorphism is an isomorphism:
\begin{Lemma} \label{IsomChar2}
The algebra homomorphism~$K[x] \to H(S_3, K)$ that maps~$x$ to the class of~$E$ is bijective.
\end{Lemma}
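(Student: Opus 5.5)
We reduce everything to the cyclic group~$U_1 = \{e,t_1\} \cong \Z_2$ via restriction. The restriction maps $\Res^{S_3}_{U_1} \colon H^m(S_3,K) \to H^m(U_1,K)$ assemble into a map $H(S_3,K) \to H(U_1,K)$. This map is an algebra homomorphism for the Yoneda product, since restricting an extension of $S_3$-modules to~$U_1$ is an exact functor and is compatible with splicing. In degree zero it is the identity on~$K$. In positive degrees it is bijective by Lemma~\ref{ResChar2}. Hence it is an algebra isomorphism. It sends the class of~$E$ to the class of the restricted extension $E|_{U_1} \colon 0 \to K \to V \to K \to 0$, where $t_1$ acts by~$f$. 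This extension still does not split, because $f \neq \id_V$. It therefore suffices to prove the lemma for~$\Z_2$, with $x$ mapped to the class of~$E|_{U_1}$.

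For $\Z_2$ in characteristic~$2$, we use the periodic resolution of Paragraph~\ref{PerResCycl}:
\[\cdots \to K[\Z_2] \xrightarrow{1+t} K[\Z_2] \xrightarrow{1+t} K[\Z_2] \to K \to 0.\]
Applying $\Hom_{\Z_2}(-,K)$ gives a complex with all differentials zero. So $H^m(\Z_2,K)\cong K$ for every $m\ge 0$, as already used in Theorem~\ref{ModGroupChar2}. The homomorphism $K[x]\to H(\Z_2,K)$ preserves degrees, and both sides are one-dimensional in each degree. Thus it is enough to show that $x^m$ is nonzero for every~$m$, i.e.\ that the class $\xi$ of $E|_{U_1}$ satisfies $\xi^m \neq 0$.

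To see this, we describe the Yoneda product in the resolution picture of Paragraph~\ref{YonProd}. First we identify~$\xi$ with the nonzero cocycle $\phi_1 \colon P_1 = K[\Z_2]\to K$, $1\mapsto 1$. To do so, we lift the identity of~$K$ along~$E$: set $\phi_0(1) \deq b_2$, and then $\phi_1(1)$ is determined by $b_1 = (1+t).b_2 = f(b_2)-b_2$. Next, the Yoneda product with a class $\theta \in \Ext^m$ is computed by lifting a cocycle $P_m \to K$ to a chain map $P_{\bullet+m}\to P_\bullet$ and composing. The cocycle $\phi_1$ lifts to the shift map $P_{k+1}\to P_k$ given by the identity of~$K[\Z_2]$; this commutes with the differentials because all differentials equal multiplication by $1+t$. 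Composing $m$ such shifts with the augmentation $P_0 \to K$ gives the map $P_m \to K$, $1\mapsto 1$, which represents the nonzero element of $H^m(\Z_2,K)$. Hence $\xi^m\neq 0$ for all~$m$.

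The main obstacle is justifying the two compatibilities used above. The first is that restriction respects the Yoneda product. The second is that the Yoneda product corresponds, under the comparison isomorphism $\Yext\cong\Ext$, to composition of lifted chain maps. For the first, we argue directly with extensions: restriction commutes with splicing and preserves equivalences of extensions. For the second, we use the description via the comparison theorem recalled in Paragraph~\ref{YonProd}, as in~\cite{Mi}.
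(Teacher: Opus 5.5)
Your proof is correct and follows the paper's route. Like the paper, you reduce to $U_i \cong \Z_2$ using that $\Res^{S_3}_{U_i}$ is an algebra homomorphism for the Yoneda product and is bijective by Lemma~\ref{ResChar2}, and you observe that $E$ restricts to the defining $\Z_2$-extension. The one difference is the $\Z_2$ case in characteristic~$2$: the paper cites Benson and Evens for it, whereas you verify it directly with the periodic resolution by lifting the cocycle $\phi_1$ to the shift chain map, which makes the argument self-contained.
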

\begin{proof}
It follows from Lemma~\ref{ResChar2} that the restriction map
\[\Res^{S_3}_{U_i} \colon H(S_3, K) \to H(U_i, K)\]
is bijective for all $i=1,2,3$, and it is clearly an algebra homomorphism with respect to the Yoneda product. It is therefore sufficient to show that the composition of our algebra homomorphism with the restriction mapping is bijective. But since~$U_i \cong \Z_2$, the restriction of~$E$ to~$U_i$ is essentially the original $\Z_2$-module structure that we pulled back along the quotient map to~$S_3/A_3 \cong \Z_2$ to construct the $S_3$-module structure. Now the required property for $\Z_2$-modules is established in \cite[Prop.~3.5.5, p.~67]{Be} or \cite[Sec.~3.2, p.~26]{E}.
\end{proof}

We now turn to the right module structure of~$H(S_3, M^1)$ over~$H(S_3, K)$. The decomposition $M^1 = M^1(O_1) \oplus M^1(O_2) \oplus M^1(O_3)$ from Paragraph~\ref{SymmThree} leads to the decomposition
\[H(S_3, M^1) = H(S_3, M^1(O_1)) \oplus H(S_3, M^1(O_2)) \oplus H(S_3, M^1(O_3))\]
into submodules for the action of~$H(S_3, K)$. Their structure is described in the following theorem:
\begin{Theorem} \label{ModStrucChar2}
The $H(S_3, K)$-module~$H(S_3, M^1)$ is generated by~$H^0(S_3, M^1)$. For the summands, we have
\begin{enumerate}
\item
$H(S_3, M^1(O_1))$ is free of rank~$1$ over~$H(S_3, K)$. The module action induces an $\SL(2, \Z)$-equivariant isomorphism
\[H^0(S_3, M^1(O_1)) \ot H(S_3, K) \to H(S_3, M^1(O_1)).\]

\item
$H(S_3, M^1(O_2))$ is free of rank~$3$ over~$H(S_3, K)$. The module action induces an $\SL(2, \Z)$-equivariant isomorphism
\[H^0(S_3, M^1(O_2)) \ot H(S_3, K) \to H(S_3, M^1(O_2)).\]

\item
$H(S_3, M^1(O_3)) = H^0(S_3, M^1(O_3)) \cong K[\P^1_3]$ is a trivial $H(S_3, K)$-module.
\end{enumerate}
\end{Theorem}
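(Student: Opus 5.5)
We treat the three summands separately and then combine them. The first assertion, generation by $H^0(S_3, M^1)$, follows at once from the three items: in the first two summands the module map from $H^0 \ot H(S_3,K)$ is surjective, and the third summand is concentrated in degree zero. The common tool is the abelian-case argument of Proposition~\ref{AbelFreeGen}. Suppose $X$ is a $K[S_3]$-module with trivial action and $Y$ is any $K[S_3]$-module. Then the isomorphism $\Hom_{K[S_3]}(P_m, Y) \ot_K X \cong \Hom_{K[S_3]}(P_m, Y \ot_K X)$ is induced by composition with elements of $X = \Hom_{K[S_3]}(K, X)$. It is therefore given by the Yoneda product, i.e.\ by functoriality of $\Ext$ in the second variable. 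This makes the resulting isomorphism $H(S_3,Y) \ot_K X \cong H(S_3, Y \ot_K X)$ compatible with the right $H(S_3,K)$-action on the left factor. By construction it is also $\SL(2,\Z)$-equivariant when $\SL(2,\Z)$ acts only on $X$.

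For the first item, $M^1(O_1) \cong K$ as an $S_3$-module with trivial $\SL(2,\Z)$-action. Here $H(S_3,K)$ is free of rank one over itself, and the generator $1 \in H^0$ gives the claimed isomorphism. For the second item, we use $M^1(O_2) \cong K[I_2] \ot_K K[\P^1_2]$, where $S_3$ acts on the first factor and $\SL(2,\Z)$ on the second. We apply the above with $Y = K[I_2]$ and $X = K[\P^1_2]$, which has dimension $3$. This reduces the claim to showing that $H(S_3, K[I_2])$ is free of rank one over $H(S_3,K) \cong K[x]$, generated in degree $0$.

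For this we use the inclusion $\iota \colon K \cong K[I_2]^{A_3} \to K[I_2]$, $1 \mapsto t_1+t_2+t_3$. By Lemma~\ref{InclChar2}, $\iota_*$ is bijective in all degrees, degree zero included. Since $\iota_*$ is postcomposition, it commutes with the right Yoneda action. Hence $\iota_* \colon H(S_3,K) \to H(S_3,K[I_2])$ is an isomorphism of right $H(S_3,K)$-modules sending $1$ to the class of $\iota$, which spans $H^0(S_3,K[I_2])$. Tensoring with $K[\P^1_2]$ gives the second item, including $\SL(2,\Z)$-equivariance.

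For the third item, the proof of Theorem~\ref{ModGroupChar2} shows that $H^m(S_3, M^1(O_3)) = 0$ for $m>0$. Thus $H(S_3,M^1(O_3)) = H^0(S_3,M^1(O_3)) \cong K[\P^1_3]$ by Paragraph~\ref{DegZero}. Any element of positive degree in $H(S_3,K)$ sends this into a vanishing group, so it acts by zero, while degree zero acts by scalars through the augmentation. The module is therefore trivial.

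The main obstacle is making precise that the decomposition isomorphisms from Corollary~\ref{DecompCohCyl}, which are realized via Bourbaki's tensor-hom isomorphism at the cochain level, really coincide with the Yoneda action. We handle this exactly as in Proposition~\ref{AbelFreeGen}, by identifying elements of the trivial factor with $\Hom_K(K,-)$ and recognizing the map as composition.
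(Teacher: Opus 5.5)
Your proposal is correct and follows essentially the paper's argument: the key step is the same reduction to $A_3$-invariants via Lemma~\ref{InclChar2}, made compatible with the module structure because postcomposition commutes with the right Yoneda action, and item~3 is handled identically by the degree argument. The only difference is that you first split off the factor $K[\P^1_2]$ by a module-compatible universal-coefficient isomorphism and then apply Lemma~\ref{InclChar2} to $K[I_2]$, whereas the paper applies it directly to $M^1(O_2)$, whose $A_3$-invariants form the trivial module $K^3$, and finishes by additivity of $\Yext^m$; this route spares it your auxiliary ``common tool''.
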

\begin{proof}
\begin{pflist}
\item
Recall that we have $H^0(S_3, M) = \Yext_{S_3}^0(K,M) = \Hom_{S_3}(K,M)$ for any $S_3$-module~$M$, and by the definition reviewed in Paragraph~\ref{YonProd}, the module action
\[H^0(S_3, M) \ot H(S_3, K) \to H(S_3, M)\]
given by the Yoneda product maps an element~$f \ot \bar{F}$, for~$f \in \Hom_{S_3}(K,M)$ and \mbox{$\bar{F} \in \Yext_{S_3}^m(K,K)$}, to $\Yext_{S_3}^m(K,f)(\bar{F})$, which is the class of~$fF$ in the notation used in that paragraph. This immediately implies our bijectivity assertion in the case where~$M=M^1(O_1)$, because~$M^1(O_1) \cong K$. The $\SL(2, \Z)$-equivariance follows from Proposition~\ref{MapClassYon}, or alternatively from the fact that the $\SL(2, \Z)$-module structure on both sides is trivial.

\item
For the second assertion, the $\SL(2, \Z)$-equivariance follows again from Proposition~\ref{MapClassYon}. To establish bijectivity, we consider the square
\begin{center}
\begin{tikzcd}[column sep = 105 pt] {}
\Hom_{S_3}(K, M^1(O_2)^{A_3}) \ot \Yext^m(K, K) \arrow{r}{f \ot \bar{F} \mapsto \Yext_{S_3}^m(K,f)(\bar{F})}
\arrow{d}{\iota_* \ot \id} &
\Yext^m(K, M^1(O_2)^{A_3}) \arrow{d}{\iota_*} \\
\Hom_{S_3}(K, M^1(O_2)) \ot \Yext^m(K, K) \arrow{r}{f \ot \bar{F} \mapsto \Yext_{S_3}^m(K,f)(\bar{F})} &
\Yext^m(K, M^1(O_2))
\end{tikzcd}
\end{center}
The square is commutative by the functoriality of~$\Yext^m$. The vertical maps are bijective by Lemma~\ref{InclChar2}. To prove our assertion, which is that the lower map is bijective, it is therefore sufficient to show that the upper map is bijective.

In the proof of Theorem~\ref{ModGroupChar2}, we have seen that~$M^1(O_2)^{A_3}$ is trivial as an $S_3$-module, and is therefore isomorphic to~$K^3$. The assertion then becomes that
\[\Hom_{S_3}(K, K^3) \ot H^m(S_3, K) \to H^m(S_3, K^3),~f \ot \bar{F}
\mapsto \Yext_{S_3}^m(K,f)(\bar{F})\]
is bijective. But this follows from the fact, mentioned in Paragraph~\ref{YonProd}, that~$\Yext^m$ is an additive functor.

\item
For the third assertion, we have already seen in the proof of Theorem~\ref{ModGroupChar2} that $H^m(S_3, M^1(O_3)) = 0$ if~$m>0$, so that
$H(S_3, M^1(O_3)) = H^0(S_3, M^1(O_3))$. In Paragraph~\ref{DegZero}, we have seen that
$H^0(S_3, M^1(O_3)) = M^1(O_3)^{S_3} \cong K[\P^1_3]$ as an $\SL(2, \Z)$-module, and we have also observed there that this space is four-dimensional over~$K$. For the module structure over~$H(S_3, K) = \Yext_{S_3}(K,K)$, note that $H(S_3, M^1(O_3))$ is a graded module over~$H(S_3, K)$ that has only zero terms in higher degree, which implies that $H^m(S_3, K)$ must act as zero if~$m>0$. This implies that the module structure is trivial in the sense introduced in Paragraph~\ref{YonProd}.
\qedhere
\end{pflist}
\end{proof}

We note that this theorem shows in particular that, in the nonabelian case,~$\TFT^\infty(\Sigma_1)$ is not always free over~$\TFT^\infty(\Sigma_0)$, in contrast to the abelian case treated in Proposition~\ref{AbelFreeGen}.

\subsection{Cohomology in characteristic~$3$} \label{Char3}
We now turn to the case where the base field~$K$ has characteristic~$3$. The symmetric group~$S_3$ contains the alternating group~$A_3$ as a normal subgroup and therefore acts on~$A_3$ by conjugation. As explained in Paragraph~\ref{PerResCycl} of the appendix, this action leads to a corresponding action on the cohomology groups of~$A_3$. The action of~$A_3$ on its cohomology groups is trivial (cf.~\cite[Prop.~4.1.1, p.~36]{E}), so that the action of~$S_3$ factors over the quotient~$S_3/A_3 \cong \Z_2$. We can therefore talk about the fixed points of this action of~$S_3$, or equivalently of the quotient~$S_3/A_3$. These fixed points appear in the following analogue of Lemma~\ref{ResChar2} for the case of characteristic~$3$:
\begin{Lemma} \label{ResChar3}
Suppose that~$W$ is a $K[S_3]$-module. Then the restriction map
\[\Res^{S_3}_{A_3} \colon H^m(S_3, W) \to H^m(A_3, W)^{S_3/A_3}\]
is a $K$-linear isomorphism.
\end{Lemma}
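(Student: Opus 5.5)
The plan is to argue exactly as in the characteristic~$2$ case, where Lemma~\ref{ResChar2} came from the Cartan--Eilenberg stable element theorem, but now with the normal Sylow $3$-subgroup~$A_3$ in place of~$U_i$. In characteristic~$3$, the index $[S_3:A_3]=2$ is invertible in~$K$, and this is the fact that drives everything.

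First, we verify that restriction lands in the invariants. For $c \in S_3$, conjugation $c^*$ on $H^m(A_3,W)$ is induced by the pair $(a \mapsto c^{-1}ac,\; w \mapsto c.w)$, where the module action is used as in Paragraph~\ref{PerResCycl}. Since inner automorphisms of~$S_3$ act trivially on $H^m(S_3,W)$ and restriction commutes with conjugation, we obtain $c^* \circ \Res^{S_3}_{A_3} = \Res^{S_3}_{A_3}$. Hence the image lies in $H^m(A_3,W)^{S_3/A_3}$. Note that the action on cohomology with coefficients in~$W$ is again trivial for $c \in A_3$, by the same inner-automorphism argument applied to~$A_3$, so that it factors over $S_3/A_3$.

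Second, we use the corestriction (transfer) map $\mathrm{Cor}^{A_3}_{S_3} \colon H^m(A_3,W) \to H^m(S_3,W)$. I would realize it concretely with a projective resolution $P_\bullet$ of~$K$ over $K[S_3]$, which by restriction is also a projective resolution over $K[A_3]$. On cochains, the map sends $\phi \in \Hom_{A_3}(P_m,W)$ to $\sum_{c \in \{e,t_1\}} c\,\phi\,c^{-1}$. Two standard identities then follow directly on cochains:
\[\mathrm{Cor} \circ \Res = [S_3:A_3]\cdot \id = 2\cdot\id, \qquad \Res \circ \mathrm{Cor} = \sum_{c \in S_3/A_3} c^* .\]
The second identity uses that $A_3$ is normal, so that the double cosets $A_3 c A_3$ are just the cosets. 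Alternatively, I could cite \cite[Chap.~2, \S~7]{Su} or \cite[Sec.~4.2]{E} for these formulas, as was done for Lemma~\ref{ResChar2}.

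Third, we conclude. Since $2$ is invertible in~$K$, the first identity shows that $\Res$ is injective. For $x \in H^m(A_3,W)^{S_3/A_3}$, the second identity gives $\Res(\tfrac12\,\mathrm{Cor}(x)) = \tfrac12 \cdot 2x = x$, so $\Res$ is surjective onto the invariants. Both maps are $K$-linear, which finishes the argument. For $m=0$, the statement reduces to $W^{S_3} = (W^{A_3})^{S_3/A_3}$, and this is consistent with the above.

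The main obstacle is not conceptual but a matter of bookkeeping: the conjugation action defined in the appendix must be made to match the one appearing in the double coset formula, with the correct placement of inverses and the correct choice of left or right action. The cochain-level formulas should be checked carefully against the definition in Paragraph~\ref{PerResCycl}. Since $S_3/A_3$ has order~$2$, any sign or inversion ambiguity is harmless, because the group is abelian and $c = c^{-1}$ in the quotient.
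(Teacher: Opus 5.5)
Your argument is correct and is essentially the paper's own. The paper simply cites the transfer-based result \cite[Chap.~III, Thm.~(10.3)]{Br} (and mentions the Lyndon--Hochschild--Serre spectral sequence as an alternative). Your identities $\mathrm{Cor}\circ\Res = 2\cdot\id$ and $\Res\circ\mathrm{Cor}=\sum_{c\in S_3/A_3}c^*$, combined with the invertibility of $2$ in characteristic~$3$, are exactly the proof of that result in the normal-subgroup case. One small bonus: because you work with a $K[S_3]$-projective resolution throughout, you do not need the passage from integral to $K$-coefficients that the paper handles by referring back to the proof of Lemma~\ref{ResChar2}.
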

\begin{proof}
In view of the points already made in the proof of Lemma~\ref{ResChar2}, this follows from~\cite[Chap.~III, Thm.~(10.3), p.~84]{Br}. We note that this result, as well as Lemma~\ref{ResChar2}, can also be deduced from the Lyndon-Hochschild-Serre spectral sequence (cf.~\cite[Chap.~III, \S~2, Thm.~1, p.~127]{HoS}; \cite[Chap.~XI, \S~10, Cor.~10.3, p.~354]{ML1}). Furthermore, we note that the assertion is also correct in the case~$m=0$, because the cohomology groups in degree zero are given via fixed points, as recalled in Paragraph~\ref{DegZero}, and we have
$(W^{A_3})^{S_3/A_3} = W^{S_3}$.
\end{proof}

Recall from Paragraph~\ref{SymmThree} that the
character group $\hat{\Z}_3^\times = \Hom(\Z_3^\times, K^\times)$ of the group
$\Z_3^\times = \{\bar{1}, -\bar{1}\}$ consists of two Dirichlet characters~$\chi_1$ and~$\chi_2$, where the first one is the trivial character and the second one is given by
$\chi_2(\bar{1}) = 1_K$ and~$\chi_2(-\bar{1}) = -1_K$.
The $\SL(2, \Z)$-representation~$K_{\chi_2}[\P^1_3]$ already discussed there appears in the following analogue of Theorem~\ref{ModGroupChar2} for fields of characteristic~$3$:
\begin{Theorem} \label{ModGroupChar3}
We have $\TFT^0(\Sigma_{1}) \cong K \oplus K[\P^1_2] \oplus K[\P^1_3]$. For~$m > 0$, we have
\[\TFT^m(\Sigma_{1}) \cong
\begin{cases}
K \oplus K[\P^1_3] &: m \equiv 0 \pmod{4} \quad \text{or} \quad m \equiv 3 \pmod{4}\\
K_{\chi_2}[\P^1_3] &: m \equiv 1 \pmod{4}  \quad \text{or} \quad m \equiv 2 \pmod{4}\\
\end{cases}\]
as $\SL(2,\Z)$-modules.
\end{Theorem}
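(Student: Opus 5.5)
We follow the pattern of Theorem~\ref{ModGroupChar2}. By Corollary~\ref{Group} and the decomposition of Paragraph~\ref{SymmThree}, $\TFT^m(\Sigma_1) \cong H^m(S_3,M^1(O_1)) \oplus H^m(S_3,M^1(O_2)) \oplus H^m(S_3,M^1(O_3))$. The case $m=0$ was treated in Paragraph~\ref{DegZero}, so assume $m>0$. The main tool is Lemma~\ref{ResChar3}, which gives $H^m(S_3,W) \cong H^m(A_3,W)^{S_3/A_3}$. We combine it with the periodic resolution for $A_3 \cong \Z_3$ from Paragraph~\ref{PerResCycl}, which gives $H^m(A_3,K)\cong K$ for all $m$. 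We must also track the action of the transposition class $t \in S_3/A_3$ on these groups. Since $t$ acts on $A_3$ by inversion, I expect the action on $H^m(\Z_3,K)$ to be multiplication by $(-1)^{\lceil m/2\rceil}$, coming from the standard action of inversion on the periodic resolution (multiplication by $-1$ on $H^2$, and degree-$1$ classes being homomorphisms $\Z_3\to K$). Thus $t$ acts by $+1$ for $m\equiv 0,3 \pmod 4$ and by $-1$ for $m\equiv 1,2 \pmod 4$. Establishing this sign pattern from the appendix's description of the conjugation action is the step I expect to be the main obstacle, and I would first try to verify it directly on the periodic resolution by lifting $s\mapsto s^{-1}$ to a chain map.

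For $O_1$, $M^1(O_1)\cong K$ is trivial, so $H^m(S_3,K)\cong H^m(A_3,K)^{S_3/A_3}$. This is $K$ exactly when $m\equiv 0,3 \pmod 4$, and $0$ otherwise, with trivial $\SL(2,\Z)$-action.

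For $O_2$, Corollary~\ref{DecompCohCyl} with trivial $Z$ gives $H^m(S_3,K[I_2])\ot K[\P^1_2]$. Here $K[I_2]\cong K[S_3/U_i]$, so Corollary~\ref{CorDecomp} identifies $H^m(S_3,K[I_2])$ with $H^m(U_i,K)$. This vanishes in characteristic $3$ for $m>0$, so the $O_2$ summand disappears.

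For $O_3$, the characteristic is $\neq 2$, so Corollary~\ref{DecompCohEig} applies. By the last part of Paragraph~\ref{SymmThree}, it yields
\[H^m(S_3,K[s+s^{-1}])\ot K_{\chi_1}[\P^1_3] \oplus H^m(S_3,K[s-s^{-1}])\ot K_{\chi_2}[\P^1_3].\]
The first coefficient module is trivial and the second is the sign representation. The space $K_{\chi_1}[\P^1_3]$ is $K[\P^1_3]$ by the remark after the lemma on sections. For the trivial module, the computation for $O_1$ gives $K$ when $m\equiv 0,3$ and $0$ otherwise. For the sign module, $A_3$ still acts trivially, but the induced action of $t$ on $H^m(A_3,\mathrm{sgn})$ is twisted by an extra $-1$. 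Hence its fixed points are $K$ exactly when $m\equiv 1,2 \pmod 4$.

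Adding the three summands, we get $K\oplus K[\P^1_3]$ for $m\equiv 0,3$ and $K_{\chi_2}[\P^1_3]$ for $m\equiv 1,2$. The $\SL(2,\Z)$-equivariance holds because the mapping class group acts only on the right tensor factors throughout the decompositions used.
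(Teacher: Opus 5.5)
Your proposal is correct and follows the paper's proof essentially step for step. The sign pattern you flag as the main obstacle is exactly Proposition~\ref{CohomCyclSubgr}, which the appendix obtains by lifting the automorphism to the periodic resolution as you suggest; one takes $k=-1$, and for the sign module there is an extra factor $-1$ coming from $\alpha$. The only deviation is in the $O_2$ summand. There you use Eckmann--Shapiro to reduce to $H^m(U_i,K)=0$, which holds because the order $2$ is prime to the characteristic. The paper instead restricts to $A_3$ via Lemma~\ref{ResChar3} and uses that $K[I_2]\cong K[A_3]$ is free over $K[A_3]$, together with Corollary~\ref{CohomReg}. Both arguments are valid.
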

\begin{proof}
\begin{pflist}
\item
We proceed as in the proof of Theorem~\ref{ModGroupChar2}. As we said already there, we know from Corollary~\ref{Group} that $\TFT^m(\Sigma_{1}) \cong H^m(S_3, M^1)$, and in view of the discussion in Paragraph~\ref{SymmThree}, we have
\[H^m(S_3, M^1) \cong H^m(S_3, M^1(O_1)) \oplus H^m(S_3, M^1(O_2)) \oplus H^m(S_3, M^1(O_3)).\]
We have already treated the case $m=0$ in Paragraph~\ref{DegZero}, so let us now assume that~\mbox{$m>0$}. By Lemma~\ref{ResChar3}, we have for the first summand that
\[H^m(S_3, M^1(O_1)) \cong H^m(A_3, M^1(O_1))^{S_3/A_3} \cong H^m(A_3, K)^{S_3/A_3}.\]
From the discussion following Proposition~\ref{CohomCycl}, we get
that~$H^m(A_3, K) \cong K$. To compute the fixed points, we apply the formula given in Proposition~\ref{CohomCyclSubgr}:

The unique nonidentity element in~$S_3/A_3$ is represented by any~$t_i$, and the formula
$t_i s t_i = s^{-1}$ shows that the number~$k$ that appears in Proposition~\ref{CohomCyclSubgr} is equal \mbox{to~$k = -1 \equiv 2 \pmod{3}$}. This proposition therefore implies that~$t_i$ acts on the space $H^m(A_3, K) \cong K$ by multiplication with~$(-1)^{m/2}$ if~$m$ is even, and by multiplication with~$(-1)^{(m+1)/2}$ if~$m$ is odd. This shows that
\[H^m(S_3, M^1(O_1)) \cong
\begin{cases}
K &: m \equiv 0 \pmod{4} \quad \text{or} \quad m \equiv 3 \pmod{4}\\
0 &: m \equiv 1 \pmod{4}  \quad \text{or} \quad m \equiv 2 \pmod{4}\\
\end{cases}\]
Clearly, the $\SL(2,\Z)$-module structure on this space is trivial in all cases.

\item
For the second summand, we note that the formula $s t_i s^{-1} = t_{s(i)}$ implies for $i=1$ that the bijection
\[A_3 \to I_2, \qquad \id \mapsto t_1, \qquad s \mapsto t_2, \qquad s^{-1} \mapsto t_3\]
is $A_3$-equivariant. From Corollary~\ref{CohomReg}, we therefore get
\[H^m(A_3, K[I_2]) \cong H^m(A_3, K[A_3]) = 0,\]
which implies that
\[H^m(S_3, K[I_2]) \cong H^m(A_3, K[I_2])^{S_3/A_3} = 0\]
by Lemma~\ref{ResChar3}. Now Corollary~\ref{DecompCohCyl} gives that
$H^m(S_3, M^1(O_2)) = 0$.

\item
For the third summand, we have seen in Paragraph~\ref{SymmThree} that the general decomposition from Paragraph~\ref{CyclSub} yields that
\begin{align*}
M^1(O_3)
\cong K[s + s^{-1}] \ot K_{\chi_1}[\P_3^{1}]
\oplus K[s - s^{-1}] \ot K_{\chi_2}[\P_3^{1}],
\end{align*}
where~$S_3$ acts only on the first tensor factor and~$\SL(2, \Z)$ only on the second one. The action of~$S_3$ on~$K[s + s^{-1}]$ is clearly trivial, and we have determined the arising cohomology groups already in the treatment of the first summand above. The action of~$S_3$ on~$K[s - s^{-1}]$ is isomorphic to the sign representation, and the arising cohomology groups can also be determined via Proposition~\ref{CohomCyclSubgr}. But because~$t_i$ now acts as multiplication by~$-1$, we find that its induced action on~$H^m(A_3, K[s - s^{-1}]) \cong K$ is by multiplication with~$-(-1)^{m/2}$ if~$m$ is even, and by multiplication with~$-(-1)^{(m+1)/2}$ if~$m$ is odd. Since
$H^m(S_3, K[s - s^{-1}]) \cong H^m(A_3, K[s - s^{-1}])^{S_3/A_3}$ by Lemma~\ref{ResChar3}, we obtain
\[H^m(S_3, K[s - s^{-1}]) \cong
\begin{cases}
0 &: m \equiv 0 \pmod{4} \quad \text{or} \quad m \equiv 3 \pmod{4}\\
K &: m \equiv 1 \pmod{4}  \quad \text{or} \quad m \equiv 2 \pmod{4}\\
\end{cases}\]
In total, we get from Corollary~\ref{DecompCohEig} that
\begin{align*}
H^m(S_3, M^1(O_3))
&\cong H^m(S_3, K[s + s^{-1}]) \ot K_{\chi_1}[\P_3^{1}] \oplus
H^m(S_3, K[s - s^{-1}]) \ot K_{\chi_2}[\P_3^{1}] \\
&\cong
\begin{cases}
K[\P_3^{1}] &: m \equiv 0 \pmod{4} \quad \text{or} \quad m \equiv 3 \pmod{4}\\
K_{\chi_2}[\P_3^{1}] &: m \equiv 1 \pmod{4}  \quad \text{or} \quad m \equiv 2 \pmod{4}\\
\end{cases}
\end{align*}
since $K_{\chi_1}[\P_3^{1}] \cong K[\P_3^{1}]$. Our assertion now follows by combining the results for the three summands.
\qedhere
\end{pflist}
\end{proof}

We note that, although both~$K_{\chi_1}[\P_3^{1}]$ and~$K_{\chi_2}[\P_3^{1}]$ are four-dimensional representations of the modular group~$\SL(2, \Z)$, these two representations are not isomorphic, as~$\fs^2$ acts as the identity in the first representation and as the multiplication by~$-1$ in the second representation. This shows that the mapping class group representations that appear in higher degree are, in general, different from those that appear in degree zero.

\subsection{The Yoneda algebra in characteristic~$3$} \label{YonAlgChar3}
In Paragraph~\ref{YonChar2}, we have determined the right module structure of~$H(S_3, M^1)$ over~$H(S_3, K)$ in the case where~$K$ has characteristic~$2$. To carry out the same analysis in the case where~$K$ has characteristic~$3$, we first need to understand the ring structure of~$H(S_3, K)$, which in the present case will be constructed as a subring of~$H(A_3, K)$. So we now assume that the characteristic of~$K$ is~$3$.

As in Paragraph~\ref{YonChar2}, we let~$V$ be a two-dimensional vector space over~$K$ with basis~$b_1,b_2$, and consider the unique $K$-linear map~$f \colon V \to V$ with the property that
\[f(b_1) = b_1 \qquad \text{and} \qquad f(b_2) = b_1 + b_2.\]
This map has now order~$3$, and we can therefore introduce an $A_3$-module structure on~$V$ by requiring that~$s$ acts via~$f$. As in the previous case, the unique $K$-linear map from~$K$ to~$V$ that maps~$1_K$ to~$b_1$ is $A_3$-equivariant, and similarly the unique $K$-linear map from~$V$ to~$K$ that maps~$b_1$ to zero and~$b_2$ to~$1_K$ is $A_3$-equivariant. We therefore have an extension
\[E \colon 0 \xlongrightarrow{} K \xlongrightarrow{} V \xlongrightarrow{} K
\xlongrightarrow{} 0\]
that represents an element in~$\Yext_{A_3}^1(K,K) \cong H^1(A_3, K)$. By the same argument as in the previous case, we can see that this element is not zero.

However, in contrast to the previous case, the class of this extension does not generate the cohomology ring~$H(A_3, K)$: We also need to consider the extension
\[F \colon 0 \xlongrightarrow{} K \xlongrightarrow{} K[A_3] \xlongrightarrow{D.} K[A_3] \xlongrightarrow{\varepsilon} K \xlongrightarrow{} 0\]
where, as in Paragraph~\ref{PerResCycl}, $\varepsilon$ denotes the standard augmentation of the group algebra, $D.$ denotes the multiplication
by the difference~$D \deq s - 1$, and the first map is uniquely determined by the property that it maps~$1_K$ to the norm~$e + s + s^{-1}$.

It can be shown that the classes~$\bar{E}$ and~$\bar{F}$ in~$\Yext_{A_3}(K,K)$ commute and that~\mbox{$\bar{E}^2 = 0$} (cf.~\cite[Prop.~3.5.5, p.~67]{Be}; \cite[Sec.~3.2, p.~26]{E}). If~$K[x,y]$ denotes the polynomial algebra in two commuting indeterminates~$x$ and~$y$, the universal property of the polynomial algebra therefore yields that there is a unique algebra homomorphism from~$K[x,y]$ to~$H(S_3, K)$ that maps~$x$ to~$\bar{E}$ and~$y$ to~$\bar{F}$. The following lemma, whose proof can be found in the references just cited, states the basic properties of this algebra homomorphism:
\begin{Lemma} \label{IsomChar3}
The algebra homomorphism~$K[x,y] \to H(A_3, K)$ that maps~$x$ to~$\bar{E}$ and~$y$
to~$\bar{F}$ is surjective. Its kernel is the ideal generated by~$x^2$.
\end{Lemma}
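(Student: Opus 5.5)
We will compute everything through the periodic resolution of Paragraph~\ref{PerResCycl}. Let $N \deq e + s + s^{-1}$ and $D = s - 1$. The resolution is
\[\cdots \xrightarrow{N.} K[A_3] \xrightarrow{D.} K[A_3] \xrightarrow{N.} K[A_3] \xrightarrow{D.} K[A_3] \xrightarrow{\varepsilon} K \to 0.\]
Applying $\Hom_{A_3}(-,K)$ gives a complex with $K$ in every degree and all differentials zero. Indeed, $D$ acts on $K$ by $0$, and $N$ acts by $3 = 0$ because the characteristic is $3$. Hence $H^m(A_3,K) \cong K$ for all $m \ge 0$, and each $H^m$ has the canonical generator $\xi_m$ given by the cochain $P_m = K[A_3] \to K$, $e \mapsto 1$. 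The proof then has three steps.

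\textbf{Identifying the generators.} Using the comparison-theorem description of the Yoneda--Ext isomorphism from Paragraph~\ref{YonProd}, we lift $\id_K$ to chain maps from the resolution to $E$ and to $F$.

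For $F$, the resolution truncated in degrees $\le 1$ is $F$ itself, so we may take $\phi_0 = \phi_1 = \id$. In degree $2$ the component $\phi_2 \colon K[A_3] \to K$ must satisfy $\phi_2 = \iota_N$-compatibility with $N.$. This forces $\phi_2(e) = 1$, so $\bar F = \xi_2$.

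For $E$, we take $\phi_0 \colon K[A_3] \to V$ with $e \mapsto b_2$. Since $(s-1)b_2 = b_1$, the map $\phi_1 \colon K[A_3] \to K$ can be taken to be $e \mapsto 1$, so $\bar E = \xi_1$.

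\textbf{Computing products.} Because every module in the resolution is $K[A_3]$ and the maps alternate between $D$ and $N$, the class $\xi_2$ lifts to the shift-by-two chain endomorphism of the resolution. Yoneda multiplication by $\bar F$ is therefore the shift $\xi_m \mapsto \xi_{m+2}$, which is an isomorphism $H^m \to H^{m+2}$. Consequently $\bar F^k = \xi_{2k}$ and $\bar E\bar F^k = \bar F^k \bar E = \xi_{2k+1}$. Commutativity of $\bar E$ and $\bar F$ and the relation $\bar E^2=0$ are already cited from \cite{Be}, \cite{E}. With these facts, the images of the monomials $y^k$ and $xy^k$ are the basis vectors $\xi_{2k}$ and $\xi_{2k+1}$, which gives surjectivity.

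\textbf{The kernel.} The ideal $(x^2)$ lies in the kernel because $\bar E^2=0$. Moreover, $K[x,y]/(x^2)$ has basis $y^k, xy^k$, and these map bijectively onto the basis $\{\xi_m\}$. Hence the kernel is exactly $(x^2)$.

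The main obstacle is the product computation, specifically checking that $\bar F$ acts as the shift. The cleanest route is to lift the class $\xi_2$ to a chain map of the resolution into itself, using that $D$ and $N$ commute in $K[A_3]$, and to splice on the left.

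The relation $\bar E^2 = 0$ can also be checked directly. Lifting $\xi_1$ to a chain map, the degree-one component must satisfy $D\phi_1 = \phi_0 D$ with $\phi_0 = \id$ in the appropriate shifted sense. This leads to $\phi_1 = $ multiplication by $\sum_j j s^j$, or an analogous element, and evaluating at $e$ gives $0+1+2 = 3 = 0$. This is where characteristic $3$ enters a second time.

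Finally, the statement as printed has target $H(S_3,K)$ in the construction but $H(A_3,K)$ in the lemma; the plan above proves the version for $H(A_3,K)$.
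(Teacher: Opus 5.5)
Your argument is correct, but it does not follow the paper, because the paper gives no proof here. It defers the lemma, together with the relations $\bar E\bar F = \bar F\bar E$ and $\bar E^2 = 0$, to the computations of Benson and Evens cited just before it. Those sources compute $H(\Z_n,K)$ with the cup product, using an explicit diagonal approximation on the periodic resolution and their own cocycle generators. Applying them here therefore implicitly needs two further steps: the Yoneda/cup comparison from the appendix, and an identification of their generators with the classes of the specific extensions $E$ and $F$.

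Your computation works with the Yoneda product directly and supplies that identification. The comparison maps give $\bar E = \xi_1$ and $\bar F = \xi_2$. Since the periodic resolution truncated at $P_2$ is literally the same resolution again, right multiplication by $\bar F$ is the periodicity shift $\xi_m \mapsto \xi_{m+2}$. The basis count for $K[x,y]/(x^2)$ then gives both surjectivity and the kernel. The references buy generality (all cyclic groups, both products); your route buys a self-contained check tailored to $E$ and $F$.

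You can even drop the remaining citation. Since $N = (s-1)^2$ in characteristic~$3$, the class $\xi_1$ lifts to the chain map $\tilde b_i \colon P_{i+1} \to P_i$ given by $\id$ for $i$ even and by $D.$ for $i$ odd. Hence $\xi_m \xi_1 = \xi_m \circ \tilde b_m$ equals $\xi_{m+1}$ for $m$ even and $0$ for $m$ odd. This gives $\bar E^2 = 0$ and $\bar F\bar E = \xi_3 = \bar E\bar F$ directly.

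There is one slip in your optional aside on $\bar E^2 = 0$. The lifting condition for the degree-one component is $D\phi_1 = \phi_0 N = N$, not $\phi_0 D$; the latter would force $\phi_1 = \id$ and give $\bar E^2 = \xi_2 \neq 0$. Your element $\sum_j j s^j = -D - N$ actually solves $Dx = -N$. This is harmless, because every solution of $Dx = \pm N$ has the form $\pm D + \lambda N$ and so has augmentation $0$. You are also right that $H(S_3,K)$ in the sentence preceding the lemma should read $H(A_3,K)$.
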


The universal property of the polynomial algebra also implies that there is a unique algebra homomorphism from~$K[x,y]$ to itself that maps~$x$ to~$-x$ and~$y$ to~$-y$. This algebra homomorphism fixes~$x^2$ and therefore preserves the ideal that it generates. In view of the preceding lemma, we therefore have an algebra homomorphism
\[\gamma \colon H(A_3, K) \to H(A_3, K)\]
satisfying $\gamma(\bar{E}) = - \bar{E}$ and $\gamma(\bar{F}) = -\bar{F}$. This algebra homomorphism obviously has order~$2$, so that we have the decomposition
\[H(A_3, K) = S \oplus J\]
into the eigenspace~$S$ of~$\gamma$ for the eigenvalue~$1$ and the eigenspace~$J$ of~$\gamma$ for the eigenvalue~$-1$. The fact that~$\gamma$ is an algebra homomorphism implies that~$S$ is a subalgebra of~$H(A_3, K)$ and that~$J$ is a module over the commutative algebra~$S$. These spaces relate to the cohomology ring of~$S_3$ as follows:

\begin{Proposition} \label{CohomRingSym}
\begin{enumerate}
\item
For the trivial representation~$K$ of~$S_3$, the restriction mapping yields an isomorphism
$\Res^{S_3}_{A_3} \colon H(S_3, K) \to S$.

\item
For the sign representation~$K_\varepsilon$ of~$S_3$, the restriction mapping yields an isomorphism
$\Res^{S_3}_{A_3} \colon H(S_3, K_\varepsilon) \to J$.

\item
$S = K[\bar{E} \bar{F}, \bar{F}^2]$

\item
As an $S$-module, $J$ is generated by~$\bar{E}$ and~$\bar{F}$.
\end{enumerate}
\end{Proposition}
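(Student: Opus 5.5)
The plan is to identify the action of $t_i$ on $H(A_3,K)$ with the automorphism $\gamma$ and then apply Lemma~\ref{ResChar3} twice. By Lemma~\ref{ResChar3}, for any $K[S_3]$-module~$W$, restriction identifies $H^m(S_3,W)$ with the $S_3/A_3$-fixed points of $H^m(A_3,W)$. We take $W=K$ and $W=K_\varepsilon$. In both cases the restriction to~$A_3$ is the trivial module~$K$, so $H^m(A_3,W)=H^m(A_3,K)$. What differs between the two cases is the action of the nonidentity coset~$t_i$, which acts by conjugation on~$A_3$ combined with its action on coefficients. For~$K_\varepsilon$ the coefficient action is multiplication by $-1$, so the induced action on cohomology is the negative of the one for~$K$.

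The first key step is therefore to show that the conjugation action of~$t_i$ on $H(A_3,K)$, for trivial coefficients, equals~$\gamma$. Conjugation by~$t_i$ is an algebra automorphism for the Yoneda product, since it is induced by an exact functor: pullback along the automorphism $a\mapsto t_iat_i^{-1}$ of~$A_3$, composed with the identity on coefficients. By Lemma~\ref{IsomChar3} it then suffices to check that it sends $\bar{E}\mapsto-\bar{E}$ and $\bar{F}\mapsto-\bar{F}$.

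We verify this in one of two ways. The first is to use Proposition~\ref{CohomCyclSubgr} with $k=-1$. As in the proof of Theorem~\ref{ModGroupChar3}, it gives the action $(-1)^{(m+1)/2}$ in odd degree~$m$ and $(-1)^{m/2}$ in even degree~$m$. In degree~$1$ this is $-1$, and in degree~$2$ it is $-1$. Since $H^1$ and $H^2$ are one-dimensional and spanned by $\bar{E}$ and $\bar{F}$, the claim follows. The second way is direct. Twisting~$E$ by the automorphism $s\mapsto s^{-1}$ gives $V$ with~$s$ acting by $f^{-1}$. The matrix of $f^{-1}$ is $b_2\mapsto b_2-b_1$, and rescaling $b_1\mapsto -b_1$ exhibits the twisted extension as~$-\bar{E}$. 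A similar check with $D=s-1$ replaced by $s^{-1}-1=-s^{-1}D$ handles~$\bar{F}$. Hence the $S_3/A_3$-fixed points for~$K$ are exactly~$S$, and those for~$K_\varepsilon$ are the $(-1)$-eigenspace~$J$. Taking the direct sum over~$m$ gives assertions (1) and (2). Restriction is an algebra homomorphism, so (1) is an algebra isomorphism.

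For (3) and (4) we use the monomial basis from Lemma~\ref{IsomChar3}: $H(A_3,K)$ has $K$-basis $\bar{F}^j$ and $\bar{E}\bar{F}^j$ for $j\ge 0$. The automorphism~$\gamma$ multiplies $\bar{F}^j$ by $(-1)^j$ and $\bar{E}\bar{F}^j$ by $(-1)^{j+1}$. So $S$ is spanned by the $\bar{F}^{2l}$ and the $\bar{E}\bar{F}^{2l+1}$, and $J$ by the $\bar{F}^{2l+1}$ and the $\bar{E}\bar{F}^{2l}$. Every basis element of~$S$ is a product of $\bar{F}^2$ and $\bar{E}\bar{F}$, using $\bar{E}^2=0$, which gives~(3). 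Every basis element of~$J$ is $\bar{F}\cdot\bar{F}^{2l}$ or $\bar{E}\cdot\bar{F}^{2l}$, which gives~(4).

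The main obstacle is the first key step: pinning down that the conjugation action on cohomology is the algebra automorphism~$\gamma$ with the correct signs. This requires compatibility of the conjugation action with the Yoneda product, together with the sign computations in degrees~$1$ and~$2$. I would rely on Proposition~\ref{CohomCyclSubgr} for the signs and on exactness of the pullback functor for multiplicativity.
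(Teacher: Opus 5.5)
Your proof is correct and follows the same overall plan as the paper. Both identify the action of the nonidentity coset of $S_3/A_3$ on $H(A_3,K)$ with $\gamma$, and with $-\gamma$ for $K_\varepsilon$, then apply Lemma~\ref{ResChar3}. Parts (3) and (4) are read off the monomial basis of Lemma~\ref{IsomChar3} exactly as the paper does.

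The difference lies in how you identify the action with $\gamma$. The paper never uses multiplicativity of the conjugation action. Each $H^m(A_3,K)$ is one-dimensional with basis $\bar{F}^j$ or $\bar{E}\bar{F}^j$, so $\gamma$ acts on it by the scalar $(-1)^{m/2}$ or $(-1)^{(m+1)/2}$. Proposition~\ref{CohomCyclSubgr} gives exactly these scalars for $t_i$ in every degree, so the two maps agree degree by degree.

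You instead check only degrees $1$ and $2$ and propagate the result using the claim that the action is an algebra automorphism.
\begin{itemize}
\item That claim is true. However, it requires identifying the action defined through cocompatible pairs and the chain maps $f_m$ of Paragraph~\ref{PerResCycl} with twisting of Yoneda extensions, and you assert this rather than verify it.
\item If you invoke Proposition~\ref{CohomCyclSubgr} anyway, the multiplicativity step is superfluous, because that proposition already covers all degrees.
\end{itemize}

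Your second variant is the genuinely different route: you twist $E$ and $F$ directly. Twisting $F$ turns the middle map into multiplication by $-s^{-1}D$. Absorbing the unit $-s^{-1}$ into the middle term then changes the left map $1 \mapsto N$ into $1 \mapsto -N$, which exhibits the twist as $-\bar{F}$. This variant bypasses the appendix computation of Proposition~\ref{Hmf} entirely and stays inside the Yoneda picture. The price is precisely the multiplicativity argument above, which would need to be written out.
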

\begin{proof}
\begin{pflist}
\item
If~$m=2j$ is an even number, we have
\[\gamma(\bar{F}^j) = (-1)^j \bar{F}^j = (-1)^{m/2} \bar{F}^j.\]
As it follows from Lemma~\ref{IsomChar3} that~$\bar{F}^j$ is a basis element of~$H^m(A_3, K)$, which is one-dimensional according to the discussion following Proposition~\ref{CohomCycl}, we see that~$\gamma$ acts on~$H^m(A_3, K)$ by multiplication with~$(-1)^{m/2}$. In other words, it acts as the identity if~$m \equiv 0 \pmod{4}$ and as multiplication by~$-1$ if~$m \equiv 2 \pmod{4}$.

If~$m=2j+1$ is odd, we have
\[\gamma(\bar{E} \bar{F}^j) = (-1)^{j+1} \bar{E} \bar{F}^j
= (-1)^{(m+1)/2} \bar{E} \bar{F}^j.\]
Arguing as in the even case, we see that~$\gamma$ acts on~$H^m(A_3, K)$ by multiplication
with~$(-1)^{(m+1)/2}$. In other words, it acts as the identity if~$m \equiv 3 \pmod{4}$ and as multiplication by~$-1$ if~$m \equiv 1 \pmod{4}$.

Now we have already discussed in the proof of Theorem~\ref{ModGroupChar3} that it follows from Proposition~\ref{CohomCyclSubgr} that the unique nonidentity element in~$S_3/A_3$ acts on~$H^m(A_3, K)$ by multiplication with~$(-1)^{m/2}$ if~$m$ is even and by multiplication with~$(-1)^{(m+1)/2}$ if~$m$ is odd. In other words, it acts exactly by~$\gamma$. Therefore the first assertion follows from Lemma~\ref{ResChar3}.

\item
We have also discussed in the proof of Theorem~\ref{ModGroupChar3} that the unique nonidentity element in~$S_3/A_3$ acts on~$H^m(A_3, K_\varepsilon)$ by multiplication with~$-(-1)^{m/2}$ if~$m$ is even, and by multiplication with~$-(-1)^{(m+1)/2}$ if~$m$ is odd. In other words, it acts by~$-\gamma$. Therefore an element
of~$H^m(A_3, K_\varepsilon)$ is fixed by this action if and only if it is an eigenvector of~$\gamma$ corresponding to the eigenvalue~$-1$, and so the second assertion follows again from Lemma~\ref{ResChar3}. Note that~$K_\varepsilon$ also restricts to the trivial representation of~$A_3$.

\item
To prove the third assertion, we first note that~$\bar{F}^2 \in S$
and~$\bar{E} \bar{F} \in S$, so that $K[\bar{E} \bar{F}, \bar{F}^2] \subset S$. But the calculations in the first step show that
\[S = \bigoplus_{j=0}^\infty H^{4j}(A_3, K)
\, \oplus \, \bigoplus_{j=0}^\infty H^{4j+3}(A_3, K).\]
A basis of~$H^{4j}(A_3, K)$ is~$\bar{F}^{2j}$, and a basis of~$H^{4j+3}(A_3, K)$
is~$(\bar{E} \bar{F}) \bar{F}^{2j}$. So the two sets that appear in the third assertion are actually equal.

\item
It also follows from the calculations in the first step that
\[J = \bigoplus_{j=0}^\infty H^{4j+2}(A_3, K)
\, \oplus \, \bigoplus_{j=0}^\infty H^{4j+1}(A_3, K).\]
A basis of~$H^{4j+2}(A_3, K)$ is~$\bar{F} \bar{F}^{2j}$, and a basis of~$H^{4j+1}(A_3, K)$
is~$\bar{E} \bar{F}^{2j}$. This implies the fourth assertion.
\qedhere
\end{pflist}
\end{proof}

As a slightly surprising consequence, we get that the cohomology rings of~$S_3$ and~$A_3$ are actually isomorphic:
\begin{Cor} \label{CohomRingIsom}
$H(S_3, K) \cong H(A_3, K)$
\end{Cor}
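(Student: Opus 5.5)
By Proposition~\ref{CohomRingSym}, the restriction map identifies $H(S_3,K)$ with the subalgebra $S = K[\bar{E}\bar{F}, \bar{F}^2]$ of $H(A_3,K)$. It therefore suffices to construct an algebra isomorphism $H(A_3,K) \to S$. Lemma~\ref{IsomChar3} presents $H(A_3,K)$ as $K[x,y]/(x^2)$, with $x \mapsto \bar{E}$ and $y \mapsto \bar{F}$. I would define an algebra homomorphism $K[x,y] \to S$ by sending $x \mapsto \bar{E}\bar{F}$ and $y \mapsto \bar{F}^2$. This is legitimate by the universal property of the polynomial algebra, since $S$ is commutative, being a subalgebra of the commutative algebra $H(A_3,K)$.

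The map kills $x^2$: its image is $(\bar{E}\bar{F})^2 = \bar{E}^2\bar{F}^2 = 0$, because $\bar{E}$ and $\bar{F}$ commute and $\bar{E}^2 = 0$. Hence it descends to an algebra homomorphism
\[\psi \colon H(A_3,K) \cong K[x,y]/(x^2) \to S.\]
Surjectivity is immediate, since the image contains the generators $\bar{E}\bar{F}$ and $\bar{F}^2$ of $S$ given in Proposition~\ref{CohomRingSym}.

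For injectivity I would compare bases. By Lemma~\ref{IsomChar3}, $H(A_3,K)$ has the $K$-basis $\bar{F}^j$ and $\bar{E}\bar{F}^j$ for $j \ge 0$. Under $\psi$ these go to $\bar{F}^{2j}$ and $\bar{E}\bar{F}^{2j+1}$, which lie in degrees $4j$ and $4j+3$. The proof of Proposition~\ref{CohomRingSym} shows that these elements form a basis of $S$: each $H^{4j}(A_3,K)$ and $H^{4j+3}(A_3,K)$ is one-dimensional, with the stated elements as basis vectors. Thus $\psi$ maps a basis bijectively onto a basis, so it is bijective.

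Composing $\psi$ with the inverse of the restriction isomorphism yields the assertion. Note that this isomorphism does not preserve the grading, since degree $m$ goes to degree $2m + (m \bmod 2)$; the corollary asserts only an ungraded algebra isomorphism. The only step needing care is the relation $(\bar{E}\bar{F})^2 = 0$. It requires that $\bar{E}$ and $\bar{F}$ commute on the nose, with no sign, and this holds because the cited references give commutativity and the characteristic is $3$.
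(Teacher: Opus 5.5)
Your proposal is correct and follows the paper's own proof essentially verbatim. Both use the homomorphism $K[x,y]/(x^2) \to S$ given by $x \mapsto \bar{E}\bar{F}$, $y \mapsto \bar{F}^2$, and get bijectivity by sending the basis $y^j, xy^j$ to the basis $\bar{F}^{2j}, \bar{E}\bar{F}^{2j+1}$ of $S$ from the proof of Proposition~\ref{CohomRingSym}. Your closing caveat is also more cautious than needed: $(\bar{E}\bar{F})^2 = \pm \bar{E}^2\bar{F}^2 = 0$ would hold even if $\bar{E}$ and $\bar{F}$ only commuted up to sign.
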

\begin{proof}
We have seen in Lemma~\ref{IsomChar3} that~$H(A_3, K)$ is isomorphic
to~$K[x,y]/(x^2)$, and we have seen in Proposition~\ref{CohomRingSym} that~$H(S_3, K)$ is isomorphic to~$S$. To establish our assertion, it is therefore sufficient to show that~$S$ is isomorphic to~$K[x,y]/(x^2)$. For this, we consider the algebra homomorphism from the polynomial algebra~$K[x,y]$ to~$S$ that maps~$x$ to~$\bar{E} \bar{F}$ and~$y$ to~$\bar{F}^2$. This map is surjective by Proposition~\ref{CohomRingSym}. Since~$x^2$ is mapped
to~$(\bar{E} \bar{F})^2 = \bar{E}^2 \bar{F}^2$, it induces an algebra homomorphism from $K[x,y]/(x^2)$ to~$S$ that is still surjective. But~$K[x,y]/(x^2)$ has a basis consisting of the residue classes of the elements~$y^j$ and~$x y^j$, for~$j \in \N_0$. These elements are mapped to~$\bar{F}^{2j}$ and~$(\bar{E} \bar{F}) \bar{F}^{2j}$, respectively, and we have seen in the proof of Proposition~\ref{CohomRingSym} that these elements form a basis of~$S$. So the induced algebra homomorphism maps a basis to a basis and is therefore bijective.
\end{proof}

For the cohomology ring of~$S_3$, this means that also
$H(S_3, K) \cong K[x,y]/(x^2)$. Under this isomorphism, the class of~$x$ corresponds to an element of cohomological degree~$3$, while the class of~$y$ corresponds to an element of cohomological degree~$4$.

\vspace{2mm}

We have seen in Proposition~\ref{CohomRingSym} that~$J$ is generated as a, say, right $S$-module by~$\bar{E}$ and~$\bar{F}$. There is therefore a surjective $S$-module homomorphism
\[f \colon S \oplus S \to J\]
that satisfies $f(1,0) = \bar{E}$ and $f(0,1) = \bar{F}$, and we have
$J \cong S^2/\ker(f)$. We can determine this kernel explicitly:

\vspace{1mm}

\begin{Proposition} \label{Kern}
The kernel of~$f$ is generated by~$(\bar{E} \bar{F},0)$ and~$(\bar{F}^2,-\bar{E} \bar{F})$ as a right \mbox{$S$-module}.
\end{Proposition}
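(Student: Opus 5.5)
We work entirely inside $H(A_3,K) \cong K[x,y]/(x^2)$ from Lemma~\ref{IsomChar3}, where $\bar{E}$ has degree~$1$, $\bar{F}$ has degree~$2$, $\bar{E}^2=0$, and the classes $\bar{F}^j$ and $\bar{E}\bar{F}^j$ form a basis with one element in each degree. From the proof of Proposition~\ref{CohomRingSym}, $S$ has basis $\bar{F}^{2j}$ and $\bar{E}\bar{F}^{2j+1}$, in degrees $4j$ and $4j+3$. The map $f$ sends $(\sigma,\tau)$ to $\bar{E}\sigma+\bar{F}\tau$. We grade $S \oplus S$ so that $f$ is homogeneous: the first summand is shifted by~$1$ and the second by~$2$.

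The plan is first to check that the two proposed generators lie in the kernel. We have $f(\bar{E}\bar{F},0)=\bar{E}^2\bar{F}=0$. Since the ring is commutative, $f(\bar{F}^2,-\bar{E}\bar{F})=\bar{E}\bar{F}^2-\bar{F}\bar{E}\bar{F}=0$. Let $N$ be the right $S$-submodule they generate; then $N\subset\ker(f)$.

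For the reverse inclusion, we compare dimensions degree by degree, using that everything is graded and finite-dimensional in each degree.

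\begin{enumerate}
\item \emph{Dimension of $(S\oplus S)_m$.} Take $m=4j+r$ with $0\le r\le 3$. The first summand contributes $S_{m-1}$, which is nonzero exactly when $m-1\equiv 0$ or $3 \pmod 4$, i.e.\ $r=1$ or $r=0$ with $m\ge 4$. The second summand contributes $S_{m-2}$, which is nonzero exactly when $m\equiv 2$ or $1 \pmod 4$, with $m\ge 2$.
\item \emph{Dimension of $\ker(f)_m$.} The map $f$ is surjective onto $J$, and $J_m$ is one-dimensional for $m\equiv 1,2 \pmod 4$ and zero otherwise. Hence $\dim\ker(f)_m$ equals $1$ for $m\equiv 0 \pmod 4$ with $m\ge4$, equals $1$ for $m\equiv 1\pmod 4$ with $m\ge5$, and is $0$ in all other degrees.
\item \emph{Explicit generators of $N$ in each degree.}
 \begin{itemize}
 \item In degree $m=4j+4$: the element $(\bar{E}\bar{F},0)\cdot\bar{F}^{2j}=(\bar{E}\bar{F}^{2j+1},0)$ is nonzero.
 \item In degree $m=4j+5$: the element $(\bar{F}^2,-\bar{E}\bar{F})\cdot\bar{F}^{2j}=(\bar{F}^{2j+2},-\bar{E}\bar{F}^{2j+1})$ is nonzero.
 \end{itemize}
 The generators $(\bar{E}\bar{F},0)$ and $(\bar{F}^2,-\bar{E}\bar{F})$ themselves sit in degrees $4$ and $5$, consistent with this count.
\end{enumerate}

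It follows that $N_m=\ker(f)_m$ in every degree, which establishes the proposition. The main obstacle is the bookkeeping: getting the degree shifts of the two summands right, and confirming that $\ker(f)$ vanishes in degrees $\le 3$ and in degrees $\equiv 2,3 \pmod 4$.
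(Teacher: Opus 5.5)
Your proof is correct, but it takes a different route from the paper. You make $f$ homogeneous by shifting the two summands by $1$ and $2$. Since $f$ is surjective by Proposition~\ref{CohomRingSym}, this gives $\dim \ker(f)_m = \dim S_{m-1} + \dim S_{m-2} - \dim J_m$, which equals $1$ exactly for $m = 4j+4$ and $m = 4j+5$ and $0$ otherwise. You then exhibit the nonzero elements $(\bar{E}\bar{F}^{2j+1},0)$ and $(\bar{F}^{2j+2},-\bar{E}\bar{F}^{2j+1})$ of the submodule in precisely those degrees, and your degree bookkeeping checks out.

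The paper uses a reduction argument instead. Because $S = K[\bar{E}\bar{F},\bar{F}^2]$, any kernel element can be altered by elements of the submodule until its first component is a scalar~$\lambda$. The relation $\lambda\bar{E} + \bar{F}\tau = 0$ then forces $\lambda = 0$ for degree reasons. It also forces $\tau = 0$, since $\bar{F}$ is not a zero divisor in $K[x,y]/(x^2)$.

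The paper's argument is shorter and needs neither the surjectivity of $f$ nor the degreewise dimensions of $J$; it uses only the generators of $S$ and the multiplicative structure of $H(A_3,K)$. Your argument is entirely mechanical, reducing to finitely many one-dimensional checks per degree. As a by-product, it gives the full Hilbert series of $\ker(f)$. It also shows at once that neither generator is redundant, since $S_1 = 0$.
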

\begin{proof}
We have~$f(\bar{E} \bar{F},0) = \bar{E} (\bar{E} \bar{F}) = 0$ and
$f(\bar{F}^2,-\bar{E} \bar{F}) = \bar{E} \bar{F}^2 - \bar{F} (\bar{E} \bar{F}) = 0$,
so that these two elements are indeed in the kernel. Now let us assume that there is an element of the kernel that is not in the submodule generated by these two elements.
Since $S = K[\bar{E} \bar{F}, \bar{F}^2]$ by Proposition~\ref{CohomRingSym}, we can subtract from this element some elements from the submodule so that the resulting element has a scalar~$\lambda \in K$ as its first component, and is by hypothesis still not contained in the submodule considered. The fact that this element is in the kernel then implies that~$\lambda \bar{E}$ is a multiple of~$\bar{F}$, which forces both~$\lambda$ and the coefficient of~$\bar{F}$ to be zero. But this means that the element considered is zero, which is a contradiction.
\end{proof}

\pagebreak

We note that, in view of the fact that~$S \cong H(S_3, K)$ via the restriction map, we can consider~$J$ as a right module over~$H(S_3, K)$. This module is isomorphic
to~$H(S_3, K_\varepsilon)$:
\begin{Proposition} \label{IsoMod}
$\Res^{S_3}_{A_3} \colon H(S_3, K_\varepsilon) \to J$ is an isomorphism of~$H(S_3, K)$-modules.
\end{Proposition}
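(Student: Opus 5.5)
Bijectivity of $\Res^{S_3}_{A_3} \colon H(S_3, K_\varepsilon) \to J$ is already the second assertion of Proposition~\ref{CohomRingSym}, so what remains is compatibility with the module structures. On the source, $H(S_3,K)$ acts on $H(S_3,K_\varepsilon)$ from the right by the Yoneda product. On the target, $H(S_3,K)$ acts on $J$ through the isomorphism $\Res^{S_3}_{A_3} \colon H(S_3,K) \to S$ from the first assertion of Proposition~\ref{CohomRingSym}, followed by multiplication in $H(A_3,K)$. Thus I will show that for $\xi \in \Yext^m_{S_3}(K,K_\varepsilon)$ and $\eta \in \Yext^l_{S_3}(K,K)$ one has
\[\Res^{S_3}_{A_3}(\xi\eta) = \Res^{S_3}_{A_3}(\xi)\,\Res^{S_3}_{A_3}(\eta),\]
where on the right the product is taken in $H(A_3,K)$. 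This uses that $K_\varepsilon$ restricts to the trivial $A_3$-module $K$, so that $\Yext_{A_3}(K,K_\varepsilon) = \Yext_{A_3}(K,K)$.

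The key point is that restriction is compatible with Yoneda splicing. In the Yoneda picture, $\Res^{S_3}_{A_3}$ sends the class of an exact sequence of $S_3$-modules to the class of the same exact sequence regarded as a sequence of $A_3$-modules. This restriction functor is exact and preserves the equivalence relation on extensions, since it maps morphisms of extensions to morphisms of extensions. It also visibly commutes with splicing: $(EF)|_{A_3} = E|_{A_3}F|_{A_3}$. For degree-zero factors, it commutes with applying morphisms to extensions. So the displayed equation holds on representatives in all degrees.

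The one point that needs checking is that this naive restriction of extensions agrees with the restriction map $\Res^{S_3}_{A_3}$ used in Lemma~\ref{ResChar3}, which is defined via projective resolutions. To see this, choose a projective resolution $P_\bullet$ of $K$ over $K[S_3]$; it remains a projective resolution over $K[A_3]$. Completing an extension via the comparison theorem, as in Paragraph~\ref{YonProd}, produces a chain map over $S_3$, and that same chain map is also an $A_3$-chain map. Hence the isomorphism between $\Yext$ and $\Ext$ commutes with restriction. Once this is in place, the multiplicativity follows, and $\Res^{S_3}_{A_3}$ is an $H(S_3,K)$-linear bijection, which is the assertion.
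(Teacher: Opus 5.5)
Your proposal is correct and follows the paper's proof. Bijectivity comes from Proposition~\ref{CohomRingSym}, and $H(S_3,K)$-linearity comes from the compatibility of restriction with the Yoneda product. The only difference is that you also spell out why restricting extensions commutes with splicing and agrees with the resolution-based restriction map, which the paper leaves implicit.
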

\begin{proof}
We have established the bijectivity in Proposition~\ref{CohomRingSym} above. Linearity over~$H(S_3, K)$ follows from the commutativity of the square
\begin{center}
\begin{tikzcd} {}
H(S_3, K_\varepsilon) \times H(S_3, K) \arrow{r}{}
\arrow[swap]{d}{\Res^{S_3}_{A_3} \times \Res^{S_3}_{A_3}} &
H(S_3, K_\varepsilon) \arrow{d}{\Res^{S_3}_{A_3}} \\
J \times S \arrow{r}{} & J
\end{tikzcd}
\end{center}
in which the horizontal maps are Yoneda products. In other words, the diagram commutes because the Yoneda product is compatible with the restriction map.
\end{proof}

\subsection{The Yoneda module structure in characteristic~$3$} \label{YonChar3}
We now turn to the right module structure of~$H(S_3, M^1)$ over~$H(S_3, K)$ for fields of characteristic~$3$. As in the case of characteristic~$2$ treated in Paragraph~\ref{YonChar2}, the decomposition $M^1 = M^1(O_1) \oplus M^1(O_2) \oplus M^1(O_3)$ from Paragraph~\ref{SymmThree} leads to the decomposition
\[H(S_3, M^1) = H(S_3, M^1(O_1)) \oplus H(S_3, M^1(O_2)) \oplus H(S_3, M^1(O_3))\]
into submodules for the action of~$H(S_3, K)$. In the present case, the structure of these submodules is as follows:
\begin{Theorem} \label{ModStrucChar3}
\begin{enumerate}
\item
$H(S_3, M^1(O_1))$ is free of rank~$1$ over~$H(S_3, K)$. The module action induces an $\SL(2, \Z)$-equivariant isomorphism
\[H^0(S_3, M^1(O_1)) \ot H(S_3, K) \to H(S_3, M^1(O_1)).\]

\item
$H(S_3, M^1(O_2)) = H^0(S_3, M^1(O_2)) \cong K[\P^1_2]$ is a trivial $H(S_3, K)$-module.

\item
$H(S_3, M^1(O_3))$ is the direct sum of a free module of rank~$4$ and~$H(S_3, K_\varepsilon)^4$. We have
\[H(S_3, M^1(O_3)) \cong H(S_3, K) \ot K_{\chi_1}[\P_3^{1}] \oplus H(S_3, K_\varepsilon) \ot K_{\chi_2}[\P_3^{1}]\]
as modules over both~$H(S_3, K)$ and~$\SL(2, \Z)$.
\end{enumerate}
\end{Theorem}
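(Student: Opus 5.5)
We follow the pattern of the proof of Theorem~\ref{ModStrucChar2} and treat the three summands separately. For the first assertion, $M^1(O_1) \cong K$ as an $S_3$-module with trivial $\SL(2,\Z)$-action. The Yoneda product map
\[H^0(S_3, M) \ot H(S_3, K) \to H(S_3, M), \qquad f \ot \bar{F} \mapsto \Yext^m_{S_3}(K,f)(\bar{F}),\]
is therefore the identification $\Hom_{S_3}(K,K) \ot H(S_3,K) \cong H(S_3,K)$. Hence it is bijective. Its $\SL(2,\Z)$-equivariance follows from Proposition~\ref{MapClassYon}, or directly because both actions are trivial.

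For the second assertion, the proof of Theorem~\ref{ModGroupChar3} shows $H^m(S_3,M^1(O_2))=0$ for $m>0$. So $H(S_3,M^1(O_2))$ is concentrated in degree zero. By Paragraph~\ref{DegZero}, this degree-zero part is $K[\P^1_2]$. A graded module concentrated in degree zero is annihilated by positive-degree elements, so it is trivial, exactly as in part~(3) of Theorem~\ref{ModStrucChar2}.

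For the third assertion, we use the $S_3$-module decomposition
\[M^1(O_3) \cong K[s+s^{-1}] \ot K_{\chi_1}[\P^1_3] \oplus K[s-s^{-1}] \ot K_{\chi_2}[\P^1_3]\]
from Paragraph~\ref{SymmThree}. Here $S_3$ acts only on the first factors and $\SL(2,\Z)$ only on the second. The first factors are $K$ and $K_\varepsilon$. Choosing bases, each summand is a direct sum of four copies of $K$, respectively of $K_\varepsilon$, with $\SL(2,\Z)$ permuting the copies through $K$-linear maps $\phi$ of the multiplicity spaces.

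We then need the isomorphism
\[H(S_3, X) \ot_K Y \cong H(S_3, X \ot_K Y)\]
for $X \in \{K, K_\varepsilon\}$ and a finite-dimensional vector space $Y$ with trivial $S_3$-action. We check it is compatible with both structures.

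Compatibility with the right Yoneda action of $H(S_3,K)$: Yoneda composition $\Yext(K,X)\times\Yext(K,K)\to\Yext(K,X)$ is performed on the $X$-side, and tensoring with $Y$ is an additive functor commuting with splicing. Equivalently, in the resolution picture $\Hom_{S_3}(P_m,X)\ot Y\cong \Hom_{S_3}(P_m,X\ot Y)$, the right action comes from chain maps on the $P$'s. This is the universal-coefficient argument of Theorem~\ref{ThmDecomp}.

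Compatibility with $\SL(2,\Z)$: the group acts only through $\id_X \ot \phi$. By functoriality of $\Yext$ in the second variable, this corresponds to $\id \ot \phi$ on the left side.

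Since $H(S_3,K)$ is free of rank one over itself, the $K_{\chi_1}$-part is free of rank $\dim K_{\chi_1}[\P^1_3]=4$. The $K_{\chi_2}$-part is $H(S_3,K_\varepsilon)^4$. By Proposition~\ref{IsoMod}, $H(S_3,K_\varepsilon)\cong J$ as $H(S_3,K)$-modules.

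The main obstacle is making the identification for $X=K_\varepsilon$ precise. One must check that tensoring the coefficient module with a vector space carrying trivial $S_3$-action commutes with the Yoneda product and with the $\SL(2,\Z)$-action simultaneously. I would argue this via the resolution description, where $S_3$ and $\SL(2,\Z)$ act on separate tensor factors, and transport to the Yoneda picture using the comparison isomorphism of Paragraph~\ref{YonProd}.
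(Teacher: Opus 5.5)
Your proposal is correct and follows essentially the same route as the paper. Parts (1) and (2) are handled exactly as in Theorem~\ref{ModStrucChar2}. For part (3), the paper likewise passes from $M^1(O_3)\cong K^4\oplus K_\varepsilon^4$ to $H(S_3,K)^4\oplus H(S_3,K_\varepsilon)^4$ by additivity of cohomology and then reassembles the tensor products to get $H(S_3,K)$-linearity and $\SL(2,\Z)$-equivariance at once; your check via functoriality of $\Yext$ in the coefficient variable just makes that ``simple universal coefficient'' step explicit, and it is no harder for $K_\varepsilon$ than for $K$.
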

\begin{proof}
\begin{pflist}
\item
The first assertion holds for exactly the same reasons as the corresponding assertion in Theorem~\ref{ModStrucChar2}, because~$M^1(O_1) \cong K$. For the second assertion, we have already seen in the proof of Theorem~\ref{ModGroupChar3} that \mbox{$H^m(S_3, M^1(O_2)) = 0$} if~$m>0$, so that
$H(S_3, M^1(O_2)) = H^0(S_3, M^1(O_2))$. In Paragraph~\ref{DegZero}, we have seen that $H^0(S_3, M^1(O_2)) = M^1(O_2)^{S_3} \cong K[\P^1_2]$ as an $\SL(2, \Z)$-module; this space is three-dimensional over~$K$. To show that the
$H(S_3, K)$-module structure is trivial, we argue as in the proof of Theorem~\ref{ModStrucChar2}: $H(S_3, M^1(O_2))$ is a graded module over~$H(S_3, K)$ that has only zero terms in higher degree, which implies that $H^m(S_3, K)$ must act as zero if~$m>0$. In other words, $H(S_3, K)$ acts by pullback along the augmentation, which is the asserted triviality.

\item
For the third assertion, recall from Paragraph~\ref{SymmThree} or the proof of Theorem~\ref{ModGroupChar3} that
\begin{align*}
M^1(O_3)
\cong K[s + s^{-1}] \ot K_{\chi_1}[\P_3^{1}] \oplus K[s - s^{-1}] \ot K_{\chi_2}[\P_3^{1}].
\end{align*}
If~$K_\varepsilon$ denotes the sign representation of~$S_3$, as in Paragraph~\ref{YonAlgChar3}, this means that $M^1(O_3) \cong K^4 \oplus K_\varepsilon^4$ as $S_3$-modules, so that
\[H(S_3, M^1(O_3)) \cong H(S_3, K)^4 \oplus H(S_3, K_\varepsilon)^4\]
as modules over~$H(S_3, K)$. The first summand is clearly free of rank~$4$ over~$H(S_3, K)$. Passing back to tensor products, we can write this isomorphism in the form
\[H(S_3, M^1(O_3)) \cong H(S_3, K) \ot K_{\chi_1}[\P_3^{1}] \oplus H(S_3, K_\varepsilon) \ot K_{\chi_2}[\P_3^{1}]\]
in which it is not only $H(S_3, K)$-linear, but also $\SL(2, \Z)$-equivariant, where on the right the $\SL(2, \Z)$-action is on the second tensor factor and the $H(S_3, K)$-action is on the first tensor factor. In this way, we have essentially deduced a simple version of the universal coefficient theorem from the additivity of cohomology, as in the proofs of Theorem~\ref{ThmDecomp} or Corollary~\ref{DecompCohEig}.
\qedhere
\end{pflist}
\end{proof}

This theorem shows in particular that~$H(S_3, M^1)$ is not generated in degree~$0$, as the nonzero terms of its summand
\[H(S_3, K_\varepsilon)^4 \cong H(S_3, K_\varepsilon) \ot K_{\chi_2}[\P_3^{1}]\] appear only in degrees~$m$ that satisfy either \mbox{$m \equiv 1 \pmod{4}$}
or~$m \equiv 2 \pmod{4}$, as we know from Theorem~\ref{ModGroupChar3} and its proof. In view of the discussion at the end of Paragraph~\ref{Char3}, this means that the submodule of~$H(S_3, M^1)$ that is generated by~$H^0(S_3, M^1)$ over the Yoneda algebra~$H^0(S_3, K)$ does not contain summands that are isomorphic to~$K_{\chi_2}[\P_3^{1}]$. In other words, the mapping class group representations that appear in higher degree are in general not only different from those that appear in degree zero, but they can also not be generated from those in degree zero by taking Yoneda products with~$\TFT^\infty(\Sigma_0) = H(S_3, K)$.

\subsection{The torus with one insertion} \label{TorIns}
In view of Proposition~\ref{GroupCohom} and the discussion following it, Theorem~\ref{ModStrucChar3} above also describes the right module structure of~$\Ext_D(K, L)$ over the algebra~$\Ext_D(K, K)$, where, as before, $D=D(S_3)$ is the Drinfel'd double of~$S_3$ and~$L$ is the coend in the category of finite-dimensional $D$-modules discussed in Paragraph~\ref{EndomCoend}. As we just saw, the submodule generated by~$\Ext^0_D(K, L)$ misses precisely the summand~$H(S_3, K_\varepsilon)^4$. However, also~$H(S_3, K_\varepsilon)^4$ can be generated in degree zero using Yoneda products, but in a different way: Remaining in characteristic~$3$, we consider the $D$-module~\mbox{$X \deq I(e, K_\varepsilon)$} from Proposition~\ref{PropRepDrinf} in Paragraph~\ref{RepDrinf}. If we use the module~$X$ instead of the trivial module~$K$, we can generate~$H(S_3, K_\varepsilon)^4$ as follows:
\begin{Proposition} \label{GenX}
The map
\[\Ext^0_D(X, L) \ot \Ext_D(K, X) \to \Ext_D(K, L)\]
given by the Yoneda product is an injection with image isomorphic to~$H(S_3, K_\varepsilon)^4$.
\end{Proposition}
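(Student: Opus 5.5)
The plan is to reduce everything to group cohomology of $S_3$, as in Proposition~\ref{GroupCohom}, and to use the decomposition of $M^1$ from Paragraph~\ref{SymmThree}.

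\textbf{Step 1: the groups $\Ext_D(K,X)$.} By Corollary~\ref{TrivConjClass}, $X = I(e,K_\varepsilon)$ is just $K_\varepsilon$ viewed as a $D$-module concentrated in degree $e$, so $X_e = K_\varepsilon$. Proposition~\ref{GroupCohom} then gives $\Ext_D(K,X) \cong H(S_3,K_\varepsilon)$. The remarks after that proposition show that this isomorphism is compatible with Yoneda products, because the passage $W \mapsto W_e$ is applied termwise to extensions.

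\textbf{Step 2: the group $\Ext^0_D(X,L)$.} By the adjunction described after Corollary~\ref{TrivConjClass}, and since $X$ is pulled back from $K[S_3]$, we get
\[\Hom_D(X,L) \cong \Hom_{S_3}(K_\varepsilon, L_e) \cong \Hom_{S_3}(K_\varepsilon, M^1),\]
where the last step uses $\tilde{\Theta}$ or $\Theta$. I would compute this using $M^1 = M^1(O_1)\oplus M^1(O_2)\oplus M^1(O_3)$:
\begin{itemize}
\item $M^1(O_1)\cong K$ contributes nothing, since $\varepsilon$ is nontrivial in characteristic~$3$.
\item $M^1(O_2)\cong K[I_2]^3$. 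The restriction of $K[I_2]$ to $A_3$ is the regular module, so its sign-isotypic socle is one-dimensional, spanned by $t_1+t_2+t_3$ with the sign twisted appropriately. More carefully, $K[I_2] = \mathrm{Ind}_{U_i}^{S_3}K$, so $\Hom_{S_3}(K_\varepsilon, K[I_2]) \cong \Hom_{U_i}(K_\varepsilon, K) = 0$, because $t_i$ acts by $-1 \ne 1$ on $K_\varepsilon$. This summand therefore contributes nothing.
\item $M^1(O_3) \cong K^4 \oplus K_\varepsilon^4$, and this summand contributes $\Hom(K_\varepsilon,K_\varepsilon^4) = K^4$, namely the component $K[s-s^{-1}]\otimes K_{\chi_2}[\P^1_3]$.
\end{itemize}
Hence $\Ext^0_D(X,L)$ is four-dimensional, spanned by the four inclusions $\iota_j \colon K_\varepsilon \to K_\varepsilon^4 \subset M^1(O_3)$.

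\textbf{Step 3: the Yoneda map.} Under these identifications, the Yoneda product of $f\in\Hom_{S_3}(K_\varepsilon,M^1)$ with a class $\bar F\in H(S_3,K_\varepsilon)$ is $\Yext(K,f)(\bar F)$, that is, $f_*\bar F$, exactly as in the proof of Theorem~\ref{ModStrucChar2}. The map in the statement therefore becomes
\[K^4\otimes H(S_3,K_\varepsilon)\to H(S_3,M^1), \qquad (\lambda_j)\otimes\bar F\mapsto \sum_j\lambda_j(\iota_j)_*\bar F.\]
This is the map induced in cohomology by the split inclusion $K_\varepsilon^4 \to M^1$. By additivity of cohomology, it is injective, with image the summand $H(S_3,K_\varepsilon)^4$ of Theorem~\ref{ModStrucChar3}.

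\textbf{Main obstacle.} The main difficulty is bookkeeping: checking that the abstract Yoneda product over $D$ really corresponds to postcomposition in $H(S_3,-)$. This needs the $W\mapsto W_e$ functor to respect splicing and to send $X$ to $K_\varepsilon$, together with the vanishing $\Hom_{S_3}(K_\varepsilon,K[I_2])=0$, which I would verify directly from the two-line Frobenius reciprocity argument in Step~2.
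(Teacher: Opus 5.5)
Your proof is correct and follows the paper's argument. You use $X_e = K_\varepsilon$ and the adjunction to reduce to the functoriality map $\Hom_{S_3}(K_\varepsilon, M^1) \ot H(S_3,K_\varepsilon) \to H(S_3,M^1)$, observe that only the summand $K_\varepsilon^4 \subset M^1(O_3)$ receives nonzero maps from $K_\varepsilon$, and conclude by additivity. The one deviation is that you show $\Hom_{S_3}(K_\varepsilon, K[I_2])=0$ by Frobenius reciprocity for $K[I_2] \cong K[S_3] \ot_{K[U_i]} K$, whereas the paper notes that any line in $K[I_2]$ must be $A_3$-fixed in characteristic~$3$ and hence spanned by $t_1+t_2+t_3$. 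Delete your preliminary sentence about a one-dimensional ``sign-isotypic socle'' spanned by $t_1+t_2+t_3$: that vector spans a trivial submodule, and there is no sign submodule at all.
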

\begin{proof}
\begin{pflist}
\item
It is immediate from Corollary~\ref{TrivConjClass} that~$X_e = K_\varepsilon$, and in view of the functor adjunction described after that corollary, we have
\[\Ext^0_D(X, L) = \Hom_D(X, L) \cong \Hom_{S_3}(K_\varepsilon, L_e)
\cong \Hom_{S_3}(K_\varepsilon, M^1),\]
where the last isomorphism results from Theorem~\ref{ThmModTilde} and Proposition~\ref{GLin}. In view of the discussion following Proposition~\ref{GroupCohom}, we therefore have to show that the map
\[\Hom_{S_3}(K_\varepsilon, M^1) \ot H(S _3, K_\varepsilon) \to H(S_3, M^1) \]
given by the Yoneda product is an injection with image~$H(S_3, K_\varepsilon)^4$. But in the Yoneda picture that we discussed in Paragraph~\ref{YonProd}, this is the map
\[\Hom_{S_3}(K_\varepsilon, M^1) \ot \Yext_{S_3}(K, K_\varepsilon) \to
\Yext_{S_3}(K, M^1) \]
that arises, as in the proof of Theorem~\ref{ModStrucChar2}, from the functoriality of~$\Yext$, i.e., by applying an element of
$\Hom_{S_3}(K_\varepsilon, M^1)$ to an extension in $\Yext_{S_3}(K, K_\varepsilon)$.

\item
We have seen in Paragraph~\ref{SymmThree} that
$M^1 = M^1(O_1) \oplus M^1(O_2) \oplus M^1(O_3)$,
and we have also seen there that~$M^1(O_1) \cong K$,
that
\[M^1(O_2) \cong K[I_2] \otimes_{K} K[\Omega^{1,2}] \cong K[I_2]^3\]
and that~$M^1(O_3) \cong
K[s + s^{-1}] \ot K_{\chi_1}[\P_3^{1}] \oplus K[s - s^{-1}] \ot K_{\chi_2}[\P_3^{1}]
\cong K^4 \oplus K_\varepsilon^4$ as $S_3$-modules. Because~$K$ has characteristic~$3$, the unit element~$1_K$ is the only third root of unity, and therefore the action of~$s$ on~$K[I_2]$ can only have~$1_K$ as a possible eigenvalue. From this fact, it is not difficult to see that~$t_1 + t_2 + t_3$ spans the only one-dimensional $S_3$-submodule of~$K[I_2]$, so that in particular there is no submodule isomorphic to~$K_\varepsilon$. This implies that
\begin{align*}
&\Hom_{S_3}(K_\varepsilon, M^1) \cong \\
&\Hom_{S_3}(K_\varepsilon, M^1(O_1)) \oplus \Hom_{S_3}(K_\varepsilon, M^1(O_2))
\oplus \Hom_{S_3}(K_\varepsilon, M^1(O_3)) \cong \\
&\Hom_{S_3}(K_\varepsilon, M^1(O_3)) \cong
\Hom_{S_3}(K_\varepsilon, K^4 \oplus K_\varepsilon^4) \cong
\Hom_{S_3}(K_\varepsilon, K_\varepsilon^4).
\end{align*}
From this chain of isomorphisms, we see that the image of the map
\[\Hom_{S_3}(K_\varepsilon, M^1) \ot H(S _3, K_\varepsilon) \to H(S_3, M^1) \]
is precisely the summand~$H(S_3, K_\varepsilon)^4$ of $H(S_3, M^1(O_3))$ that we found in Theorem~\ref{ModStrucChar3}.
\qedhere
\end{pflist}
\end{proof}

We note that the map that appears in this proposition is equivariant for the action of the mapping class group, where the action of the mapping class group on the left-hand side is defined by acting on the first factor. This follows from the fact that the mapping class group acts in the present case by postcomposition, as was pointed out at the end of Paragraph~2.6. Alternatively, as the mapping class group is here the modular group~$\SL(2 ,\Z)$, this also follows from the explicit form of the action of the modular group that we have used throughout and just reviewed in the preceding proof. The map is also equivariant for the right action of the Yoneda algebra~$\Ext_D(K,K)$, where the action of the Yoneda algebra on the left-hand side is defined by acting on the second factor. This holds for the same reasons as those given in the proof of Proposition~\ref{MapClassYon}.

From the point of view of derived topological field theory, the mapping that appears in the proposition above can be interpreted as obtained from the gluing of two surfaces. To discuss this, we need the following lemma, in which we assume that~$\CC$ is a category satisfying the hypotheses from Paragraph~2.1:
\begin{Lemma} \label{ExtAdj}
Suppose that~$X$, $Y$, and~$Z$ are objects in~$\CC$. Then there is, for every~$m \in \N_0$, an isomorphism
\[\Ext^m(X^* \ot Y, Z) \cong \Ext^m(Y, X \ot Z)\]
that is natural in~$X$, $Y$, and~$Z$.
\end{Lemma}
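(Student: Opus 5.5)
The plan is to reduce the statement to the duality adjunction
\[\Hom_\CC(X^* \ot Y, Z) \cong \Hom_\CC(Y, X \ot Z),\]
natural in all three variables, and then push it through projective resolutions. For the left dual~$X^*$ with evaluation $ev_X \colon X^* \ot X \to \1$ and coevaluation $coev_X \colon \1 \to X \ot X^*$ from Paragraph~2.1, the functor $X^* \ot -$ is left adjoint to $X \ot -$. A morphism $f \colon X^* \ot Y \to Z$ is sent to $(\id_X \ot f) \circ (coev_X \ot \id_Y)$, using associativity constraints. The inverse sends $g$ to $(ev_X \ot \id_Z) \circ (\id_{X^*} \ot g)$. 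The zigzag identities show that these are mutually inverse, and naturality in~$Y$ and~$Z$ is immediate. Naturality in~$X$ uses the definition of the dual morphism~$f^*$ via evaluation and coevaluation. Here I will need to fix the variance convention: both sides should be read as contravariant in~$X$ on the left and covariant on the right, which matches the standard definition of~$f^*$.

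Next I pass to $\Ext^m$. I choose a projective resolution $P_\bullet \to Y$. In a rigid monoidal abelian category as in Paragraph~2.1, the functor $X^* \ot -$ is exact, because it has both adjoints, being $X \ot -$ on the right and ${}^*X\ot-$ on the left. It also preserves projectives, because its right adjoint $X \ot -$ is exact. Hence $X^* \ot P_\bullet \to X^* \ot Y$ is a projective resolution of $X^* \ot Y$. Applying the adjunction degreewise gives an isomorphism of cochain complexes
\[\Hom_\CC(X^* \ot P_\bullet, Z) \cong \Hom_\CC(P_\bullet, X \ot Z).\]
This is a map of complexes because naturality in the first variable makes it commute with the differentials $d_m^*$. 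Taking cohomology yields $\Ext^m(X^* \ot Y, Z) \cong \Ext^m(Y, X \ot Z)$ for every~$m$.

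The main obstacle is proving naturality at the level of $\Ext$ and showing independence of the chosen resolution.
\begin{itemize}
\item For naturality in~$Z$, and for $X$ in the variable $X \ot Z$, postcomposition commutes with the cochain isomorphism, so this case is immediate.
\item For a morphism $Y \to Y'$, I lift it to a chain map $P_\bullet \to P'_\bullet$ by the comparison theorem. Tensoring that lift with~$X^*$ gives a lift of $X^* \ot Y \to X^* \ot Y'$. Since $\Ext$ is computed by any lift, naturality of the degreewise adjunction again gives commutativity.
\item For naturality in~$X$ on the left-hand side, a morphism $h \colon X \to X'$ induces $h^* \ot P_\bullet$, which is a chain map lifting $h^* \ot \id_Y$. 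The naturality of the adjunction in~$X$ then transfers degreewise.
\end{itemize}
The same lifting argument shows that the isomorphism does not depend on the choice of resolution. I expect only the bookkeeping of associativity constraints and of the dual-morphism convention to require care.
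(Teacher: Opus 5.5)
Your proposal is correct and follows the same route as the paper. The paper resolves the first argument projectively, uses that tensoring is exact and preserves projectives, and applies the duality adjunction degreewise, deferring the details to Witherspoon's Remark~9.3.2, which your argument in effect writes out. One small slip: the left adjoint of $X^* \ot -$ is $X^{**} \ot -$ (since $A^* \ot -$ is left adjoint to $A \ot -$ for every $A$), not ${}^*X \ot -$, though exactness of tensoring is in any case one of the standing assumptions from Paragraph~2.1 that the paper invokes.
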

\begin{proof}
Since tensoring is exact and preserves projectives, as we mentioned in Paragraph~2.1, this lemma can be proved exactly as in~\cite[Rem.~9.3.2, p.~191]{W2}, which treats the case of Hopf algebras. The lemma can also be deduced from a generalization of~\cite[Chap.~IV, Sec.~12, Eq.~(12.1), p.~163]{HiS} to categories.
\end{proof}

With the help of this lemma, the map that appears in Proposition~\ref{GenX} above takes in degree~$m$ the form
\[\Ext^0_D(X, L) \ot \Ext^m_D(X^*, K) \to \Ext^m_D(K, L).\]
According to the definitions in Paragraph~3.2, the space $\Ext^0_D(X, L)$ that appears on the left-hand side is equal to~$\TFT^0(\Sigma_{1,1}^X)$, the block space associated with the torus with one boundary component labeled by~$X$. The space~$\Ext^m_D(K, L)$ that appears on the right-hand side is equal to~$\TFT^m(\Sigma_{1,0})$, the derived block space associated with the torus without boundary. The derived block space~$\TFT^m(\Sigma_{0,1}^{X^*})$ associated with the sphere with one boundary component labeled by~$X^*$, or equivalently a disk whose boundary circle is labeled by~$X^*$, is~$\Ext^m_D(X^*, K)$, so that we get a map
\[\TFT^0(\Sigma_{1,1}^X) \ot \TFT^m(\Sigma_{0,1}^{X^*}) \to \TFT^m(\Sigma_{1,0}).\]
Although we have not constructed a derived modular functor here, but only its corresponding mapping class group representations, a derived modular functor would assign this map to the gluing of a disk into the boundary component of a torus to obtain a torus without boundary.

\newpage

\appendix
\section{Appendix} \label{Sec:App}
\subsection{The periodic resolution for the cyclic group} \label{PerResCycl}
One of our basic computational tools is the periodic resolution for the cyclic group, and it is the purpose of this paragraph in the appendix to collect its basic properties to the extent that we need them in the main text. So suppose that~$U = \langle t \rangle \cong \Z_n$ is a finite cyclic group of order~$n$ with generator~$t$. Then there is a periodic free resolution
\[\cdots \xlongrightarrow{d_4} P_3 \xlongrightarrow{d_3} P_2 \xlongrightarrow{d_2} P_1 \xlongrightarrow{d_1} P_0 \xlongrightarrow{\varepsilon} K \to 0\]
of the base field~$K$ in which all modules~$P_m$ are equal to the group algebra~$K[U]$. To describe the boundary operators, we introduce two elements in the group algebra~$K[U]$, namely the norm
$N \deq 1 + t + \dots + t^{n-1}$ and the difference $D \deq t-1$, and define
$d_m(x) = Dx$ if~$m$ is odd and $d_m(x) = Nx$  if~$m$ is even. The augmentation map $\varepsilon$ is the usual counit in a group algebra, defined as
$\varepsilon(\sum_{i=0}^{n-1} \alpha_i t^i) = \sum_{i=0}^{n-1}\alpha_i$. This resolution is discussed in many textbooks on homological algebra, for example in~\cite[Chap.~IV, \S~7, p.~121f]{ML1}. If~$N.$ and~$D.$ denote the action by~$N$ and~$D$, respectively, we get the following result for the cohomology groups:
\begin{Proposition} \label{CohomCycl}
For a $K[U]$-module~$M$, we have
\[H^m(U,M) \cong
\begin{cases}
M^{U} &:  m = 0\\
M^{U}/N.M &: m \text{ even}, m \neq 0\\
\ker(N.)/D.M &: m \text{ odd}
\end{cases}\]
\end{Proposition}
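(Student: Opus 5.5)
The plan is to compute $H^m(U,M)=\Ext^m_{K[U]}(K,M)$ directly from the periodic resolution just described. First one has to check that it really is a projective resolution of $K$. Each $P_m=K[U]$ is free, so only exactness needs work. Augmentation: $\varepsilon$ is surjective, and its kernel is the augmentation ideal. That ideal is spanned by the elements $t^i-1=(t^{i-1}+\dots+1)(t-1)$, hence it equals $D\,K[U]=\im(d_1)$.

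At the positive spots we use two facts in the commutative ring $K[U]$:
\begin{itemize}
\item $ND=DN=t^n-1=0$, so $d_m\circ d_{m+1}=0$;
\item $\ker(D.)=K\cdot N=\im(N.)$, and $\ker(N.)=\im(D.)$.
\end{itemize}
For the second fact, an element $x=\sum\alpha_it^i$ with $tx=x$ has all coefficients equal, so $x$ is a multiple of $N$. Thus $\ker(D.)=K\cdot N=N\,K[U]$, since $Nt^i=N$. The other equality follows by a dimension count: $\dim\ker(N.)=n-\dim\im(N.)=n-1=\dim\im(D.)$, because $\im(D.)$ is the augmentation ideal.

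Next, apply $\Hom_{K[U]}(-,M)$. We identify $\Hom_{K[U]}(K[U],M)\cong M$ by $\phi\mapsto\phi(1)$. Under this identification, precomposition with multiplication by an element $x\in K[U]$ becomes the action $m\mapsto x.m$; this uses commutativity of $K[U]$. The cochain complex is therefore
\[M\xlongrightarrow{D.}M\xlongrightarrow{N.}M\xlongrightarrow{D.}M\xlongrightarrow{N.}\cdots,\]
where $M$ in degree $m$ is followed by $D.$ if $m$ is even and by $N.$ if $m$ is odd.

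Reading off cohomology gives the three cases:
\begin{itemize}
\item In degree $0$ we get $\ker(D.)=\{m\in M\mid t.m=m\}=M^U$.
\item In odd degree $m$ we get $\ker(N.)/\im(D.)=\ker(N.)/D.M$.
\item In even degree $m\neq 0$ we get $\ker(D.)/\im(N.)=M^U/N.M$.
\end{itemize}

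There is no real obstacle here; the only point needing care is the exactness verification at the positive spots of the resolution. The cohomology is independent of the chosen resolution by the comparison theorem recalled in Paragraph~3.1, so this resolution may be used.
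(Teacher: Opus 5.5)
Your proposal is correct and follows essentially the route the paper takes: the paper gives no separate proof and refers to textbooks for the exactness of the periodic resolution, which you verify correctly. The rest matches the paper's own identification in the proof of Proposition~\ref{Hmf}: applying $\Hom_{K[U]}(-,M)$, evaluating at $1$ to get the complex of copies of $M$ with alternating maps $D.$ and $N.$, and reading off the three cases.
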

In the case that the module is trivial, the action of~$N$ is multiplication by~$n$, while the action of~$D$ is zero. If the characteristic of the base field divides~$n$, the action of~$N$ is also zero, and the preceding result yields $H^m(U,M) = M$ in all cases. If the characteristic of the base field does not divide~$n$, the action of~$N$ is an isomorphism, and we obtain $H^0(U,M) = M$ and $H^m(U,M) = 0$ if~$m>0$.

The computation that shows that the above periodic resolution is exact is precisely what is needed to deduce the following corollary from this proposition:
\begin{Cor} \label{CohomReg}
For $m>0$, we have $H^m(U,K[U]) = 0$.
\end{Cor}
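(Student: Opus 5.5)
We apply Proposition~\ref{CohomCycl} with $M = K[U]$ and compute the three pieces for the periodic resolution directly. The point is that the exactness of the periodic resolution at each $P_m$ with $m>0$ is exactly the pair of identities
\[\ker(N.) = D.K[U] \qquad \text{and} \qquad \ker(D.) = N.K[U]\]
inside the module $K[U]$.

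\textbf{Step 1: $\ker(D.) = K[U]^U = N.K[U]$.} Write an element as $x = \sum_{i=0}^{n-1} \alpha_i t^i$. Then $Dx = tx - x = 0$ holds if and only if $\alpha_{i-1} = \alpha_i$ for all indices taken modulo $n$. This means all $\alpha_i$ are equal, so $x$ is a scalar multiple of $N$. Conversely, $N.K[U] = K N$, since $N t^i = N$ for every $i$. Note also that $\ker(D.) = K[U]^U$. Hence, for even $m>0$, we get $H^m(U,K[U]) = K[U]^U/N.K[U] = 0$.

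\textbf{Step 2: $\ker(N.) = D.K[U]$.} We have $Nx = \varepsilon(x) N$, so $\ker(N.) = \ker(\varepsilon)$, the augmentation ideal, which has dimension $n-1$.

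The inclusion $D.K[U] \subseteq \ker(\varepsilon)$ is clear, since $\varepsilon(D) = 0$. For the dimension count, Step 1 gives $\dim \ker(D.) = 1$, so $\dim D.K[U] = n-1$ by rank--nullity. Therefore the two spaces coincide, and for odd $m$ we get $H^m(U,K[U]) = \ker(N.)/D.K[U] = 0$.

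There is no real obstacle here. The only mild subtlety is that the argument must not depend on whether $\operatorname{char} K$ divides $n$. This is avoided because $Nx = \varepsilon(x)N$ holds in every characteristic, and the rank count uses only the one-dimensionality of $\ker(D.)$. Equivalently, one can note that $K[U]$ is free, hence projective, so all higher Ext groups vanish. I will nevertheless give the direct computation, since it matches the remark preceding the corollary.
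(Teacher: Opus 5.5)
Your direct computation is correct and is the paper's argument: the paper deduces the corollary from Proposition~\ref{CohomCycl} by noting that the identities $\ker(D.)=N.K[U]$ and $\ker(N.)=D.K[U]$, which express the exactness of the periodic resolution, are exactly what is needed, and you have carried out that verification. Your closing ``equivalently'' remark is wrong as stated, though: in $H^m(U,K[U])=\Ext^m_{K[U]}(K,K[U])$ the module $K[U]$ sits in the second variable, where projectivity gives nothing (compare $\Ext^1_{\Z}(\Z/2,\Z)\neq 0$); what is needed is that $K[U]$ is injective over itself, which holds because it is a Frobenius algebra---and this is the alternative argument the paper gives.
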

This equation holds in fact not only for cyclic groups, but for any finite group: The group algebra~$K[U]$ is a Frobenius algebra (cf.~\cite[Examp.~16.56, p.~442]{La}), and therefore a quasi-Frobenius algebra. Thus it is injective as a module over itself (cf.~\cite[Thm.~15.1, p.~409, Thm.~15.9, p.~412]{La}). As a consequence, we get
\[H^m(U,K[U]) = \Ext_{K[U]}^m(K,K[U]) = 0\]
(cf.~\cite[Chap.~IV, Prop.~7.2, p.~140]{HiS}).

As also mentioned in Paragraph~\ref{CyclSub}, every automorphism~$f \colon U \to U$ has the property that~$f(t) = t^k$ for a number~$k$ that is relatively prime to~$n$. Group cohomology assigns to~$f$ a map
\[H^m(f) \colon H^m(U,M) \to H^m(U,M')\]
where~$M'$ is the $U$-module with the same underlying set as~$M$, but the new module structure \mbox{$x \scdot y \deq f(x).y$} given by pullback along~$f$, where the low dot denotes the old module structure (cf.~\cite[Sec.~9.5.1, p.~564f]{Ro}). We want to describe this map from the perspective of our periodic resolution. To this end, we define $f_m \colon P_m \to P'_m$ as
\[f_m(x) \deq
\begin{cases}
k^{\tfrac{m}{2}} f(x) &: m \text{ even}\\
k^{\tfrac{m-1}{2}} N_k f(x) &: m \text{ odd}
\end{cases}\]
where $N_k \deq 1 + t + \dots + t^{k-1}$ is the truncated norm. We claim that~$(f_m)$ constitutes a chain map between the two projective resolutions:
\begin{Lemma}
The diagram
\begin{center}
\begin{tikzcd}
\cdots \arrow{r}{d_4} & P_3 \arrow{r}{d_3} \arrow{d}{f_3} & P_2 \arrow{r}{d_2} \arrow{d}{f_2} & P_1 \arrow{r}{d_1} \arrow{d}{f_1} & P_0 \arrow{r}{\varepsilon} \arrow{d}{f_0}
& K \arrow[equal]{d} \\
\cdots \arrow{r}{d_4} & P'_3 \arrow{r}{d_3} & P'_2 \arrow{r}{d_2} & P'_1 \arrow{r}{d_1} & P'_0 \arrow{r}{\varepsilon} & K
\end{tikzcd}
\end{center}
is commutative.
\end{Lemma}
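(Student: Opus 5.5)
We have to verify that every square in the diagram commutes, and that each $f_m$ is $K[U]$-linear as a map $P_m \to P'_m$. Here $P'_m$ is $K[U]$ carrying the pulled-back structure $x \scdot y = f(x)y$. Linearity is immediate. For $x,y \in K[U]$ we have $f(xy) = f(x)f(y) = x \scdot f(y)$. Multiplication by $N_k$ commutes with everything, because $K[U]$ is commutative. So each $f_m$ is a scalar multiple of $f$, possibly composed with left multiplication by the central element $N_k$, and is therefore linear for the pulled-back structure. The differentials in the lower row act through the $\scdot$-structure. Thus $d'_m(y)$ equals $f(D)\,y$ for odd $m$ and $f(N)\,y$ for even $m$, written in the old multiplication. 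Since $f$ permutes $U$, we get $f(N) = N$, while $f(D) = t^k - 1 = D N_k$.

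Next we check the squares one at a time, splitting by the parity of $m$.

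The rightmost square holds because $\varepsilon \circ f = \varepsilon$; indeed $f$ maps group elements to group elements.

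For the square with $d_1$, we need $d'_1(f_1(x)) = f_0(D x)$. The left side is $f(D)\,N_k f(x) = D N_k N_k f(x)$, and the right side is $f(D)f(x) = D N_k f(x)$. These do not obviously agree. This tells me I have misread which convention the lower differential uses. The correct reading is that $P'_m$ is the resolution of the module $M'$, with differentials given by multiplication by the same elements $D, N$ in the old ring structure. Then the check reads $D\,N_k f(x) = f(D x) = (t^k-1) f(x)$, which is true since $D N_k = t^k - 1$.

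I will therefore fix that convention: the lower row is the same periodic resolution, and $f_m$ is viewed as a map from $P_m$ to the pullback. The remaining identities then follow.

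For odd $m \to m-1$ (an odd $m \ge 3$ maps via $D$ to $P_{m-1}$ with $m-1$ even), we need
\[
D\, k^{\frac{m-1}{2}} N_k f(x) \;=\; k^{\frac{m-1}{2}} f(Dx).
\]
This holds because $f(D) = D N_k$.

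For even $m \to m-1$ (via $N$), we need
\[
N\, k^{\frac{m-2}{2}} N_k f(x) \;=\; k^{\frac m2} f(Nx) = k^{\frac m2} N f(x).
\]
This reduces to the key identity $N N_k = kN$, which holds because $t^i N = N$ for all $i$.

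The only step that needs genuine care is fixing the convention for the lower row. Once it is settled that the lower row is the same complex with the old multiplication, and that $f_m$ is regarded as linear into the pullback, the whole lemma reduces to the two ring identities $f(D) = D N_k$ and $N N_k = kN$.
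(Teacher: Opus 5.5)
Your final argument is correct and is essentially the paper's own proof. You get $\scdot$-linearity of the $f_m$ from $f(xy)=x\scdot f(y)$ and commutativity, the rightmost square from $\varepsilon\circ f=\varepsilon$, the odd squares from $f(D)=DN_k=t^k-1$, and the even squares from $f(N)=N$ together with $N_kN=kN$. The convention you eventually adopt is exactly the paper's: the lower row uses the same maps $x\mapsto Dx$ and $x\mapsto Nx$ in the ordinary multiplication, which are $\scdot$-linear because $K[U]$ is commutative. So the false start should simply be deleted. In the even case you have written the two sides of $d_m\circ f_m=f_{m-1}\circ d_m$ in swapped positions, which is harmless since the resulting equation in $K[U]$ is the same.
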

\begin{proof}
We first note that the maps~$f_m$ are indeed module homomorphisms: If~$m$ is even, this holds because we have
\begin{align*}
f_m(xy) = k^{\tfrac{m}{2}} f(xy) = k^{\tfrac{m}{2}} f(x) f(y) = x \scdot f_m(y)
\end{align*}
and if~$m$ is odd, it holds similarly because
\begin{align*}
f_m(xy) = k^{\tfrac{m-1}{2}} N_k f(xy) = k^{\tfrac{m-1}{2}} N_k f(x) f(y)
= f(x).f_m(y) = x \scdot f_m(y),
\end{align*}
where we have used that in both equations the unmodified module structure is just given by multiplication.

To show the commutativity of the rightmost square, we note that both~$f_0 = f$ and~$\varepsilon$ are algebra homomorphisms, so that it suffices to verify the claim for the generator~$t$, for which it holds since
\mbox{$\varepsilon(f(t)) = \varepsilon(t^k) = 1 = \varepsilon(t)$}.
For the case where~$m$ is odd, we note that $D N_k = t^k - 1 = f(D)$ and therefore
\begin{align*}
d_m(f_m(x)) = k^{\tfrac{m-1}{2}} D N_k f(x) = k^{\tfrac{m-1}{2}} f(D) f(x)
= k^{\tfrac{m-1}{2}} f(Dx) = f_{m-1}(d_m(x)).
\end{align*}
For the case where~$m \geq 2$ is even, we note that $f(N) = N$, because both sides are the sum over all elements of the cyclic group, and that $N_k N = k N$. Therefore, we have
\begin{align*}
d_m(f_m(x)) = k^{\tfrac{m}{2}} N f(x) = k^{\tfrac{m-2}{2}} N_k N f(x)
= k^{\tfrac{m-2}{2}} N_k f(Nx) = f_{m-1}(d_m(x)).
\end{align*}
This establishes our claim.
\end{proof}

To compute the induced map in cohomology, we first note that~$M^U = M'^U$. Furthermore, the equation~$f(N) = N$ already recorded in the preceding proof implies that~\mbox{$N.M = N \scdot M= N.M'$}. Finally, in~$M/D.M$, both~$t$ and~$t^k$ act as the identity. This means that under the isomorphism described in Proposition~\ref{CohomCycl} above, the spaces~$H^m(U,M)$ and~$H^m(U,M')$ are mapped to the same space in all cases. As we will show now, $H^m(f)$ is on this space equal to the multiplication by a scalar. If we denote multiplication by a scalar by the scalar itself, $H^m(f)$ has the following form:
\begin{Proposition} \label{Hmf}
\[H^m(f) =
\begin{cases}
k^{\tfrac{m}{2}} &: m \text{ even}\\
k^{\tfrac{m+1}{2}}  &: m \text{ odd}
\end{cases}\]
\end{Proposition}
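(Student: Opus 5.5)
We compute $H^m(f)$ using the chain map $(f_m)$ from the preceding lemma. By the comparison theorem, $H^m(f)$ is the map induced in cohomology by the cochain map
\[\Hom_{K[U]}(P_m, M) \to \Hom_{K[U]}(P_m, M'), \qquad \phi \mapsto \phi',\]
and we will identify it explicitly on cocycles. Since the module structures on $M$ and $M'$ live on the same underlying set, it seems cleanest to write $H^m(f)$ as the composite of two steps. The first is the map that regards a $U$-linear map $P'_m \to M'$, where $P'_m$ carries the pulled-back structure, as a $U$-linear map $P_m \to M$. The second is precomposition with $f_m$.

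The first step is to fix the standard identification of $\Hom_{K[U]}(K[U],M)$ with $M$ given by $\phi \mapsto \phi(1)$. Under it, the coboundaries become multiplication by $D$ and by $N$ in alternating degrees, which is exactly what yields the description in Proposition~\ref{CohomCycl}. For $M'$ we use the corresponding identification: a $U$-linear map $\psi \colon P'_m \to M'$ is also determined by $\psi(1)$. Here $P'_m = K[U]$ with the action $x \scdot y = f(x)y$, so it is generated by $1$ as well.

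The second step is to chase a cocycle $\psi$, represented by $w = \psi(1) \in M$, through precomposition with $f_m$. This gives the element $\psi(f_m(1))$. For $m$ even, $f_m(1) = k^{m/2}\cdot 1$, so the element is $k^{m/2} w$. For $m$ odd, $f_m(1) = k^{(m-1)/2} N_k$, and $\psi(N_k) = N_k \scdot w = \sum_{j=0}^{k-1} t^{jk}.w$ in the old structure. In the odd case the class lives in $\ker(N.)/D.M$, and $t$ acts as the identity modulo $D.M$ (the same is true of $t^k$, as noted before the proposition). Hence $\sum_{j} t^{jk}.w \equiv k w$ modulo $D.M$, and the total factor is $k^{(m-1)/2}\cdot k = k^{(m+1)/2}$. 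In the even case no further reduction is needed, since $M^U/N.M$ is the same set for $M$ and $M'$.

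The main obstacle is bookkeeping the direction and identifications correctly. The map $H^m(f)$ goes from $H^m(U,M)$ to $H^m(U,M')$, while the chain map goes from $P$ to $P'$. I would first verify the variance carefully, namely that precomposing with $f_m$ sends cochains for $(P',M')$ to cochains for $(P,M)$ or the reverse, and adjust the identification via $\phi(1)$ on each side accordingly. The scalars $k^{m/2}$ and $k^{(m+1)/2}$ are unaffected by this, provided one checks that both resolutions are identified with the same periodic complex in Proposition~\ref{CohomCycl}.
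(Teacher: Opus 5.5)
Your proposal is correct and is essentially the paper's proof: you identify the cochain complexes with $M$ by evaluation at $1$, note that precomposition with $f_m$ becomes multiplication by $k^{m/2}$ in even degrees and by $k^{(m-1)/2}N_k$ in odd degrees, and use that $t$ acts trivially on $\ker(N.)/D.M$. One small slip: since $\psi$ is also linear for the original structures, $\psi(N_k) = N_k.w = \sum_{j=0}^{k-1} t^j.w$, not $N_k \scdot w = \sum_{j} t^{jk}.w$ (the latter is $\psi(f(N_k))$); both are congruent to $kw$ modulo $D.M$, so the factor $k^{(m+1)/2}$ is unaffected.
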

\begin{proof}
To compute the induced map in cohomology, we must compute the composition
\begin{center}
\begin{tikzcd}
\cdots & \Hom_U(P_2, M) \arrow{l}{d_3^*} \arrow{d}{\id} & \Hom_U(P_1, M)\arrow{l}{d_2^*} \arrow{d}{\id} & \Hom_U(P_0, M) \arrow{l}{d_1^*} \arrow{d}{\id} \\
\cdots & \Hom_U(P'_2, M') \arrow{l}{d_3^*} \arrow{d}{f_2^*} & \Hom_U(P'_1, M') \arrow{l}{d_2^*} \arrow{d}{f_1^*} & \Hom_U(P'_0, M') \arrow{l}{d_1^*} \arrow{d}{f_0^*}\\
\cdots & \Hom_U(P_2, M') \arrow{l}{d_3^*} & \Hom_U(P_1, M') \arrow{l}{d_2^*} & \Hom_U(P_0, M') \arrow{l}{d_1^*}
\end{tikzcd}
\end{center}
in which~$f_m^*$ denotes precomposition with~$f_m$. If we define $g_m \colon M \to M'$ as
\[g_m(x) \deq
\begin{cases}
k^{\tfrac{m}{2}} x &: m \text{ even}\\
k^{\tfrac{m-1}{2}} N_k.x &: m \text{ odd}
\end{cases}\]
where, as before, the low dot denotes the original module action in~$M$, we have that the diagram
\begin{center}
\begin{tikzcd}
\Hom_U(P_m, M) \arrow{r}{} \arrow{d}{f_m^*} & M \arrow{d}{g_m} \\
\Hom_U(P_m, M') \arrow{r}{} & M'
\end{tikzcd}
\end{center}
commutes if the horizontal isomorphisms are given by evaluation at~$1$. The preceding diagram then becomes
\begin{center}
\begin{tikzcd}
\cdots & M \arrow{l}{D.} \arrow{d}{g_4} & M \arrow{l}{N.} \arrow{d}{g_3} &
M \arrow{l}{D.} \arrow{d}{g_2} & M \arrow{l}{N.} \arrow{d}{g_1} &
M \arrow{l}{D.} \arrow{d}{g_0}\\
\cdots & M' \arrow{l}{D.} & M' \arrow{l}{N.} & M' \arrow{l}{D.} & M' \arrow{l}{N.} & M' \arrow{l}{D.}
\end{tikzcd}
\end{center}
where~$N.$ and~$D.$ are now given by the action of~$N$ and~$D$ on~$M$ and~$M'$, respectively, which means for the lower row that~$D.$ is the original action of~$f(D)$. This implies, in our description of the cohomology, that the corresponding map is induced by~$g_m$. But for~$m$ even,~$g_m$ is the multiplication by~$k^{m/2}$ anyway, and because~$t$ acts trivially on~$\ker(N.)/D.M$, the map~$g_m$ induces the multiplication by~$k^{(m+1)/2}$ on this quotient if~$m$ is odd.
\end{proof}

We note that a special case of the preceding proposition can be found in~\cite[Chap.~V, Sec.~3, Exerc.~4, p.~114]{Br}; however, the proof outlined there does not generalize directly. A similar exercise can be found in~\cite[Chap.~IV, \S~7, Exerc.~6, p.~124]{ML1}, and special cases are also stated in~\cite[Lem.~3, p.~343]{S1} and~\cite[Prop.~8.1, p.~283]{S2}.

Suppose now that our cyclic group~$U$ is a normal subgroup of a finite group~$G$. For $a \in G$, conjugation then induces the automorphism $f \colon U \to U, b \mapsto a^{-1} b a$ that maps~$t$ to~$t^k$ for a suitable number~$k$. If~$M$ is a $K[G]$-module, it is also a $K[U]$-module by restriction, and the map~$\alpha \colon M' \to M,~x \mapsto a.x$ is $K[U]$-linear, as we have
\[\alpha(b \scdot x) = \alpha(f(b).x) = (a (a^{-1} b a)).x = (ba).x = b.\alpha(x)\]
for $b \in U$ and~$x \in M$. In the terminology of~\cite[loc.~cit.]{Ro}, this means that~$(f,\alpha)$ is a cocompatible pair. It induces the following map in cohomology:
\begin{Proposition} \label{CohomCyclSubgr}
We have
\[H^m(f,\alpha)(\overline{x}) =
\begin{cases}
k^{\tfrac{m}{2}} \,  \overline{a.x} &: m \text{ even}, m \neq 0\\
k^{\tfrac{m+1}{2}}  \overline{a.x} &: m \text{ odd}
\end{cases}\]
For $m=0$, we have $H^0(f,\alpha)(x) = a.x$ for~$x \in M^U$.
\end{Proposition}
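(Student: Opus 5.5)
The plan is to compose the two maps already available. First, the chain map $(f_m)$ from the preceding lemma computes $H^m(f)\colon H^m(U,M)\to H^m(U,M')$. Second, the $K[U]$-linear map $\alpha\colon M'\to M$ induces $\alpha_*\colon H^m(U,M')\to H^m(U,M)$ by postcomposition. By the definition of the map associated with a cocompatible pair (cf.~\cite[Sec.~9.5.1]{Ro}), $H^m(f,\alpha)=\alpha_*\circ H^m(f)$. At the cochain level this is the composite
\[\Hom_U(P_m,M)\to\Hom_U(P_m,M')\to\Hom_U(P_m,M),\qquad \phi\mapsto\alpha\circ\phi\circ f_m .\]

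To compute this, I identify $\Hom_U(P_m,M)$ with $M$ by evaluation at~$1$, as in the proof of Proposition~\ref{Hmf}. Under this identification, precomposition with $f_m$ becomes the map $g_m$ defined there. Postcomposition with $\alpha$ becomes $x\mapsto a.x$ in every degree, since $(\alpha\circ\phi)(1)=a.\phi(1)$. Hence the composite is $x\mapsto a.g_m(x)$. This equals $k^{m/2}\,a.x$ for $m$ even, and $k^{(m-1)/2}\,a.(N_k.x)$ for $m$ odd.

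It then remains to pass to the descriptions in Proposition~\ref{CohomCycl}, and to check that $\alpha$ maps the relevant subquotients of $M'$ to those of $M$. Both have the same underlying sets $M^U$, $N.M$, $\ker(N.)$ and $D.M$, as noted before Proposition~\ref{Hmf}. Since $U$ is normal, $a$ maps $M^U$ to $M^U$. The equation $a N a^{-1}=N$ gives $a.N.M=N.M$ and $a.\ker(N.)=\ker(N.)$. For $D.M$, we have $a D a^{-1}=a t a^{-1}-1$, which is $D$ times a unit-like factor in $K[U]$ because $ata^{-1}$ is a power of~$t$. This gives $a.D.M\subseteq D.M$, so $x\mapsto a.x$ descends to the quotients.

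In degree $0$ the formula $H^0(f,\alpha)(x)=a.x$ is immediate, since $f_0=f$ and $f(1)=1$. For $m$ even and positive, the composite is $k^{m/2}\,\overline{a.x}$ directly. The main step is the odd case, where $a.(N_k.x)$ must be rewritten. Modulo $D.M$, every power of $t$ acts as the identity, so $N_k.x\equiv kx$. Since $a$ preserves $D.M$, this gives $\overline{a.(N_k.x)}=k\,\overline{a.x}$, and the total factor is $k^{(m-1)/2}\cdot k=k^{(m+1)/2}$, as asserted.
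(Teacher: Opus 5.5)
Your proposal is correct and is essentially the paper's proof: the paper also forms the composite $\alpha_*\circ f_m^*$, identifies it via evaluation at $1$ with the cochain map $\alpha\circ g_m$ on $M$, and reads off the scalar from Proposition~\ref{Hmf}. Your extra checks, that $x\mapsto a.x$ preserves $M^U$, $N.M$, $\ker(N.)$ and $D.M$, already follow from the cochain-map property, so they are redundant but harmless; for $D.M$ you only need $aDa^{-1}=D\,y$ for some $y\in K[U]$, and no unit property is required.
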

\begin{proof}
With the notation used in the preceding proof, we have the cochain map
\begin{center}
\begin{tikzcd}
\Hom_U(P_m, M) \arrow{r}{\id} & \Hom_U(P'_m, M') \arrow{r}{f_m^*} &
\Hom_U(P_m, M') \arrow{r}{\alpha_*} & \Hom_U(P_m, M)
\end{tikzcd}
\end{center}
After evaluating at the unit element, this translates into the cochain map
\begin{center}
\begin{tikzcd}
\cdots & M \arrow{l}{D.} \arrow{d}{\alpha \circ g_4}
& M \arrow{l}{N.} \arrow{d}{\alpha \circ g_3}
& M \arrow{l}{D.} \arrow{d}{\alpha \circ g_2}
& M \arrow{l}{N.} \arrow{d}{\alpha \circ g_1}
& M \arrow{l}{D.} \arrow{d}{\alpha \circ g_0}\\
\cdots & M \arrow{l}{D.} & M \arrow{l}{N.} & M \arrow{l}{D.} & M \arrow{l}{N.} & M \arrow{l}{D.}
\end{tikzcd}
\end{center}
which induces the map~$H^m(f, \alpha) \colon H^m(U, M) \to H^m(U, M)$ in cohomology. Our assertion therefore follows from Proposition~\ref{Hmf}.
\end{proof}

\subsection{Acyclic assembly} \label{AcyclAss}
In the remaining parts of this appendix, we provide some foundational material for homological algebra in arbitrary categories that is necessary to connect the Yoneda products discussed in Paragraph~\ref{YonProd} with the more frequently used cup products. For most of this material, we are unfortunately not aware of suitable references in the literature.

In this paragraph, we consider an arbitrary abelian category~$\CC$. A double complex in~$\CC$ is a family of objects~$(X_{ij})_{i,j \in \Z}$ together with differentials
$d^1_{ij} \colon X_{ij} \to X_{i-1,j}$, called the horizontal differentials, and
$d^2_{ij} \colon X_{ij} \to X_{i,j-1}$, called the vertical differentials, so that
\[d^1_{i-1,j} \circ d^1_{ij} = 0 \qquad d^2_{i,j-1} \circ d^2_{ij} = 0
\qquad d^1_{i,j-1} \circ d^2_{ij} + d^2_{i-1,j} \circ d^1_{ij} = 0\]
for all $i,j \in \Z$ (cf.~\cite[Examp.~1.2.4, p.~7]{We}). We consider here only first-quadrant double complexes, which by definition satisfy $X_{ij} = 0$ if~$i < 0$ or~$j < 0$. For these, the two notions of total complexes (cf.~\cite[Examp.~1.2.6, p.~8]{We}) coincide, and this (unique) total complex is defined as the chain complex
\[T_n \deq \bigoplus_{\substack{i,j=0 \\i+j=n}}^n X_{ij} =
\bigoplus_{i=0}^n X_{i,n-i},\]
whose differential~$d_n$ is the unique morphism that makes the diagram
\begin{center}
\begin{tikzcd} [column sep = 60mm]{}
T_n = \bigoplus\limits_{k=0}^n X_{k,n-k} \arrow{d}{}\arrow{r}{d_n}
&T_{n-1} = \bigoplus\limits_{k=0}^{n-1} X_{k,n-1-k} \arrow{d}{}  \\
X_{i,n-i} \oplus X_{i+1,n-1-i}
\arrow[swap]{r}{d^2_{i,n-i} + d^1_{i+1,n-1-i}}& X_{i,n-1-i}
\end{tikzcd}
\end{center}
commutative, where $i=0,\dots,n-1$ and the vertical morphisms arise from the canonical projections to the components. In other words, we define the morphism
\mbox{$d_n \colon T_n \to T_{n-1}$} by specifying what its composition with each canonical projection is. By the universal property of a product, this is sufficient, and the finite direct sums that appear above are indeed direct products (cf.~\cite[Chap.~II, Prop.~9.1, p.~76]{HiS}).

In this situation, we have the following categorical version of the acyclic assembly lemma:
\begin{Lemma} \label{AcyclAssLem}
Suppose that, for all $i \in \Z$, the vertical complex $(X_{ij})_{j \in \Z}$ with differential~$d^2$ is exact. Then the total complex~$(T_n)_{n \in \Z}$ is exact.
\end{Lemma}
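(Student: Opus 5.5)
We work in an arbitrary abelian category, so we cannot chase elements naively. The plan is to prove exactness of the total complex at each $T_n$ by an induction that uses only morphisms and the exactness of the vertical columns. The most robust approach is a filtration argument. For fixed $n$, the first-quadrant condition makes all direct sums finite. We filter $T$ by the subcomplexes
\[F_p T_n \deq \bigoplus_{i \le p} X_{i,n-i}.\]
These are subcomplexes because $d^1$ lowers $i$ and $d^2$ preserves $i$. The successive quotients $F_pT/F_{p-1}T$ are isomorphic, up to sign of the differential, to the column $(X_{p,j})_j$, shifted so that $X_{p,n-p}$ sits in degree $n$. By hypothesis each column is exact, so each quotient complex is exact.

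The key steps, in order, are as follows.
\begin{enumerate}
\item Check that $F_pT$ is a subcomplex with $F_{-1}T=0$. Use the universal property of the finite direct sum, as in the definition of $d_n$, to verify that the composite $F_pT_n \to T_n \to T_{n-1}$ factors through $F_pT_{n-1}$. Then identify the quotient $F_pT/F_{p-1}T$ with the $p$-th column, whose differential is $d^2$.
\item Apply the long exact homology sequence for short exact sequences of complexes in an abelian category to
\[0\to F_{p-1}T \to F_pT \to F_pT/F_{p-1}T\to 0.\]
This yields $H_n(F_{p-1}T)\cong H_n(F_pT)$ for all $n$, since the outer complex is exact. Induction on $p$, starting from $F_{-1}T=0$, then gives that every $F_pT$ is exact.
\item Observe that in degree $n$ we have $F_pT_n=T_n$ for $p\ge n$. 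In fact, for $p\ge n+1$ the complexes $F_pT$ and $T$ agree in degrees $n+1$, $n$ and $n-1$, so $H_n(T)=H_n(F_{n+1}T)=0$.
\end{enumerate}

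The main obstacle is step 2: the existence of the long exact sequence with a connecting morphism in a general abelian category. This is the snake lemma in categorical form, and it is standard. We would cite it, for instance from Weibel's Theorem 1.3.1, whose proof is valid in any abelian category. Alternatively, we would invoke the Freyd–Mitchell embedding theorem, which reduces the claim to modules. In that case one only needs to restrict to the small abelian subcategory generated by the finitely many objects involved in a given degree, which is compatible with the universe conventions adopted in the paper. We prefer the filtration argument over a direct element chase, since it requires only that quotients and direct sums behave well.
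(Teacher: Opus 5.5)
Your proof is correct, and its skeleton is the same as the paper's. Your $F_pT_n=\bigoplus_{i\le p}X_{i,n-i}$ is exactly the paper's $Y_{p,n}$. The paper also inducts on the number of columns starting from column~$0$, and it also concludes via $Y_{m,n}=T_n$ and $d_{m,n}=d_n$ for $m\ge n$.

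The difference lies only in how the inductive step is justified:
\begin{itemize}
\item You quote the long exact homology sequence for $0\to F_{p-1}T\to F_pT\to F_pT/F_{p-1}T\to 0$.
\item The paper writes $d_{m+1,n}$ as an upper triangular matrix and builds a four-row diagram. This diagram uses the automorphism $h_{m,n}$ and the epimorphism $\id\oplus d^2_{m+1,n-m}$ onto $Y_{m,n}\oplus K^2_{m+1,n-m-1}$. The paper then finishes with the weak four lemma. It therefore never needs a connecting morphism or an embedding theorem, in line with its stated aim of a self-contained treatment.
\end{itemize}

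Your route is shorter and more transparent. It is also legitimate: the paper itself invokes the long exact sequence in an arbitrary abelian category, citing Mitchell and Mac Lane, in the proof of Theorem~\ref{TensProdRes}. For that step, Mac Lane's treatment via members is a better reference than Weibel. To my recollection, Weibel's proof of the snake lemma is an element chase that is carried over to abelian categories through Freyd--Mitchell.

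Note also that you use less than the full long exact sequence. You only need exactness of $H_n(F_{p-1}T)\to H_n(F_pT)\to H_n(F_pT/F_{p-1}T)$ at the middle term, which involves no connecting morphism at all. The paper's diagram essentially carries out this particular chase at the level of morphisms, in the present degreewise split situation.
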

\begin{proof}
\begin{pflist}
\item
We define the auxiliary objects $Y_{m,n} \deq \bigoplus_{k=0}^m X_{k,n-k}$, so that we have
$Y_{n,n} = T_n$ and even $Y_{m,n} = T_n$ for~$m \ge n$. As in the case of~$d_n$ above, we use the universal property of a product to define the morphisms \mbox{$d_{m,n} \colon Y_{m,n} \to Y_{m,n-1}$} via the commutative diagrams
\begin{center}
\begin{tikzcd} [column sep = 60mm]{}
Y_{m,n} = \bigoplus\limits_{k=0}^m X_{k,n-k} \arrow{d}{}\arrow{r}{d_{m,n}}
&Y_{m,n-1} = \bigoplus\limits_{k=0}^m X_{k,n-1-k} \arrow{d}{}  \\
X_{i,n-i} \oplus X_{i+1,n-1-i}
\arrow[swap]{r}{d^2_{i,n-i} + d^1_{i+1,n-1-i}}& X_{i,n-1-i}
\end{tikzcd}
\end{center}
for $i = 0, 1, \dots, m - 1$ and
\begin{center}
\begin{tikzcd} [column sep = 60mm]{}
Y_{m,n} = \bigoplus\limits_{k=0}^m X_{k,n-k} \arrow{d}{} \arrow{r}{d_{m,n}}
&Y_{m,n-1} = \bigoplus\limits_{k=0}^m X_{k,n-1-k} \arrow{d}{}  \\
X_{m,n-m}
\arrow[swap]{r}{d^2_{m,n-m}}& X_{m,n-1-m}
\end{tikzcd}
\end{center}
for $i=m$, so that $d_{n,n} = d_n$ and even $d_{m,n} = d_n$ for~$m \ge n$. To establish our assertion, it will therefore be sufficient to show that the sequence
\begin{align*}
Y_{m,n+1} \xrightarrow{\; \; d_{m,n+1} \; \;} Y_{m,n}
\xrightarrow{\; \; d_{m,n} \; \;} Y_{m,n-1}
\end{align*}
is exact, which we prove by induction on~$m$. For $m=0$, we have
$Y_{0,n} = X_{0,n}$ and $d_{0,n} = d^2_{0,n}$, so that the base case holds by hypothesis.

\item
For the inductive step from~$m$ to~$m+1$, we first note that we have
\[Y_{m,n} = Y_{m-1,n} \oplus X_{m,n-m}.\]
As mentioned above, finite direct sums and finite direct products coincide,
and therefore we have canonical inclusions
$\iota^1_{m,n} \colon Y_{m-1,n} \to Y_{m,n}$ and
$\iota^2_{m,n} \colon X_{m,n-m} \to Y_{m,n}$
as well as canonical projections
\[\pi^1_{m,n} \colon Y_{m,n} \to Y_{m-1,n} \qquad \text{and} \qquad
\pi^2_{m,n} \colon Y_{m,n} \to X_{m,n-m}.\]
With the help of these inclusions and projections, morphisms between finite direct sums can be represented by matrices. In the case of
$d_{m,n} \colon Y_{m,n} \to Y_{m,n-1}$, this matrix is
\[\begin{pmatrix}
\pi^1_{m,n-1} \circ d_{m,n} \circ \iota^1_{m,n}
& \pi^1_{m,n-1} \circ d_{m,n} \circ \iota^2_{m,n} \\
\pi^2_{m,n-1} \circ d_{m,n} \circ \iota^1_{m,n}
& \pi^2_{m,n-1} \circ d_{m,n} \circ \iota^2_{m,n}
\end{pmatrix} =
\begin{pmatrix}
d_{m-1,n} & \iota^2_{m-1,n-1} \circ d^1_{m,n-m} \\
0 & d^2_{m,n-m}
\end{pmatrix}.
\]
From this description, we see in particular that
\begin{align} \tag{I} \label{Id1}
d_{m,n} \circ \iota^2_{m,n} \circ d^1_{m+1,n-m} =
\iota^2_{m,n-1} \circ d^2_{m,n-m} \circ d^1_{m+1,n-m}.
\end{align}
The fact that $d_{m,n-1} \circ d_{m,n} = 0$ then follows inductively from the computation
\begin{align*}
&\begin{pmatrix}
d_{m-1,n-1} & \iota^2_{m-1,n-2} \circ d^1_{m,n-1-m} \\
0 & d^2_{m,n-1-m}
\end{pmatrix}
\begin{pmatrix}
d_{m-1,n} & \iota^2_{m-1,n-1} \circ d^1_{m,n-m} \\
0 & d^2_{m,n-m}
\end{pmatrix} \\
&= \begin{pmatrix}
0 & d_{m-1,n-1} \circ \iota^2_{m-1,n-1} \circ d^1_{m,n-m} + \iota^2_{m-1,n-2} \circ d^1_{m,n-1-m} \circ d^2_{m,n-m} \\
0 & 0
\end{pmatrix}
\end{align*}
and Identity~(\ref{Id1}) just derived.

\item
We now denote the kernel of~$d^2_{m,n}$ by~$K^2_{m,n}$ and consider the diagram
\begin{center}
\begin{tikzcd} [column sep = 15mm]{}
Y_{m,n+1} \oplus X_{m+1,n-m} \arrow{r}{d_{m+1,n+1}}
&Y_{m,n} \oplus X_{m+1,n-m-1} \arrow{r}{d_{m+1,n}}
&Y_{m,n-1} \oplus X_{m+1,n-m-2} \\
Y_{m,n+1} \oplus X_{m+1,n-m} \arrow{r}{d_{m+1,n+1}} \arrow{u}{\id}
&Y_{m,n} \oplus K^2_{m+1,n-m-1}
\arrow{r}{\pi^1_{m+1,n-1} \circ \, d_{m+1,n}} \arrow{u}{}
&Y_{m,n-1} \arrow[swap]{u}{\iota^1_{m+1,n-1}} \\
Y_{m,n+1} \oplus X_{m+1,n-m}
\arrow{r}{f_{m,n}}
\arrow{u}{\id}
&Y_{m,n} \oplus X_{m+1,n-m}
\arrow{r}{g_{m,n}}
\arrow{u}{\id \oplus \, d^2_{m+1,n-m}}
&Y_{m,n-1} \arrow[swap]{u}{\id} \\
Y_{m,n+1} \oplus X_{m+1,n-m}
\arrow[swap]{r}{d_{m,n+1} \oplus \, \id}
\arrow{u}{\id}
&Y_{m,n} \oplus X_{m+1,n-m} \arrow[swap]{r}{d_{m,n} \circ \pi^1}
\arrow[swap]{u}{h_{m,n}}
&Y_{m,n-1} \arrow[swap]{u}{\id}
\end{tikzcd}
\end{center}
where~$\pi^1 \colon Y_{m,n} \oplus X_{m+1,n-m} \to Y_{m,n}$ is the projection to the first component. The other morphisms that appear in the diagram can be described via matrices as follows: The morphism $f_{m,n}$ is defined via the
$2 \times 2$-matrix
\[f_{m,n} \deq \begin{pmatrix}
d_{m,n+1} & \iota^2_{m,n} \circ d^1_{m+1,n-m} \\
0 & \id
\end{pmatrix},\]
while~$g_{m,n}$ is defined via the $1 \times 2$-matrix
\[g_{m,n} \deq \begin{pmatrix}
d_{m,n} & \iota^2_{m,n-1} \circ d^1_{m+1,n-m-1} \circ d^2_{m+1,n-m}
\end{pmatrix},
\]
and finally $h_{m,n}$ via the $2 \times 2$-matrix
\[h_{m,n} \deq \begin{pmatrix}
\id & \iota^2_{m,n} \circ d^1_{m+1,n-m} \\
0 & \id
\end{pmatrix}.\]
We then have $f_{m,n} = h_{m,n} \circ (d_{m,n+1} \oplus \id)$ and
\mbox{$d_{m+1,n+1} = (\id \oplus \, d^2_{m+1,n-m}) \circ f_{m,n}$}, so that the lower left and the middle left square commute. Furthermore, we have
\[d_{m+1,n} \circ (\id \oplus \, d^2_{m+1,n-m})  =
\begin{pmatrix}
d_{m,n} & \iota^2_{m,n-1} \circ d^1_{m+1,n-m-1} \circ d^2_{m+1,n-m}\\
0 & 0
\end{pmatrix},\]
which shows that the right middle square commutes. Identity~(\ref{Id1}) above implies that $g_{m,n} \circ h_{m,n} = d_{m,n} \circ \pi^1$, so that the lower right square commutes. The commutativity of the two upper squares follows relatively directly from the definitions, so that the entire diagram commutes. Note that these identities imply that
\[g_{m,n} \circ f_{m,n} = g_{m,n} \circ h_{m,n} \circ (d_{m,n+1} \oplus \id)
= d_{m,n} \circ \pi^1 \circ (d_{m,n+1} \oplus \id) = 0,\]
so that the composition in the third row, and in fact in all rows, gives zero.

\item
In this diagram, the lower row is exact (in the middle), because it is the direct sum of the exact sequences $X_{m+1,n-m} \to X_{m+1,n-m} \to 0$ and
\begin{align*}
Y_{m,n+1} \xrightarrow{\; \; d_{m,n+1} \; \;}  Y_{m,n}
\xrightarrow{\; \; d_{m,n} \; \;} Y_{m,n-1}
\end{align*}
that is exact by the induction hypothesis. Since~$h_{m,n}$ is invertible with
\[h_{m,n}^{-1} = \begin{pmatrix}
\id & - \iota^2_{m,n} \circ d^1_{m+1,n-m} \\
0 & \id
\end{pmatrix},\]
the third row is also exact.

In the right middle square, we have that $\id \oplus \, d^2_{m+1,n-m}$ is an epimorphism, because $d^2_{m+1,n-m} \colon X_{m+1,n-m} \to K^2_{m+1,n-m-1}$ is an epimorphism by the exactness of the vertical complex. The weak four lemma (cf.~\cite[Chap.~XII, \S~3, Lem.~3.1 (ii), p.~364]{ML1}), applied with two zero objects on the left, now implies that
$\id \oplus \, d^2_{m+1,n-m}$ induces an epimorphism between the kernels of the two horizontal morphisms in this square. But because the third row is exact and the left middle square commutes, this implies that the second row is exact. Since
\[d_{m+1,n} =
\begin{pmatrix}
d_{m,n} & \iota^2_{m,n-1} \circ d^1_{m+1,n-m-1} \\
0 & d^2_{m+1,n-m-1}
\end{pmatrix},
\]
the two horizontal morphisms in the upper right square have isomorphic kernels, which implies that the first, top row is exact. But this is precisely the exactness of the sequence
\begin{align*}
Y_{m+1,n+1} \xrightarrow{\; \; d_{m+1,n+1} \; \;} Y_{m+1,n}
\xrightarrow{\; \; d_{m+1,n} \; \;}  Y_{m+1,n-1}
\end{align*}
that we wanted to establish.
\qedhere
\end{pflist}
\end{proof}

In the case where~$\CC$ is the category of modules over a ring, the preceding lemma can be found in~\cite[Lem.~2.7.3, p.~59]{We} in an even more general form. In fact, with the help of the Freyd-Mitchell embedding theorem (cf.~\cite[Chap.~VI, Thm.~7.2, p.~151]{Mi}) or similar embedding theorems, the result above can be deduced from its special case for a category of modules. We have given the proof here in order to have a relatively short, self-contained treatment of the question that does not rely on embedding theorems. We note that other proofs that also do not rely on embedding theorems are already available in the literature (cf.~\cite[Cor.~6.8, p.~87]{Bg}).

\subsection{Tensor products of resolutions} \label{TensRes}
We now return to the consideration of categories that satisfy the assumptions spelled out in Paragraph~2.1, and are therefore in particular monoidal. For our treatment of the cup product in Paragraph~\ref{CupProd} below, we will need that the tensor product of two projective resolutions is again a projective resolution, a fact that is also of independent interest. In the case of the cohomology of groups or more generally of Hopf algebras, this fact is usually deduced from the K\"unneth theorem (cf.~\cite[Sec.~3.2, p.~57]{Be}; \cite[Sec.~2.5, p.~17]{E}; \cite[Sec.~9.3, p.~191]{W2}), an argument that does not carry over directly to the categorical setting. We therefore provide here a different argument that is an adaptation of a line of reasoning for modules found in~\cite[Thm.~2.7.2, p.~58]{We} and is based on the acyclic assembly lemma stated as Lemma~\ref{AcyclAssLem}.

So suppose that
\[\cdots \overset{d_3}{\longrightarrow} P_2
\overset{d_2}{\longrightarrow} P_1
\overset{d_1}{\longrightarrow} P_0 \overset{\xi_X}{\longrightarrow} X \to 0\]
is a resolution of the object~$X$, not necessarily projective, and that
\[\cdots \overset{\delta_3}{\longrightarrow} Q_2
\overset{\delta_2}{\longrightarrow} Q_1
\overset{\delta_1}{\longrightarrow} Q_0 \overset{\xi_Y}{\longrightarrow} Y \to 0\]
is a resolution of the object~$Y$. In the same way as for modules, we can then define a double complex by setting
\[X_{ij} \deq P_i \ot Q_j\]
for~$i \ge 0$ and~$j \ge 0$ and $X_{ij} \deq 0$ if~$i < 0$ or~$j < 0$ (cf.~\cite[Chap.~V, Sec.~1, p.~168]{HiS}). The horizontal and vertical differentials are given by
\[d^1_{ij} \deq d_i \ot \id  \qquad \text{and} \qquad
d^2_{ij} \deq (-1)^i \; \id \ot \, \delta_j \]
respectively. The associated total complex
\[T_n \deq \bigoplus_{\substack{i,j=0 \\i+j=n}}^n P_{i} \ot Q_{j} =
\bigoplus_{i=0}^n P_{i} \ot Q_{n-i} \]
then satisfies $T_0 = P_0 \ot Q_0$ and therefore has an augmentation
$\xi_X \ot \xi_Y \colon T_0 \to X \ot Y$. Using this augmentation, we claim the following:
\begin{Theorem} \label{TensProdRes}
$(T_n)$ is a resolution of~$X \ot Y$. The compositions
\begin{align*}
T_n \xrightarrow{\phantom{\id \ot \xi_Y}}
P_n \ot Q_0 \xrightarrow{\id \ot \xi_Y} P_n \ot Y
\end{align*}
of the projections to the last summand and $\id \ot \xi_Y$ yield a chain map that is a quasi-isomorphism between the two resolutions~$(T_n)$
and~$(P_n \ot Y)$ of~$X \ot Y$.
\end{Theorem}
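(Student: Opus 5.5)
The plan is to deduce everything from the acyclic assembly lemma, Lemma~\ref{AcyclAssLem}, applied to an augmented version of the double complex $X_{ij} = P_i \ot Q_j$. The idea is that the total complex of the augmented double complex is, up to signs and a shift, the mapping cone of the proposed chain map $T_\bullet \to P_\bullet \ot Y$. Its exactness therefore gives the quasi-isomorphism, and the resolution property of $(T_n)$ is then inherited from that of $(P_n \ot Y)$.

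\textbf{Step 1: the chain map.} Denote by $\phi_n \colon T_n \to P_n \ot Y$ the composition of the projection to the summand $P_n \ot Q_0$ with $\id \ot \xi_Y$. I first check that $(\phi_n)$ is a chain map. By the defining diagram of $d_n$, the component of $d_n$ landing in $P_{n-1} \ot Q_0$ is $d^2_{n-1,1} + d^1_{n,0}$ on $P_{n-1} \ot Q_1 \oplus P_n \ot Q_0$. After composing with $\id \ot \xi_Y$, the first term dies because $\xi_Y \circ \delta_1 = 0$. The second term becomes $(d_n \ot \id_Y) \circ (\id \ot \xi_Y)$, which is exactly what is needed.

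In degree zero, $\xi_X \ot \xi_Y = (\xi_X \ot \id_Y) \circ \phi_0$, so $\phi$ is compatible with the augmentations. Moreover, $(P_n \ot Y)$ is a resolution of $X \ot Y$, since tensoring with $Y$ is exact by the assumptions in Paragraph~2.1.

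\textbf{Step 2: the augmented double complex.} I define a first-quadrant double complex $X'_{ij}$ by
\[
X'_{i0} \deq P_i \ot Y, \qquad X'_{ij} \deq P_i \ot Q_{j-1} \quad \text{for } j \ge 1.
\]
The horizontal differentials are $d_i \ot \id$. The vertical differentials are $(-1)^i\, \id \ot \delta_{j-1}$ for $j \ge 2$ and $(-1)^i\, \id \ot \xi_Y$ for $j=1$; the sign convention may need adjusting so that the anticommutation relation holds. The $i$-th vertical complex is $P_i \ot (\cdots \to Q_1 \to Q_0 \to Y \to 0)$, which is exact because $P_i \ot -$ is exact. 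Lemma~\ref{AcyclAssLem} then shows that the total complex $T'$ of $X'$ is exact.

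\textbf{Step 3: identification with the cone.} In degree $n$ we have $T'_n = (P_n \ot Y) \oplus T_{n-1}$. Writing the total differential of $T'$ as a matrix, its entries are the differential of $P_\bullet \ot Y$, the shifted differential $d_{n-1}$ of $T$ (the shift contributes a sign), and the off-diagonal entry $\pm\phi_{n-1}$ coming from the $\xi_Y$-rows. Thus $T'$ is the mapping cone of $\phi$, possibly after rescaling some summands by signs, and rescaling does not affect exactness. Exactness of a mapping cone is equivalent to $\phi$ being a quasi-isomorphism, by the long exact sequence; in an abelian category this is the usual diagram chase.

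It follows that $H_n(T) \cong H_n(P \ot Y)$. This vanishes for $n>0$, and for $n = 0$ the cokernel of $T_1 \to T_0$ is identified with $X \ot Y$ via $\xi_X \ot \xi_Y$, using the compatibility from Step~1. Hence $(T_n)$ is a resolution of $X \ot Y$.

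I expect the main obstacle to be Step~3: getting the indexing shift and the signs right so that the total differential of $X'$ literally matches the cone differential. A second point needing care is the cone/long-exact-sequence argument in a general abelian category, for which I would rely on the same type of diagram chase as in the proof of Lemma~\ref{AcyclAssLem}.
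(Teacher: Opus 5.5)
Your proof is correct, but it follows a different route from the paper's. It is essentially the mapping-cone argument of Weibel's Theorem~2.7.2, which the paper names as its model but deliberately modifies.

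The paper does not add an augmentation row. It sets $Y_0 \deq \ker(\xi_Y)$ and splits $(Q_j)$ into a short exact sequence of complexes $Q' \rightarrowtail Q \twoheadrightarrow Q''$. Here $Q'$ is the exact complex $\cdots \to Q_2 \to Q_1 \to Y_0 \to 0$, and $Q''$ is $Y$ concentrated in degree zero. Tensoring with the $P_i$ gives a degreewise short exact sequence of total complexes $T' \rightarrowtail T \twoheadrightarrow T''$ with $T''_n = P_n \ot Y$. The acyclic assembly lemma makes $T'$ exact, and the long exact homology sequence then shows that $T \to T''$ (your $\phi$) is a quasi-isomorphism.

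The paper's version needs only the existence of the long exact sequence. The map $H_n(T) \to H_n(T'')$ appearing there is by construction the one induced by $\phi$, so no connecting morphism has to be identified and no signs have to be tracked. Your route instead rests on the fact that an exact cone forces a quasi-isomorphism, i.e.\ that the connecting morphism of $0 \to P \ot Y \to T' \to T[-1] \to 0$ is $\pm H(\phi)$. This is standard, but in an arbitrary abelian category without embedding theorems it is one more diagram chase, and that is exactly what the paper's variant sidesteps.

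In exchange, you work with a single double complex and need no auxiliary kernel $Y_0$. You also verify the chain-map property and the identification of $H_0(T)$ with $X \ot Y$ via $\xi_X \ot \xi_Y$ more explicitly than the paper does.

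On the signs: the ones you wrote already satisfy the anticommutation relation. The block of the differential of $T'$ on $T_{n-1}$ is exactly $d_{n-1}$, and the only sign sits in the off-diagonal entry $(-1)^{n-1}\phi_{n-1}$. Rescaling the summand $T_{n-1}$ of $T'_n$ by a sign depending on $n$ then matches the standard cone.
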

\begin{proof}
\begin{pflist}
\item
We define $Y_0 \deq \ker(\xi_Y)$, so that the sequence
$Y_0 \rightarrowtail Q_0 \overset{\xi_Y}{\twoheadrightarrow} Y$
is short exact. We can then form the chain complexes
\[
Q'_j \deq
\begin{cases}
Q_{j} &: j > 0 \\
Y_0 &: j = 0 \\
0 &: j < 0
\end{cases}
\qquad \text{and} \qquad
Q''_j \deq
\begin{cases}
0 &: j > 0 \\
Y &: j = 0 \\
0 &: j < 0
\end{cases}
\]
The differentials of~$(Q'_j)$ are $\delta'_j \deq \delta_j$ if $j > 1$, while $\delta'_1 \colon Q_1 \to Y_0$ is induced by $\delta_1 \colon Q_1 \to Q_0$. The differentials~$\delta''_j$ of~$(Q''_j)$ are all equal to zero. Note that~$(Q'_j)$ is exact.

By construction, there is a chain map from~$(Q'_j)$ to~$(Q_j)$ that is the identity in nonzero degrees and is the inclusion~$Y_0 \to Q_0$ in degree zero. Similarly, there is a chain map from~$(Q_j)$ to~$(Q''_j)$ that is zero in nonzero degrees and is the augmentation~\mbox{$\xi_Y \colon Q_0 \to Y$} in degree zero. That this is indeed a chain map follows from the fact that $\xi_Y \circ \delta_1 = 0$.

\item
Exactly as for~$(Q_j)$ above, we now form the corresponding double complexes
$X'_{ij} \deq P_i \ot Q'_j$ and $X''_{ij} \deq P_i \ot Q''_j$ together with their associated total complexes~$(T'_n)$ and~$(T''_n)$. The two chain maps introduced above then lead to two chain maps between these total complexes, one from~$(T'_n)$ to~$(T_n)$ and one from~$(T_n)$ to~$(T''_n)$. As we have~$T''_n = P_n \ot Y$, the second chain map is exactly the one that appears in the assertion. Furthermore, since
\[T'_n \deq \bigoplus_{\substack{i,j=0 \\i+j=n}}^n P_{i} \ot Q'_{j} =
\bigoplus_{i=0}^n P_{i} \ot Q'_{n-i} =
\left( \bigoplus_{i=0}^{n-1} P_{i} \ot Q_{n-i} \right) \oplus (P_{n} \ot Y_0),\] the sequence $T'_n \rightarrowtail T_n \twoheadrightarrow T''_n$ is short exact, because it is the direct sum of the short exact sequence
\[\bigoplus_{i=0}^{n-1} P_{i} \ot Q_{n-i} \rightarrow
\bigoplus_{i=0}^{n-1} P_{i} \ot Q_{n-i} \rightarrow 0\]
and the sequence
\[P_{n} \ot Y_0 \rightarrowtail P_{n} \ot Q_0 \twoheadrightarrow P_{n} \ot Y\]
that is also short exact because of the exactness of the tensor product mentioned in Paragraph~2.1. From this exactness of the tensor product, we also get that, for all $i \in \Z$, the vertical complex~$(X'_{ij})_{j \in \Z}$ is exact. Therefore Lemma~\ref{AcyclAssLem} implies that the total complex~$(T'_n)$ is exact.

\item
Associated with our short exact sequence of chain complexes is a long exact homology sequence
\begin{align*}
\cdots \xrightarrow{\phantom{xxx}} H_{n+1}(T'') \xrightarrow{\phantom{xxx}} H_n(T') \xrightarrow{\phantom{xxx}} H_n(T) \xrightarrow{\phantom{xxx}} H_n(T'') \xrightarrow{\phantom{xxx}} H_{n-1}(T')
\xrightarrow{\phantom{xxx}} \cdots
\end{align*}
Note that this fact, which is usually only shown for modules, is correct in an arbitrary abelian category (cf.~\cite[Chap.~VI, \S~8, p.~154]{Mi}; \cite[Chap.~VIII, \S~4, Lem.~5, p.~206ff]{ML2} as well as \cite{Beyl} and \cite{FHH} for other treatments). Since~$(T'_n)$ is exact, we have that $H_n(T') = 0$. Therefore the long exact homology sequence implies that the chain map from~$(T_n)$ to~$(T''_n)$ induces an isomorphism $H_n(T) \rightarrow H_n(T'')$
in homology, i.e., is a quasi-isomorphism (cf.~\cite[Def.~1.1.2, p.~3]{We}). But again by the exactness of the tensor product, we then have that
\[H_n(T) \cong H_n(T'') \cong H_n(P) \ot Y \cong
\begin{cases}
0 &: n > 0 \\
X \ot Y &: n = 0 \\
\end{cases}\]
which proves our assertion.
\qedhere
\end{pflist}
\end{proof}

In this context, we note that the analogous compositions
\begin{align*}
T_n \xrightarrow{\phantom{\xi_X \ot \id}} P_0 \ot Q_n \xrightarrow{\xi_X \ot \id}  X \ot Q_n
\end{align*}
of the projections to the first summand and $\xi_X \ot \id$ also yield a chain map that is a quasi-isomorphism, as can be shown by a similar argument.

It now follows directly that tensor products of projective resolutions are again projective resolutions:
\begin{Cor} \label{TensProj}
If~$(P_i)$ is a projective resolution of~$X$ and $(Q_j)$ is a projective resolution of~$Y$, then $(T_n)$ is a projective resolution of~$X \ot Y$.
\end{Cor}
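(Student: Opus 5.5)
By Theorem~\ref{TensProdRes}, the total complex $(T_n)$ is already a resolution of~$X \ot Y$: it is exact in positive degrees and has $H_0(T) \cong X \ot Y$ via the augmentation $\xi_X \ot \xi_Y$. It therefore remains only to show that each object $T_n$ is projective.

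Since $T_n = \bigoplus_{i=0}^n P_i \ot Q_{n-i}$ is a finite direct sum, and a finite direct sum of projective objects is projective (for instance by \cite[Chap.~I, Prop.~4.5, p.~24]{HiS}, already used in Paragraph~\ref{RepDrinf}), it suffices to check that each $P_i \ot Q_j$ is projective. This follows from the standing assumption of Paragraph~2.1 that tensoring preserves projectives: tensoring with~$Q_j$ sends the projective object~$P_i$ to a projective object. Equivalently, one can invoke the fact that $P \ot Q$ is projective as soon as one of its factors is projective, which holds in a rigid category with exact tensor product because of the adjunction $\Hom(P \ot Q, -) \cong \Hom(P, - \ot Q^*)$, where the right-hand side is exact.

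The only step needing care is that this adjunction, or the cited preservation property, applies on the correct side given the left-duality conventions of Paragraph~2.1. With left duals, the adjunction $\Hom(P \ot Q, Z) \cong \Hom(P, Z \ot Q^*)$ may require a right dual instead, depending on convention. If so, I would use the other factor, writing $\Hom(P \ot Q, Z) \cong \Hom(Q, P^{*} \ot Z)$ or its appropriate variant. In either case the functor $\Hom(P \ot Q, -)$ is a composition of an exact functor (tensoring with a dual) with the exact functor $\Hom(P,-)$ or $\Hom(Q,-)$, and is therefore exact. This yields the projectivity of $P_i \ot Q_j$, and hence of~$T_n$, which completes the argument.
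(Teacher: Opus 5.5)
Your proposal is correct and is essentially the paper's proof: Theorem~\ref{TensProdRes} gives the resolution property, each $P_i \ot Q_j$ is projective by the standing assumption of Paragraph~2.1, and a finite direct sum of projectives is projective. Your additional duality argument for the projectivity of $P_i \ot Q_j$ is valid, since both factors are projective and tensoring is exact, so either adjunction works; it is not needed, because the paper simply cites Paragraph~2.1.
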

\begin{proof}
As we have discussed in Paragraph~2.1, the objects~$P_i \ot Q_j$ are projective, and a direct sum of projective objects is projective
(cf.~\cite[Chap.~I, Prop.~4.5, p.~24]{HiS}; \cite[Chap.~II, Prop.~14.3, p.~70]{Mi}). Therefore, each object~$T_n$ of the total complex is projective.
\end{proof}

\subsection{Cup products} \label{CupProd}
The definition of the Yoneda product in Paragraph~\ref{YonProd} did not require that~$\CC$ is a tensor category. But if we assume that~$\CC$ satisfies the conditions stated in Paragraph~2.1, we can introduce another product, the so-called cup product. As just recalled in the proof of Theorem~\ref{TensProdRes}, the tensor product is exact in each variable. So if the extensions
\[E \colon 0 \to Y \to Z_{m-1} \to \dots \to Z_1 \to Z_0 \to X \to 0\]
and
\[F \colon 0 \to Y' \to Z'_{l-1} \to \dots \to Z'_1 \to Z'_0 \to X' \to 0\]
represent elements of~$\Yext^m(X,Y)$ and~$\Yext^l(X',Y')$ respectively, the sequences
\[E \ot Y' \colon 0 \to Y \ot Y' \to Z_{m-1} \ot Y' \to \dots \to Z_1 \ot Y' \to Z_0 \ot Y' \to X \ot Y' \to 0\]
and
\[X \ot F \colon 0 \to X \ot Y' \to X \ot Z'_{l-1} \to \dots \to X \ot Z'_1 \to X \ot Z'_0 \to X \ot X' \to 0\]
are still extensions, and by forming their Yoneda product
$(E \ot Y') (X \ot F)$, we obtain an operation
\[\cup \colon \Yext^m(X,Y) \times \Yext^l(X',Y') \to \Yext^{m+l}(X \ot X',Y \ot Y')\]
called the cup product. Note that in the case where $X=Y'=\1$, the unit object of the category, the cup product coincides with the Yoneda product in the sense that we have~$E \cup F = EF$ up to the unit constraints, as already mentioned in Paragraph~\ref{RedGroupCohom}.

The cup product can also be described via a different extension: We define $Z_m \deq Y$ and $Z'_l \deq Y'$ and form the tensor product complex
$$U_n \deq \bigoplus_{i=0}^m \bigoplus_{\substack{j=0 \\i+j=n}}^l Z_{i} \ot Z'_{j}$$
for $n=0,\ldots,m+l$ not of~$E$ and~$F$, but of the complexes in which the last terms~$X$ and~$X'$ are missing. We then have $U_0 = Z_0 \ot Z'_0$ and
$U_{m+l} = Z_m \ot Z'_l = Y \ot Y'$. In particular, we get a morphism
$U_0 = Z_0 \ot Z'_0 \to X \ot X'$. In this way, we obtain an extension:
\begin{Proposition} \label{RepCupProd}
The sequence
\[ 0 \to Y \ot Y' \to U_{m+l-1} \to \dots \to U_1 \to U_0 \to X \ot X' \to 0\]
is an extension that is equivalent to the Yoneda product~$(E \ot Y') (X \ot F)$.
\end{Proposition}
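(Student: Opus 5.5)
We first check that the sequence is exact, and then compare it with the Yoneda product via a chain map over the identities of the end terms. For exactness, consider the truncated complexes $\hat E\colon Z_m \to \dots \to Z_0$ and $\hat F\colon Z'_l \to \dots \to Z'_0$, where $Z_m = Y$ and $Z'_l = Y'$. These are resolutions of $X$ and $X'$ in the sense of Paragraph~\ref{TensRes}, with bounded length. Theorem~\ref{TensProdRes} applies to them without change: the objects beyond degree $m$, respectively $l$, are zero, and projectivity was never used there. It shows that the total complex $(U_n)$ is exact in positive degrees and has $H_0 = X \ot X'$ via $\xi_X \ot \xi_{X'}$. The total complex vanishes above degree $m+l$. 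Its top term $U_{m+l} = Y \ot Y'$ injects into $U_{m+l-1}$ because $H_{m+l}(U) = 0$. Hence the displayed sequence is an exact $(m+l)$-extension of $X \ot X'$ by $Y \ot Y'$.

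For the equivalence, we recall that extension classes are related by chains of morphisms of extensions that are the identity on both ends. We write the spliced extension $(E \ot Y')(X \ot F)$ as the complex with $X \ot Z'_j$ in degrees $j = 0,\dots,l-1$ and $Z_i \ot Y'$ in degrees $l+i$ for $i = 0,\dots,m-1$. The connecting map is $Z_0 \ot Y' \to X \ot Y' \to X \ot Z'_{l-1}$. We then define a chain map $\Phi$ from $(U_n)$ to this complex, in the spirit of the quasi-isomorphism of Theorem~\ref{TensProdRes}. In degree $n \le l-1$, let $\Phi_n$ be the projection of $U_n$ to the summand $Z_0 \ot Z'_n$, followed by $\xi_X \ot \id$. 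In degree $n = l+i$ with $i \ge 0$, let $\Phi_n$ be the projection to $Z_i \ot Z'_l = Z_i \ot Y'$, up to the sign $(-1)^{\,\cdot}$ dictated by the convention $d^2_{ij} = (-1)^i \id \ot \delta_j$. In the top degree $m+l$, $\Phi$ is the identity of $Y \ot Y'$, and in degree $-1$ it is the identity of $X \ot X'$.

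The main work is checking that $\Phi$ commutes with the differentials. Three cases need attention. In the low range, $\xi_X \circ d_1 = 0$ kills the horizontal contributions. In the high range, the vertical differentials out of $Z'_l$ vanish beyond the top. The real obstacle is the junction degree $n = l$. There the component $Z_0 \ot Y' \to X \ot Z'_{l-1}$ of the spliced complex, which is $\xi_X \ot \iota$, must match the total differential of $U_l$ followed by $\Phi_{l-1}$. This amounts to the identity $(\xi_X \ot \id) \circ (-1)^0(\id \ot \delta_l) = (\id \ot \iota) \circ (\xi_X \ot \id)$, together with the vanishing of the contributions of the other summands of $U_l$. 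The signs from $(-1)^i$ may force us to twist $\Phi$ by $\pm1$ on some components. A global sign $(-1)^{ml}$ or similar might have to be absorbed into the convention, or the extension may need to be replaced by one that differs by the automorphism $-\id$ on an interior term; such a replacement does not change the class, since it is an isomorphism of extensions fixing the ends. Once $\Phi$ is a chain map that is the identity on $Y \ot Y'$ and on $X \ot X'$, the definition of equivalence of extensions in~\cite[Chap.~VII, \S~3]{Mi} gives the assertion.
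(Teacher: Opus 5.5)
Your proposal follows the paper's proof: exactness comes from Theorem~\ref{TensProdRes} applied to the truncated extensions, and then you build the same morphism of extensions (projection to $Z_0 \ot Z'_n$ followed by $\xi_X \ot \id$ below degree $l$, projection to $Z_{n-l} \ot Y'$ from degree $l$ on) and check it in the same low-range, junction and high-range cases. The sign hedging is unnecessary, since the horizontal differential $d_i \ot \id$ carries no sign and the only vertical components your $\Phi$ sees lie in column $i=0$ (the others come from $Z_i \ot Z'_{l+1} = 0$), so $\Phi$ is a chain map with all signs $+1$; drop the suggestion of absorbing a global sign $(-1)^{ml}$, which would not be compatible with an equivalence of extensions.
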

\begin{proof}
The assertion that this sequence is an extension follows from Theorem~\ref{TensProdRes}, as every extension can be viewed as a resolution by adding zeros on the left. The fact that this extension is equivalent to the one described above can be shown as in the case of Hopf algebras, an argument found in \cite[Prop.~3.2.1, p.~58]{Be} and \cite[Lem.~9.3.4, p.~191]{W2}.
In greater detail, we define for $n=0,\ldots,l-1$ the morphism
\[\phi_n \colon U_n \to X \ot Z'_{n}\]
as the composition of the projection from~$U_n$ to its summand~$Z_{0} \ot Z'_{n}$ and from there to $X \ot Z'_{n}$, using the given morphism in~$E$. For $n=l,\ldots,m+l-1$, we define
\[\phi_n \colon U_n \to Z_{n-l} \ot Y'\]
as the projection from~$U_n$ to its summand $Z_{n-l} \ot Z'_{l} = Z_{n-l} \ot Y'$. To see that~$(\phi_n)$ is a morphism of extensions, or in other words a chain map, it is necessary to check the commutativity of several squares, which can be organized into the following five cases:
\begin{parlist}
\item
On the very right, the required commutative diagram is
\begin{center}
\begin{tikzcd} [column sep = large]{}
U_{0} = Z_0 \ot Z'_0 \arrow{r}{} \arrow{d}{\phi_0} & X \ot X' \arrow{d}{\id} \\
X \ot Z'_0 \arrow{r}{} & X \ot X'
\end{tikzcd}
\end{center}
which commutes by construction.

\item
For $n=0,\ldots,l-2$, the required commutative diagram is
\begin{center}
\begin{tikzcd} [column sep = large]{}
(Z_{1} \ot Z'_n) \oplus (Z_{0} \ot Z'_{n+1}) \arrow{r}{} \arrow{d}{}
& Z_0 \ot Z'_n \arrow{d}{} \\
U_{n+1} \arrow{r}{} \arrow{d}{\phi_{n+1}} & U_{n} \arrow{d}{\phi_n} \\
X \ot Z'_{n+1} \arrow{r}{} &  X \ot Z'_n
\end{tikzcd}
\end{center}
in which the upper square comes from the definition of the differential of the total complex introduced at the beginning of Paragraph~\ref{AcyclAss}. The diagram commutes because the composition $Z_1 \to Z_0 \to X$ is zero.

\item
For $n=l-1$, the required commutative diagram is
\begin{center}
\begin{tikzcd} [column sep = large]{}
(Z_{1} \ot Z'_{l-1}) \oplus (Z_{0} \ot Z'_{l}) \arrow{r}{} \arrow{d}{}
& Z_0 \ot Z'_{l-1} \arrow{d}{} \\
U_{l} \arrow{r}{} \arrow{d}{\phi_{l}} & U_{l-1} \arrow{d}{\phi_{l-1}} \\
Z_0 \ot Z'_{l} \arrow{r}{} &  X \ot Z'_{l-1}
\end{tikzcd}
\end{center}
in which the lower horizontal morphism splices the two extensions together. It commutes for the same reason as in the previous case.

\item
For $n=l,\ldots,m+l-2$, the required diagram is
\begin{center}
\begin{tikzcd} [column sep = large]{}
U_{n+1} \arrow{r}{} \arrow{d}{\phi_{n+1}} & U_{n} \arrow{d}{\phi_n} \\
Z_{n+1-l} \ot Z'_{l} \arrow{r}{} &  Z_{n-l} \ot Z'_{l}
\end{tikzcd}
\end{center}
which commutes because the second summand in the differential for the total tensor product complex is zero in this case.

\item
On the very left, we have $n=m+l-1$ and $U_{m+l} = Z_{m} \ot Z'_{l} = Y \ot Y'$, so that the diagram
\begin{center}
\begin{tikzcd} [column sep = large]{}
U_{m+l} \arrow{r}{} \arrow{d}{\id} & U_{m+l-1} \arrow{d}{\phi_{m+l-1}} \\
Z_{m} \ot Z'_{l}  \arrow{r}{} &  Z_{m-1} \ot Z'_{l}
\end{tikzcd}
\end{center}
commutes for the same reason as in the previous case.
\qedhere
\end{parlist}
\end{proof}

\subsection{Cup products via projective resolutions} \label{CupProdRes}
The cup product can also be described via projective resolutions: For a given projective resolution
\[\cdots \xlongrightarrow{d_3} P_2 \xlongrightarrow{d_2} P_1 \xlongrightarrow{d_1} P_0
\xlongrightarrow{\xi} X\]
of the object~$X$, we have discussed in Paragraph~\ref{YonProd} that the extension~$E$ considered in Paragraph~\ref{CupProd} above corresponds to the cohomology class of the
morphism~\mbox{$\phi_m \in \Hom_{\CC}(P_m,Y)$} that arises from the commutative diagram
\begin{center}
\begin{tikzcd}[column sep = small] {}
\cdots \arrow{r}{} &
P_{m+2} \arrow{d}{\phi_{m+2}} \arrow{r}{d_{m+2}} &
P_{m+1} \arrow{d}{\phi_{m+1}} \arrow{r}{d_{m+1}} &
P_m \arrow{d}{\phi_m} \arrow{r}{d_m} &
P_{m-1} \arrow{d}{\phi_{m-1}} \arrow{r}{} &
\cdots \arrow{r}{} &
P_1 \arrow{d}{\phi_1} \arrow{r}{d_1} &
P_0 \arrow{d}{\phi_0} \arrow{r}{\xi} &
X \arrow{d}{\id_X} \arrow{r}{} &
0 &\\
\cdots \arrow{r}{} &
0 \arrow{r}{} &
0 \arrow{r}{} &
Y \arrow{r}{} &
Z_{m-1} \arrow{r}{} &
\cdots \arrow{r}{} &
Z_1 \arrow{r}{} &
Z_0 \arrow{r}{} &
X \arrow{r}{} &
0
\end{tikzcd}
\end{center}

Similarly, if
\[\cdots \xlongrightarrow{d'_3} Q_2 \xlongrightarrow{d'_2} Q_1 \xlongrightarrow{d'_1} Q_0
\xlongrightarrow{\xi'} X'\]
is a projective resolution of~$X'$, the extension~$F$ also considered above corresponds to the cohomology class of the morphism~$\psi_l \in \Hom_{\CC}(Q_l,Y')$ that arises from the commutative diagram
\begin{center}
\begin{tikzcd}[column sep = small] {}
\cdots \arrow{r}{} &
Q_{l+2} \arrow{d}{\psi_{l+2}} \arrow{r}{d'_{l+2}} &
Q_{l+1} \arrow{d}{\psi_{l+1}} \arrow{r}{d'_{l+1}} &
Q_l \arrow{d}{\psi_l} \arrow{r}{d'_l} &
Q_{l-1} \arrow{d}{\psi_{l-1}} \arrow{r}{} &
\cdots \arrow{r}{} &
Q_1 \arrow{d}{\psi_1} \arrow{r}{d'_1} &
Q_0 \arrow{d}{\psi_0} \arrow{r}{\xi'} &
X' \arrow{d}{\id_{X'}} \arrow{r}{} &
0 &\\
\cdots \arrow{r}{} &
0 \arrow{r}{} &
0 \arrow{r}{} &
Y' \arrow{r}{} &
Z'_{l-1} \arrow{r}{} &
\cdots \arrow{r}{} &
Z'_1 \arrow{r}{} &
Z'_0 \arrow{r}{} &
X' \arrow{r}{} &
0
\end{tikzcd}
\end{center}
From Corollary~\ref{TensProj}, we know that the total complex
\[T_n \deq \bigoplus_{\substack{i,j=0 \\i+j=n}}^n P_{i} \ot Q_{j}\]
is a projective resolution of~$X \ot X'$. By functoriality of the total tensor product complex, the tensor products~$\phi_i \ot \psi_j$ then yield a chain map from~$T_n$ to the total complex
\[U_n \deq \bigoplus_{i=0}^m \bigoplus_{\substack{j=0 \\i+j=n}}^l Z_{i} \ot Z'_{j}\]
that, according to Proposition~\ref{RepCupProd}, represents the cup product.
Since we have $U_{m+l} = Z_{m} \ot Z'_{l}$, the arising morphism $T_{m+l} \to U_{m+l}$ has at most one nonvanishing component, namely
$$\phi_m \ot \psi_l \colon P_m \ot Q_l \to U_{m+l}$$
and vanishes on all summands $P_{i} \ot Q_{j}$ of~$T_{m+l}$ that satisfy $i+j=m+l$, but not~$i=m$ and~$j=l$.

In view of the relation between the Yoneda description and the description via projective resolutions discussed in Paragraph~\ref{YonProd}, this means that the cup product in the latter description is induced by the map
$$\Hom(P_m, Y) \times \Hom(Q_l, Y') \rightarrow \Hom(T_{m+l}, Y \ot Y')$$
that assigns to a pair $(\phi_m, \psi_l)$ of morphisms the morphism
$T_{m+l} \to Y \ot Y'$ that is equal to $\phi_m \ot \psi_l$ on $P_m \ot Q_l$
and vanishes on all summands $P_{i} \ot Q_{j}$ of~$T_{m+l}$ that satisfy $i+j=m+l$, but not~$i=m$ and~$j=l$.

In the case where $X = X' = \1$, the cup product induces an operation
\[\cup \colon \Ext^m(\1,Y) \times \Ext^l(\1,Y') \to \Ext^{m+l}(\1,Y \ot Y')\]
because $\1 \cong \1 \ot \1$. But even if we use a specific projective resolution of the unit object on the left-hand side, so that $P_i = Q_i$, the description of the cup product that we have given above uses on the right-hand side the projective resolution~$(T_n)$. By the comparison theorem already mentioned in Paragraph~3.1, there must then be a quasi-isomorphism from~$(P_n)$ to~$(T_n)$ that lifts the isomorphism~$\1 \cong \1 \ot \1$ to the projective resolutions. Such a quasi-isomorphism is often called a diagonal approximation (cf.~\cite[Sec.~3.2, p.~57]{Be}) or, at least in special cases, an Alexander-Whitney map. For group cohomology, explicit diagonal approximations for the standard resolution can be found, for example, in~\cite[Sec.~3.4, p.~64]{Be} or~\cite[Sec.~3.2, p.~25]{E}, and for the periodic resolution discussed in Paragraph~\ref{PerResCycl} in~\cite[Sec.~3.5, p.~67]{Be} or~\cite[loc.~cit.]{E}.

\end{document}